\numberwithin{equation}{section}
\DeclareMathOperator*{\wtend}{\rightharpoonup}
\theoremstyle{plain}
	\newtheorem{theorem}{Theorem}[section]
	\newtheorem{proposition}[theorem]{Proposition}    
	\newtheorem{lemma}[theorem]{Lemma}
	\newtheorem{property}[theorem]{Property}
\theoremstyle{definition}
	\newtheorem{definition}[theorem]{Definition}
 \def\NN{{\mathbb N}}  
  \def\RR{{\mathbb R}}
 \def\ZZ{{\mathbb Z}} 
 \def\cB{{\mathcal B}} \def\cC{{\mathcal C}} 
\def\cD{{\mathcal D}}  \def\cF{{\mathcal F}} 
  \def\cI{{\mathcal I}} 
  \def\cL{{\mathcal L}} 
 \def\cN{{\mathcal N}}  
\def\cS{{\mathcal S}}
\newcommand{\eps}{\varepsilon}
\newcommand{\R}{\mathbb{R}} % Redondant mais elle est relou ta notation %C'est toi qui est relou !
\newcommand{\dd}{\mathrm{d}}
\newcommand{\der}[2]{\frac{\dd #1}{\dd #2}}
\newcommand{\tend}[2]{\underset{#1\to #2}{\longrightarrow} }
\newcommand{\mc}{\mathcal}
\newcommand{\what}{\widehat}
\newcommand{\dx}{ \, {\rm d} x}
\newcommand{\dt}{ \, {\rm d} t}
\DeclareMathOperator*{\D}{\rm{div}}
\title{Existence and Uniqueness for the SQG Vortex-Wave System when the Vorticity is Constant near the Point-Vortex}
\author{
Dimitri Cobb\footnote{Universität Bonn, Hausdorff Center for Mathematics (HCM), Endenicher Allee 62, D-53115 Bonn, Deutschland.}, 
Martin Donati\footnote{Université Grenoble Alpes, Institut Fourier, 100 rue des Mathématiques, 38610 Gières, France.} 
and Ludovic Godard-Cadillac\footnote{Université de Bordeaux, Bordeaux INP, Institut de Mathématiques de Bordeaux (IMB), UMR 5251, F-33400 Talence, France.}}
\date{\today}
\begin{document}

\maketitle

\begin{abstract}
This article studies the vortex-wave system for the Surface Quasi-Geostrophic equation with parameter $0<s<1$. We obtain local existence of classical solutions in $H^4$ under the standard ``plateau hypothesis'', $H^2$-stability of the solutions, and a blow-up criterion. 
In the sub-critical case $s>1/2$ we establish global existence of weak solutions. For the critical case $s=1/2$, we introduced a weaker notion of solution ($V$-weak solutions) to give a meaning to the equation and prove global existence. 

\end{abstract}

%%%%%%%%%%%%%%%%%%%%%%%%%%%%%%%%%%%%%%%%%%%%%%%%%%%%%
%%%%%%%%%%%%%%%%%%%%%%%%%%%%%%%%%%%%%%%%%%%%%%%%%%%%%
%%%%%%%%%%%%%%%%%%%%%%%%%%%%%%%%%%%%%%%%%%%%%%%%%%%%%
\section*{Introduction}

The purpose of this work is to study the vortex-wave system associated to \eqref{SQG}, the surface quasi-geostrophic equation. Equation \eqref{SQG} is a partial differential equation set on the plane that is widely used to model atmospheric flows outside the tropical region. It is given by
\begin{equation}\tag{SQG}\label{SQG}
    \frac{\partial}{\partial t}\theta+v\cdot\nabla\theta=0,\qquad\quad\text{with }\quad v=-\nabla^\perp(-\Delta)^{-s}\,\theta,
\end{equation}
where $0<s<1$ and where $\perp$ designates the counter-clockwise rotation of angle $\pi/2$ in the plane. 
In this equation, $\theta:[0,T)\times\RR^2\to\RR$ is the potential temperature of the fluid, that will be referred throughout this article as the \emph{active scalar}, and $v:[0,T)\times\RR^2\to\RR^2$ is the velocity of the fluid.
For the physical content of (SQG), we recommend the two reference books~\cite{Pedlowsky_1987, Vallis_2006}. 
The second equation in~\eqref{SQG} is called the \emph{quasi-geostrophic Biot-Savart law} in reference to the 2D Euler equation written in term of the vorticity $\omega$:
\begin{equation}\tag{Euler}\label{Euler}
    \frac{\partial}{\partial t}\omega+v\cdot\nabla\omega=0,\qquad\quad\text{with }\quad v=-\nabla^\perp(-\Delta)^{-1}\,\omega.
\end{equation}
The 2D Euler equation can be seen as the limit case in~\eqref{SQG} where $s\to 1^-$. 
The similarity of these two equations gave raise to a wide range of studies trying to investigate whether the special solutions known for 2D Euler have their equivalent for \eqref{SQG}. 
The most studied cases are traveling and co-rotating solutions, either from bifurcation arguments~\cite{Ao_Davila_DelPino_Musso_Wei_2021, Garcia_2021, Hassainia_Wheeler_2022, Hmidi_Mateu_2017} or variationnal arguments~\cite{Cao_Qin_Zhan_Zou_2021, Godard-Cadillac_Gravejat_Smets_2020, Gravejat_Smets_2019}.

However close these two equations may seem, the theory for the Cauchy problem associated to the evolution equation is very different. 
Existence and uniqueness globally in time for the 2D Euler equation is well-known since the works by Youdovich if the initial vorticity lays in $L^\infty$. 
In this case, the Euler Biot-Savart law gives that the velocity field is log-Lipschitz, so that the Osgood lemma applies (see~\cite{Youdovich_1963, Youdovich_1995}). 
For the~\eqref{SQG} equation, the Biot-Savart law is more singular and such an argument does not apply. 
The Cauchy theory for \eqref{SQG} remains to this day an outstanding problem. 
The global existence is only known for weak solutions~\cite{Marchand_2008}, which are non-unique in some cases when the regularity is too low~\cite{Buckmaster_Shkoller_Vicol_2019}. 
Numerical simulations~\cite{Scott_Dritschel_2014} suggests that, unlike the 2D Euler equations, the $L^\infty$ setting does not give raise to global in time strong solutions. 
Existence and uniqueness on a short interval of time for \eqref{SQG} has been proved for the sub-critical and critical case ($s\geq 1/2$) by~\cite{Constantin_Majda_Tabak_1994} and the super-critical case ($0<s<1/2$) has been done by~\cite{Chae_Constantin_Cordoba_Gancedo_Wu_2012} in the $H^k$ setting with $k\geq 4$. 
The study of the Cauchy problem for these equations is very active, trying to exhibit finite time singularity or to prove global existence in a well-chosen class of regularity (see for instance~\cite{Chae_Jeong_Oh_2023, Cordoba_MartinezZoroa_2022, Garcia_GomezSerrano_2022, Hu_Kukavika_Ziane_2015, Jeong_Kim_2024, Jolly_Kumar_Martinez_2022} and references therein).

Nevertheless, one feature of the 2D Euler equation that is preserved by \eqref{SQG} is the point-vortex system.
This consists in the assumption that at the initial time the active scalar is sharply concentrated around a finite number of points and therefore writes at the limit as a weighted sum of Dirac masses.
This model should be understood as a mathematical formalization of the intuitive notion of ``\emph{centers of whirlpools}''. 
These Dirac masses then evolve in time in virtue of the transport equations respectively~\eqref{SQG} and~\eqref{Euler}. Although in the SQG equations the active scalar is not the vorticity of the fluid (its curl), by analogy with the Euler case we continue to call a Dirac mass of the active scalar a \emph{point-vortex}.
This system is well-posed on a given interval of time as long as the distances between vortices remain positive (absence of collapse).
Global well-posedness for almost every initial datum has been obtained for the Euler case in the plane by~\cite{Marchioro_Pulvirenti_1993}, and in bounded smooth domains by~\cite{Donati_2022}. 
The extension of this result to the quasi-geostrophic point-vortex system is done by~\cite{Geldhauser_Romito_2020, Godard-Cadillac_vortex_2022}.

One natural question then lays in the possibility to study a mixed system involving both a regular part for the active-scalar (driven by the PDE) coupled with a singular part made of point-vortices.
This coupled system was introduced for the 2D Euler equation independently by~\cite{Marchioro_Pulvirenti_1991,Starovoitov_1994_Uniqueness} and called the ``\emph{vortex-wave system}''.
Using a Lagrangian formulation of the vortex-wave system, the existence of solutions for this system was proved in presence of a single point-vortex. 
Uniqueness for this system has been achieved by~\cite{Marchioro_Pulvirenti_1991} and~\cite{Lacave_Miot_2009} respectively for Lagrangian and Eulerian solutions, under the assumption that the point-vortex lays in a region where the vorticity is constant (plateau assumption).
It is worth mentioning other works related to the vortex-wave system such as~\cite{ Bjorland_2011, Cao_Wang_2020,Crippa_LopesFilho_Miot_NussenzveigLopes_2016, Glass_Lacave_Sueur_2014,  Glass_Lacave_Sueur_2016, Lacave_Miot_preprint, Nguyen_Nguyen_2019, Rosenzweig_2020, Varholm_Wahlen_Walsh_2020}.

The main goal of the present paper is to study the vortex-wave system for the Surface Quasi-Geostrophic equation.
Our first result is the local existence and uniqueness of a classical solution for the SQG vortex-wave system under the same ``plateau assumption'' as in~\cite{Lacave_Miot_2009}.
Unlike the Euler case, it is not clear whether this result can be made global since the velocity field is less regular due to the singularity of the Biot-Savart law kernel. 
We cannot obtain strong solution with $\theta$ being only in $L^1\cap L^\infty$ so that we make a much stronger assumption on the initial datum:   $\theta_0\in H^4(\RR^2).$
Such a regularity allows us to proceed to energy estimates relying on the commutator structure of the equation. 
While the energy estimates are comparable to the ones of \cite{Chae_Constantin_Cordoba_Gancedo_Wu_2012}, new difficulties arise due to the presence of the point-vortex.
Passing to the limit in a regularized system gives the existence of the solution on a bounded interval of time.
We were also able to obtain a uniqueness result with a proof that relies on the point-vortex staying in its initial ``plateau'', combined with a precise $H^2$-stability estimate.
We also provide a directional blow-up criterion for the case where the point-vortex reaches the boundary of its ``plateau'' in finite time.
In our work, we mainly focused on the novelties induced by the presence of the point-vortex but we believe it is possible to improve the regularity using some of the refined techniques developed in the literature on SQG (see \cite{Hu_Kukavika_Ziane_2015, Jolly_Kumar_Martinez_2022, Jolly_Kumar_Martinez_2021} for details).

The second part of the article studies weak solutions (in $L^1\cap L^\infty$) for the SQG vortex-wave system. 
Since the Youdovitch theory does not apply for SQG, we cannot expect global solutions in $L^1\cap L^\infty$ to be unique with characteristics that are well-defined.
In the case $s>1/2$ the Biot-Savart law has good regularizing properties, and we can prove global existence.
The critical case $s=1/2$ is more difficult since it is no longer possible to obtain regularity on the velocity field. The Biot-Savart law is here a $0$-order pseudo-differential operator.
The first consequence is that one must relax the definition of the point-vortex system to apply the Peano theorem.
Furthermore, the main difficulty lays in the definition of the velocity field generated by the point-vortex when acting on the smooth part of the active scalar. Indeed, the case $s=1/2$ implies that the kernel of the Biot-Savart law is no longer $L^1_{\rm loc}.$
To overcome this problem, we introduce a weaker notion of solution, that we called $V$-weak solutions, adapted to the study of singularities localized on a set of small Hausdorff dimension.
We show the existence of a global $V$-weak solution and study its properties. 
We show that this weaker notion of solutions shares with the standard notion of weak solutions several important properties. In particular, we prove that if the $V$-weak solution is $\cC^1$, then it is a classical solution.

%%%%%%%%%%%%%%%%%%%%%%%%%%%%%%%%%%%%%%%%%%%%%%%%%%%%%
%%%%%%%%%%%%%%%%%%%%%%%%%%%%%%%%%%%%%%%%%%%%%%%%%%%%%
%%%%%%%%%%%%%%%%%%%%%%%%%%%%%%%%%%%%%%%%%%%%%%%%%%%%%
\section{Presentation of the Problem and Main Result}

%%%%%%%%%%%%%%%%%%%%%%%%%%%%%%%%%%%%%%%%%%%%%%%%%%%%%
%%%%%%%%%%%%%%%%%%%%%%%%%%%%%%%%%%%%%%%%%%%%%%%%%%%%%
\subsection{The Vortex-Wave System for SQG}

%%%%%%%%%%%%%%%%%%%%%%%%%%%%%%%%%%%%%%%%%%%%%%%%%%%%%
\subsubsection{Singular vorticity: The point-vortex system}

We first define for $0<s\leq 1$ the function:
\begin{equation}\label{def:K_s}
   \forall\;x\in\RR^2\setminus\{0\},\qquad K_s(x):=c_s\frac{x^\perp}{|x|^{4-2s}},\qquad\text{with }\;c_s:=\frac{(1-s)\Gamma(1-s)}{2^{2s-1}\pi\Gamma(s)},
\end{equation}
which is the kernel of the Biot-Savart operator $-\nabla^\perp (-\Delta)^{-s}$ associated to the Green's function of $(-\Delta)^{s}$. Remark that $c_s\to\frac{1}{2\pi}=:c_1$ as $s\to1^-$. The case $s=1$ corresponds to the case of~\eqref{Euler} while $0<s<1$ corresponds to~\eqref{SQG}.
The point-vortex system in the plane is given by the following equations. If $z_1,\ldots,z_N$ are the positions of the point-vortices in the plane and $a_1,\ldots,a_N$ their intensities, then the evolution of the $i^{th}$ vortex with $i=1,\dots, N$ is given by:
\begin{equation}\label{eq:Point Vortex alpha}
\der{}{t} z_i(t) = c_s\sum_{\substack{j=1 \\ j\neq i}}^N  a_j\frac{(z_i(t)-z_j(t))^\perp}{|z_i(t)-z_j(t)|^{4-2s}}.
\end{equation}
Indeed, if one plugs an active scalar of the form $\theta(0,x)=\sum_{j=1}^Na_j \delta_{z_j}$ in~\eqref{SQG} or~\eqref{Euler} then the associated velocity field writes, using~\eqref{def:K_s}:
\begin{equation*}
    v(0,x)\;:=\; c_s\sum_{j=1}^Na_j\frac{(x-z_j)^\perp}{|x-z_j|^{4-2s}}.
\end{equation*}
Since the active scalar is simply transported by the velocity of the fluid, we indeed obtain~\eqref{eq:Point Vortex alpha} under the natural assumption that a point-vortex does not self-interact.

For this article, we mainly focus on the vortex-wave system in the case of a single vortex moving in time $t\mapsto z(t)\in\RR^2$.
The case of several point-vortices is similar as long as the distances between the point-vortices remain positive on the studied interval of time, see Section \ref{sec:multi_points}.
To give a proper definition of the vortex-wave system, the authors in~\cite{Marchioro_Pulvirenti_1991} suggest a Lagrangian point-of-view while in~\cite{Lacave_Miot_2009} they work with an Eulerian point-of-view. In this work we follow the second point-of-view. 

%%%%%%%%%%%%%%%%%%%%%%%%%%%%%%%%%%%%%%%%%%%%%%%%%%%%%
\subsubsection{Local Existence and Uniqueness for the SQG Vortex-Wave}

We prove the following well-posedness result for the vortex-wave system:
\begin{theorem}\label{thrm:strong_solutions}
    Let $0 < s < 1$. Consider $\theta_0 \in H^k$ an initial datum for some integer $k \geq 4$ and $z_0 \in \R^2$ an initial point-vortex position. Assume that $\theta_0$ is constant on some neighborhood of $z_0$ (plateau assumption). Then the following assertions hold true.
    \begin{itemize}
    
        \item[$(i)$] Local existence and uniqueness: there exists time $T > 0$ such that the PDE system
            \begin{equation}\label{eq:StrongSystem}
                \begin{cases}
                    \partial_t \theta + (v + H) \cdot \nabla \theta = 0, \vspace{1mm}\\
                    \displaystyle \der{}{t} z(t) = v \big(t, z(t) \big),\vspace{1mm}\\
                    v = - \nabla^\perp (- \Delta)^{-s} \theta = K_s\star\theta, \vspace{1mm}\\
                    H(t,x) = K_s \big( x - z(t) \big),
                \end{cases}
            \end{equation}
            with initial datum respectively $\theta_0$ and $z_0$,
            has a unique solution $\theta \in L^\infty([0, T) ; H^k(\R^2)) \cap C^1([0, T) \times \R^2)$ and $z\in C^1([0, T) ; \R^2)$.
    
        \item[$(ii)$] Short time $H^{2}$-stability: 
            let $(\theta_1,z_1)$ and $(\theta_2,z_2)$ be two solutions in $L^\infty([0,T],H^k(\R^2))\times C^1([0,T];\RR^2)$ of \eqref{eq:StrongSystem}, with $\theta_1(0)$ and $\theta_2(0)$ locally constant around $z_1$ and $z_2$ respectively. Then there exists $\rho > 0$ and $T_0 >0$ depending only on $\theta_1(0)$ and $\theta_2(0)$  such that if
            \begin{equation*}
                |z_2(0) - z_1(0) | < \rho,
            \end{equation*}
            then, for every $t \in [0,T_0]$, and for every $\ell \in \NN$ such that $2 \le \ell \le k-2$,
            \begin{equation}\label{ieq:Stability}
                \| \theta_2(t,\cdot) - \theta_1(t,\cdot) \|_{H^{\ell}} + |z_2(t)-z_1(t)| \lesssim \| \theta_2(0,\cdot) - \theta_1(0,\cdot) \|_{H^{\ell}} + |z_2(0)-z_1(0)|.
            \end{equation}
            
        \item[$(iii)$] Blow-up criterion: let $(\theta,z)$ be a solution of~\eqref{eq:StrongSystem} on an interval $[0,T^*)$ with $T^* <\infty$. Let
            \begin{equation}\label{def:R(t)}
                R(t) := \sup \big\{ r \ge 0 \; , \; \nabla\theta(t) \equiv 0 \text{ on } B(z(t),r) \big\}
            \end{equation}
            and recall that the plateau hypothesis states that $R(0)>0$. Assume that $R(t) \tend{t}{T^*} 0$. Then
            \begin{equation*}
                \int_0^{T^\ast} \|\theta(t)\|_{H^{3-2s}} \dd t = +\infty.
            \end{equation*}
    \end{itemize}

\vspace{0.2cm}
\end{theorem}

The different statements in Theorem~\ref{thrm:strong_solutions} are proved throughout the article. The existence and regularity of strong solutions in point \textit{(i)} is the main concern of Section~\ref{sec:existence_strong}, and Theorem~\ref{t:StrongExistence} in particular. For point \textit{(ii)}, uniqueness and the proof of~\ref{ieq:Stability} comes from Section~\ref{sec:Uniqueness} and Proposition \ref{prop:stabilité} therein. Finally, the results of \textit{(iii)} concerning a blow-up criterion is proved in Section~\ref{sec:blow-up}, see also Theorem~\ref{thrm:blow-up} below.

\medskip

Let us formulate a few remarks about Theorem~\ref{thrm:strong_solutions}.

\medskip

In~\eqref{eq:StrongSystem}, the equation $v = -\nabla^\perp (- \Delta)^{-s} \theta$ is well-defined as a convolution with $K_s$ only when $s> 1/2$. Yet it is not always clear whether this convolution is well-defined. When $s\leq 1/2$ we use Fourier multipliers to give a meaning to the velocity $v$ (see Proposition~\ref{p:FourierMultiplierKS} hereafter). Note also that for a fixed time $t\in[0,T)$, the product $H \theta$ is a well-defined and measurable function taking finite values for every $x\neq z(t)$. It is also well-defined on all $\RR^2$ when seen as a principal value distribution, because $\theta(t) \in H^k(\R^2) \subset H^4(\R^2)$ and $H(t) \in \dot{B}^{2s - 1}_{1, \infty}(\R^2) \subset H^{-2}(\R^2)$.
The convolution against the Biot-savart kernel $K_s$ is taken with respect to the space variable $x\in\RR^2.$ 
As a convention throughout this article, the gradient operators $\nabla$ and $\nabla^\perp$, convolutions and fractional Laplace operators $(-\Delta)^s$ and $(-\Delta)^{-s}$ are always taken with respect to the space variable $x\in\RR^2.$

Concerning the regularity of strong solutions, in this work we limit ourselves to $H^4$ solutions for simplicity reasons and focus on what is new in presence of a point-vortex. However, we believe that both existence and uniqueness results can be extended to lower levels of regularity on $\theta$, according to \cite{Hu_Kukavika_Ziane_2015} and \cite{Jolly_Kumar_Martinez_2022}. For the stability result, the hypothesis $2 \le \ell \le k-2$ is always satisfied at least by $\ell =2$ since $k\ge 4$. In the case $s \ge 1/2$ we also obtain a $L^2$-stability result (see Section~\ref{sec:Uniqueness} for rigorous statement and proof). This $L^2$-stability fails in the case $s<1/2$ for which we need at least to control the $H^2$ norm of $\theta$.

The system~\eqref{eq:StrongSystem} can blow-up in two different ways: either the active scalar could lose regularity on its own as in the usual SQG equations, or the point-vortex could collide with the boundary of the constant part of the active scalar, making the system singular. Theorem~\ref{thrm:strong_solutions}-$(iii)$ investigates this second type of blow-up. 
In Section~\ref{sec:blow-up}, we actually obtain a sharper and directional criterion, which is the following Theorem~\ref{thrm:blow-up}, from which we deduce Theorem~\ref{thrm:strong_solutions}-$(iii)$. More precisely, we prove:

\begin{theorem}\label{thrm:blow-up}
    Let $(\theta,z)$ be a strong solution of~\eqref{eq:StrongSystem} as given by Theorem~\ref{thrm:strong_solutions} on an interval $[0,T^*)$, with $T^* < \infty$. Let
    \begin{equation*}
    N(t) := \max_{\{x \in \R^2, \, |x-z(t)| = R(t)\}} \frac{-\big(x-z(t)\big)\cdot\big(v(x)-v(z(t))\big)}{R^2(t)(1-\ln R(t))} \,.
    \end{equation*}
    Then,
    \begin{equation*}
        R(t) \tend{t}{T^*} 0 \qquad \Longleftrightarrow \qquad \int_0^{T^*} N(t) \dd t = +\infty \,.
    \end{equation*}
\end{theorem}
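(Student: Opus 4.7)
The proof relies on an ODE for $R(t)$ derived from the transport structure, together with direction-by-direction bounds involving $N(t)$. The two directions of the equivalence require substantively different arguments: the forward one is a Gronwall estimate on $1 - \ln R$, while the backward one exploits the plateau hypothesis to remove the Biot-Savart singularity.

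\textbf{Evolution of $R$ and forward direction.} Since $\theta$ is transported by $v + H$, the gradient-support of $\theta(t)$ equals $\phi_t\bigl(\operatorname{supp}(\nabla\theta_0)\bigr)$ for the flow $\phi_t$ of $v + H$, and $R(t)$ is the distance from $z(t)$ to this transported support. A Danskin-type envelope argument on this minimum, together with the orthogonality $(x - z) \perp K_s(x - z)$ which cancels the singular contribution of $H(x^*) = K_s(x^* - z(t))$, yields at points of differentiability and for $x^*(t)$ realizing the minimum:
$$R'(t) = \frac{\bigl(x^*(t) - z(t)\bigr) \cdot \bigl(v(t, x^*(t)) - v(t, z(t))\bigr)}{R(t)}.$$
The definition of $N$ directly gives $-R'(t) \le N(t) R(t)(1 - \ln R(t))$. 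Setting $u(t) := 1 - \ln R(t)$, this reads $u' \le N u$, and Gronwall yields $u(t) \le u(0)\exp(\int_0^t N)$. Hence $R(t) \to 0$ forces $u(t) \to \infty$, and therefore $\int_0^{T^*} N = +\infty$.

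\textbf{Backward direction.} This direction is \emph{not} the contrapositive of the forward one: the symmetric lower bound $R'(t) \ge -N(t) R(t) (1 - \ln R(t))$ only produces the useless inequality $u(t) \ge u(0) e^{-\int N}$, which fails to force $R \to 0$ when $\int N = +\infty$. The plan is instead to prove the contrapositive, $R \not\to 0 \Rightarrow \int_0^{T^*} N < \infty$. Suppose $R(t) \ge \rho > 0$ on an interval $[T_0^*, T^*)$. By the plateau hypothesis, $\theta(t)$ is constant on $B(z(t), \rho)$, so for any $y \in B(z(t), \rho/2)$ one may write
$$\nabla v(t, y) = \int_{|w - z(t)| \ge \rho} \nabla K_s(y - w)\, \bigl(\theta(t, w) - \theta(t, z(t))\bigr) \dd w,$$
with the kernel evaluated strictly away from its singularity. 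An $L^2$-bound on $\nabla K_s$ outside a ball of radius $\rho/2$ combined with the $L^2$-conservation of $\theta$ under transport yields
$$\|\nabla v(t)\|_{L^\infty(B(z(t), \rho/2))} \le C\, \rho^{2s - 3}\, \|\theta_0\|_{L^2}.$$
Consequently, for $x$ with $|x - z(t)| = R(t) \le \rho/2$, one has $|(x - z) \cdot (v(x) - v(z))| \le C R^2 \rho^{2s - 3} \|\theta_0\|_{L^2}$, and so $N(t) \le C\rho^{2s - 3}\|\theta_0\|_{L^2}/(1 - \ln R(t))$ is uniformly bounded on $[T_0^*, T^*)$. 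Thus $\int_0^{T^*} N < \infty$, as desired.

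\textbf{Main obstacle.} The backward direction is the principal difficulty: it cannot be recovered from the forward estimate alone, and must exploit the plateau structure to kill the Biot-Savart singularity near $z(t)$. A secondary subtlety is excluding pathological oscillations with $\liminf_{t \to T^*} R(t) = 0 < \limsup_{t \to T^*} R(t)$, so that ``$R \not\to 0$'' genuinely translates into a uniform lower bound $R(t) \ge \rho > 0$ near $T^*$; this is handled by combining the Lipschitz continuity of $R(t)$ with the uniform bound on $N(t)$ derived above, together with the fact that the rapid rotation induced by $H$ prevents a once-shrunk plateau from recovering its original extent within finite time.
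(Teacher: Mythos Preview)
Your forward direction ($R\to 0\Rightarrow\int N=\infty$) is correct and coincides with the paper's argument: both derive the one-sided differential inequality $R'(t)\ge -N(t)\,R(t)\bigl(1-\ln R(t)\bigr)$ from the transport of fluid particles together with the orthogonality $(x-z)\cdot K_s(x-z)=0$, and then integrate via an Osgood-type estimate on $u=1-\ln R$.

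Your backward argument, however, contains a contradiction that breaks it. You assume $R(t)\ge\rho$ on $[T_0^*,T^*)$, bound $\nabla v$ only on the small ball $B\bigl(z(t),\rho/2\bigr)$, and then write ``for $x$ with $|x-z(t)|=R(t)\le\rho/2$'' --- but under your standing hypothesis $R(t)\ge\rho>\rho/2$, so no such $x$ exists. The quantity $N(t)$ is evaluated on the circle of radius $R(t)\ge\rho$, which lies \emph{outside} the ball where you controlled $\nabla v$; as $|y-z|\nearrow R(t)$ your integral representation for $\nabla v(y)$ loses the separation $|y-w|\ge\rho/2$ from $\supp(\theta-\beta)$ and the bound degenerates. (A secondary issue: the $L^2$ pairing you invoke requires $\theta-\beta\in L^2$, which fails when the plateau value $\beta\neq 0$.) For comparison, the paper's own proof derives only the same one-sided inequality and then asserts the full equivalence without supplying a separate argument for the reverse implication; so your observation that $\Leftarrow$ cannot be read off from the Gronwall bound is well-founded, but the argument you propose does not close the gap. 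The final remark about rotation by $H$ preventing plateau recovery is too heuristic to resolve the liminf/limsup issue.
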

The quantity $t \mapsto N(t)$ is a weaker version of the log-Lipschitz norm of $v$ that only takes into account the capacity of $v$ to bring fluid particles from the outside of $B(z(t),R(t))$ towards the point-vortex.

The proof of Theorem~\ref{thrm:blow-up} is done along with Theorem~\ref{thrm:strong_solutions}-$(iii)$ in Section~\ref{sec:blow-up}.

%%%%%%%%%%%%%%%%%%%%%%%%%%%%%%%%%%%%%%%%%%%%%%%%%%%%%
\subsubsection{Global existence of weak solution in the sub-critical case}
In the sub-critical case (namely $s>1/2$) the global existence of weak solutions is ensured due to the regularizing properties of the $\nabla^\perp(-\Delta)^{-s}$ operator. More precisely, we prove the following.

\begin{theorem}[Global weak solutions if $s>1/2$]~\label{thrm:global weak sub-critical}
    We assume that $s > 1/2$. Let $\theta_0 \in L^1 \cap L^\infty(\RR^2)$ and $z_0 \in \R^2$. Then there exists $\theta \in L^\infty(\R_+ ; L^1 \cap L^\infty)$ and $z \in C^0(\R_+ ; \R^2)$ such that $(\theta, z)$ is a weak solution of the vortex-wave system with initial datum $(\theta_0, z_0)$. More precisely, for any test function $\psi \in \mc D ([0, T) \times \R^2)$, we have
\begin{equation}\label{eq:weak formulation}
    \int_0^T\int_{\RR^2}\theta(t,x)\,\bigg(\frac{\partial\psi}{\partial t}(t,x)+(v+H)\cdot\nabla\psi(t,x)\bigg)\,\dd x\,\dd t+\int_{\RR^2}\theta_0(x)\psi(0,x)\,\dd x=0,
\end{equation}
where $v$ and $H$ are given by~\eqref{eq:StrongSystem} and where $z$ is a solution of the integral equation
\begin{equation}\label{eq:weak formulation 2}
    z(t) = z_0 + \int_0^t v \big( \tau, z(\tau) \big) {\rm d} \tau.
\end{equation}
Moreover, this solution satisfies $\|\theta(t,\cdot)\|_{L^p}\leq\|\theta_0\|_{L^p}$ for all $p\in[1,+\infty].$ Finally, the function $\theta$ is continuous in time with values in $L^1\cap L^\infty(\RR^2)$ equipped with the weak-$\star$ topology.
\end{theorem}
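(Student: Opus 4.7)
The plan is a standard regularization-and-compactness argument exploiting the sub-critical threshold $s > 1/2$: since $|K_s(x)| \lesssim |x|^{2s-3}$ with $2s - 3 > -2$, the kernel $K_s$ belongs to $L^1_{\rm loc}(\R^2)$. This locality allows all the nonlinear products in the weak formulation to be well-defined without any plateau hypothesis, gives $v \in L^\infty(\R^2)$ whenever $\theta \in L^1 \cap L^\infty(\R^2)$, and even provides Hölder regularity of $v$ of exponent $2s - 1 > 0$.

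The first step is to regularize the singular kernel: let $(\rho_\eps)_{\eps > 0}$ be a standard mollifier and set $K_s^\eps := K_s \ast \rho_\eps \in C^\infty \cap L^\infty(\R^2)$. The version of \eqref{eq:StrongSystem} obtained by replacing $K_s$ by $K_s^\eps$ in both the Biot-Savart law and in the definition of $H$ admits a global smooth solution $(\theta^\eps, z^\eps)$ by classical ODE and transport theory. Since both $v^\eps$ and $H^\eps$ remain divergence-free (they are still perpendicular gradients), the transport structure preserves the $L^p$ norms: $\|\theta^\eps(t)\|_{L^p} = \|\theta_0\|_{L^p}$ for every $p \in [1, \infty]$. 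Splitting $K_s = K_s \mathbbm{1}_{B(0,1)} + K_s \mathbbm{1}_{B(0,1)^c}$ into an $L^1$ piece (using $s > 1/2$) and a decaying $L^q$ piece for $q > 2/(3-2s)$, Young's inequality produces a uniform bound $\|v^\eps\|_{L^\infty_{t,x}} \lesssim \|\theta_0\|_{L^1 \cap L^\infty}$, and splitting the difference $K_s(x-\cdot)-K_s(y-\cdot)$ into a near and a far region of $|x-y|$ yields uniform $C^{0, 2s-1}_{\rm loc}$ regularity of $v^\eps$. In particular $z^\eps$ is uniformly Lipschitz on bounded time intervals.

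Next, I pass to the limit. Up to extracting a diagonal subsequence over a sequence of compact time intervals, Arzelà-Ascoli gives $z^\eps \to z$ uniformly and, combined with a uniform time modulus of $v^\eps$ deduced from $\partial_t v^\eps = -K_s^\eps \ast \D(\theta^\eps (v^\eps + H^\eps))$, also $v^\eps \to v$ locally uniformly in $(t,x)$; Banach-Alaoglu gives $\theta^\eps \rightharpoonup^\star \theta$ in $L^\infty(\R_+ ; L^1 \cap L^\infty)$. The product $\theta^\eps v^\eps$ then passes to the limit in $\mathcal D'$ through a strong-weak$^\star$ pairing. The singular vortex term $\theta^\eps H^\eps$ is handled by noting that $H^\eps \to H$ in $L^1_{\rm loc}(\R^2)$ uniformly in $t$: indeed $K_s^\eps \to K_s$ in $L^1_{\rm loc}$, and translations in $L^1$ are continuous, which combined with the uniform convergence of $z^\eps$ suffices to pass to the limit when paired against $\theta^\eps \rightharpoonup^\star \theta$ in $L^\infty$. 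This yields \eqref{eq:weak formulation}, and \eqref{eq:weak formulation 2} follows directly from the locally uniform convergence of $v^\eps$ and of $z^\eps$.

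The $L^p$ bound on $\theta$ is inherited from the conserved regularized norms by weak-$\star$ lower semi-continuity, and the weak-$\star$ continuity of $t \mapsto \theta(t, \cdot)$ follows by testing \eqref{eq:weak formulation} against a countable dense family in $C^\infty_c(\R^2)$ and noting that the resulting integrals are absolutely continuous in time, since $v \in L^\infty$, $H \in L^1_{\rm loc}$ and $\theta \in L^\infty$. I expect the main obstacle to be precisely the passage to the limit in $H^\eps \cdot \nabla \theta^\eps$, which mixes the weak-$\star$ convergence of $\theta^\eps$ with a singular kernel riding along the perturbed trajectory $z^\eps$; the sub-critical integrability $K_s \in L^1_{\rm loc}$ combined with the uniform Lipschitz bound on $z^\eps$ is exactly what is needed to close this argument.
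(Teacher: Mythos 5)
Your proposal is correct and follows essentially the same route as the paper: regularize the kernel so that the coupled system has global approximate solutions, use conservation of the $L^p$ norms and the sub-critical gain ($K_s\in L^1_{\rm loc}$, hence uniformly bounded, Hölder-continuous velocities) to get compactness of $v^\eps$ and $z^\eps$, pass to the limit by pairing these strong convergences (and $H^\eps\to H$ in $L^1_{\rm loc}$) against the weak-$\star$ convergence of $\theta^\eps$, and finish with weak-$\star$ lower semicontinuity for the $L^p$ bounds and a density argument for the weak-$\star$ time continuity. The differences are only in implementation — you mollify the kernel where the paper truncates it, obtain the uniform Hölder and time-equicontinuity bounds on $v^\eps$ by direct potential estimates and the evolution equation where the paper invokes the Besov bound $\|v_\eps\|_{B^{s-1/2}_{p,p}}\lesssim\|\theta_0\|_{L^p}$ and the embedding into $C^{0,\sigma'}$, and recover $z$ by Arzelà--Ascoli rather than by defining it from the limit integral equation — so no further comparison is needed.
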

The proof is given in Section~\ref{sec:proof-subcritical}.

%%%%%%%%%%%%%%%%%%%%%%%%%%%%%%%%%%%%%%%%%%%%%%%%%%%%%
\subsection{Global Existence of Weak Solutions in the Critical Case}
From the result given by Theorem~\ref{thrm:global weak sub-critical}, one naturally wonders what happens in the critical case where $s=1/2$.

%%%%%%%%%%%%%%%%%%%%%%%%%%%%%%%%%%%%%%%%%%%%%%%%%%%%%
\subsubsection{Duhamel Formula for the Quasi-Point-Vortex System}
One important difference in the case $s=1/2$ is that the function $K_s$ ceases to be locally $L^1$. 
Moreover, the convolution with $K_s$ is now a Fourier multiplier of order $0$ since such convolution correspond to the differential operator $-\nabla^\perp(-\Delta)^{-\frac{1}{2}}$.
As a consequence, there is no hope for the velocity $v:=K_s\star\theta$ to be a continuous function with $\theta$ being only in $L^1\cap L^\infty$. 
To give a meaning to the vortex-wave system when $s=1/2,$ we have to weaken the notion of point-vortex to obtain the regularity needed to define the flow. 
For that purpose, we rewrite the equation on the evolution of the point-vortex~\eqref{eq:Euler Vortex Wave weak 2} as follows:
\begin{equation}\label{eq:not relaxed}
    z(t)=z_0+\int_0^t\int_{\RR^2}v\big(t',x\big)\,\delta_0\big(x-z(t'))\,\dd x\,\dd t',
\end{equation}
where $\delta_0$ is the centered Dirac mass.
In the view of such a formulation we introduce the notion of \emph{quasi point-vortex}, which is a relaxation of the equation above:
\begin{equation}\label{eq:relaxed}
    z(t)=z_0+\int_0^t\int_{\RR^2}v\big(t',x\big)\,\chi\big(x-z(t'))\,\dd x\,\dd t',
\end{equation}
where $\chi$ is a fixed $L^1$ function such that $\int\chi=1$. 
This function $\chi$ must be understood as an approximation of the Dirac mass.
In other words, the computation of the velocity for the point-vortex is not the simple evaluation of the velocity field $v$ at one point $z$ (which is ill-defined) but the computation of an average of $v$ on a ``small'' neighborhood of $z$.
The main weakness of this approach is that the solution depends on the choice of $\chi\in L^1$ and it is unclear whether one can pass to the limit $\chi\to\delta_0$.
Yet this relaxation ~\eqref{eq:relaxed} is the most simple and reasonable way to give a meaning to~\eqref{eq:not relaxed}.

%%%%%%%%%%%%%%%%%%%%%%%%%%%%%%%%%%%%%%%%%%%%%%%%%%%%%
\subsubsection{A Remark on the Structure of the Non-Linearity}
Another problem arising in the critical case $s=1/2$ is to give a meaning for almost every $t\geq0$ to the following integrals appearing in~\eqref{eq:weak formulation}:
\begin{equation}\label{eq:Integrals not defined}
    \int_{\RR^2}\theta(t,x)\,v(t,x)\cdot\nabla\psi(t,x)\,\dd x\qquad\text{and}\qquad\int_{\RR^2}\theta(t,x)\,H(t,x)\cdot\nabla\psi(t,x)\,\dd x.
\end{equation}

Concerning the first one, we simply follow the idea developed in~\cite{Marchand_2008} which consists in using the commutator structure of this term to regain some regularity on $\theta$.
This idea is reminiscent from previous works on SQG.

\begin{lemma}[Commutator formulation]\label{lem:Commutator formulation}
    Let $\theta\in\cC^1$ and $\varphi\in\cD(\RR^2)$. We have:
    \begin{equation}\label{eq:commutator formulation}
        \int_{\RR^2}\theta(x)\,K_s\!\star\theta(x)\cdot\nabla\varphi(x)\,\dd x=\frac{1}{2}\int_{\RR^2}K_s\!\star\theta(x)\cdot\Big[(-\Delta)^s,\nabla\varphi\Big](-\Delta)^{-s}\theta(x)\,\dd x,
    \end{equation}
    where $[A,B]:=AB-BA$ is the commutator between $A$ and $B$.
\end{lemma}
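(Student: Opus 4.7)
The plan is to exploit the self-adjointness of $(-\Delta)^s$ together with the anti-symmetric structure of the 2D Biot--Savart kernel. I first introduce the auxiliary scalar $f := (-\Delta)^{-s}\theta$, so that $\theta = (-\Delta)^s f$ and $v := K_s \star \theta = -\nabla^\perp f$. Denoting by $I$ the left-hand side of~\eqref{eq:commutator formulation}, self-adjointness of $(-\Delta)^s$ transfers the fractional Laplacian off of $\theta$ and onto the product $v \cdot \nabla\varphi$:
\begin{equation*}
    I = \int_{\RR^2} f\,(-\Delta)^s(v \cdot \nabla\varphi)\,\dd x.
\end{equation*}

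Next I would expand the fractional Laplacian of the product through the commutator. Componentwise, $(-\Delta)^s(v_i \partial_i\varphi) = \partial_i\varphi \,(-\Delta)^s v_i + [(-\Delta)^s, \partial_i\varphi] v_i$; summing over $i$ yields
\begin{equation*}
    (-\Delta)^s(v \cdot \nabla\varphi) = \nabla\varphi \cdot (-\Delta)^s v + [(-\Delta)^s, \nabla\varphi] \cdot v.
\end{equation*}
Because $v = -\nabla^\perp f$ and $(-\Delta)^s$ commutes with spatial derivatives, $(-\Delta)^s v = -\nabla^\perp\theta$. The key structural observation is that the non-commutator piece reduces back to $I$: using the 2D identity $a^\perp \cdot b = -a \cdot b^\perp$ one rewrites $v \cdot \nabla\varphi = \nabla f \cdot \nabla^\perp\varphi$, and an integration by parts against $\theta$ (which produces no extra boundary term since $\nabla \cdot \nabla^\perp = 0$) gives $\int_{\RR^2} f\,\nabla\varphi \cdot \nabla^\perp\theta\,\dd x = I$.

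Plugging this into the previous identity produces $I = -I + \int_{\RR^2} f\,[(-\Delta)^s, \nabla\varphi] \cdot v\,\dd x$, which immediately yields the factor $\tfrac{1}{2}$. To put the result in the stated form, I would transfer the commutator from acting on $v$ to acting on $f$ by unfolding $[(-\Delta)^s, \partial_i\varphi] v_i = (-\Delta)^s(\partial_i\varphi\, v_i) - \partial_i\varphi\,(-\Delta)^s v_i$ and applying self-adjointness of $(-\Delta)^s$ to each of the two pieces separately, which is equivalent to invoking the anti-self-adjointness of the commutator of two self-adjoint operators.

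The main obstacle will be the rigorous justification of these formal manipulations under the bare hypothesis $\theta \in \cC^1$. The operator $(-\Delta)^{-s}$ is not bounded on $\cC^1$, and for $s \le 1/2$ the kernel $K_s$ fails to be locally integrable, so the objects ``$(-\Delta)^{-s}\theta$'' and ``$v \cdot \nabla\varphi$'' both require careful interpretation (the former as a tempered distribution, the latter through the Fourier-multiplier realisation given in Proposition~\ref{p:FourierMultiplierKS}). I would first establish the identity for Schwartz $\theta$, where every integral is absolutely convergent and all operators act classically; a Calderón-type commutator bound controlling $[(-\Delta)^s, \partial_i\varphi] f$ in $L^p$ then allows me to pass to the limit by density and recover the statement at the regularity level claimed.
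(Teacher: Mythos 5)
Your argument is correct and is essentially the paper's own proof, slightly reorganized: both rest on the self-adjointness of $(-\Delta)^{\pm s}$, the perpendicular-gradient (divergence-free) structure of $v=K_s\star\theta$ together with an integration by parts using $\nabla\cdot\nabla^\perp\varphi=0$, and the resulting relation $I=-I+(\text{commutator term})$ which produces the factor $\tfrac12$; the only difference is that you first let the commutator act on $v$ and then transpose it, whereas the paper writes $I$ in two ways (inserting $(-\Delta)^{s}(-\Delta)^{-s}$, resp.\ integrating by parts and moving $(-\Delta)^{-s}$ across) and adds them, landing directly on the commutator applied to $(-\Delta)^{-s}\theta$. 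One harmless caveat: your final transposition uses the anti-self-adjointness of $[(-\Delta)^s,\nabla\varphi]$ and hence flips a sign, so a careful bookkeeping of your computation (exactly like the paper's own, once its mixed conventions $K_s\star=\pm\nabla^\perp(-\Delta)^{-s}$ are reconciled) yields the identity with $-\tfrac12$, i.e.\ with the commutator written as $[\nabla\varphi,(-\Delta)^s]$; the displayed $+\tfrac12$ is a sign slip in the statement itself and is immaterial for the way the lemma is used later.
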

This lemma is proved later on (Section~\ref{sec:commutator}). 
It is proved in~\cite{Marchand_2008}, using this commutator formulation, that this integral makes sense in $L^p$ for $4/3<p<2$ (see Lemma~\ref{lem:L4/3 regularity} hereafter).

%%%%%%%%%%%%%%%%%%%%%%%%%%%%%%%%%%%%%%%%%%%%%%%%%%%%%
\subsubsection{A Weaker Notion of Weak Solution}

The remaining difficulty is the second integral in~\eqref{eq:Integrals not defined}, which is more difficult to treat since $H= -\nabla^\perp (-\Delta)^{-s}\delta_z$, which is more singular than $v:=-\nabla^\perp(-\Delta)^{-s}\theta.$ 
One possible idea would have been to regularize $\delta_0(x-z(t))$ in the same spirit than~\eqref{eq:relaxed}.
Nevertheless, such an approach would have made loose the information on the deformation of $\theta$ in the neighborhood of the singularity of a point-vortex.

To pass-by this difficulty, we choose to introduce a weaker notion of solution that is weaker than the standard notion of \emph{weak solution} exploiting the fact that the singularity in the equation lays in a small set : a single point moving in time. This idea is inspired by the work of Schochet~\cite{Schochet_1995} on Euler point-vortices.
We first recall that, in a general formulation, the standard notion of weak solution writes: 
\begin{equation}\label{eq:weak standard}
    u\in \cD'(\Omega;\RR^d)\quad\text{ is a weak solution iff: }\qquad\forall\;\phi\in\cD(\Omega;\RR^d),\qquad\cF(u,\phi)=0
\end{equation}
where $\cF:\cD'\times\cD\to\RR$ is the function that represent the weak formulation of the studied equation
(the function $u\mapsto\cF(u,\phi)$ may be well-defined only on a subspace of $\cD'$).

\begin{definition}[$V$-weak solutions]\label{defi:V weak}
    Let $\Omega$ be a domain and $V\subseteq\cD(\Omega;\RR^d).$ Then
    \begin{equation*}
    u\in \cD'(\Omega;\RR^d)\quad\text{ is a }V\text{-weak solution to~\eqref{eq:weak standard} iff: }\quad\forall\;\phi\in V,\quad\cF(u,\phi)=0.
\end{equation*}
\end{definition}
In other words, we are not able to have the weak formulation~\eqref{eq:weak standard} for all test functions $\phi\in\cD(\Omega;\RR^d)$ but only for functions in a subset $V$.
To have this notion of $V$-weak solution be relevant, this set $V$ must be the biggest possible.
Whenever we manipulate it, we systematically study how big the set $V\subseteq\cD(\Omega;\RR^d)$ can be to ensure that we remain close to the standard notion of weak solution.
Note that when $u$ and $\cF$ are smooth enough, the two notion coincide whenever $V$ is dense in $\cD(\Omega;\RR^d)$.
To measure how much the set $V$ coincide with $\cD(\Omega;\RR^d)$, we introduce the notion of $\Gamma$-coincidences for sets of functions:
\begin{definition}[$\Gamma$-coincidence]
    Let $V$ and $W$ two sets of functions $\Omega\to\RR^d$ and let $\Gamma\subseteq\Omega$. We say that the two sets $V$ and $W$ are $\Gamma$-\textit{coincident} if they coincide except on $\Gamma$. More precisely, for all $\Omega_0\subseteq\Omega$ such that the distance between $\Omega_0$ and $\Gamma$ is positive:
    \begin{equation*}
        \forall\,f\in V,\;\;\exists\;g\in W,\quad f_{\big|_{\Omega_0}}\equiv g_{\big|_{\Omega_0}}\qquad\text{and symmetrically :}\qquad\forall\;g\in W,\;\;\exists\;f\in V,\quad f_{\big|_{\Omega_0}}\equiv g_{\big|_{\Omega_0}}.
    \end{equation*}
\end{definition}
This notion of coincidence is adapted to the study of singularities (choosing $\Gamma$ being the set where the singularity is supported). 
This definition can be interpreted intuitively as follows: as soon as we lay at positive distance from the singularity supported on $\Gamma$ then the notion of $V$-weak solution coincides with the standard notion of weak solution (taking $W=\cD(\Omega;\RR^d)$ in the definition above).

To better sustain the relevancy of this notion of $V$-weak solution, but also to give a better intuition, we mention the following easy property:

\begin{property}
    let $u\in\cD'(\Omega;\RR^d)$ be a $V$-weak solution of some equation (see definition~\ref{defi:V weak}), with $V$ being $\Gamma$-coincident with $\cD(\Omega:\RR^d)$ for some $\Gamma\subseteq\Omega$. 
    
    Then $u$ is a solution in the sense of distributions in $\Omega\setminus\Gamma$ :  \begin{equation*}
           u\in \cD'(\Omega\setminus\Gamma;\RR^d)\quad\text{ is such that: }\qquad\forall\;\phi\in\cD(\Omega\setminus\Gamma;\RR^d),\qquad\cF(u,\phi)=0.
    \end{equation*}
\end{property}
\noindent
In other words, we have the following hierarchy :
\begin{equation*}
    \text{solution in }\cD'(\Omega)\quad\Longrightarrow\quad V\text{-weak solution (}\Gamma\text{-coincident})\quad\Longrightarrow\quad \text{solution in }\cD'(\Omega\setminus\Gamma).
\end{equation*}
Note also that $\cD(\Omega\setminus\Gamma)$ is $\Gamma$-coincident with $\cD(\Omega)$. Nevertheless, in practice the set $V$ should be chosen larger so that the least possible information is lost in the neighborhood of the singularity.

%%%%%%%%%%%%%%%%%%%%%%%%%%%%%%%%%%%%%%%%%%%%%%%%%%%%%
\subsubsection{Global Existence of $V$-Weak Solutions in the Critical Case}
With this weaker notion of solution at hand and this commutator formulation, we are able to prove the following theorem:

\begin{theorem}[Global $V$-weak solutions to the \emph{quasi} vortex-wave if $s=1/2$]\label{thrm:VFaible}
    Assume that $s=1/2$. Let $\theta_0\in L^1\cap L^\infty(\RR^2)$, let $z_0\in\RR^2$ and let $T>0$. Let $\chi\in L^1(\RR^2)$ such that $\int\chi=1.$ Then there exists $V\subseteq\cD([0,T)\times\RR^2)$ such that:\vspace{0.1cm}

    $(i)$ There exists $\theta\in L^\infty([0,T);L^1\cap L^\infty(\RR^2))$ and $z\in\cC^{0,1}([0,T);\RR^2)$ a $V$-weak solution to the vortex-wave system in the sense that for all test functions $\psi\in V$:
    \begin{equation}\label{eq:Vweak forumulation}\begin{split}
        \int_0^T\int_{\RR^2}\Bigg(\frac{\partial\psi}{\partial t}(t,x)+v(t,x)&\cdot\Big[(-\Delta)^s,\nabla\psi(t,x)\Big](-\Delta)^{-s}\\&+H(t,x)\cdot\nabla\psi(t,x)\Bigg)\theta(t,x)\;\dd x\,\dd t+\int_{\RR^2}\theta_0(x)\,\psi(0,x)\,\dd x=0,
    \end{split}
    \end{equation}
where $v$ and $H$ are given by~\eqref{eq:StrongSystem}, 
and we have for all $t\in[0,T)$,
\begin{equation}\label{eq:Euler Vortex Wave weak 2}
    z(t)=z_0+\int_0^t\int_{\RR^2}v\big(t',x\big)\chi(x-z(t'))\,\dd x\,\dd t'.
\end{equation}

\vspace{0.1cm}
$(ii)$ The set $V$ is such that the product $H\cdot\nabla\psi$ lays in $L^1(\RR^2).$ Moreover, the set $V$ is dense in $L^p(\R^2)$ for all $p\in[1,+\infty]$, dense in $W^{1,p}(\R^2)$ for all $p<+\infty$. \vspace{0.3cm}

$(iii)$ The set $V$ is $\Gamma$-coincident with $\cD([0,T)\times\RR^2)$ for $\Gamma$ the trajectory of the point-vortex:
\begin{equation*}
    \Gamma\,:=\,\bigcup_{t\in[0,T)}\big\{(t,z(t))\big\}\quad\subseteq\;[0,T)\times\RR^2.
\end{equation*}

$(iv)$ If moreover a solution $\theta$ of~\eqref{eq:Vweak forumulation} has regularity $\cC^1$ on $[0,T)\times\RR^2$ and is constant on a neighborhood of $z(t)$ for all $t\in[0,T)$, then $(\theta,z)$ is a classical solution on $[0,T)$ to the the \emph{quasi} vortex-wave with $s=1/2.$
\end{theorem}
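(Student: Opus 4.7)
The plan is to construct the $V$-weak solution via regularization and compactness, and only then to define $V$ so as to make the singular part of the equation integrable along the (now known) vortex trajectory.

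\textbf{Step 1 — Regularization.} Fix a standard mollifier $(\eta_\eps)_{\eps > 0}$ and replace $\theta_0$ by $\theta_{0,\eps} := \eta_\eps \star \theta_0$ and $K_{1/2}$ by $K_{1/2,\eps} := K_{1/2} \star \eta_\eps$. This produces a regularized system with smooth velocity fields $v_\eps$ and $H_\eps$, for which a classical coupling of transport equation and Cauchy-Lipschitz yields a global smooth solution $(\theta_\eps, z_\eps)$. Since $v_\eps$ is divergence-free, $\|\theta_\eps(t)\|_{L^p} \le \|\theta_0\|_{L^p}$ for every $p \in [1, +\infty]$, and the trajectory satisfies the relaxed ODE $\dot z_\eps(t) = \int v_\eps(t,x) \chi(x - z_\eps(t)) \dx$. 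The latter is uniformly bounded because $v_\eps$ is bounded in $L^p_{\rm loc}$ (as a zero-order Fourier multiplier of a uniformly bounded $\theta_\eps$) and $\chi \in L^1$, so $z_\eps$ is uniformly Lipschitz.

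\textbf{Step 2 — Compactness.} Weak-$\star$ compactness in $L^\infty([0,T); L^1 \cap L^\infty)$ gives $\theta_\eps \wtend \theta$ along a subsequence, and Ascoli gives uniform convergence $z_\eps \to z$ with $z \in \cC^{0,1}$. Strong compactness of $\theta_\eps$ in $L^p_{\rm loc}$ follows from an Aubin-Lions argument, using the equation to bound $\partial_t \theta_\eps$ in a negative Sobolev space. Combined with the uniform convergence of $z_\eps$, this is enough to pass to the limit in $\int v_\eps \chi(\cdot - z_\eps) \dx$ and thus recover~\eqref{eq:Euler Vortex Wave weak 2}.

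\textbf{Step 3 — Definition of $V$.} Having constructed $z$, set $\Gamma := \bigcup_{t \in [0,T)} \{(t, z(t))\}$. Define $V$ as the set of $\psi \in \cD([0,T) \times \RR^2)$ for which $\nabla \psi(t, x)$ vanishes fast enough as $x \to z(t)$ to guarantee $H \cdot \nabla \psi \in L^1([0,T) \times \RR^2)$ (e.g.\ $\nabla\psi(t, z(t)) = 0$ with a Hölder control). In particular every $\psi \in \cD\big([0,T) \times \RR^2 \setminus \Gamma\big)$ lies in $V$, which yields the $\Gamma$-coincidence stated in (iii). Density in $L^p$ and $W^{1,p}$ for $p < +\infty$ follows from the fact that $\Gamma$ is a curve of vanishing $2$-capacity in $\RR^3$, so functions whose gradient vanishes near $\Gamma$ are dense in these spaces; this gives (ii).

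\textbf{Step 4 — Passage to the limit in the weak formulation.} Write the weak formulation of the regularized system tested against $\psi \in V$. The transport term $\int \theta_\eps \, v_\eps \cdot \nabla \psi$ is rewritten through the commutator identity of Lemma~\ref{lem:Commutator formulation}; Marchand's $L^{4/3}$-$L^2$ bound on the commutator (Lemma~\ref{lem:L4/3 regularity}) yields an expression that is continuous in the strong $L^p_{\rm loc}$ topology of $\theta_\eps$, so one can pass to the limit there. The point-vortex term $\int \theta_\eps \, H_\eps \cdot \nabla \psi$ is handled by dominated convergence: since $\nabla \psi$ vanishes near $\Gamma$ and $z_\eps \to z$ uniformly, for large enough $\eps$ the integrand is dominated by an $L^1$ function independent of $\eps$, and pointwise convergence $H_\eps \to H$ holds off $\Gamma$. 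The time derivative and initial datum terms are standard.

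\textbf{Step 5 — Item (iv).} If the obtained $\theta$ turns out to be $\cC^1$ and locally constant around $z(t)$, then $H \cdot \nabla \theta \equiv 0$ in a neighborhood of the vortex, and every test function in $\cD([0,T) \times \RR^2)$ can be approximated by elements of $V$ in a way compatible with integration by parts against a smooth $\theta$. Recovering~\eqref{eq:StrongSystem} pointwise is then a routine regularity argument.

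\textbf{Main obstacle.} The delicate point is the simultaneous tuning of $V$: it must be large enough to be dense in the prescribed spaces and $\Gamma$-coincident with $\cD$, yet small enough that both the commutator term and $H \cdot \nabla \psi$ are globally integrable uniformly in $\eps$, so that the passage to the limit goes through. Controlling the commutator $[(-\Delta)^{1/2}, \nabla \psi](-\Delta)^{-1/2} \theta_\eps$ uniformly in $\eps$, and showing that its limit coincides with the corresponding expression for $\theta$, is the most technical step, as it sits exactly at the critical threshold where the Biot–Savart law ceases to regularize.
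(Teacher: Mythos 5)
The central step of your argument fails: in Step 2 you claim strong compactness of $\theta_\eps$ in $L^p_{\rm loc}$ by Aubin--Lions, and in Step 4 you use it to pass to the limit in the quadratic commutator term. Aubin--Lions requires a \emph{compact} embedding in the space variable, but the only uniform spatial information available is $\theta_\eps \in L^\infty([0,T);L^1\cap L^\infty)$, which carries no regularity and hence no compactness; no uniform $H^\sigma$ or $W^{\sigma,p}$ bound, $\sigma>0$, is propagated at the critical exponent $s=1/2$. In addition, near the vortex even the time-derivative bound degenerates: $\partial_t\theta_\eps=-\div\big((v_\eps+H_\eps)\theta_\eps\big)$ and $H_\eps\theta_\eps$ is \emph{not} uniformly bounded in $L^1_{\rm loc}$, since $|K_{1/2}|\sim|x|^{-2}$ is not locally integrable in $\RR^2$ and the truncation only costs a diverging $\log(1/\eps)$. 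Since the term $\int \theta_\eps\, v_\eps\cdot\nabla\psi$ is quadratic in $\theta_\eps$ (and nonlocal, so that the vanishing of $\nabla\psi$ near $\Gamma$ does not localize it), weak-$\star$ convergence alone is insufficient and your route has no substitute for the missing strong convergence. This is exactly the difficulty the paper's proof is organized around: following Marchand, one splits $\cI_\psi(\theta_\eps,\theta_\eps)$ into Littlewood--Paley low/high frequency interactions, uses that only the smoothed blocks $S_j\theta_\eps$ (and $K_{1/2}\star S_j\theta_\eps$) are pre-compact (Lemma~\ref{lem:strong compactness} — the legitimate place for an Aubin--Lions-type argument, since these blocks are spatially smooth), and makes the high--high self-interaction uniformly small in $j$ via the commutator estimate of Lemma~\ref{lem:L4/3 regularity} in $L^p$, $4/3<p<2$; weak-$\star$ convergence of $\theta_\eps$ then suffices. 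Without this (or an equivalent) mechanism, your Step 4 does not go through.

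A secondary, repairable issue concerns your definition of $V$: you only impose $\nabla\psi(t,z(t))=0$ with a H\"older rate, so $\nabla\psi$ does not vanish on a neighborhood of the trajectory. Then the dominated-convergence argument for $\int\theta_\eps\, H_\eps\cdot\nabla\psi$ is not immediate, because the singularity of $H_\eps$ sits at the moving point $z_\eps(t)\neq z(t)$ and a single $\eps$-independent $L^1$ majorant is not obvious; it also re-enters the estimate of $|\dot z_\eps|$, where $\chi\in L^1$ alone does not bound $\int v_\eps\,\chi(\cdot-z_\eps)\dx$ when $v_\eps$ is merely $L^2$. The paper sidesteps both points by taking $V_\delta$ to consist of test functions with $\nabla\psi\equiv0$ on the full balls $B(z(t),\delta)$ and setting $V=\bigcup_{\delta>0}V_\delta$: once $|z_\eps-z|\le\delta/2$ and $\eps\le\delta/2$ only the tame part of the kernels is seen, the limit is elementary, and this smaller $V$ already yields the density and the $\Gamma$-coincidence claimed in items $(ii)$--$(iii)$, as well as the reconstruction argument of item $(iv)$ via the cut-off $\psi_\delta$ and Lemma~\ref{lem:Commutator formulation}.
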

Point $(iv)$ of this theorem is a main and standard property for any weak solution of any equation. 
This property can be seen as one of the main interest of the notion of weak solution because the notion of weak solution makes sense as long as we can reconstruct a classical solution whenever the weak solution turns out to be smooth enough.
What we prove at Point $(iv)$ of this theorem is that this important property is also satisfied by our notion of $V$-weak solutions.
The fact that the weak and $V$-weak solutions share this property is the main argument in favor of this new weaker notion of solution.

Concerning Point $(iii)$ of the theorem, this property of $\Gamma$-coincidence means, from an intuitive point-of-view, that the solution is a distribution ``outside'' the singularity (meaning at positive distance from the point-vortex) and is not defined ``inside'' the singularity.

The proof of Theorem~\ref{thrm:VFaible} is done in Section~\ref{sec:proofVfaible}.

%%%%%%%%%%%%%%%%%%%%%%%%%%%%%%%%%%%%%%%%%%%%%%%%%%%%%
%%%%%%%%%%%%%%%%%%%%%%%%%%%%%%%%%%%%%%%%%%%%%%%%%%%%%
%%%%%%%%%%%%%%%%%%%%%%%%%%%%%%%%%%%%%%%%%%%%%%%%%%%%%
\section{Functional Analysis and Regularization}

%%%%%%%%%%%%%%%%%%%%%%%%%%%%%%%%%%%%%%%%%%%%%%%%%%%%%
%%%%%%%%%%%%%%%%%%%%%%%%%%%%%%%%%%%%%%%%%%%%%%%%%%%%%
\subsection{Fourier Analysis and Besov spaces}
We first define a very convenient tool for Fourier analysis in a context of non-linear equations which is the Littlewood-Paley analysis. For a more detailed presentation, see~\cite{Bahouri_Chemin_Danchin_2011}. Recall that the Fourier transform of $f:\RR^2\to\RR$ is defined by
\begin{equation*}
    \widehat{f}(\xi)=\cF f(\xi):=\int_{\RR^2}f(x)e^{-ix\cdot\xi}\dd x
\end{equation*}

%%%%%%%%%%%%%%%%%
\begin{definition}[Littlewood-Paley decomposition]\label{d:LPDecomposition}
    We consider a nonnegative function $\phi\in\cD(\RR^2)$ that is radial and such that $\phi(\xi)=1$ if $|\xi|\leq1/2$ and $\phi(\xi)=0$ if $|\xi|\geq1.$ We define $\psi(\xi):=\phi(\xi/2)-\phi(\xi)$, and we call this function the \textit{Fourier cut-off function}.

    We then define the $j$-th \textit{homogeneous dyadic block} of the Littlewood-Paley decomposition for a given tempered distribution $f\in\cS'(\RR^2)$ by
    \begin{equation*}
        \dot{\triangle}_jf:=\cF^{-1}\bigg(\psi\Big(\frac{\xi}{2^j}\Big)\,\cF(f)\bigg).
    \end{equation*}

    We also define the \textit{non-homogenous dyadic blocks} $(\triangle_j)_{j \geq -1}$ in the following way: when $j \geq 0$, we set $\triangle_j = \dot{\triangle}_j$, and for $j = -1$, we note
    \begin{equation}\label{eq:LFOperator}
        \triangle_{-1} f = \mc F^{-1} \left( \phi(\xi) \mc F (f) \right).
    \end{equation}
    With the cut-off function $\phi$, we define a \textit{low frequencies cutting operator} by setting, for $j \in \mathbb{Z}$
    \begin{equation}\label{eq:LPSOperator}
        \mathrm{S}_j f := \mc F \big[ \phi (2^{-j} \xi) \mc F (f) \big].
    \end{equation}
    We also define a \textit{high frequencies cutting operator} by:
    \begin{equation*}
        \mathrm{H}_jf:=(1-\mathrm{S}_j)f=\sum_{k=j}^{+\infty}\dot{\triangle}_kf.
    \end{equation*}
\end{definition}

With the Littlewood-Paley decomposition, it is possible to define the Besov spaces:

\begin{definition}[Inhomogeneous Besov spaces]
    For $s\in\RR,$ and $p,q\in[1,+\infty]$, the distribution $f\in\cS'$ belongs to the inhomogeneous Besov space $B^s_{p,q}$ if and only if the following Besov norm is finite:
    \begin{equation*}
        \|f\|_{B^s_{p,q}}\;:=\;\| \triangle_{-1} f \|_{L^p} + \bigg(\sum_{j=0}^{+\infty}2^{jqs}\,\|\triangle_jf\|_{L^p}^q\bigg)^\frac{1}{q}.
    \end{equation*}
    With this norm, the space $B^s_{p,q}$ is a Banach space.
\end{definition}

\begin{definition}[Homogeneous Besov spaces]
    For $s\in\RR,$ and $p,q\in[1,+\infty]$, the distribution $f\in\cS'$ belongs to the homogeneous Besov space $\dot{B}^s_{p,q}$ if and only if the following Besov semi-norm is finite:
    \begin{equation*}
        \|f\|_{\dot{B}^s_{p,q}}\;:=\bigg(\sum_{j=-\infty}^{+\infty}2^{jqs}\,\|\dot{\triangle}_jf\|_{L^p}^q\bigg)^\frac{1}{q}.
    \end{equation*}
\end{definition} 
It is standard to prove that $B^s_{2,2}=H^s$ for $s\in\RR$ and that $\dot{B}^s_{2,2}=\dot{H}^s$ in the case $s<1$. One of the most useful properties of homogeneous spaces is their scaling invariance: if $f \in \dot{B}^s_{p, r}$ and $f_\lambda (x) = f(\lambda x)$, then we have $\| f_\lambda \|_{\dot{B}^s_{p, r}} \approx \lambda^{s - 2/p} \| f \|_{\dot{B}^s_{p, r}}$. 

\medskip

We will also need a Bernstein inequality: if a dyadic block $\dot{\triangle}f$ belongs to $L^p$, then it belongs to all $L^q$ spaces for $q \geq p$, but the price to pay is a power of $2^j$, which is roughly the regularity scale of $\dot{\triangle}_j f$. This is formalized in the following Lemma.

\begin{lemma}[see Lemma 2.1, p. 52, in \cite{Bahouri_Chemin_Danchin_2011}]\label{l:Bernstein}
    Consider $1 \leq p \leq q \leq + \infty$, $j \in \mathbb{Z}$, and $f \in \mc S'$ such that $\dot{\triangle}_j f \in L^p$. Then the following inequality holds:
    \begin{equation*}
        \big\| \dot{\triangle}_j f \big\|_{L^q} \leq C(d) 2^{j d \left( \frac{1}{p} - \frac{1}{q} \right)} \big\| \dot{\triangle}_j f \big\|_{L^p}.
    \end{equation*}
\end{lemma}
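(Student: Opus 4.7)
The plan is to exploit the fact that $\dot{\triangle}_j f$ has Fourier support contained in the annulus $\cC_j = \{\xi \in \RR^d \, : \, 2^{j-1} \le |\xi| \le 2^{j+1}\}$, which is a standard consequence of the definition via the cut-off $\psi$. The strategy is then to reproduce $\dot{\triangle}_j f$ by convolution against a Schwartz kernel and apply Young's inequality, the gain being quantified by a precise scaling argument.

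First, I would fix an auxiliary function $\widetilde{\psi} \in \cD(\RR^d \setminus \{0\})$ which is identically equal to $1$ on the annulus $\cC_0$, so that $\widetilde{\psi}(2^{-j}\cdot) \equiv 1$ on $\cC_j$. Setting $h := \cF^{-1} \widetilde{\psi}$, one obtains a Schwartz function, and by the identity $\widetilde\psi(2^{-j}\xi)\cF(\dot{\triangle}_j f)(\xi) = \cF(\dot{\triangle}_j f)(\xi)$ it follows that
\begin{equation*}
    \dot{\triangle}_j f \;=\; h_j \star \dot{\triangle}_j f, \qquad \text{where}\quad h_j(x) := 2^{jd}\,h(2^j x).
\end{equation*}
This is the key algebraic identity: a Littlewood–Paley block is automatically smoothed at scale $2^{-j}$ by convolution against a fixed mollifier.

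Next, I would apply Young's convolution inequality with exponents chosen by $1 + \tfrac{1}{q} = \tfrac{1}{r} + \tfrac{1}{p}$, which yields
\begin{equation*}
    \| \dot{\triangle}_j f \|_{L^q} \;\le\; \|h_j\|_{L^r}\, \|\dot{\triangle}_j f\|_{L^p}.
\end{equation*}
The $L^r$ norm of $h_j$ is computed by a direct change of variables, giving $\|h_j\|_{L^r} = 2^{jd(1 - 1/r)} \|h\|_{L^r}$. Since $h$ is Schwartz, $\|h\|_{L^r}$ is a finite constant depending only on $d$, and the exponent $1 - 1/r = 1/p - 1/q$ produces exactly the announced factor $2^{jd(1/p - 1/q)}$.

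I do not expect any real obstacle here: the argument is purely one of Fourier localization combined with scaling, and no part of it interacts with the specific difficulties of the SQG system treated earlier in the paper. The only point requiring minor care is the existence of the auxiliary bump $\widetilde\psi$ compactly supported away from the origin (so that $h$ is genuinely Schwartz and $\|h\|_{L^r}$ is finite for every $r \in [1,+\infty]$), and the verification that the constant $C(d)$ so obtained is independent of $j$, which is immediate from the scaling identity for $h_j$.
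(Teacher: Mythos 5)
Your proof is correct: the paper does not prove this lemma at all but simply cites Lemma 2.1 of Bahouri--Chemin--Danchin, and your argument (Fourier support of $\dot{\triangle}_j f$ in the annulus $\cC_j$, the reproducing identity $\dot{\triangle}_j f = h_j \star \dot{\triangle}_j f$ with a rescaled Schwartz kernel, then Young's inequality together with the scaling $\|h_j\|_{L^r} = 2^{jd(1-1/r)}\|h\|_{L^r}$) is precisely the standard proof given in that reference. Nothing further is needed.
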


We are able to give a meaning to the convolution with the kernel $K_s:x\mapsto x^\perp/|x|^{4-2s}$ for all value of $s$ provided that the function $\theta$ is $H^1$:

\begin{proposition}\label{p:FourierMultiplierKS}
    Consider $s \in (0, 1)$. Then the operator $\nabla^\perp (- \Delta)^{-s}$ defines a bounded Fourier multiplier $H^{1-2s} \longrightarrow L^\infty + L^2$. In other words, the function
    \begin{equation*}
        \what{g}(\xi) = i \xi^\perp |\xi|^{-2s} \what{\theta}(\xi)
    \end{equation*}
    is the Fourier transform a function $g = \triangle_{-1} g + ({\rm Id} - \triangle_{-1}) g$ with
    \begin{equation*}
        \| \triangle_{-1} g \|_{L^\infty} + \big\| ({\rm Id} - \triangle_{-1}) g \big\|_{L^2} \lesssim \| \theta \|_{H^{1-2s}}.
    \end{equation*}
\end{proposition}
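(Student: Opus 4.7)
The plan is to split $g$ according to the low-frequency cutoff, placing the low-frequency piece $\triangle_{-1}g$ in $L^\infty$ and the high-frequency piece $(\mathrm{Id}-\triangle_{-1})g$ in $L^2$. This dichotomy is dictated by the behavior of the multiplier $m(\xi)=i\xi^\perp|\xi|^{-2s}$, whose magnitude $|\xi|^{1-2s}$ is comparable to the Sobolev weight $(1+|\xi|^2)^{(1-2s)/2}$ on $\{|\xi|\gtrsim 1\}$ (giving direct $L^2$ control there), but either blows up at the origin (if $s>1/2$) or fails to be integrable at infinity (if $s<1/2$). The cutoff at $|\xi|\sim 1$ separates these regimes, and only the low-frequency part requires a different functional framework.

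For the high-frequency piece, I would apply Plancherel directly:
$$\big\|(\mathrm{Id}-\triangle_{-1})g\big\|_{L^2}^2=\int_{\RR^2}|1-\phi(\xi)|^2\,|\xi|^{2-4s}\,|\widehat\theta(\xi)|^2\,\dd\xi.$$
Since $1-\phi$ is supported in $\{|\xi|\geq 1/2\}$, on this set $|\xi|^{2-4s}\lesssim (1+|\xi|^2)^{1-2s}$ uniformly in $\xi$, hence the right-hand side is bounded by $\|\theta\|_{H^{1-2s}}^2$.

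For the low-frequency piece, I would use the elementary bound $\|\triangle_{-1}g\|_{L^\infty}\lesssim\|\widehat{\triangle_{-1}g}\|_{L^1}$ followed by Cauchy--Schwarz against the Sobolev weight:
$$\|\triangle_{-1}g\|_{L^\infty}\lesssim \int_{\RR^2}\phi(\xi)\,|\xi|^{1-2s}\,|\widehat\theta(\xi)|\,\dd\xi \lesssim \left(\int_{\RR^2}\phi^2(\xi)\,|\xi|^{2(1-2s)}(1+|\xi|^2)^{2s-1}\,\dd\xi\right)^{\!1/2}\|\theta\|_{H^{1-2s}}.$$
Because $\phi$ is supported in the unit ball, on this region the Japanese bracket factor is bounded above and below, so the integral reduces to a multiple of $\int_{|\xi|\leq 1}|\xi|^{2-4s}\,\dd\xi$, which converges in $\RR^2$ precisely when $2-4s>-2$, i.e.\ when $s<1$.

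The main (rather mild) obstacle is the low-frequency singularity when $s>1/2$: the symbol is not bounded by the Sobolev weight near the origin, so one cannot simply put $\triangle_{-1}g$ in $L^2$. The trick is to place it in $L^\infty$ instead, exploiting the fact that in two dimensions $|\xi|^{1-2s}$ remains locally square-integrable as long as $s<1$. Combining the two bounds yields the decomposition $g=\triangle_{-1}g+(\mathrm{Id}-\triangle_{-1})g\in L^\infty+L^2$ with the claimed estimate.
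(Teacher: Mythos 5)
Your proposal is correct and follows essentially the same route as the paper: the same $\triangle_{-1}$ low/high frequency split, with the low-frequency piece controlled in $L^\infty$ via the $L^1$ bound on its Fourier transform plus Cauchy--Schwarz (using integrability of $|\xi|^{2-4s}$ on the unit ball, valid for $s<1$), and the high-frequency piece controlled in $L^2$ by comparing the symbol to the Sobolev weight away from the origin. If anything, your Cauchy--Schwarz against the weight $(1+|\xi|^2)^{(1-2s)/2}$ lands directly on $\|\theta\|_{H^{1-2s}}$, which is slightly cleaner than the paper's bound by $\|\theta\|_{L^2}$ for the low-frequency term.
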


\begin{proof}
    We split the function $g$ into two parts, a low frequency one and a high frequency one: we have, by making use of the small frequencies cutting operator of \eqref{eq:LFOperator},
    \begin{equation*}
        \begin{split}
            \what{g}(\xi) & = \mc F \big[ \triangle_{-1} g \big] (\xi) + \mc F \big[ ({\rm Id} - \triangle_{-1}) g \big](\xi) \\
            & = \phi(\xi) i \xi^\perp |\xi|^{-2s} \what{\theta}(\xi) + \big( 1 - \phi(\xi) \big) i \xi^\perp |\xi|^{-2s} \what{\theta}(\xi) := \what{g_1}(\xi) + \what{g_1}(\xi),
        \end{split}
    \end{equation*}
    where $\phi \in \mc D$ is the Littlewood-Paley function from Definition \ref{d:LPDecomposition}. Let us focus on the low frequency part, which will give a $L^\infty$ function: we have
    \begin{equation*}
        \| g_1 \|_{L^\infty} \lesssim \| \what{g_1} \|_{L^1} \lesssim \int \phi(\xi) |\xi|^{1 - 2s} |\what{\theta}(\xi)| { \rm d} \xi.
    \end{equation*}
    Now, the Cauchy-Schwarz inequality provides, as $\phi$ is supported in a ball $B(0, 1)$,
    \begin{equation*}
        \int \phi(\xi) |\xi|^{1 - 2s} |\what{\theta}(\xi)| { \rm d} \xi \lesssim \| \what{\theta} \|_{L^2} \left( \int_0^1 r^{2(1 - 2s)} r {\rm d} r \right)^{1/2} \lesssim \| \theta \|_{L^2}.
    \end{equation*}
    On the other hand, the high frequency term reads
    \begin{equation*}
        \what{g_2}(\xi) = \big( 1 - \phi(\xi) \big) i \xi^\perp |\xi|^{-2s} \what{\theta}(\xi) = \underbrace{\big( 1 - \phi(\xi) \big) \frac{i \xi^\perp |\xi|^{-2s}}{1 + |\xi|^{1 - 2s}}}_{\in L^\infty} \big( 1 + |\xi|^{1 - 2s} \big) \what{\theta}(\xi),
    \end{equation*}
    so that we do indeed have $\| g_2 \|_{L^2} \leq \| \theta \|_{H^{1-2s}}$.
\end{proof}

In the particular case where $s=1/2$, it is possible to give a meaning to the convolution with $K_\frac{1}{2}$ when $\theta$ is only in $L^1$ or $L^2$ (which will be useful for the study of weak solutions):
\begin{lemma}\label{lem:K 1/2}  The following estimate holds: \begin{equation*}
        \|K_\frac{1}{2}\star\theta\|_{L^2}\lesssim\|\theta\|_{L^2}.
    \end{equation*}
\end{lemma}

\begin{proof}
    The Fourier multiplier associated to the convolution with $K_\frac{1}{2}$ is the symbol $\xi^\perp/|\xi|$, which is bounded. The $L^2$ estimate is then obtained using the Fourier-Plancherel formula.
\end{proof}

%%%%%%%%%%%%%%%%%%%%%%%%%%%%%%%%%%%%%%%%%%%%%%%%%%%%%
%%%%%%%%%%%%%%%%%%%%%%%%%%%%%%%%%%%%%%%%%%%%%%%%%%%%%
\subsection{Commutator formulation}\label{sec:commutator}
To start with, we prove the commutator formulation stated by Lemma~\ref{lem:Commutator formulation}:
\begin{proof}
    On the one hand we simply write
    \begin{equation}\label{Sologne}
        I:=\int_{\RR^2}\theta(x)\,K_s\!\star\theta(x)\cdot\nabla\varphi(x)\,\dd x=\int_{\RR^2}K_s\!\star\theta(x)\cdot\nabla\varphi(x)\,(-\Delta)^s(-\Delta)^{-s}\theta(x)\,\dd x
    \end{equation}
    On the other hand, recalling that $K_s\star=\nabla^\perp(-\Delta)^{-s}$, an integration by part gives,
    \begin{equation*}
        I=-\int_{\RR^2}(-\Delta)^{-s}\theta(x)\,\nabla^\perp\cdot\big(\theta\nabla\varphi\big)(x)\,\dd x=-\int_{\RR^2}(-\Delta)^{-s}\theta(x)\,\nabla^\perp\theta(x)\cdot\nabla\varphi(x)\,\dd x,
    \end{equation*}
    where for the second equality we used the Schwarz theorem that states $\nabla^\perp\cdot\nabla\phi=0.$ We now remark that $(-\Delta)^{-s}$ is a convolution operator. Writting $1=(-\Delta)^{-s}(-\Delta)^{s}$, The Fubini theorem then gives:
    \begin{equation}\label{Verdun}
        I=-\int_{\RR^2}\nabla^\perp\theta(x)\cdot(-\Delta)^{-s}(-\Delta)^{s}\Big((-\Delta)^{-s}\theta(x)\,\nabla\varphi\Big)(x)\,\dd x=-\int_{\RR^2}K_s\star\theta(x)\cdot(-\Delta)^{s}\Big((-\Delta)^{-s}\theta\,\nabla\varphi\Big)(x)\,\dd x
    \end{equation}
    Summing~\eqref{Sologne} and~\eqref{Verdun} gives~\eqref{eq:commutator formulation}.
\end{proof}

From this reformulation of the non-linear term as a commutator, we have the following properties that are classical for the analysis of SQG (see given references for proofs).
We make widely use of these results throughout the article:

\begin{lemma}[Lemma 2.2 in~\cite{Marchand_2008}]\label{lem:calderon}
    For $s\geq1/2$, define the singular integral operator $J:=[(-\Delta)^s,\nabla\phi]$. For $1<p<+\infty$, there exists $C_p\geq0$ such that
    \begin{equation*}
        \|J(f)\|_{L^p}\;\leq\;C_p\,\|\nabla^2\phi\|_{L^\infty}\|f\|_{L^p}.
    \end{equation*}
\end{lemma}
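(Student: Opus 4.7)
My plan is to express $J(f)$ as a principal value singular integral operator of Calderón commutator type, then invoke classical $L^p$ theory for such operators.

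First I would derive an explicit pointwise kernel. Using the standard singular integral representation of the fractional Laplacian,
\begin{equation*}
(-\Delta)^s h(x) = c_s^\star \, \mathrm{p.v.} \int_{\RR^2} \frac{h(x) - h(y)}{|x - y|^{2 + 2s}} \, dy,
\end{equation*}
valid for $s \in (0, 1)$ and sufficiently smooth $h$ (with $c_s^\star$ an appropriate dimensional constant), I apply this formula componentwise to $h = f \nabla\phi$ and separately to $h = f$, multiplying the result by $\nabla\phi(x)$ afterwards. Subtracting, the diagonal contributions $f(x) \nabla\phi(x)$ cancel exactly, yielding
\begin{equation*}
J(f)(x) = c_s^\star \, \mathrm{p.v.} \int_{\RR^2} \frac{\nabla\phi(x) - \nabla\phi(y)}{|x - y|^{2 + 2s}} \, f(y) \, dy.
\end{equation*}
This identifies $J$ as a Calderón commutator associated with the Lipschitz symbol $\nabla\phi$.

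Next I would verify that the kernel $K_\phi(x, y) := \frac{\nabla\phi(x) - \nabla\phi(y)}{|x - y|^{2 + 2s}}$ defines an $L^p$-bounded singular integral operator with norm controlled by $\|\nabla^2\phi\|_{L^\infty}$. The size bound follows from the mean value theorem: $|\nabla\phi(x) - \nabla\phi(y)| \le \|\nabla^2\phi\|_{L^\infty}\, |x-y|$, hence $|K_\phi(x,y)| \le C \|\nabla^2\phi\|_{L^\infty}\, |x-y|^{-(1+2s)}$. In the critical case $s = 1/2$ this is exactly the admissible size $|x-y|^{-2}$ of a two-dimensional Calderón-Zygmund kernel; a similar mean value computation applied to differences of $K_\phi$ in $x$ and $y$ (controlled again by $\|\nabla^2\phi\|_{L^\infty}$ together with Lipschitz regularity of $\nabla^2\phi$) gives the Hörmander smoothness condition, and classical CZ theory furnishes the $L^p \to L^p$ estimate for $p \in (1, +\infty)$. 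Equivalently one may parametrise via $\nabla\phi(x) - \nabla\phi(y) = (x-y)\cdot \int_0^1 \nabla^2\phi\bigl(y + t(x-y)\bigr)\,dt$ and reduce $J(f)$ to an average of operators of Calderón's first commutator type, whose $L^p$ boundedness is classical.

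For the super-critical range $s > 1/2$, the naive size $|x-y|^{-(1+2s)}$ is a priori too singular for elementary CZ arguments, and extra cancellation must be extracted. Using the same first-order Taylor expansion, I would rewrite $J$ as an average in $t$ of operators with kernels of the form $\nabla^2\phi(\cdot)\cdot (x-y)/|x-y|^{2+2s}$, and combine a Bony paraproduct decomposition of the product $\nabla^2\phi \cdot f$ with the $L^p$ mapping properties of the Riesz-type operator $(x-y)/|x-y|^{2+2s}$, viewed as a Fourier multiplier of order $2s - 1$, to conclude. The main obstacle is exactly this super-critical regime: the kernel is genuinely over-singular, so that obtaining the $L^p$ bound without losing regularity on $f$ requires either a $T(1)$-type verification, a paraproduct argument carefully exploiting the Lipschitz character of $\nabla\phi$, or a direct appeal to sharp Kato-Ponce type commutator inequalities from the SQG literature cited in \cite{Marchand_2008}.
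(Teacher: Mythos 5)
First, note that the paper offers no proof of this lemma at all: it is quoted as Lemma~2.2 of \cite{Marchand_2008} and used only at $s=1/2$, so the intended ``proof'' is a citation. Your kernel computation is correct and does identify $J$ at $s=1/2$ with Calder\'on's first commutator associated with the Lipschitz function $A=\nabla\phi$; that is the right way to see the statement. The genuine gap is in the step ``size bound $+$ H\"ormander smoothness, hence classical CZ theory furnishes the $L^p$ estimate.'' Calder\'on--Zygmund theory only transfers a known $L^2$ (or $L^{p_0}$) bound to all $1<p<\infty$; the kernel conditions alone give no boundedness whatsoever, and for this operator the $L^2$ bound with constant $\|\nabla^2\phi\|_{L^\infty}$ \emph{is} the hard content (Calder\'on's 1965 theorem, or Coifman--Meyer / a $T(1)$-type argument). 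Your proposed fix by averaging in $t$ the kernels $\nabla^2\phi\big(y+t(x-y)\big)\cdot(x-y)/|x-y|^{2+2s}$ does not repair this: for fixed $t\in(0,1)$ these are not convolution-type CZ operators, and their uniform $L^2$ boundedness is essentially equivalent to the commutator theorem you are trying to prove, so the argument is circular. Moreover, checking the H\"ormander condition the way you indicate uses the Lipschitz continuity of $\nabla^2\phi$, which would make the constant depend on $\nabla^3\phi$ and not only on $\|\nabla^2\phi\|_{L^\infty}$ as claimed. The correct route at $s=1/2$ is simply to invoke the Calder\'on commutator theorem (as Marchand does), not elementary CZ theory.

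For $s>1/2$ your sketch cannot be completed, and in fact no proof can: the commutator $[(-\Delta)^s,\nabla\phi]$ is a pseudodifferential operator of positive order $2s-1$, with principal symbol proportional to $|\xi|^{2s-2}\,\xi\cdot\nabla(\nabla\phi)$. Testing on $f_\lambda(x)=e^{i\lambda x_1}\psi(x)$ with $\psi$ supported where $\partial_1\nabla\phi\neq 0$ gives $\|Jf_\lambda\|_{L^p}/\|f_\lambda\|_{L^p}\sim\lambda^{2s-1}\to\infty$, so an estimate by $C_p\|\nabla^2\phi\|_{L^\infty}\|f\|_{L^p}$ is impossible; a paraproduct or Kato--Ponce argument can only produce bounds involving $\|(-\Delta)^{(2s-1)/2}f\|_{L^p}$ or higher norms of $\phi$. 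In other words, the ``$s\ge 1/2$'' in the statement should be read as the critical case $s=1/2$ (the only one the paper uses, in Lemma~\ref{lem:L4/3 regularity}); your honest acknowledgment that the supercritical kernel is ``over-singular'' is the symptom of the statement itself failing there, not of a missing technique.
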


\begin{lemma}[See Proposition 2.1 in \cite{Chae_Constantin_Cordoba_Gancedo_Wu_2012}]\label{p:CCCGW_Commutator}
    Consider $s \in \R$, $j \in \{ 1, 2 \}$ and the operator $A_j = \partial_j (- \Delta)^{- s}$. For any $\varepsilon > 0$ the bound
    \begin{equation*}
        \big\| [A_j, g]f \big\|_{L^2} \leq C(s) \| f \|_{L^2} \left\| \what{(- \Delta)^{(1 - 2s)/2} g} \right\|_{L^1} + C(s) \| (- \Delta)^{-s} f \|_{L^2} \left\| \what{(- \Delta)^{1/2} g} \right\|_{L^1} .
    \end{equation*}
    holds as long as all the norms above make sense (and are finite).
\end{lemma}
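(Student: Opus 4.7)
The plan is to work in Fourier variables. Since $A_j$ is the Fourier multiplier with symbol $m(\zeta):=i\zeta_j |\zeta|^{-2s}$, the commutator admits the representation
\begin{equation*}
\what{[A_j,g]f}(\xi) \;=\; \frac{1}{(2\pi)^2}\int_{\R^2} \bigl(m(\xi)-m(\eta)\bigr)\,\what g(\xi-\eta)\,\what f(\eta)\,\dd \eta,
\end{equation*}
so the whole estimate reduces to a pointwise control of the symbol difference $m(\xi)-m(\eta)$, which I would handle in two complementary regimes according to the relative size of $|\xi-\eta|$ and $|\eta|$.

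\emph{Low-frequency interaction, $|\xi-\eta|\leq |\eta|/2$.} On the segment from $\eta$ to $\xi$ the modulus stays comparable to $|\eta|$, and the symbol satisfies $|\nabla m(\zeta)|\lesssim|\zeta|^{-2s}$. A first-order Taylor expansion therefore gives the pointwise bound $|m(\xi)-m(\eta)|\lesssim|\xi-\eta|\,|\eta|^{-2s}$. Inserting this in the convolution, attaching the factor $|\xi-\eta|$ to $\what g$ and $|\eta|^{-2s}$ to $\what f$, then applying Young's inequality ($L^1\ast L^2\to L^2$) and Plancherel produces the bound $\|\what{(-\Delta)^{1/2}g}\|_{L^1}\|(-\Delta)^{-s}f\|_{L^2}$, which is exactly the second term of the claim.

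\emph{Comparable- or low-frequency-of-$f$ interaction, $|\xi-\eta|>|\eta|/2$.} Here $|\xi|,|\eta|\lesssim|\xi-\eta|$, and one first estimates crudely $|m(\xi)-m(\eta)|\leq|\xi|^{1-2s}+|\eta|^{1-2s}$. When $s\leq 1/2$ both exponents are nonnegative and the size comparisons yield $|m(\xi)-m(\eta)|\lesssim|\xi-\eta|^{1-2s}$; Young's inequality then gives the first term $\|\what{(-\Delta)^{(1-2s)/2}g}\|_{L^1}\|f\|_{L^2}$ and completes the argument in this range.

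The main obstacle is the case $s>1/2$ in this second regime: the exponent $1-2s$ is negative, so $|\eta|^{1-2s}$ blows up as $|\eta|\to 0$ and the crude pointwise bound is no longer usable. I would resolve this by extracting a positive power: write $|\eta|^{1-2s}|\what f(\eta)|=|\eta|\cdot|\eta|^{-2s}|\what f(\eta)|$ and use $|\eta|\leq 2|\xi-\eta|$ to shift the factor $|\eta|$ onto $\what g$, recovering exactly the combination $\|\what{(-\Delta)^{1/2}g}\|_{L^1}\|(-\Delta)^{-s}f\|_{L^2}$ already present in the low-frequency regime. The companion contribution $|\xi|^{1-2s}$ is dealt with symmetrically, either by a dyadic decomposition in $\xi$ with the troublesome low frequencies absorbed into the $(-\Delta)^{-s}f$ factor, or by a duality argument exploiting the formal skew-adjointness of $A_j$.
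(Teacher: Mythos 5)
Up to its final step your argument is sound and is in fact the proof of the cited result: the paper does not prove this lemma but quotes it from \cite{Chae_Constantin_Cordoba_Gancedo_Wu_2012}, where Proposition 2.1 concerns operators $\partial_j(-\Delta)^{(\beta-2)/2}$ with $\beta\in(1,2)$, i.e.\ symbols of \emph{positive} order $1-2s>0$ in the present notation. Your Fourier representation of the commutator, the splitting $|\xi-\eta|\le|\eta|/2$ versus $|\xi-\eta|>|\eta|/2$, the mean-value bound $|m(\xi)-m(\eta)|\lesssim|\xi-\eta|\,|\eta|^{-2s}$ in the first regime, the crude bound $\lesssim|\xi-\eta|^{1-2s}$ in the second when $s\le1/2$, and your treatment of the $|\eta|^{1-2s}$ piece when $s>1/2$ (writing $|\eta|^{1-2s}\le 2|\xi-\eta|\,|\eta|^{-2s}$) all reproduce that argument correctly.

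The remaining step is a genuine gap, and it cannot be closed as you suggest, because for $1/2<s<1$ the inequality as stated is false (even though all norms involved are finite, so the caveat in the statement does not help). Take $\what f=\mathbf 1_{B(Ne_1,1)}$ and $\what g=\mathbf 1_{B(-Ne_1,1)}$ (add the mirror bump at $+Ne_1$ to make $g$ real; the orders below do not change). Then $\widehat{gf}$ has mass of order $1$ on $B(0,1)$, where the symbol $i\xi_j|\xi|^{-2s}$ has size $|\xi|^{1-2s}\gg1$; since $\int_{|\xi|\le1}\xi_j^2|\xi|^{-4s}\,\dd\xi<\infty$ for $s<1$, one gets $\|A_j(gf)\|_{L^2}\gtrsim 1$ uniformly in $N$, while $\|g\,A_jf\|_{L^2}\lesssim \|\what g\|_{L^1}\,N^{1-2s}\to0$, so $\|[A_j,g]f\|_{L^2}\gtrsim1$. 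On the other hand $\|f\|_{L^2}\sim1$, $\|\what{(-\Delta)^{(1-2s)/2}g}\|_{L^1}\sim N^{1-2s}$, $\|(-\Delta)^{-s}f\|_{L^2}\sim N^{-2s}$ and $\|\what{(-\Delta)^{1/2}g}\|_{L^1}\sim N$, so the right-hand side is $O(N^{1-2s})\to0$. This is precisely the configuration your two proposed fixes cannot handle: the dangerous output frequencies are $|\xi|=O(1)$, created by two nearly opposite high frequencies of $f$ and $g$, so nothing can be absorbed into $\|(-\Delta)^{-s}f\|_{L^2}$ (which is $N^{-2s}$ here), and a duality/skew-adjointness argument merely exchanges the two bumps. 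What is true for $1/2<s<1$, and suffices for the way the lemma is used in Lemma~\ref{l:energyIneqEQ1} and in Section~\ref{sec:Uniqueness} (there $g=\partial_j\theta$, $f=\partial^\alpha\theta$), is the same estimate with an extra low-frequency term such as $C(s)\|f\|_{L^2}\big(\|g\|_{L^2}+\|\what g\|_{L^1}\big)$: split the output frequencies $|\xi|\le1$ and $|\xi|\ge1$ in your second regime, use $\|\widehat{gf}\|_{L^\infty}\le\|\what g\|_{L^2}\|\what f\|_{L^2}$ and the local square-integrability of $|\xi|^{1-2s}$ on the first piece, and $|\xi|^{1-2s}\le1$ on the second. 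So either prove that corrected statement (harmless in the applications, where these extra norms of $\partial_j\theta$ are already controlled by $\int(1+|\xi|)^2|\what\theta|\,\dd\xi$), or restrict the lemma to $s\le1/2$; as written, the last step of your proposal cannot be completed.
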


%%%%%%%%%%%%%%%%%%%%%%%%%%%%%%%%%%%%%%%%%%%%%%%%%%%%%
%%%%%%%%%%%%%%%%%%%%%%%%%%%%%%%%%%%%%%%%%%%%%%%%%%%%%
\subsection{Regularization of the Kernel}\label{sec:reg}

In this paragraph, we define the regularization $K_{s, \varepsilon}$ of the kernel $K_s$ that we will be using throughout the article, as well as the approximate system that will serve to construct solutions. 

\medskip

First of all, we fix a radial smooth cut-off function $\chi \in C^\infty$ such that $0 \leq \chi \leq 1$ and
\begin{equation*}
    \chi(x) = 1 \text{ for } |x| \leq \frac{1}{2}, \qquad \chi(-x) = \chi(x) \qquad \text{and} \qquad {\rm supp}(\chi) \subset B(0, 1).
\end{equation*}
For any radius $\varepsilon > 0$, we also define a scaled version of this cut-off
\begin{equation*}
    \chi_\varepsilon(x) = \chi \left( \frac{x}{\varepsilon} \right).
\end{equation*}
This allows to truncate smoothly the kernel $K_s$, and so get rid of the singularity. We define the truncated kernel $K_{s, \varepsilon}$ by 
\begin{equation*}
    K_{s, \varepsilon}(x) := \big( 1 - \chi_\varepsilon(x) \big) K_s (x),
\end{equation*}
so that $K_{s, \varepsilon}(x) = K_s(x)$ for all $|x| \geq \varepsilon$. We also have the inequality 
\begin{equation}\label{bio}
     |K_{s,\varepsilon}(x)|\leq |K_{s}(x)|=\frac{c_s}{|x|^{3-2s}}.
\end{equation}
In particular, the convergence $K_{s, \varepsilon} \longrightarrow K_s$ holds uniformly locally in $\R^2 \setminus \{ 0 \}$ as the truncation parameter goes to zero $\varepsilon \rightarrow 0^+$ (see also Lemma \ref{l:KernelConvergence}). Also note that $K_{s, \varepsilon}$ is a smooth odd function $K_{s, \varepsilon}(x) = - K_{s, \varepsilon}(-x)$. Finally, we should note that the approximate kernel $K_{s, \varepsilon}$ is divergence-free, which is the content of the following lemma.

\begin{lemma}\label{l:HRisDivFree}
    For any $t \geq 0$ such that $R(t) > 0$, we have $\D (K_{s, \varepsilon}) = 0$.
\end{lemma}

\begin{proof}
    By writing the divergence of $K_{s, \varepsilon}$, we see that
    \begin{equation*}
        \D (K_{s, \varepsilon}) = - \nabla \chi_\varepsilon \cdot K_s + (1 - \chi_\varepsilon) \D (K_s).
    \end{equation*}
    The second term in this equation is zero because $K_s$ is smooth and divergence-free on the support of $1 - \chi_\varepsilon$. The first term is also zero because the vector $K_s(x)$ is colinear to $x^\perp$ while the radial symmetry of the cut-off function $\chi$ implies that $\nabla \chi_\varepsilon(x)$ is colinear to $x$.
\end{proof}

\medskip

With the help of the kernel $K_{s, \varepsilon}$, we may define an approximate system, for which the existence of solutions is easier. More precisely, we consider the following set of PDEs
\begin{equation}\label{eq:ApproxSystem1}
    \begin{cases}
        \partial_t \theta_\varepsilon + \D \big( (v_\varepsilon + \mc H_\varepsilon) \theta_\varepsilon \big) = 0\\
         \der{}{t} z_\varepsilon(t) = v_\varepsilon\big( t, z_\varepsilon(t) \big) \\
        v_\varepsilon = K_{s, \varepsilon} \star \theta_\varepsilon \\
        \mc H_\varepsilon(t,x) = K_{s, \varepsilon} \big( x - z_\varepsilon(t) \big),
    \end{cases}
\end{equation}
which we equip with the initial datum 
\begin{equation}\label{eq:RegularizedThetaZero}
    \theta_{\varepsilon}(0) = \theta_{0, \varepsilon} := \chi_{1/\varepsilon} . (S_{j_\varepsilon} \theta_0),
\end{equation}
where $S_j$ is the Littlewood-Paley approximation operator \eqref{eq:LPSOperator} and $j_\varepsilon \geq 1/\varepsilon$. In this way, the initial datum is smooth and compactly supported.

\medskip

% The first step of the proofs consists in introducing a regularization of the kernel $K_{s}$ defined at~\eqref{def:K_s}. Let $\varepsilon>0$, we define $K_{s,\eps}$ a smooth, bounded and divergence free $\RR^2 \to \RR^2$ map satisfying $K_{s,\eps}(x) = -K_{s,\eps}(-x),$ such that
% \begin{equation}\label{rouge}
%     |x| \ge \eps \quad \Longrightarrow \quad K_{s,\eps}(x) = K_s(x),
% \end{equation}
% and such that
% \begin{equation}\label{bio}
%     |K_{s,\varepsilon}(x)|\leq |K_{s}(x)|=\frac{c_s}{|x|^{3-2s}}.
% \end{equation}
% We note that $K_{s,\varepsilon}\to K_s$ as $\varepsilon\to 0^+$ locally uniformly in $L^\infty(\RR^2\setminus\{0\})$. We can therefore identify $K_{s,0}=K_s$.
% It is possible to define the regularized vortex-wave problem which consists in the same equations~\eqref{eq:StrongSystem} except that the kernel $K_s$ is replaced by the regularised kernel $K_{s,\varepsilon}$ in~\eqref{def:v_and_H}.

%Since the regularised kernel is smooth, the existence of a global solution $(\theta_\varepsilon, z_\varepsilon)$ to the regularized vortex-wave problem follows by standard arguments provided that the initial datum of the PDE is replaced by a suitable smooth approximation $\theta_{0,\varepsilon}.$ 

% We also introduce the notations $v_\varepsilon:=K_{s,\varepsilon}\star\theta_\varepsilon$ and $H_\varepsilon:=K_{s,\varepsilon}(x-z_\varepsilon(t))$.

Since the velocity field $v_\varepsilon$ is divergence-free ($\nabla \cdot \nabla^\perp=0$ by the Stokes theorem), the Liouville theorem~\cite{Arnold_1989} states that for all measurable set $X\subseteq\RR^2:$
\begin{equation*}
    \cL^2(X)=\cL^2(\Phi^t_\varepsilon X),
\end{equation*}
where $\cL^2$ is the Lebesgue measure on $\RR^2$ and $t\geq0\mapsto
\Phi^t_\varepsilon$ is the flow map associated to the velocity field $v_\varepsilon+H_\varepsilon$. This velocity field is divergence-free and the active scalar is transported by the flow. We deduce the following conservation property for all $p\in[1,+\infty]$:
\begin{equation}\label{conservation law epsilon}
    \forall\,t>0,\qquad\|\theta_\varepsilon(t,.)\|_{L^p}=\|\theta_0\|_{L^p}.
\end{equation}
%Using embedding of $L^p$ functions into Besov spaces (see~\cite[\S 2.5]{Bahouri_Chemin_Danchin_2011}), we have $\theta\in B_{p,p}^0$ for all $p\in[2,+\infty)$.

Before examining the existence of classical solutions, we first show that the approximate system \eqref{eq:ApproxSystem1} has global (approximate) solutions.

\begin{proposition}\label{prop:global smooth approx sol}
    Consider any $\varepsilon > 0$ and the initial data $(\theta_{0, \varepsilon}, z_0)$ defined by \eqref{eq:RegularizedThetaZero}. Then the approximate system has a global smooth solution $(\theta_\varepsilon, z_\varepsilon)$.
\end{proposition}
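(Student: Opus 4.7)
The plan is to run a standard Picard fixed-point argument for the coupled PDE--ODE system \eqref{eq:ApproxSystem1}. The whole point of the $\epsilon$-regularization is that the truncated kernel $K_{s,\epsilon}$ is an element of $C^\infty(\R^2)$ with bounded derivatives of all orders: the cutoff $\chi_\epsilon$ removes the singularity at the origin, while the decay $|K_s(x)| \lesssim |x|^{-(3-2s)}$ (valid since $s < 1$, and analogously for all derivatives) handles the behavior at infinity. In particular $K_{s,\epsilon}$ lies in every $L^p$ for $p > 2/(3-2s)$ and is bounded.

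\textbf{Step 1 (Regularity of the advecting field).} For any $\theta \in L^1(\R^2)$ and any point $z \in \R^2$, Young's convolution inequality provides, for every multi-index $\alpha$,
\begin{equation*}
    \| D^\alpha v \|_{L^\infty} \le \| D^\alpha K_{s,\epsilon} \|_{L^\infty}\, \|\theta\|_{L^1}, \qquad v := K_{s,\epsilon} \star \theta,
\end{equation*}
and of course $\mc H(\cdot, x) = K_{s,\epsilon}(x - z)$ is smooth and bounded with all derivatives uniformly controlled. Consequently $v + \mc H$ is globally Lipschitz in $x$ (indeed $C^\infty$) with constants depending only on $\epsilon$ and $\|\theta\|_{L^1}$.

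\textbf{Step 2 (Local existence via contraction).} Set $E_T := C\bigl([0,T]; L^1 \cap L^\infty\bigr) \times C\bigl([0,T]; \R^2\bigr)$ and, given $(\bar\theta, \bar z) \in E_T$, construct $\bar v$ and $\bar{\mc H}$ as in \eqref{eq:ApproxSystem1}. The Cauchy--Lipschitz theorem applied to $\dot y = (\bar v + \bar{\mc H})(t,y)$ yields a $C^\infty$-diffeomorphism flow $\bar\phi_t$; we then define
\begin{equation*}
    \Phi(\bar\theta, \bar z) \;:=\; \Bigl(\, \theta_{0,\epsilon} \circ \bar\phi_t^{-1}, \quad z_0 + \int_0^t \bar v\bigl(\tau, \bar z(\tau)\bigr)\, \dd \tau \,\Bigr).
\end{equation*}
Since $K_{s,\epsilon}$ and $\nabla K_{s,\epsilon}$ are bounded, Gr\"onwall-type estimates on the flow show that $\Phi$ is a contraction on a ball of $E_T$ for $T > 0$ small enough (depending only on $\epsilon$ and the $L^1 \cap L^\infty$ norm of $\theta_{0,\epsilon}$). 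The fixed point supplies a local solution. As $\theta_{0,\epsilon}$ is $C^\infty$ with compact support by \eqref{eq:RegularizedThetaZero} and $\phi_t$ is a $C^\infty$-diffeomorphism, $\theta_\epsilon(t, \cdot) = \theta_{0,\epsilon} \circ \phi_t^{-1}$ is smooth in $x$; the transport equation then upgrades this to joint smoothness in $(t,x)$, and $z_\epsilon \in C^\infty$ by the ODE and the smoothness of $v_\epsilon$.

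\textbf{Step 3 (Globalization and main subtlety).} The conservation identity \eqref{conservation law epsilon}, coming from the divergence-free character of $v_\epsilon + \mc H_\epsilon$ and Liouville's theorem, guarantees that $\|\theta_\epsilon(t)\|_{L^1 \cap L^\infty}$ remains equal to $\|\theta_{0,\epsilon}\|_{L^1 \cap L^\infty}$ throughout the lifespan. By Step 1, the bounds on $v_\epsilon, \mc H_\epsilon$ and all their spatial derivatives are then uniform in time, so the local solution extends to $[0, +\infty)$ by a standard continuation argument. No serious obstacle appears: for fixed $\epsilon > 0$, the problem is transport by a smooth, globally Lipschitz field coupled to a non-singular ODE. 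The one point requiring care is the simultaneous treatment of $\theta_\epsilon$ and $z_\epsilon$ in the fixed-point iteration, but since both are driven by the same smooth kernel $K_{s,\epsilon}$, they close together on the chosen space $E_T$.
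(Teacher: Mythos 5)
Your proof is correct in outline, but it takes a genuinely different route from the paper. The paper does not run a Lagrangian fixed point: it introduces a second, spectral approximation (the frequency cut-off $E_N$), so that the system becomes an ODE on the Banach space $X_N$ of band-limited $L^2$ functions times $\R^2$, solved globally by Cauchy--Lipschitz together with conservation of the $L^2$ norm and a Gr\"onwall bound on $z_N$; the solution of \eqref{eq:ApproxSystem1} is then recovered by compactness as $N \to \infty$ (uniform bounds on $\partial_t \theta_N$ in $H^{-1}$, Ascoli for $z_N$, convergence of the products). Your scheme is the classical Marchioro--Pulvirenti-style iteration on the transported datum $\theta_{0,\epsilon}\circ\bar\phi_t^{-1}$ and the trajectory $z$, exploiting that $K_{s,\epsilon}$ is smooth, bounded, divergence-free and in $L^2$ (note it is \emph{not} in $L^1$ when $s \ge 1/2$, so your difference estimates must go through $\|K_{s,\epsilon}\|_{L^\infty}\|\cdot\|_{L^1}$ or $\|K_{s,\epsilon}\|_{L^2}\|\cdot\|_{L^2}$, as you implicitly do). Your approach is more elementary and yields spatial smoothness directly from the flow, whereas the paper's Galerkin-type argument only ever uses the $L^2$ energy structure and so never has to track pointwise regularity of the datum. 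The one place where your write-up is thinner than it should be is the continuation step: the contraction time in Step 2 depends not only on $\epsilon$ and $\|\theta_{0,\epsilon}\|_{L^1\cap L^\infty}$ but also on the Lipschitz constant (and support size) of the datum being restarted, so to rule out finite-time breakdown of the iteration you should add that $\|\nabla(v_\epsilon + \mc H_\epsilon)\|_{L^\infty}$ is bounded uniformly in time by $C(\epsilon)\big(\|\theta_{0,\epsilon}\|_{L^1}+1\big)$, hence $\mathrm{Lip}\,\theta_\epsilon(t)$ grows at most exponentially and stays finite on any bounded interval, which keeps the restart times bounded below; with that remark the globalization is complete.
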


\begin{proof}
    Since the approximate system \eqref{eq:ApproxSystem1} is fully non-linear PDE system, it is not immediately obvious that it has global solutions. To construct these, we resort to a second approximation: define for all $N \geq 1$ the operator $E_N$ by its Fourier transform
    \begin{equation*}
        \forall f \in L^2, \qquad \what{E_N f}(\xi) := \mathds{1}_{|\xi| \leq N} \what{f}(\xi)
    \end{equation*}
    and consider the system of equations
    \begin{equation}\label{eq:AppCauchyLipschitz}
        \begin{cases}
            \partial_t \theta_N + E_N \D \big( (v_N + \mc H_N) \theta_N \big) = 0\\
            \der{}{t} z_N(t) = v_N(t, z_N(t))\\
            v_N = K_{s, \varepsilon} \star \theta_N\\
            \mc H_N (t, x) = K_{s, \varepsilon}(x - z_N(t)).
        \end{cases}
    \end{equation}
    We associate to \eqref{eq:AppCauchyLipschitz} the initial data
    \begin{equation*}
        \big( \theta_N(0), z_0 \big) := \big( E_N \theta_{0, \varepsilon}, z_0)
    \end{equation*}
    with the perspective of using the Cauchy-Lipschitz theorem to construct a solution $(\theta_N, z_N)$. With that in mind, we define the Banach space
    \begin{equation*}
        X_N := \Big\{ (\theta, z) \in L^2 \times \R^2, \quad {\rm supp}(\what{f}) \subset B (0, N) \Big\},
    \end{equation*}
    which we endow with the norm
    \begin{equation*}
        \big\| (\theta, z) \big\|_{X_N} := \| \theta \|_{L^2} + |z|.
    \end{equation*}

    \medskip

    In order to apply the Cauchy-Lipschitz theorem, we must check that the map
    \begin{equation*}
        F : \left(
        \begin{array}{c}
             \theta  \\
             z 
        \end{array}
        \right) \longmapsto \left( 
        \begin{array}{c}
             E_N \D \big( \big( \theta \star K_{s, \varepsilon} + K_{s, \varepsilon} (x-z) \big) . \theta \big)  \\
              K_{s, \varepsilon} \star \theta (t, z)
        \end{array}
        \right)
    \end{equation*}
    is locally Lipschitz on $X_N \longrightarrow X_N$. We therefore consider two elements $(\theta_1, z_1)$ and $(\theta_2, z_2)$ of $X_N$ and study the difference $\big\| F(\theta_2, z_2) - F(\theta_1, z_1) \big\|_{X_N}$. We have
    \begin{equation*}
        \begin{split}
            \big\| F(\theta_2, z_2) - F(\theta_1, z_1) \big\|_{X_N} & \lesssim N \big\| (\theta_2 - \theta_1) \star K_{s, \varepsilon} . \theta_2 \big\|_{L^2} + N \big\| \theta_1 \star K_{s, \varepsilon} . (\theta_2 - \theta_1) \big\|_{L^2} \\
            & + N \big\| \big( K_{s, \varepsilon} (x - z_2) - K_{s, \varepsilon}(x-z_1) \big).\theta_2 \big\|_{L^2} + N \big\| K_{s, \varepsilon} (x - z_1) . (\theta_2 - \theta_1) \big\|_{L^2}\\
            & + \big| K_{s, \varepsilon} \star \theta_2 (t, z_2) - K_{s, \varepsilon} \star \theta_2 (t, z_1) \big| + \big| K_{s, \varepsilon} \star \theta_2 (t, z_1) - K_{s, \varepsilon} \star \theta_1 (t, z_1) \big| \\
            & := \sum_{k = 1}^6 M_k.
        \end{split}
    \end{equation*}
    We start by noting that the approximate kernel $K_{s, \varepsilon}$ is a $L^2$ function for any value of $\varepsilon > 0$. Therefore, we bound the first two terms $M_1$ and $M_2$ by
    \begin{equation*}
        \begin{split}
            M_1 + M_2 & \lesssim N \big\| (\theta_2 - \theta_1) \star K_{s, \varepsilon} \big\|_{L^\infty} \| \theta_2 \|_{L^2} + N \| \theta_1 \star K_{s, \varepsilon} \|_{L^\infty} \| \theta_2 - \theta_1 \|_{L^2} \\
            & \lesssim 2 N \| K_{s, \varepsilon} \|_{L^2} \| \theta_2 \|_{L^2} \| \theta_2 - \theta_1 \|_{L^2}.
        \end{split}
    \end{equation*}
    For the next two terms, we note that the kernel $K_{s, \varepsilon}$ also is a bounded Lipschitz function. This implies the following bounds for $M_3$ and $M_4$,
    \begin{equation*}
        M_3 + M_4 \lesssim N \| \nabla K_{s, \varepsilon} \|_{L^\infty} |z_2 - z_1| \| \theta_2 \|_{L^2} + N \| K_{s, \varepsilon} \|_{L^\infty} \| \theta_2 - \theta_1 \|_{L^2}.
    \end{equation*}
    Finally, for the last two terms, $M_5$ and $M_6$, we also use the fact that $K_{s, \varepsilon} \in W^{1, \infty}$ to write
    \begin{equation*}
        \begin{split}
            M_5 + M_6 & \lesssim \| \nabla K_{s, \varepsilon} \star \theta_2 \|_{L^\infty} |z_2 - z_1| + \| K_{s, \varepsilon} \star (\theta_2 - \theta_1) \|_{L^\infty} \\
            & \lesssim \| \nabla K_{s, \varepsilon} \|_{L^2} \| \theta_2 \|_{L^2} |z_2 - z_1| + \| K_{s, \varepsilon} \|_{L^2} \| \theta_2 - \theta_1 \|_{L^2}.
        \end{split}
    \end{equation*}
    Putting together these three estimates, we see that
    \begin{equation*}
        \big\| F(\theta_2, z_2) - F(\theta_1, z_1) \big\|_{X_N} \lesssim C(s, \varepsilon, N) \| \theta_2 \|_{L^2} \big( \| \theta_2 - \theta_1 \|_{L^2} + |z_2 - z_1| \big),
    \end{equation*}
    so that the map is indeed $X_N \longrightarrow X_N$ Lipschitz. Accordingly, the Cauchy-Lipschitz theorem provides the existence of a (unique) maximal solution
    \begin{equation*}
        (\theta_N, z_N) : (T_N^-, T_N^+) \longrightarrow X_N.
    \end{equation*}

    \medskip
    
    We now prove that this solution is global, or, since we only are interested in non-negative times, that $T_N^+ = + \infty$. For this, we remark that the conservation of $L^2$ norms also applies to solutions of \eqref{eq:AppCauchyLipschitz}, since the approximate kernel $K_{s, \varepsilon}$ is divergence-free, according to Lemma \ref{l:HRisDivFree}. Consequently, the vector field $v_N + \mc H_N$ is divergence-free and
    \begin{equation*}
        \frac{1}{2} \der{}{t} \int |\theta_N(t)|^2 \dd x = 0,
    \end{equation*}
    and this implies that there is conservation of  $L^2$ norms $\| \theta_N (t) \|_{L^2} = \| \theta_N(0) \|_{L^2} \leq \| \theta_0 \|_{L^2}$. Furthermore, the approximate velocity $v_N$ also satisfies the bound
    \begin{equation*}
        \| v_N \|_{W^{1, \infty}} \lesssim \| K_{s, \varepsilon} \|_{H^1} \| \theta_N \|_{L^2}.
    \end{equation*}
    Using this in the differential equation satisfied by the particle trajectory, we see that
    \begin{equation*}
        \left| \der{}{t} z_N \right| \lesssim \| K_{s, \varepsilon} \|_{H^1} \| \theta_N \|_{L^2} |z_N| \leq C(s, \varepsilon) \| \theta_0 \|_{L^2} |z_N|.
    \end{equation*}
    Grönwall's inequality then gives the global estimate
    \begin{equation}\label{eq:UniformBoundeEpsilon}
        \big| z_N(t) \big| \leq \big( \| \theta_0 \|_{L^2} + |z_0| \big) e^{C(s, \varepsilon) t \| \theta_0 \|_{L^2}}.
    \end{equation}
    Note that the upper bound in this inequality is independent on $N$. In particular, we deduce that the approximate solutions $(\theta_N, z_N)$ remain bounded in the space $X_N$, so that they cannot blow-up in finite time and $T_N^+ = + \infty$.

    \medskip

    Now, in order to construct the solution $(\theta_\varepsilon, z_\varepsilon)$, we prepare to take the limit $N \rightarrow + \infty$. From estimate \eqref{eq:UniformBoundeEpsilon}, we already know that the sequence $(\theta_N, z_N)_N$ is uniformly bounded in $L^\infty(L^2) \times L^\infty_{\rm loc}$ with respect to the approximation parameter $N \geq 1$. Consequently, since the kernel $K_{s, \varepsilon}$ lies in $H^\ell$ for any $\ell \in \R$, we see that, for any fixed time $T > 0$,
    \begin{equation}\label{ordre et beaute}
        \begin{split}
            \| \partial_t \theta_N \|_{L^\infty_T (H^{-1})} & \leq \| v_N + \mc H_N \|_{L^\infty} \| \theta_N \|_{L^2} \\
            & \leq \big( \|K_{s, \varepsilon} \|_{L^\infty} + \|K_{s, \varepsilon} \|_{L^2} \| \theta_N \|_{L^2} \big) \| \theta_N \|_{L^2} \\
            & \leq C(s, \varepsilon) \big( \| \theta_0 \|_{L^2}^2 + \| \theta_0 \|_{L^2} \big).
        \end{split}
    \end{equation}
    The sequence $(\theta_N)$ is then bounded in the space $W^{1, \infty}_{\rm loc}(H^{-1})$. We deduce the strong convergence of the sequence (up to an extraction),
    \begin{equation*}
        \theta_N \tend{N}{+ \infty} \theta_\varepsilon \qquad \text{in } C^0_T (H^{- 1 - \delta}_{\rm loc})
    \end{equation*}
    for any $\delta > 0$. In particular, the regularity of the approximate kernel $K_{s, \varepsilon} \in H^\ell$ yields the convergence
    \begin{equation}\label{eq:AAvelocityConvergence}
        v_N \tend{N}{+ \infty} v_\varepsilon \qquad \text{in } C^0_T (H^{\ell'}_{\rm loc})
    \end{equation}
    for any $\ell' \in \R$, from which we deduce convergence of the product
    \begin{equation*}
        \theta_N v_N \tend{N}{+ \infty} \theta_\varepsilon v_\varepsilon \qquad \text{in } C^0_T(H^{-1 - \delta}_{\rm loc}).
    \end{equation*}
    In the same way, by using the differential equation $\der{}{t}z_N = v_N(t, z_N)$, we see that the approximate particle trajectories are bounded in the space $W^{1, \infty}(\R_+ ; \R^2)$, and Ascoli's theorem then shows that the sequence $(z_N)$ is uniformly convergent (up to an extraction). Associating this with the convergence \eqref{eq:AAvelocityConvergence} provides, for any $t \in [0, T),$
    \begin{equation*}
        \begin{split}
            \big| v_N(t, z_N) - v_\varepsilon (t, z_\varepsilon) \big| & \leq \big| v_N (t, z_N) - v_N (t, z_\varepsilon) \big| + \big| v_N (t, z_\varepsilon) + v_\varepsilon (t, z_\varepsilon) \big| \\
            & \leq \| z_N - z_\varepsilon \|_\infty \| v_N \|_{H^3} + \| v_N - v_\varepsilon \|_{H^2} \\
            & \tend{N}{+ \infty} 0 \; .
        \end{split}
    \end{equation*}
    The inequality above proves that the functions $v_N (t, z_N)$ converge uniformly to $v_\varepsilon (t, z_\varepsilon)$. In exactly the same way, we see that the functions $\mc H_N (t, x) = K_{s, \varepsilon}(x - z_N(t))$ converge to $\mc H_{\varepsilon} (t, x) = K_{s, \varepsilon} (x - z_\varepsilon(t))$. 

    \medskip
    
    Overall, we see that the functions $(\theta_\varepsilon, v_\varepsilon, \mc H_\varepsilon)$ solve the approximate system \eqref{eq:ApproxSystem1} with initial datum $\theta_{0, \varepsilon}$. Note that because $v_\varepsilon = K_{s, \varepsilon} * \theta_\varepsilon$ is a smooth function, the $C^\infty$ regularity of the initial datum is propagated by the flow.
    
\end{proof}

%%%%%%%%%%%%%%%%%%%%%%%%%%%%%%%%%%%%%%%%%%%%%%%%%%%%%
%%%%%%%%%%%%%%%%%%%%%%%%%%%%%%%%%%%%%%%%%%%%%%%%%%%%%
%%%%%%%%%%%%%%%%%%%%%%%%%%%%%%%%%%%%%%%%%%%%%%%%%%%%%

\section{Local Existence of Classical Solutions}\label{sec:existence_strong}

The section is devoted to the proof of existence of solutions as stated in Theorem~\ref{thrm:strong_solutions}-$(i)$. More precisely, we prove the following.
\begin{theorem}\label{t:StrongExistence}
   Consider $k \geq 4$ and a set of vortex-wave initial data $(\theta_0, z_0) \in H^k(\R^2) \times \R^2$ such that $\theta_0$ is constant (equal to, say a fixed $\beta \in \R$) on a ball $B(z_0, R_0)$ centered at the point vortex and of positive radius $R_0 > 0$.
   
   Then there exists a time $T > 0$ such that the vortex-wave system \eqref{eq:StrongSystem} has a solution $(\theta, z) \in L^\infty([0, T); H^k(\R^2)) \times C^1([0, T); \R^2)$. In addition, the time $T$ may be chosen such that
\begin{equation*}
    T \geq \frac{C}{E_0^{6+3k-6s} + E_0^{1/2}},
\end{equation*}
where $E_0 = \| \theta_0 \|_{H^k}^2 + \frac{1}{R_0}$.

\end{theorem}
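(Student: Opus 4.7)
The strategy is to use the smooth global approximate solutions $(\theta_\epsilon, z_\epsilon)$ provided by Proposition~\ref{prop:global smooth approx sol} and to derive uniform $H^k$ a priori bounds on a time interval $[0,T]$ that is independent of $\epsilon$, then pass to the limit. The output of the approximate system is already smooth, so every formal manipulation below is fully justified at fixed $\epsilon > 0$.

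A crucial structural observation is that the plateau is preserved by the regularized flow. Since the truncated kernel satisfies $K_{s,\epsilon}(y) = 0$ for $|y| \leq \epsilon/2$, the singular velocity $\mc H_\epsilon$ vanishes in a neighborhood of $z_\epsilon(t)$; hence the ODE $\dot z_\epsilon = v_\epsilon(t, z_\epsilon)$ and the characteristic equation $\dot x = (v_\epsilon + \mc H_\epsilon)(t,x)$ agree along the vortex trajectory, so that $z_\epsilon(t)=\Phi_\epsilon(t,z_0)$ where $\Phi_\epsilon$ is the Lagrangian flow of $v_\epsilon + \mc H_\epsilon$. Consequently $\theta_\epsilon(t)$ remains constant on the transported plateau $\Phi_\epsilon(t, B(z_0, R_0))$ which always contains $z_\epsilon(t)$ in its interior, and the radius
\begin{equation*}
    R_\epsilon(t) := \sup \big\{ r > 0 \; ; \; \nabla \theta_\epsilon(t) \equiv 0 \text{ on } B(z_\epsilon(t), r) \big\}
\end{equation*}
is well-defined and strictly positive.

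The heart of the argument is a coupled a priori estimate on $\|\theta_\epsilon\|_{H^k}$ and $R_\epsilon$. Splitting $\der{}{t}\|\theta_\epsilon\|_{H^k}^2$ into the contributions of $v_\epsilon \cdot \nabla \theta_\epsilon$ and $\mc H_\epsilon \cdot \nabla \theta_\epsilon$, I would control the first by standard SQG commutator estimates (Lemma~\ref{p:CCCGW_Commutator}) combined with the Sobolev embedding $H^k \hookrightarrow W^{1,\infty}$ valid for $k \geq 4$ in $\RR^2$, obtaining a bound of order $\|\theta_\epsilon\|_{H^k}^3$. For the second, the plateau is decisive: $\nabla \theta_\epsilon \equiv 0$ on $B(z_\epsilon, R_\epsilon)$ localizes the integrand away from the singularity of $\mc H_\epsilon$, the top-order piece vanishes upon integration by parts since $\nabla \cdot \mc H_\epsilon = 0$ off the vortex, and the Kato--Ponce-type remainder is controlled via the pointwise bounds $|\nabla^\ell \mc H_\epsilon(x)| \lesssim |x - z_\epsilon|^{-(3 - 2s + \ell)}$ valid on the support of $\nabla \theta_\epsilon$. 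In parallel, tracking $R_\epsilon$ along the flow yields $|\dot R_\epsilon| \lesssim R_\epsilon \bigl( \|\theta_\epsilon\|_{H^k} + R_\epsilon^{-(4-2s)} \bigr)$, by using $\|\nabla v_\epsilon\|_{L^\infty} \lesssim \|\theta_\epsilon\|_{H^k}$ together with the same pointwise bound on $\nabla \mc H_\epsilon$ restricted to $\{|x-z_\epsilon| \geq R_\epsilon\}$.

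Combining both inequalities in the auxiliary energy $E_\epsilon(t) := \|\theta_\epsilon(t)\|_{H^k}^2 + 1/R_\epsilon(t)$ and collecting exponents by Young's inequality produces a differential inequality of the form $\dot E_\epsilon \lesssim E_\epsilon^{3/2} + E_\epsilon^{1 + \alpha}$ with $\alpha = 6+3k-6s$. Integration then yields the uniform bound $\sup_{\epsilon} \sup_{[0,T]} E_\epsilon < +\infty$ for $T \gtrsim 1/(E(0)^{1/2} + E(0)^{6+3k-6s})$, with a lower bound on $R_\epsilon$ away from zero throughout this time interval. Once these uniform estimates are in hand, the passage to the limit is routine: $(\theta_\epsilon)$ is uniformly bounded in $L^\infty_T H^k$ with $(\partial_t \theta_\epsilon)$ bounded in $L^\infty_T H^{k-1}$, so Aubin--Lions compactness provides strong convergence in $L^\infty_T H^{k-\delta}_{\mathrm{loc}}$ along a subsequence, while $(z_\epsilon)$ is uniformly Lipschitz and Ascoli gives uniform convergence on $[0,T]$; these sufficed to pass to the limit in each nonlinear term, including $\mc H_\epsilon \cdot \nabla \theta_\epsilon$, thanks to the uniform lower bound on $R_\epsilon$. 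The main technical difficulty of the whole scheme is precisely this nonlinear coupling between $\|\theta_\epsilon\|_{H^k}$ and $R_\epsilon(t)$, which forces the careful design of the combined energy $E_\epsilon$ and the precise bookkeeping of the exponents in the commutator and Sobolev estimates.
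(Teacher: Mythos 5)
Your proposal is correct and follows essentially the same route as the paper: regularize with $K_{s,\epsilon}$ and invoke Proposition~\ref{prop:global smooth approx sol}, run the $H^k$ energy estimate with the same splitting (commutator bound via Lemma~\ref{p:CCCGW_Commutator} for the $v$-term, plateau localization for the $\mc H$-term), couple it with the evolution of the plateau radius into the energy $E = \|\theta\|_{H^k}^2 + 1/R$, close a Gronwall-type inequality with the stated exponents, and pass to the limit by compactness. The only (harmless) deviations are that the paper exploits the exact orthogonality $\big(x-z(t)\big)\cdot H(t,x)=0$, so the singular field contributes nothing to the radius evolution rather than the lossier $R^{-(4-2s)}$ term you allow, and that the limit $K_{s,\epsilon}\star\theta_\epsilon \to K_s\star\theta$ is not quite ``routine'' when $s\le 1/2$ (the kernel is not locally integrable), which is why the paper devotes Lemmas~\ref{l:KernelConvergence} and~\ref{l:ConvVelocity} to a Besov/Littlewood--Paley argument for which your uniform bounds are exactly the required input.
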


\subsection{\textsl{A Priori} Estimates}\label{sec:apriori}

This first paragraph is devoted to the establishment of \textsl{a priori} estimates for our system: assuming that the solution is regular, we will derive high regularity estimates on some time interval. These are summarized in the following proposition.

\begin{proposition}\label{p:APriori}
    Consider an initial datum $(\theta_0, z_0)$ and a solution $(\theta, z)$ that are very regular, both in terms of smoothness and integrability.  Furthermore, assume that there is a $R_0 > 0$ such that $\theta_0$ is constant, say equal to a fixed $\beta \in \R$, on the ball $B(z_0, R_0)$. 
    
    For every time, we set $R(t) \geq 0$ to be the largest radius $r \geq 0$ such that the function $\theta(t)$ is constant on the ball $B \big( z(t), R(t) \big)$, see \eqref{def:R(t)}. Then, for every integer $k \geq 4$, there exists a time $T > 0$ such that the bound
    \begin{equation*}
        E(t) := \big\| \theta(t) \big\|_{H^k}^2 + \frac{1}{R(t)} \leq C \left( \| \theta_0 \|_{H^k}^2 + \frac{1}{R_0} \right)
    \end{equation*}
    holds for every $0 \leq t \leq T$. Moreover, the time $T$ satisfies the inequality
    \begin{equation*}
        T \geq \frac{C}{E_0^{6+3k-6s} + E_0^{1/2}}.
    \end{equation*}
\end{proposition}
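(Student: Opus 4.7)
The plan is to derive $H^k$ energy estimates for $\theta$, coupled to a lower bound on the evolution of the plateau radius $R(t)$, and then to integrate the resulting differential inequalities. I would start by applying $\partial^\alpha$ for $|\alpha|\leq k$ to $\partial_t\theta+(v+H)\cdot\nabla\theta=0$ and pairing with $\partial^\alpha\theta$ in $L^2$. Both $v$ and $H$ are of the form $\nabla^\perp$ of a scalar, so $v+H$ is formally divergence-free, and the plateau hypothesis forces $\partial^\alpha\theta$ to vanish on a neighborhood of $z(t)$, which justifies integration by parts across the singularity of $H$. The transport contribution therefore drops out and one is reduced to studying the commutator terms
\[
\frac{1}{2}\frac{d}{dt}\|\theta\|_{H^k}^2 = -\sum_{|\alpha|\leq k}\int\partial^\alpha\theta\cdot\Big([\partial^\alpha,v\cdot\nabla]\theta + [\partial^\alpha,H\cdot\nabla]\theta\Big)\,dx.
\]

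For the $v$-commutator I would follow the standard SQG strategy: combining $v=K_s\star\theta$ with Kato-Ponce type inequalities and Lemma~\ref{p:CCCGW_Commutator} (as in \cite{Chae_Constantin_Cordoba_Gancedo_Wu_2012}) to produce a bound of order $\|\theta\|_{H^k}^3$, which ultimately yields the $E^{1/2}$ term in $T$. The $H$-commutator is the novelty. Expanding
\[
[\partial^\alpha,H\cdot\nabla]\theta = \sum_{0<\beta\leq\alpha}\binom{\alpha}{\beta}\partial^\beta H\cdot\nabla\partial^{\alpha-\beta}\theta,
\]
each $\partial^\beta H$ blows up like $|x-z(t)|^{-(3-2s+|\beta|)}$ near $z(t)$; but the plateau hypothesis guarantees that $\nabla\partial^{\alpha-\beta}\theta$ is supported in $\{|x-z(t)|\geq R(t)\}$. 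Hence $|\partial^\beta H|\lesssim R(t)^{-(3-2s+|\beta|)}$ on the effective support, and a Cauchy-Schwarz estimate produces a bound of the form $R(t)^{-(3-2s+|\beta|)}\|\theta\|_{H^k}^2$, with the dominant contribution coming from $|\beta|=k$.

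To close the system I need a lower bound on $R(t)$. Since $z(t)$ moves with velocity $v(t,z(t))$ only, I would analyze the flow of $v+H$ in the frame centered at $z(t)$: with $y=x-z(t)$, a fluid particle obeys $\dot y=\big(v(t,x)-v(t,z(t))\big)+K_s(y)$. The crucial observation is that $K_s(y)$ is everywhere tangent to circles centered at $0$, so $y\cdot K_s(y)\equiv 0$ and the point-vortex kernel contributes nothing to the radial dynamics; only the regular part does, and it is controlled by $\|\nabla v\|_{L^\infty}$. Grönwall then gives $R(t)\geq R_0\exp(-\int_0^t\|\nabla v\|_{L^\infty}d\tau)$, equivalently $\frac{d}{dt}(1/R)\lesssim\|\theta\|_{H^k}/R$, using $\|\nabla v\|_{L^\infty}\lesssim\|\theta\|_{H^k}$ for $k\geq 4$ by Sobolev embedding combined with Proposition~\ref{p:FourierMultiplierKS}. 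Setting $E(t):=\|\theta(t)\|_{H^k}^2+1/R(t)$, all the estimates combine into an inequality of the form $\dot E\leq C\,E^{3/2}+C\,E^p$ for an explicit exponent $p$ depending on $(k,s)$, and a standard comparison argument yields both the pointwise bound $\|\theta(t)\|_{H^k}^2\leq C(\|\theta_0\|_{H^k}^2+1/R(t))$ and the quantitative existence time $T\gtrsim 1/(E(0)^{p-1}+E(0)^{1/2})$.

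The hardest step will be the quantitative use of the plateau hypothesis in the $H$-commutator: one has to convert the non-integrable singularity of $\partial^\beta H$ at $z(t)$ into a controlled negative power of $R(t)$, while simultaneously ensuring that this power remains compatible with the exponential rate at which $R(t)$ can shrink. The balance between the highest power of $1/R(t)$ appearing in the energy estimate and the growth of $\|\theta\|_{H^k}$ through $\|\nabla v\|_{L^\infty}$ is what ultimately determines the exponent $p$ in the existence time.
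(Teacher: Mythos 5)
Your strategy is the same as the paper's: an $H^k$ energy estimate in which the transport part drops by (approximate) divergence-freeness, the observation that $(x-z(t))\cdot K_s(x-z(t))=0$ so that only $\|\nabla v\|_{L^\infty}$ can shrink the plateau (this is exactly Lemma~\ref{lem:remain_constant_near_PV}), the commutator bound of Lemma~\ref{p:CCCGW_Commutator} for the term where all $k$ derivatives fall on the velocity, and closure through $E(t)=\|\theta(t)\|_{H^k}^2+1/R(t)$. One caveat on the $v$-part: plain Kato--Ponce is not enough when $s<1/2$, since $\|v\|_{H^k}\sim\|\theta\|_{H^{k+1-2s}}$ loses derivatives; the term $\partial^\alpha v\cdot\nabla\theta$ is only harmless after the skew-symmetrization $\int\partial^\alpha\theta\,\partial_j\theta\,A_j\partial^\alpha\theta=\tfrac12\int\partial^\alpha\theta\,[\partial_j\theta,A_j]\,\partial^\alpha\theta$ with $A_j=\partial_j(-\Delta)^{-s}$, to which Lemma~\ref{p:CCCGW_Commutator} is then applied. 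You cite the right lemma, so this is a matter of making the rearrangement explicit rather than a missing idea.

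The genuine gap is quantitative and sits in your $H$-commutator estimate. Taking $\partial^\beta H$ in $L^\infty$ of the effective support, $\|\partial^\beta H\|_{L^\infty(\{|x-z|\geq R\})}\lesssim R^{-(3-2s+|\beta|)}$, and Cauchy--Schwarz on the two $\theta$ factors gives, at the dominant order $|\beta|=k$, the power $R^{-(3+k-2s)}$. Feeding this into Young's inequality and the ODE comparison yields $\dot E\lesssim E^{3/2}+E^{9+3k-6s}$, hence only $T\gtrsim (E^{8+3k-6s}+E^{1/2})^{-1}$, which does not imply the stated bound $T\geq C(E^{6+3k-6s}+E^{1/2})^{-1}$ for large $E$. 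The paper (Lemma~\ref{l:energyIneqEQ2}) gains one power of $R$ precisely on these high-kernel-derivative terms: after truncating $H$ at scale $R(t)$ (which also makes the $\gamma=\alpha$ term vanish cleanly, the cut-off being radial), the terms with $|\alpha-\gamma|\geq 3$ derivatives on the kernel are estimated by putting $\partial^{\alpha-\gamma}\widetilde H_R$ in $L^2$ --- finite since the truncated kernel lives in $\{|x-z|\geq R/2\}$ and decays, with norm $\sim R^{-(2-2s+|\alpha-\gamma|)}$ --- while $\nabla\partial^\gamma\theta$ goes in $L^\infty$ via Sobolev (possible because $|\gamma|\leq k-3$ and $k\geq4$). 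This gives the worst power $R^{-(2+k-2s)}$ and hence the exponent $6+3k-6s$ in the existence time. With that single modification your argument delivers the proposition as stated; as written, it proves the a priori bound but a strictly weaker lower bound on $T$.
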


Adopt the notation of Proposition \ref{p:APriori}. The proof is based on an energy method: consider a multi-index $\alpha \in \NN^2$ whose length is $k := |\alpha| = \alpha_1 + \alpha_2$. By integrating the transport equation against $\partial^{2\alpha} \theta$, we obtain an energy estimate involving the $\dot{H}^k$ norm of $\theta$, namely
\begin{equation}\label{eq:energyEstimate}
\begin{split}
    \frac{1}{2} \frac{\rm d}{ \dt} \int |\partial^\alpha \theta|^2 \dx & = - \int \partial^\alpha \theta . \partial^\alpha \D (\theta . \nabla^\perp (- \Delta)^{-s} \theta) \dx - \int \partial^\alpha \theta . \partial^\alpha \D (\theta H) \\
    & = J_1 + J_2.
\end{split}
\end{equation}

\subsubsection{Estimates for the First Integral}

We will first start by focusing on the integral $J_1$, involving a trilinear product of derivatives of $\theta$, and turn afterwards our attention to $J_2$. This is the purpose of the following lemma.

\begin{lemma}\label{l:energyIneqEQ1}
    With the notation defined above, the inequality
    \begin{equation}\label{eq:energyIneqEQ1}
        \big| J_1(t) \big| \leq C(s, k) \big\| \theta(t) \big\|_{H^k}^3
    \end{equation}
    holds for all times $t \geq 0$.
\end{lemma}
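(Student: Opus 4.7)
The plan is to exploit the classical energy method, leveraging the divergence-free structure of $v$ to rewrite $J_1$ as a commutator term, and then distinguish between a ``top-order'' contribution, which requires the CCCGW-type commutator estimate from Lemma~\ref{p:CCCGW_Commutator}, and ``low-order'' contributions, which are handled by standard Hölder inequalities together with the Sobolev embedding $H^{1+\delta}(\mathbb R^2) \hookrightarrow L^\infty$.

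First I would use that $v = -\nabla^\perp(-\Delta)^{-s}\theta$ is divergence-free to write $\partial^\alpha \D(\theta v) = \partial^\alpha (v \cdot \nabla \theta) = v \cdot \nabla \partial^\alpha \theta + [\partial^\alpha, v \cdot \nabla]\theta$, and observe that integration by parts together with $\D v = 0$ kills the first term:
\begin{equation*}
    J_1 = -\int \partial^\alpha \theta \cdot [\partial^\alpha, v \cdot \nabla]\theta \dx = -\sum_{0 < \beta \leq \alpha} \binom{\alpha}{\beta} \int \partial^\alpha \theta \cdot \partial^\beta v \cdot \nabla \partial^{\alpha - \beta}\theta \dx .
\end{equation*}

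Next, for the \emph{intermediate} terms $0 < |\beta| < k$, both $\partial^\beta v$ and $\nabla \partial^{\alpha-\beta}\theta$ are of order strictly less than $k$. Using Hölder with an appropriate splitting of the exponents (Gagliardo-Nirenberg interpolation in two dimensions) together with the fact that $v = K_s \star \theta$ gains $2s-1$ derivatives on $\theta$ (by Proposition \ref{p:FourierMultiplierKS}), and the Sobolev embedding $H^{1+\delta} \hookrightarrow L^\infty$ valid on $\mathbb R^2$ since $k \geq 4$, each such term is bounded by $\|\theta\|_{H^k}^3$ up to a constant depending on $k$ and $s$.

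The main obstacle is the \emph{top-order} term $|\beta| = k$, which reads $\int \partial^\alpha\theta \cdot \partial^\alpha v \cdot \nabla \theta \dx$. This term is problematic in the supercritical regime $s<1/2$ because the naive bound $\|\partial^\alpha v\|_{L^2} = \|\theta\|_{\dot H^{k+1-2s}}$ loses derivatives. To work around this, I would write $\partial^\alpha v = -\nabla^\perp (-\Delta)^{-s}\partial^\alpha \theta = (A_2, -A_1)\partial^\alpha \theta$ with $A_j = \partial_j (-\Delta)^{-s}$, and symmetrize: since $A_j$ is skew-adjoint, a direct computation yields
\begin{equation*}
    2\int \partial^\alpha\theta \cdot (\partial_k \theta) \, A_j \partial^\alpha \theta \dx = - \int \partial^\alpha\theta \cdot [A_j, \partial_k \theta]\partial^\alpha\theta \dx .
\end{equation*}
Then I would apply Lemma~\ref{p:CCCGW_Commutator} with $g = \partial_k\theta$ and $f = \partial^\alpha\theta$. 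The Fourier $L^1$ norms on $(-\Delta)^{(1-2s)/2}\partial_k\theta$ and $(-\Delta)^{1/2}\partial_k\theta$ are each controlled by $\|\theta\|_{H^{3-2s+\delta}} + \|\theta\|_{H^{3+\delta}} \lesssim \|\theta\|_{H^k}$ (using Cauchy-Schwarz with a weight and $k \geq 4$). The factor $\|(-\Delta)^{-s}\partial^\alpha\theta\|_{L^2} = \|\partial^\alpha(-\Delta)^{-s}\theta\|_{L^2} \lesssim \|\theta\|_{H^k}$ since $s > 0$. Combining these contributions then produces $\|{[A_j,\partial_k\theta]}\partial^\alpha\theta\|_{L^2} \lesssim \|\theta\|_{H^k}^2$, and Cauchy-Schwarz with the remaining $\partial^\alpha\theta$ factor yields the desired bound $\|\theta\|_{H^k}^3$, completing the estimate uniformly in $s \in (0,1)$.
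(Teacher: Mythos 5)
Your proposal is correct and follows essentially the same route as the paper: Leibniz expansion of the nonlinear term, cancellation of the highest-order transport contribution via $\D v=0$, Hölder plus the Sobolev embedding $H^{1+\delta}(\R^2)\subset L^\infty$ for the intermediate terms, and symmetrization with the skew-adjoint operators $A_j=\partial_j(-\Delta)^{-s}$ combined with Lemma~\ref{p:CCCGW_Commutator} (with the Fourier $L^1$ norms and the factor $\|(-\Delta)^{-s}\partial^\alpha\theta\|_{L^2}$ controlled exactly as in the paper, using $k\geq 4>2s$) for the top-order term. The only cosmetic difference is that you distribute derivatives over $v\cdot\nabla\theta$ while the paper distributes them over $\theta\,v$ inside the divergence, which is an equivalent bookkeeping.
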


\begin{proof}
We distribute the derivatives in the product $\theta . \nabla^\perp (- \Delta)^{-s} \theta$ by means of the binomial formula:
\begin{equation*}
\int \partial^\alpha \theta . \partial^\alpha \D (\theta . \nabla^\perp (- \Delta)^{-s} \theta) \dx = \sum_{\gamma \leq \alpha} \left(
\begin{array}{c}
     \alpha  \\
     \gamma 
\end{array}
\right) \int \partial^\alpha \theta . \D \big( \partial^\gamma \theta . \partial^{\alpha - \gamma} \nabla^\perp (- \Delta)^{-s} \theta \big) \dx,
\end{equation*}
where the binomial coefficient is defined by
\begin{equation*}
\left(
\begin{array}{c}
     \alpha  \\
     \gamma 
\end{array}
\right) := \frac{\gamma !}{\alpha ! (\alpha - \gamma)!} = \frac{\gamma_1 ! \gamma_2 !}{\alpha_1 ! \alpha_2 ! (\alpha_1 - \gamma_1) ! (\alpha_2 - \gamma_2) !}.
\end{equation*}
We distinguish between four cases in the sum. Firstly, all derivatives could be of order lower than $k$, so that we may easily bound the integral by using only $\| \theta \|_{H^k}$. More precisely, assume that $2 \leq |\gamma| \leq k - 1$, so that
\begin{equation*}
\begin{split}
\left| \int \partial^\alpha \theta . \D \big( \partial^\gamma \theta . \partial^{\alpha - \gamma} \nabla^\perp (- \Delta)^{-s} \theta \big) \dx \right| & \leq \| \partial^\alpha \theta \|_{L^2} \| \partial^{\alpha - \gamma} \nabla^\perp (- \Delta)^{-s} \theta \|_{L^\infty} \| \nabla \partial^\gamma \theta \|_{L^2} \\
& \leq \| \theta \|_{H^k} \| \partial^{\alpha - \gamma} \nabla^\perp (- \Delta)^{-s} \theta \|_{L^\infty} \| \theta \|_{H^{k}}
\end{split}
\end{equation*}
In order to bound the $L^\infty$ norm in the previous inequality, we make use of the Sobolev embedding $H^{1 + \varepsilon} \subset L^\infty$, which holds for any $\varepsilon > 0$. Because the exponent $s > 0$ is positive, the operator $\nabla^\perp (- \Delta)^{-s}$ is of order smaller than $1$,  so that
\begin{equation*}
\| \partial^{\alpha - \gamma} \nabla^\perp (- \Delta)^{-s} \theta \|_{L^\infty} \leq \| \partial^{\alpha - \gamma} \nabla^\perp (- \Delta)^{-s} \theta \|_{H^{1 + \varepsilon}} \leq \| \theta \|_{H^{\sigma}},
\end{equation*}
where the index $\sigma = (k-2) + (1 - s) + (1 + \varepsilon)$ is smaller than $\sigma \leq k$ if $\varepsilon$ is chosen small enough. Therefore we have the bound
\begin{equation}\label{eq:thetaIntegral1}
\left| \int \partial^\alpha \theta . \D \big( \partial^\gamma \theta . \partial^{\alpha - \gamma} \nabla^\perp (- \Delta)^{-s} \theta \big) \dx \right| \leq \| \theta \|^3_{H^k} \leq \| \theta \|_{H^k}^3.
\end{equation}

Secondly, when we have $|\gamma| = 1$, similar computations  lead to
\begin{equation}\label{eq:thetaIntegral2}
\begin{split}
\left| \int \partial^\alpha \theta . \D \big( \partial^\gamma \theta . \partial^{\alpha - \gamma} \nabla^\perp (- \Delta)^{-s} \theta \big) \dx \right| & \leq \| \partial^\alpha \theta \|_{L^2} \| \partial^\gamma \nabla \theta \|_{L^\infty} \| \partial^{\alpha - \gamma} \nabla^{\perp} (- \Delta)^{-s} \theta \|_{L^2} \\
& \leq \| \theta \|_{H^k} \| \partial^\gamma \nabla \theta \|_{H^{1+\varepsilon}} \| \theta \|_{H^{k-s/2}} \leq \| \theta \|_{H^k}^3
\end{split}
\end{equation}
provided that the exponent $k$ be greater than $k \geq 4 > 3 + \varepsilon$.

The third case of interest is when $\gamma = \alpha$, so that the integral cancels because the velocity field $v = \nabla^\perp (- \Delta)^{-s} \theta$ is divergence free:
\begin{equation}\label{eq:thetaIntegral3}
\int \partial^\alpha \theta . \D \big( \partial^\alpha \theta .  \nabla^\perp (- \Delta)^{-s} \theta \big) \dx = 0.
\end{equation}

Finally, we are left with the case where $\gamma = 0$, so the full weight of the derivatives bears on the velocity field $v = \nabla^\perp (- \Delta)^{-s} \theta$. In order to circumvent the loss of derivatives induced by the operator $A = \nabla^\perp (- \Delta)^{-s}$, we take advantage of the commutator structure in the equation: by noting $A = (A_1, A_2) = \big(- \partial_2 (- \Delta)^{-s}, \partial_1 (- \Delta)^{-s} \big)$, we define two skew-symmetric operators $A_1$ and $A_2$ so that
\begin{equation*}
\begin{split}
\int \partial^\alpha \theta . \D \big( \theta . \partial^{\alpha} \nabla^\perp (- \Delta)^{-s} \theta \big) \dx & = \int \partial^\alpha \theta . \partial_j \theta . \partial^\alpha A_j \theta \dx \\
& = - \int A_j (\partial^\alpha \theta . \partial_j \theta) . \partial^\alpha \theta \dx \\
& = \frac{1}{2} \int \partial^\alpha \theta . \big[ \partial_j \theta, A_j \big] \partial^\alpha \theta \dx,
\end{split}
\end{equation*}
where there is an implicit sum on the repeated index $j \in \{ 1, 2\}$. Using Proposition \ref{p:CCCGW_Commutator} with $\sigma = -s$ to bound the commutator appearing in the last integral, we get, for any $\varepsilon > 0$,
\begin{equation*}
\begin{split}
\left| \int \partial^\alpha \theta . \D \big( \theta . \partial^{\alpha} \nabla^\perp (- \Delta)^{-s} \theta \big) \dx \right| & \lesssim \| \partial^\alpha \theta \|_{L^2} \, \big\| \big[ \partial_j \theta, A_j \big] \partial^\alpha \theta \big\|_{L^2} \\
& \lesssim \| \partial^\alpha \theta \|_{L^2}^2 \int |\xi|^{2 - 2s} |\what{\theta}(\xi)| {\rm d} \xi + \| \partial^\alpha \theta \|_{L^2} \| \partial^\alpha \theta \|_{\dot{H}^{-2s}} \int |\xi|^2 |\what{\theta}(\xi)| {\rm d} \xi .
\end{split}
\end{equation*}
In order to bound the integrals above, note that the function $\xi \mapsto |\xi|^{2 - 2s}$ is bounded on a neighborhood of $\xi = 0$. As a consequence, for every $\varepsilon > 0$, we may use the Cauchy-Schwarz inequality to write
\begin{equation*}
    \begin{split}
        \int \big( |\xi|^{2 - 2s} + |\xi|^2 \big) |\what{\theta}(\xi)| {\rm d} \xi & \leq \int \frac{|\xi|^2}{1 + |\xi|^{3 + \varepsilon}} \big( 1 + |\xi| \big)^{3 + \varepsilon} |\what{\theta}(\xi)| {\rm d} \xi \\
        & \leq C(\varepsilon) \| \theta \|_{H^{3 + \varepsilon}}.
    \end{split}
\end{equation*}
Furthermore, the since $k = |\alpha| \geq 4 > 2s$, the homogeneous norm $\| \theta \|_{\dot{H}^{2s}}$ is bounded by $\| \theta \|_{H^k}$. We deduce
\begin{equation}\label{eq:thetaIntegral4}
    \left| \int \partial^\alpha \theta . \D \big( \theta . \partial^{\alpha} \nabla^\perp (- \Delta)^{-s} \theta \big) \dx \right| \lesssim \| \theta \|_{H^k}^3.
\end{equation}
Putting together the bounds \eqref{eq:thetaIntegral1}, \eqref{eq:thetaIntegral2}, \eqref{eq:thetaIntegral3} and \eqref{eq:thetaIntegral4}, we then get the following estimate for the second summand in \eqref{eq:energyEstimate}:
\begin{equation*}
\left| \int \partial^\alpha \theta . \partial^\alpha \D (\theta . \nabla^\perp (- \Delta)^{-s} \theta) \dx \right| \lesssim \| \theta \|_{H^k}^3.
\end{equation*}
\end{proof}

\subsubsection{Radius Estimates}

Now, we take care of the last integral $J_2$ in \eqref{eq:energyEstimate} involving the influence of the point vortex through $H(t,x)$. This will create no particular problem: the point vortex is assumed to be in a region where $\theta$ is constant, and this will allow us to ignore any contribution of $H(t,x)$ near the singular point $z(t)$. In order to implement this argument, for any time, we need to prove that the $\theta$ remains constant near $z(t)$. We prove the following lemma.

\begin{lemma}\label{lem:remain_constant_near_PV}
Let $(\theta,z)$ be a strong solution on $[0,T)$ to \eqref{eq:StrongSystem} (in the sense of Theorem~\ref{thrm:strong_solutions}) and assume that
\begin{equation*}
    \theta_0 \equiv \beta \qquad \text{ on } B\big(z(0),R_0\big),
\end{equation*}
for some $\beta \in \RR$. Recall that $R$, defined in relation~\eqref{def:R(t)}, is given by
\begin{equation*}
    R(t) = \sup \big\{ r \ge 0 \; , \; \theta(t) \equiv \beta \text{ on } B(z(t),r) \big\}.
\end{equation*}
Then for every $t \in [0,T)$,
\begin{equation*}
    R(t) \ge R(0) \exp \left(- \int_0^t \| \nabla v(\tau,\cdot) \|_{L^\infty} \dd \tau\right).
\end{equation*}
In addition, we also have the inequality
\begin{equation}\label{eq:Rdefinition}
\der{}{t} \left( \frac{1}{R(t)} \right) \lesssim \frac{\| \nabla v \|_{L^\infty}}{R(t)} \lesssim \frac{\| \theta \|_{H^3}}{R(t)}.
\end{equation}
\end{lemma}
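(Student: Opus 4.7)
The plan is to transport the plateau region along the flow of the full vector field $v + H$ and to control its deformation by a Grönwall argument. The crucial observation is that $H(t,x) = K_s(x - z(t)) = c_s (x - z(t))^\perp / |x - z(t)|^{4-2s}$ is pointwise orthogonal to the radial direction $x - z(t)$, hence produces no radial displacement of fluid particles relative to the vortex. This is what will make the singular term drop out of the key distance identity.

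\medskip

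For any $y_0 \in B(z_0, R_0) \setminus \{z_0\}$, consider the characteristic $y'(t) = v(t,y(t)) + H(t,y(t))$ with $y(0) = y_0$. Combined with the vortex equation $z'(t) = v(t,z(t))$, as long as $y(t) \neq z(t)$ we compute
\[
\tfrac{1}{2}\tfrac{d}{dt}|y(t) - z(t)|^2 \,=\, (y - z)\cdot\bigl[v(t,y) - v(t,z)\bigr] \,+\, (y-z)\cdot H(t,y) \,=\, (y-z)\cdot\bigl[v(t,y)-v(t,z)\bigr],
\]
where the $H$ term vanishes by the orthogonality above. Bounding the remainder by $\|\nabla v(t)\|_{L^\infty}|y-z|^2$, a two-sided Grönwall yields
\[
|y_0 - z_0|\, e^{-\int_0^t \|\nabla v(\tau)\|_{L^\infty} d\tau} \,\leq\, |y(t) - z(t)| \,\leq\, |y_0 - z_0|\, e^{\int_0^t \|\nabla v(\tau)\|_{L^\infty} d\tau}.
\]
The lower bound prevents collision with $z(t)$, so the characteristic exists on all of $[0,T)$.

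\medskip

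Since $\theta$ is transported by $v+H$ and equals $\beta$ on $B(z_0, R_0)$, we have $\theta(t, y(t)) = \beta$ along each such characteristic. Running the same computation backward in time, any point $y \in B(z(t), R_0 e^{-\int_0^t \|\nabla v\|_{L^\infty} d\tau})$ is the image of some $y_0 \in B(z_0, R_0)$ by the forward flow, hence $\theta(t,\cdot) \equiv \beta$ there; this yields the first inequality $R(t) \geq R(0)\, e^{-\int_0^t \|\nabla v\|_{L^\infty} d\tau}$. Applying it on subintervals $[t, t+h]$ gives $1/R(t+h) \leq (1/R(t))\, e^{\int_t^{t+h}\|\nabla v\|_{L^\infty} d\tau}$, and taking the upper-right derivative in $h$ produces $\frac{d}{dt}(1/R) \leq \|\nabla v\|_{L^\infty}/R$. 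The final bound $\|\nabla v\|_{L^\infty} \lesssim \|\theta\|_{H^3}$ follows from the Sobolev embedding $H^{1+\varepsilon}(\R^2) \hookrightarrow L^\infty$ together with the fact that $\nabla\nabla^\perp(-\Delta)^{-s}$ is a Fourier multiplier of order $2-2s$, so that $\nabla v \in H^{1+2s} \subset L^\infty$ for any $s > 0$ (treating low frequencies as in Proposition \ref{p:FourierMultiplierKS}).

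\medskip

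The only delicate point is the cancellation in the distance identity: without the radial-orthogonality $(y - z) \cdot K_s(y - z) = 0$, the contribution of $H$ would blow up like $|y - z|^{2s - 3}$ near the vortex and no Grönwall bound could close.
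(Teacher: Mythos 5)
Your proof is correct and follows essentially the same route as the paper: a Lagrangian argument exploiting the radial orthogonality $(x-z(t))\cdot H(t,x)=0$ so that only $v$ contributes to the relative radial motion, a Grönwall bound with $\|\nabla v\|_{L^\infty}$, and the Sobolev/multiplier bound $\|\nabla v\|_{L^\infty}\lesssim\|\theta\|_{H^3}$. The paper phrases it as a differential inequality on $R(t)$ obtained by tracking particles where $\theta\neq\beta$, while you flow the plateau ball forward/backward, but this is only a cosmetic difference.
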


\begin{proof}
Let $x \in \R^2$ such that $\theta(t,x) \neq \beta$. Let $\tau \mapsto X(\tau)$ be the trajectory of the fluid particle passing in $x$ at time $t$. Then
\begin{equation*}\begin{split}
    \left. \der{}{\tau}|X(\tau) - z(\tau)|\right|_{\tau = t} & = \frac{x-z(t)}{|x-z(t)|} \cdot \big(v(x,t) + H(x,t) - v(z(t),t)\big) \\
    & = \frac{x-z(t)}{|x-z(t)|} \cdot \big(v(x,t) - v(z(t),t)\big) \\
    & \ge - |x-z(t)| \| \nabla v \|_{L^\infty} \\
    & \ge - R(t)\| \nabla v \|_{L^\infty}
\end{split}\end{equation*}
where we used the fact that $\big(x-z(t)\big) \cdot H(x,t)= 0$. Since this inequality holds for every value of $x$ such that $\theta(t,x)\neq\beta,$ we deduce that it holds also for the points $x$ that belong to the border of $\{x\in\RR^2:\theta(t,x)\neq\beta\}$ (by continuity of the flow). In particular, it holds for the points $x$ that are at distance $R(t)$ from the point-vortex $\delta_{z(t)}$. Therefore:
\begin{equation}\label{eq:est_der_R}
    \der{}{t} R(t) \ge -R(t)\| \nabla v \|_{L^\infty}.
\end{equation}
Using the Grönwall inequality,
\begin{equation*}
    R(t) \ge R(0) \exp \left(-\int_0^t \| \nabla v(\tau) \|_{L^\infty} \dd \tau\right).
\end{equation*}
Finally, inequality \eqref{eq:Rdefinition} follows from \eqref{eq:est_der_R} and the Sobolev embeddings $H^k \subset H^3 \subset W^{1, \infty}$.
\end{proof}

\subsubsection{Estimates for the Second Integral}

We are now in a favorable position to bound the integral $J_2$. Our estimates are contained in the lemma immediately below.

\begin{lemma}\label{l:energyIneqEQ2}
    With the notation of Proposition \ref{p:APriori}, the inequality
    \begin{equation}\label{eq:energyIneqEQ2}
        \left| \int \partial^\alpha \theta . \partial^\alpha \D (\theta . H) \dx \right| \lesssim \| \theta \|_{H^k}^2 \left( \frac{1}{R(t)^{4 - 2s}} + \frac{1}{R(t)^{2 + k - 2s}} \right)
    \end{equation}
    holds for all times $t \geq 0$.
\end{lemma}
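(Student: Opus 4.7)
The plan is to exploit two structural features of the integrand. First, since $K_s = -\nabla^\perp(-\Delta)^{-s}\delta_0$, one has $\D H = 0$ as a distribution, so $\D(\theta H) = H \cdot \nabla \theta$ and the integral rewrites as $\int \partial^\alpha \theta \cdot \partial^\alpha(H \cdot \nabla \theta) \dx$. Second, the plateau hypothesis combined with Lemma~\ref{lem:remain_constant_near_PV} gives $\theta(t) \equiv \beta$ on $B(z(t), R(t))$, hence $\partial^\gamma \theta \equiv 0$ on the closed ball $\overline{B(z(t), R(t))}$ for every multi-index $\gamma$ of positive length. Consequently every integrand we meet is automatically supported in the exterior region $\Omega_R := \{x : |x - z(t)| \geq R(t)\}$, on which $H$ is smooth and satisfies $|\partial^\beta H(t, x)| \leq C_\beta / |x - z(t)|^{3 - 2s + |\beta|}$.

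Next I would apply Leibniz to $\partial^\alpha(H \cdot \nabla \theta)$ and isolate the top-order term $H \cdot \nabla \partial^\alpha \theta$, the only contribution carrying $k+1$ derivatives on $\theta$. This a priori dangerous term cancels by symmetry: writing
\begin{equation*}
\int \partial^\alpha \theta \cdot H \cdot \nabla \partial^\alpha \theta \dx = \frac{1}{2}\int_{\Omega_R} H \cdot \nabla(\partial^\alpha \theta)^2 \dx
\end{equation*}
and integrating by parts produces zero, because $\D H \equiv 0$ pointwise on $\Omega_R$ and the inner boundary contribution on $\partial B(z(t), R(t))$ vanishes by continuity of $\partial^\alpha \theta$ (the term at infinity is harmless along a suitable sequence of radii thanks to $\|\partial^\alpha \theta\|_{L^2}^2 < \infty$ and the decay of $H$).

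The main obstacle is estimating the remaining terms $J_\gamma := \int \partial^\alpha \theta \cdot \partial^{\alpha-\gamma} H \cdot \partial^\gamma \nabla \theta \dx$ for $\gamma \leq \alpha$ with $\gamma \neq \alpha$, parametrised by $j := |\alpha - \gamma| \in \{1, \dots, k\}$. The difficulty is that when $j$ is small, neither of the two $\theta$-derivatives has low enough order to sit in $L^\infty$ via Sobolev embedding. I would therefore split into two regimes. When $j \geq 3$, one has $|\gamma| + 1 \leq k - 2$, so the embedding $H^{1+\epsilon} \hookrightarrow L^\infty$ places $\partial^\gamma \nabla \theta$ in $L^\infty$; combining this with $\partial^\alpha \theta \in L^2$ and the radial computation $\|\partial^{\alpha-\gamma} H \mathbf{1}_{\Omega_R}\|_{L^2} \leq C/R(t)^{2+j-2s}$ yields $|J_\gamma| \lesssim \|\theta\|_{H^k}^2/R(t)^{2+j-2s}$. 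When $j \in \{1,2\}$, I would instead use the pointwise bound $\|\partial^{\alpha-\gamma} H\|_{L^\infty(\Omega_R)} \leq C/R(t)^{3-2s+j}$ and keep both $\theta$-factors in $L^2$, obtaining $|J_\gamma| \lesssim \|\theta\|_{H^k}^2/R(t)^{3-2s+j}$.

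Finally I would collect all contributions. The extreme cases $j=1$ and $j=k$ produce the two advertised exponents $4-2s$ and $2+k-2s$; every intermediate exponent is dominated by $R(t)^{-(4-2s)} + R(t)^{-(2+k-2s)}$ via a short comparison separating $R(t) \leq 1$ from $R(t) \geq 1$ (using $k \geq 3$ to absorb the $j=2$ contribution through the inequality $5-2s \leq 2+k-2s$). This yields the announced estimate.
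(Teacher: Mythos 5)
Your proof is correct and follows essentially the same route as the paper: convert $\D(\theta H)$ to $H\cdot\nabla\theta$ using the plateau, expand by Leibniz, cancel the top-order term via the divergence-free structure of $H$ away from $z(t)$, and estimate the remaining terms by Hölder with exactly the same $L^2$/$L^\infty$ case split (your $j\ge 3$ versus $j\in\{1,2\}$ matches the paper's $|\gamma|\le k-3$ versus $k-2\le|\gamma|\le k-1$) and the same exponents. The only cosmetic difference is that the paper replaces $H$ by the smooth truncation $\widetilde H_R=(1-\chi_{R(t)})H$ and uses that $\widetilde H_R$ is divergence free, whereas you integrate by parts on the exterior region $\{|x-z(t)|\ge R(t)\}$ and check the boundary terms directly; both implementations of the cancellation are equivalent.
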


\begin{proof}
We now define a smooth truncation of the singular contribution of the point vortex $H(t,x)$. Let $\chi \in \mc D$ be the radial and smooth cut-off function defined in Subsection \ref{sec:reg}, and set, for any time such that $R(t) > 0$, 
\begin{equation*}
\widetilde{H}_R(t,x) = \big( 1 - \chi_{R(t)} (x - z(t)) \big) H(t,x), \qquad \text{where } \chi_R(y) := \chi \left( \frac{y}{R} \right).
\end{equation*}
Lemma \ref{l:HRisDivFree} shows that $\widetilde{H}_R$ is a divergence-free function. This and the fact that $\theta$ is constant on the ball $B \big( z(t), R(t) \big)$, and so $\nabla \theta  = 0$ on that area, let us obtain the equalities
\begin{equation}\label{eq:DivFactoring}
\D (\theta H) = H \cdot \nabla \theta = \widetilde{H}_R \cdot \nabla \theta = \D( \theta \widetilde{H}_R).
\end{equation}
We come back to \eqref{eq:DivFactoring}, thanks to which we express the last integral in the energy estimate \eqref{eq:energyEstimate} as
\begin{equation*}
\int \partial^\alpha \theta . \partial^\alpha \D (\theta \widetilde{H}_R) \dx = \sum_{|\gamma| \leq |\alpha|} \left(
\begin{array}{c}
     \alpha \\
     \gamma
\end{array}
\right) \int \partial^\alpha \theta . \D \big( \partial^\gamma \theta . \partial^{\alpha - \gamma} \widetilde{H}_R \big) \dx.
\end{equation*}
In estimating the integrals above, there are three cases. Firstly, when $\gamma = \alpha$, we have
\begin{equation*}
\int \partial^\alpha \theta . \nabla \partial^\alpha \theta \cdot \widetilde{H}_R \dx = 0,
\end{equation*}
since the approximate kernel $K_{s,\varepsilon}$ is divergence free, according to Lemma~\ref{l:HRisDivFree}.
Secondly, we explore the case where $|\gamma| \leq k-3$. We then have, thanks to the Sobolev embedding $H^2 \subset L^\infty$,
\begin{equation*}
\begin{split}
\left| \int \partial^\alpha \theta . \D \big( \partial^\gamma \theta . \partial^{\alpha - \gamma} \widetilde{H}_R \big) \dx \right| & \leq \| \partial^\alpha \theta \|_{L^2} \| \partial^{\alpha - \gamma} \widetilde{H}_R \|_{L^2} \| \nabla \partial^\gamma \theta \|_{L^\infty} \\
& \lesssim \frac{\| \theta \|_{H^k}^2}{R(t)^{2 + k - |\gamma| - 2s}}\\
& \lesssim \| \theta \|_{H^k}^2 \left( \frac{1}{R(t)^{5 - 2s}} + \frac{1}{R(t)^{2 + k - 2s}} \right).
\end{split}
\end{equation*}
Lastly, in the case where $k-2 \leq |\gamma| \leq k-1$, so that $1 \leq |\alpha - \gamma| \leq 2$, we instead evaluate the $L^\infty$ norm of $\partial^\gamma H_R$ to get
\begin{equation*}
\begin{split}
\left| \int \partial^\alpha \theta . \D \big( \partial^\gamma \theta . \partial^{\alpha - \gamma} \widetilde{H}_R \big) \dx \right| & \leq \| \partial^\alpha \theta \|_{L^2} \| \partial^{\alpha - \gamma} \widetilde{H}_R \|_{L^\infty} \| \nabla \partial^\gamma \theta \|_{L^2} \\
& \lesssim \frac{\| \theta \|_{H^k}^2}{R(t)^{3 + k - |\gamma| - 2s}} \\
& \lesssim \| \theta \|_{H^k}^2 \left( \frac{1}{R(t)^{4-2s}} + \frac{1}{R(t)^{5 - 2s}} \right).
\end{split}
\end{equation*}
Note that because $k \geq 4$, the exponent $4 - 2s$ in this last upper bound is smaller than the exponent $2 + k - 2s$ obtained above. Overall, we have the estimate
\begin{equation*}
\left| \int \partial^\alpha \theta . \partial^\alpha \D (\theta . \widetilde{H}_R) \dx \right| \lesssim \| \theta \|_{H^k}^2 \left( \frac{1}{R(t)^{4 - 2s}} + \frac{1}{R(t)^{2 + k - 2s}} \right).
\end{equation*}
\end{proof}

\subsubsection{Proof of the \textsl{A Priori} Estimates}

Finally, with the help of the lemmas above, we can prove Proposition \ref{p:APriori}.

\begin{proof}[Proof (of Proposition \ref{p:APriori})]
Plugging estimates \eqref{eq:energyIneqEQ1} from Lemma \ref{l:energyIneqEQ1} and \eqref{eq:energyIneqEQ2} from Lemma \ref{l:energyIneqEQ2} along with \eqref{eq:Rdefinition} into inequality \eqref{eq:energyEstimate}, we obtain a differential inequality:
\begin{equation*}
\begin{split}
    \der{}{t} \left( \| \theta \|_{H^k}^2 + \frac{1}{R(t)} \right) & \lesssim \| \theta \|_{H^k}^3 + \frac{\| \theta \|_{H^3}}{R(t)} + \| \theta \|_{H^k}^2 \left( \frac{1}{R(t)^{4 - 2s}} + \frac{1}{R(t)^{2 + k - 2s}} \right)\\
    & \lesssim \| \theta \|^3_{H^k} + \frac{1}{R(t)^{3/2}} + \frac{1}{R(t)^{6 + 3k - 6s}} \\
    & \lesssim \left( \| \theta \|_{H^k}^2 + \frac{1}{R(t)} \right)^{3/2} + \left( \| \theta \|_{H^k}^2 + \frac{1}{R(t)} \right)^{6 + 3k - 6s}.
    \end{split}
\end{equation*}
Define $E(t) := \| \theta \|_{H^k}^2 + R(t)^{-1}$ and set, for notational convenience, $q+1 := 6 + 3k - 6s$. Then, by integrating the inequality above, we obtain, for $T \geq 0$
\begin{equation*}
E(T) \leq E(0) + C \int_0^T \Big( E(t)^{3/2} + E(t)^{q+1} \Big) \dt.
\end{equation*}
Let $[0, T^\ast]$ be the time interval on which the integral above stays smaller than the initial data:
\begin{equation*}
T^\ast := \sup \left\{ T > 0, \quad \int_0^T \Big( E(t)^{3/2} + E(t)^{q+1} \Big) \dt \leq E(0) \right\}.
\end{equation*}
Then, as long as $0 \leq T \leq T^\ast$, the energy $E(t)$ satisfies the inequality
\begin{equation}\label{eq:EnergyEstimate}
    E(T) \leq C E(0),
\end{equation}
from which we infer
\begin{equation*}
\int_0^T \Big( E(t)^{3/2} + E(t)^{q_2+1} \Big) \dt \leq CT E(0)^{3/2} + C T E(0)^{q+1},
\end{equation*}
and consequently
\begin{equation*}
T^\ast \geq \frac{C}{E(0)^q + E(0)^{1/2}}.
\end{equation*}
\end{proof}

\subsection{Construction of a solution}

This Subsection is dedicated to the construction of a local solution of \eqref{eq:StrongSystem}. 
The proof relies on the regularized dynamics defined in Subsection \ref{sec:reg}, whose solutions will naturally satisfy the \textsl{a priori} estimates of Proposition \ref{p:APriori}.

\subsubsection{Step 1: Approximate Solutions and Uniform Bounds}

\begin{lemma}\label{l:ApproxSolutionsExist}
    Assume that the initial data $(\theta_0, z_0)$ are as in Proposition \ref{t:StrongExistence}. Then for all $\varepsilon > 0$, the approximate system \eqref{eq:ApproxSystem1} equipped with the initial data $(\theta_{0, \varepsilon}, z_0)$ has a global solution $(\theta_\varepsilon, z_\varepsilon)$. In addition, this solution satisfies the bounds of Proposition \ref{p:APriori}, namely if $R_\varepsilon(t)$ is such that 
\begin{equation*}
\theta_\varepsilon(t, x) = \beta \qquad \text{for } |x - z_\varepsilon(t)| \leq R_\varepsilon (t),
\end{equation*}
then we have a time $T_\varepsilon > 0$ such that the inequalities
\begin{equation}\label{eq:UniformBounds}
\| \theta_\varepsilon(t) \|_{H^k} + \frac{1}{R_\varepsilon(t)} \lesssim \| \theta_0 \|_{H^k} + \frac{1}{R_0}
\end{equation}
hold for $t \in [0, T_\varepsilon[$. In addition $T_\varepsilon$ satisfies the lower bound $T_\varepsilon \geq C^{-1} T^\ast$ independently of the approximation parameter $\varepsilon > 0$. 
\end{lemma}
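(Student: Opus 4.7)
The plan is to combine Proposition~\ref{prop:global smooth approx sol}, which already furnishes global smooth solutions $(\theta_\varepsilon, z_\varepsilon)$ to the approximate system, with a straightforward transcription of the \textsl{a priori} estimates from Subsection~\ref{sec:apriori}. The whole point is to verify that every constant appearing in those estimates depends only on $s$, $k$, $\|\theta_0\|_{H^k}$ and $R_0$, and not on $\varepsilon$. First, I would modify (or reinterpret) the regularization \eqref{eq:RegularizedThetaZero} so that $\theta_{0,\varepsilon}$ is identically equal to $\beta$ on a ball $B(z_0, R_0/2)$ for all small $\varepsilon$: this can be achieved, e.g., by setting $\theta_{0, \varepsilon} = \beta + \chi_{1/\varepsilon} \cdot S_{j_\varepsilon}\bigl((\theta_0 - \beta)(1-\eta)\bigr)$ with $\eta$ a smooth cut-off equal to $1$ on $B(z_0,R_0/2)$ and supported in $B(z_0,R_0)$. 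In particular $R_\varepsilon(0) \geq R_0/2$, and $\theta_{0,\varepsilon}$ still converges to $\theta_0$ in $H^k$.

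Next, I would reproduce the proof of Lemma~\ref{lem:remain_constant_near_PV} verbatim for $(\theta_\varepsilon, z_\varepsilon)$: the only structural fact used is that $\mc H_\varepsilon(t,x) = K_{s,\varepsilon}(x - z_\varepsilon(t))$ is colinear to $(x-z_\varepsilon(t))^\perp$, which holds because $K_{s,\varepsilon} = (1-\chi_\varepsilon) K_s$ and $K_s(y)$ is proportional to $y^\perp$. This gives the same pointwise inequality $\der{}{t} R_\varepsilon(t) \geq -R_\varepsilon(t)\|\nabla v_\varepsilon\|_{L^\infty}$, hence the radius term in \eqref{eq:Rdefinition} is preserved. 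Then I would differentiate the approximate transport equation against $\partial^{2\alpha}\theta_\varepsilon$, using that $v_\varepsilon$ is divergence free (as verified in the proof of Proposition~\ref{prop:global smooth approx sol}), and redo the case analysis of Lemmas~\ref{l:energyIneqEQ1} and~\ref{l:energyIneqEQ2}. The splittings over $\gamma \leq \alpha$, the Sobolev embedding $H^{1+\delta}\subset L^\infty$, and the pointwise bound \eqref{bio} $|K_{s,\varepsilon}|\leq |K_s|$ produce exactly the same bounds on each piece of $J_1$ and $J_2$, with $\|\widetilde H_R\|_{L^p}$ controlled by negative powers of $R_\varepsilon(t)$ that are uniform in $\varepsilon$.

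The only step that is not purely cosmetic is the commutator term in the $\gamma=0$ case of Lemma~\ref{l:energyIneqEQ1}. Here I would exploit that, because $K_{s,\varepsilon}$ is odd, $A_{\varepsilon,j} := K_{s,\varepsilon}\star$ is still skew-symmetric, so the symmetrization identity
\begin{equation*}
\int \partial^\alpha\theta_\varepsilon\,\partial_j\theta_\varepsilon\,\partial^\alpha A_{\varepsilon,j}\theta_\varepsilon\,\dx
= \tfrac12 \int \partial^\alpha\theta_\varepsilon\,\bigl[\partial_j\theta_\varepsilon, A_{\varepsilon,j}\bigr]\partial^\alpha\theta_\varepsilon\,\dx
\end{equation*}
is preserved. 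To bound this commutator uniformly in $\varepsilon$, I would decompose $A_{\varepsilon,j} = A_j - B_{\varepsilon,j}$, where $A_j = \partial_j(-\Delta)^{-s}$ is the original operator and $B_{\varepsilon,j}$ is convolution against $\chi_\varepsilon K_{s,j}$, also an odd kernel. The $A_j$-piece is controlled by Lemma~\ref{p:CCCGW_Commutator} exactly as in the original proof. For the $B_{\varepsilon,j}$-piece, one expands the commutator pointwise and uses the compact support of $\chi_\varepsilon K_s$ in $B(0,\varepsilon)$ together with the $L^2$-boundedness of the even convolution $(|y|\cdot \chi_\varepsilon K_{s,j})\star$ to bound $\bigl\|[\partial_j\theta_\varepsilon, B_{\varepsilon,j}]\partial^\alpha\theta_\varepsilon\bigr\|_{L^2}$ by $\|\theta_\varepsilon\|_{H^k}^2$ up to a constant independent of $\varepsilon$.

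Once these ingredients are in place, the exact same differential inequality
\begin{equation*}
\der{}{t}\Bigl(\|\theta_\varepsilon\|_{H^k}^2 + \tfrac{1}{R_\varepsilon(t)}\Bigr) \lesssim E_\varepsilon(t)^{3/2} + E_\varepsilon(t)^{q+1}, \qquad q+1 := 6+3k-6s,
\end{equation*}
holds for $E_\varepsilon := \|\theta_\varepsilon\|_{H^k}^2 + 1/R_\varepsilon$, with implicit constant independent of $\varepsilon$. The bootstrap of the proof of Proposition~\ref{p:APriori} then yields a time $T_\varepsilon$ on which $E_\varepsilon(t) \leq C\,E_\varepsilon(0) \leq C\,E(0)$, with $T_\varepsilon \geq C^{-1}T^\ast$. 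The main obstacle is the uniform-in-$\varepsilon$ control of the correction commutator $[\,\cdot\,, B_{\varepsilon,j}]$, as the singular part $\chi_\varepsilon K_s$ is not in $L^1$ for $s \leq 1/2$; this is where one must exploit oddness/cancellation of the truncated kernel rather than brute-force absolute-value estimates.
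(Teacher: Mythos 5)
Your proposal is correct and follows essentially the same route as the paper: the paper's own proof of this lemma consists precisely of invoking Proposition~\ref{prop:global smooth approx sol} for global existence and asserting that ``the exact same computations'' as in Proposition~\ref{p:APriori} give the uniform bounds, which is what you carry out in detail. Your two extra verifications --- adjusting the regularization of $\theta_0$ so that the plateau (and hence $R_\varepsilon(0)\gtrsim R_0$) survives, and splitting the truncated convolution as $A_j-B_{\varepsilon,j}$ to control the commutator uniformly in $\varepsilon$ (where in fact the Lipschitz difference already yields the locally integrable weight $|y|\,|K_s(y)|\sim|y|^{2s-2}$, so oddness is not even needed) --- are refinements of points the paper leaves implicit rather than a different approach.
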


\begin{proof}
    The only non-trivial point is the existence of a global approximate solution, which is given by Proposition \ref{prop:global smooth approx sol}. The exact same computations leading to Proposition \ref{p:APriori} show that they also follow the \textsl{a priori} estimates.
\end{proof}

\subsubsection{Step 2: Convergence of the Regularized Kernels}

Before showing that the approximate solutions $(\theta_\varepsilon, z_\varepsilon)$ converge when $\varepsilon \rightarrow 0^+$, we must study the regularized kernel $K_{s, \varepsilon}$. We already know that $K_{s, \varepsilon} \longrightarrow K_s$ locally uniformly in $\R^2 \setminus \{ 0 \}$, however, we will need a more precise norm convergence. This is the purpose of the following lemma.
\begin{lemma}\label{l:KernelConvergence}
    The kernels $K_{s, \varepsilon}$ are uniformly bounded in the homogeneous Besov space $\dot{B}^{2s - 1}_{1, \infty}$. In addition, for any $\sigma < 2s - 1$, we have the convergence
    \begin{equation}\label{eq:KernelConvergence}
        K_{s, \varepsilon} \longrightarrow K_s \qquad \text{in } \dot{B}^{\sigma}_{1, \infty}.
    \end{equation}
\end{lemma}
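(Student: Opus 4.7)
The strategy is to exploit the homogeneity of $K_s$ (of degree $2s-3$ in dimension two) via dilation. Since $K_s(\lambda x) = \lambda^{2s-3} K_s(x)$ and $\chi_\epsilon(x) = \chi(x/\epsilon)$, one immediately obtains the rescaling identities
\[
K_{s,\epsilon}(x) = \epsilon^{2s-3}K_{s,1}(x/\epsilon), \qquad K_{s,\epsilon}(x) - K_s(x) = -\chi_\epsilon(x)K_s(x) = -\epsilon^{2s-3}(\chi K_s)(x/\epsilon).
\]
Combined with the scaling of the homogeneous Besov norm in dimension two, $\|f(\lambda\cdot)\|_{\dot{B}^\sigma_{1,\infty}} \approx \lambda^{\sigma-2}\|f\|_{\dot{B}^\sigma_{1,\infty}}$ (recalled in the paper), this reduces both claims to the case $\epsilon = 1$:
\[
\|K_{s,\epsilon}\|_{\dot{B}^\sigma_{1,\infty}} \approx \epsilon^{2s-1-\sigma}\|K_{s,1}\|_{\dot{B}^\sigma_{1,\infty}}, \qquad \|K_{s,\epsilon} - K_s\|_{\dot{B}^\sigma_{1,\infty}} \approx \epsilon^{2s-1-\sigma}\|\chi K_s\|_{\dot{B}^\sigma_{1,\infty}}.
\]
At the critical index $\sigma = 2s-1$ the first identity gives a uniform bound (the prefactor cancels); for any $\sigma < 2s-1$ the prefactor $\epsilon^{2s-1-\sigma}$ vanishes as $\epsilon \to 0^+$, yielding the convergence in the second identity. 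All that remains is to verify that the two right-hand side norms $\|K_{s,1}\|_{\dot{B}^{2s-1}_{1,\infty}}$ and $\|\chi K_s\|_{\dot{B}^\sigma_{1,\infty}}$ (for every $\sigma < 2s-1$) are finite.

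For $K_{s,1}$, I split into dyadic blocks according to frequency. At high frequencies ($j \geq 0$), the function $K_{s,1}$ is smooth and its derivatives satisfy $|\nabla^N K_{s,1}(x)| \lesssim |x|^{2s-3-N}\mathbf{1}_{|x|>1/2}$, which is in $L^1$ for every $N\geq 1$ since $s<1$; the $L^1$-Bernstein inequality then yields $\|\dot\triangle_j K_{s,1}\|_{L^1} \lesssim 2^{-jN}$, comfortably controlling the weighted sum. At low frequencies ($j \leq 0$), I decompose $K_{s,1} = K_s - \chi K_s$. For the $K_s$ term, the homogeneity allows an exact rescaling $\dot\triangle_j K_s(x) = 2^{j(3-2s)}\dot\triangle_0 K_s(2^j x)$, whence $\|\dot\triangle_j K_s\|_{L^1} = 2^{j(1-2s)}\|\dot\triangle_0 K_s\|_{L^1}$; the block $\dot\triangle_0 K_s$ is a Schwartz function because its Fourier transform $-i\xi^\perp \psi(\xi)/|\xi|^{2s}$ is smooth with compact support (away from $0$), so $\|\dot\triangle_0 K_s\|_{L^1} < \infty$. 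The $\chi K_s$ contribution is absorbed by the second finiteness claim.

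For $\chi K_s$, the compact support together with the vanishing of its Fourier transform at the origin are the decisive features. Since $\widehat{\chi K_s} = \widehat\chi \ast \widehat{K_s}$ is a well-defined convolution of a Schwartz function with the locally integrable multiplier $-i\xi^\perp/|\xi|^{2s}$, and since the parity of $\chi$ (even) against that of $K_s$ (odd) kills the Taylor constant at $\xi=0$, a first-order expansion gives $|\widehat{\chi K_s}(\xi)| \lesssim |\xi|$ near the origin; this controls the low-frequency blocks via a Plancherel/Bernstein-type bound. At high frequencies, the local $|x|^{2s-3}$ singularity of $\chi K_s$ dictates the Besov regularity, whose scaling-critical index in $\dot{B}^\sigma_{1,\infty}$ is precisely $2s-1$, so every $\sigma$ strictly below this value is attainable. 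The main technical difficulty is the rigorous treatment of $\chi K_s$ as a tempered distribution when $s \leq 1/2$, where the pointwise product $\chi(x)K_s(x)$ is not an $L^1_{\mathrm{loc}}$ function; here the Fourier-side definition via $\widehat\chi \ast \widehat{K_s}$ is indispensable, but the overall structure of the argument is unchanged.
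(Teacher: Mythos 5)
Your argument takes the same route as the paper's: the rescaling identities $K_{s,\epsilon}=\epsilon^{2s-3}K_{s,1}(\cdot/\epsilon)$ and $K_s-K_{s,\epsilon}=\epsilon^{2s-3}(\chi K_s)(\cdot/\epsilon)$, combined with the scaling invariance of $\dot{B}^{\sigma}_{1,\infty}$, reduce both claims to the finiteness of $\|K_{s,1}\|_{\dot{B}^{2s-1}_{1,\infty}}$ and of $\|\chi K_s\|_{\dot{B}^{\sigma}_{1,\infty}}$ for $\sigma<2s-1$, exactly as in the paper; the only real difference is that the paper settles these two ingredients by citation (homogeneity of $K_s$ via Proposition 2.21 in Bahouri--Chemin--Danchin, and the coincidence of homogeneous and inhomogeneous Besov norms for compactly supported distributions via Proposition 2.93), whereas you reprove them by dyadic estimates. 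Your explicit steps are sound: high frequencies of $K_{s,1}$ via integrable derivatives, and the exact block scaling $\|\dot{\triangle}_j K_s\|_{L^1}=2^{j(1-2s)}\|\dot{\triangle}_0 K_s\|_{L^1}$ with $\dot{\triangle}_0 K_s$ Schwartz. Two steps, however, are only gestured at. First, for the low frequencies of $\chi K_s$, a ``Plancherel/Bernstein-type bound'' cannot produce an $L^1$ bound on a dyadic block from $L^2$ or Fourier-side information, since Bernstein only raises the Lebesgue exponent; the correct mechanism is the vanishing mean of $\chi K_s$ (oddness) used against the block kernel in physical space, which gives $\|\dot{\triangle}_j(\chi K_s)\|_{L^1}\lesssim 2^{j}\int |y|\,|\chi K_s(y)|\,\dd y$ and suffices for $\sigma>-1$, in particular for $\sigma$ near $2s-1$ where the lemma is used. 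Second, the high-frequency control of $\chi K_s$ is asserted through a scaling-criticality heuristic rather than proved; as written this is a gap, but it is immediately repaired with material you already have, by writing $\chi K_s=K_s-K_{s,1}$ on blocks $j\ge 0$ and invoking the critical-index bounds for $K_s$ and $K_{s,1}$. The distributional (principal-value) interpretation of $\chi K_s$ when $s\le 1/2$ that you flag is indeed needed, but the paper leaves it equally implicit, so it is not a point of divergence.
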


\begin{proof}
    As seen from equation \eqref{def:K_s}, the kernel $K_s$ is a homogeneous function of degree $2s - 3$. A simple scaling argument (see Proposition 2.21, p. 65 in \cite{Bahouri_Chemin_Danchin_2011}) shows that $K_s \in \dot{B}^{2s - 1}_{1, \infty}$. Now, from \eqref{eq:KepsRegularization}, we see that the difference $K_s - K_{s, \varepsilon}$ reads
    \begin{equation*}
        \begin{split}
            K_s (x) - K_{s, \varepsilon} (x) & = \chi \left( \frac{x}{\varepsilon} \right) \frac{x^\perp}{|x|^{4 - 2s}} \\
            & = \left( \frac{1}{\varepsilon} \right)^{3 - 2s} \chi \left( \frac{x}{\varepsilon} \right) \frac{(x/\varepsilon)^\perp}{|x / \varepsilon|^{4 - 2s}} \\
            & = \left( \frac{1}{\varepsilon} \right)^{3 - 2s} (\chi K_s) \left( \frac{x}{\varepsilon} \right).
        \end{split}
    \end{equation*}
    This shows that the difference $K_s - K_{s, \varepsilon}$ is a scaled version of the function $\chi K_s$. Therefore, for $\sigma \in \R$, we have, by the scaling properties of homogeneous Besov spaces,
    \begin{equation*}
        \big\| K_s - K_{s, \varepsilon} \big\|_{\dot{B}^\sigma_{1, \infty}} \lesssim \left( \frac{1}{\varepsilon} \right)^{\sigma - (2s - 1)} \big\| \chi K_s \big\|_{\dot{B}^\sigma_{1, \infty}}.
\end{equation*}
    In particular, the convergence \eqref{eq:KernelConvergence} holds as soon as $\sigma < 2s - 1$, provided that we indeed have $\chi K_s \in \dot{B}^\sigma_{1, \infty}$. This follows from the fact that homogeneous and non-homogeneous Besov spaces coincide for functions supported in a given compact subset of $\R^2$ (see Proposition 2.93, p. 108, in \cite{Bahouri_Chemin_Danchin_2011}).
\end{proof}

\subsubsection{Step 3: Time Compactness and Convergence of the Temperatures}

We start by showing strong convergence of the approximate (potential) temperatures $(\theta_\varepsilon)$, as an application of Ascoli's theorem.

\begin{lemma}\label{l:ConvTheta}
    Let $(\theta_\varepsilon, z_\varepsilon)$ be the approximate solution defined in Lemma \ref{l:ApproxSolutionsExist}. Then, for any $\ell' < k$, we have the strong convergence (up to an extraction)
    \begin{equation}\label{eq:ThetaConvergence}
        \theta_\varepsilon \longrightarrow \theta \qquad \text{in } L^\infty ([0, T) ; H^{\ell'}_{\rm loc}).
\end{equation}
\end{lemma}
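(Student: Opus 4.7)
The plan is to apply the Ascoli--Arzelà theorem, combining the uniform spatial regularity from estimate \eqref{eq:UniformBounds} with a uniform equicontinuity in time obtained from the evolution equation. At every fixed time, the Rellich--Kondrachov compact embedding $H^k(\RR^2) \hookrightarrow H^{\ell'}(K)$ (valid for any compact $K\subset\RR^2$ and any $\ell'<k$) immediately provides spatial compactness of the family $\{\theta_\varepsilon(t,\cdot)\}_{\varepsilon>0}$.

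To establish time equicontinuity, I would estimate $\partial_t \theta_\varepsilon$ uniformly in $\varepsilon$ in a weaker topology. Writing
\begin{equation*}
\partial_t \theta_\varepsilon = -(v_\varepsilon + \mathcal{H}_\varepsilon)\cdot \nabla \theta_\varepsilon,
\end{equation*}
the regular part is easy: Proposition \ref{p:FourierMultiplierKS} combined with the uniform bound of $\theta_\varepsilon$ in $H^k$ (valid since $k\geq 4 \geq 1-2s$) gives that $v_\varepsilon$ is uniformly bounded in $L^\infty + L^2$, while $\nabla \theta_\varepsilon$ is uniformly bounded in $L^\infty$ by the Sobolev embedding $H^{k-1}\hookrightarrow L^\infty$. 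Thus $v_\varepsilon \cdot \nabla \theta_\varepsilon$ is uniformly bounded in $L^\infty_T(L^2_{\rm loc})$.

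For the singular point-vortex term $\mathcal{H}_\varepsilon \cdot \nabla \theta_\varepsilon$, I would invoke the plateau-preservation result (Lemma \ref{lem:remain_constant_near_PV}) applied to the approximate system, which yields $\nabla \theta_\varepsilon(t,\cdot)\equiv 0$ on $B(z_\varepsilon(t), R_\varepsilon(t))$, with $R_\varepsilon(t)$ bounded below by a positive constant uniformly in $\varepsilon$ on $[0,T^\ast/C]$ thanks to \eqref{eq:UniformBounds}. Consequently the product $\mathcal{H}_\varepsilon \cdot \nabla \theta_\varepsilon$ is supported in $\{|x-z_\varepsilon(t)| \geq R_\varepsilon(t)\}$, on which $|\mathcal{H}_\varepsilon(t,x)| \leq |K_s(x-z_\varepsilon(t))|$ is pointwise controlled by a locally integrable function independent of $\varepsilon$. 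This yields a uniform bound on $\partial_t \theta_\varepsilon$ in $L^\infty_T(H^{-M}_{\rm loc})$ for some $M>0$, which interpolated against the $L^\infty_T(H^k)$ bound produces the requisite Hölder equicontinuity in $H^{\ell'}_{\rm loc}$.

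With both hypotheses verified, Ascoli--Arzelà in $C^0([0,T); H^{\ell'}_{\rm loc})$ provides a subsequence converging to some limit $\theta$, which yields \eqref{eq:ThetaConvergence}. The main obstacle is really the singular term $\mathcal{H}_\varepsilon\cdot\nabla\theta_\varepsilon$: without the $\varepsilon$-uniform lower bound on $R_\varepsilon(t)$, this contribution could not be controlled independently of $\varepsilon$, and it is precisely the plateau hypothesis, transported in a uniform fashion by Lemma \ref{lem:remain_constant_near_PV}, that makes the compactness argument work.
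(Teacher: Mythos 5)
Your proof is correct and follows essentially the same route as the paper: uniform $H^k$ and $1/R_\varepsilon$ bounds, a uniform-in-$\varepsilon$ estimate on $\partial_t\theta_\varepsilon$ in which the plateau (preserved as in Lemma \ref{lem:remain_constant_near_PV}) tames the point-vortex term, interpolation against the $L^\infty_T(H^k)$ bound, and an Ascoli argument — the paper merely takes the time-derivative bound in the stronger space $H^{k-1}$ (via $\|\chi_{R_\varepsilon}H_\varepsilon\|_{H^{k-1}}\lesssim R_\varepsilon^{2s-k-1}$) and interpolates between $H^{k-1}$ and $H^k$, whereas you use a weaker space, which changes nothing essential. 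The only small imprecision is that Proposition \ref{p:FourierMultiplierKS} concerns the exact operator $K_s\star$ while $v_\varepsilon=K_{s,\varepsilon}\star\theta_\varepsilon$ involves the truncated kernel; a bound uniform in $\varepsilon$ follows instead from the uniform $\dot B^{2s-1}_{1,\infty}$ bound of Lemma \ref{l:KernelConvergence} combined with the Bernstein-type computation of Lemma \ref{l:ConvVelocity}, a point the paper itself glosses over in \eqref{eq:FirstTimeDerivative}.
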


\begin{proof}
While the uniform bounds of Lemma \ref{l:ApproxSolutionsExist} provide space compactness on the sequence $(\theta_\varepsilon)_{\varepsilon > 0}$, we will also need compactness with respect to the time variable: by using the evolution equation for $\theta_\varepsilon$, we see that
\begin{equation*}
    \partial_t \theta = - v_\varepsilon \cdot \nabla \theta_\varepsilon - \chi_{R_\varepsilon} H_\varepsilon \cdot \nabla \theta_\varepsilon.
\end{equation*}
Because $\theta_\varepsilon$ is constant (equal to $\beta$) on a neighborhood of $z_\varepsilon(t)$, the products in the right-hand side are well-defined and we have, for all times $t \in [0, T)$,
\begin{equation}\label{eq:FirstTimeDerivative}
    \begin{split}
        \| \partial_t \theta_\varepsilon \|_{H^{k-1}} & \lesssim \| v_\varepsilon \|_{H^{k-1}} \| \nabla \theta_\varepsilon \|_{H^{k-1}} + \| \chi_{R_\varepsilon} H_\varepsilon \|_{H^{k-1}} \| \nabla \theta_\varepsilon \|_{H^{k-1}} \\
        & \lesssim \| \theta_\varepsilon \|_{H^k}^2 + \frac{1}{R_\varepsilon(t)^{k+1 - 2s}  } \| \theta_\varepsilon \|_{H^k} \\
        & \lesssim \| \theta_0 \|_{H^k}^2 + \frac{1}{R_0^{k+1 - 2s}} \| \theta_0 \|_{H^k}.
    \end{split}
\end{equation}
In particular, we see that the sequence $(\theta_\varepsilon)_{\varepsilon > 0}$ is uniformly bounded in the space $W^{1, \infty}([0, T) ; H^{k-1})$. While this already provides both time and space compactness, it will be convenient to use an interpolation argument to get convergence in a stronger space topology. Consider $\ell \in (k-1, k)$ and $\sigma \in (0, 1)$ such that $\ell = (k-1) \sigma + k (1 - \sigma)$. Then, for $t_1, t_2 \in [0, T)$, we have
\begin{equation*}
    \begin{split}
        \big\| \theta(t_1) - \theta (t_2) \big\|_{H^\ell} & \lesssim \big\| \theta(t_1) - \theta (t_2) \big\|_{H^{k-1}}^\sigma \big\| \theta(t_1) - \theta (t_2) \big\|_{H^k}^{1 - \sigma} \\
        & \lesssim 2^{1 - \sigma} |t_2 - t_1|^\sigma \| \theta \|_{W^{1, \infty} (H^{k-1})}^\sigma \| \theta \|_{L^\infty (H^k)}^{1 - \sigma}.
    \end{split}
\end{equation*}
Consequently, the sequence $(\theta_\varepsilon)_{\varepsilon > 0}$ is bounded in the space $C^{0, \sigma}([0, T) ; H^\ell)$, and we deduce the existence of a $\theta$ with a strong convergence, up to taking a subsequence $(\varepsilon_n)_n$,
\begin{equation*}
    \theta_\varepsilon \longrightarrow \theta \qquad \text{in } L^\infty ([0, T) ; H^{\ell'}_{\rm loc}),
\end{equation*}
for any $\ell' < \ell < k$. Note that in reality, because $\theta_\varepsilon \in C^0([0, T) ; H^{\ell'}_{\rm loc})$, the convergence above is in that space, so that the limit $\theta$ is continuous with respect to the time variable.
\end{proof}

\subsubsection{Step 4: Convergence of the Velocity Fields}

In the same manner, we show the convergence of the approximate velocity fields $(v_\varepsilon)$. Here, there is a slight twist because $v_\varepsilon = K_{s, \varepsilon} \star \theta_\varepsilon$ is a convolution by a kernel depending on the approximation parameter $\varepsilon > 0$. This means that we must also use the convergence properties of the sequence $(K_{s, \varepsilon})$ from Lemma \ref{l:KernelConvergence}.

\begin{lemma}\label{l:ConvVelocity}
    Let $(v_\varepsilon)$ the approximate velocities defined in Lemma \ref{l:ApproxSolutionsExist}. Then we have pointwise convergence (up to an extraction)
    \begin{equation*}
        v_\varepsilon \longrightarrow v \qquad a.e.
    \end{equation*}
    Furthermore, the limit $v$ is given by $v = K_s \star \theta$.
\end{lemma}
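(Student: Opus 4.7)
The plan is to split the difference into two pieces which can be handled separately, using respectively the convergence of the potential temperatures from Lemma \ref{l:ConvTheta} and that of the regularized kernels from Lemma \ref{l:KernelConvergence}. More concretely, I would write
\begin{equation*}
    v_\varepsilon - K_s \star \theta \;=\; (K_{s,\varepsilon} - K_s) \star \theta_\varepsilon \;+\; K_s \star (\theta_\varepsilon - \theta),
\end{equation*}
where the convolutions against $K_s$ are interpreted via the Fourier-multiplier extension of Proposition \ref{p:FourierMultiplierKS} whenever $s \leq 1/2$ so that $K_s$ is not locally integrable.

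For the second term, I would invoke Lemma \ref{l:ConvTheta}, which (up to extraction) gives $\theta_\varepsilon \to \theta$ strongly in $L^\infty_T(H^{\ell'}_{\rm loc})$ for any $\ell' < k$. Since $k \geq 4 > 1 - 2s$, Proposition \ref{p:FourierMultiplierKS} applies to a smooth compactly-supported truncation of $\theta_\varepsilon - \theta$, which shows that $K_s \star (\theta_\varepsilon - \theta) \to 0$ in $L^\infty + L^2$, locally. The tail contribution (outside a compact) is handled by the uniform $H^k$-bound of Lemma \ref{l:ApproxSolutionsExist} and the decay of $K_s$ at infinity.

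For the first term, I would combine the uniform estimate $\|\theta_\varepsilon\|_{L^\infty_T(H^k)} \lesssim 1$ from Lemma \ref{l:ApproxSolutionsExist} with the kernel convergence $K_{s,\varepsilon} \to K_s$ in $\dot{B}^\sigma_{1,\infty}$ for any $\sigma < 2s-1$ (Lemma \ref{l:KernelConvergence}). A convolution inequality of Young type in Besov spaces, together with the embedding $H^k \hookrightarrow \dot{B}^{-\sigma}_{\infty, 1}$ valid as soon as $k > 2 - \sigma$, then yields
\begin{equation*}
    \big\| (K_{s,\varepsilon} - K_s) \star \theta_\varepsilon \big\|_{L^\infty} \;\lesssim\; \|K_{s,\varepsilon} - K_s\|_{\dot{B}^\sigma_{1,\infty}} \, \|\theta_\varepsilon\|_{\dot{B}^{-\sigma}_{\infty, 1}} \;\longrightarrow\; 0.
\end{equation*}

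Putting the two estimates together, $v_\varepsilon$ converges to $K_s\star\theta$ in, say, $L^2_{\rm loc}$; extracting a further subsequence then gives pointwise convergence almost everywhere, and we simply \emph{define} $v := K_s \star \theta$ as this limit. The hard part here is really the low-regularity regime $s \leq 1/2$, where the kernel is supercritical for local integrability and every estimate has to be routed through Proposition \ref{p:FourierMultiplierKS} and Lemma \ref{l:KernelConvergence} rather than through any pointwise computation on $K_s$; matching the topologies (Besov spaces for kernels versus local Sobolev spaces for temperatures) is the principal bookkeeping difficulty.
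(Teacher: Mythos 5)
Your proposal is correct and starts from the same decomposition $v_\varepsilon - v = (K_{s,\varepsilon}-K_s)\star\theta_\varepsilon + K_s\star(\theta_\varepsilon-\theta)$ as the paper; the treatment of the first term is essentially identical (the paper carries out the Littlewood--Paley/Bernstein computation by hand to get $\|(K_{s,\varepsilon}-K_s)\star\theta_\varepsilon\|_{L^\infty}\lesssim\|K_{s,\varepsilon}-K_s\|_{\dot B^\sigma_{1,\infty}}\|\theta_\varepsilon\|_{H^2}$, which is exactly your Besov--Young estimate, and your side condition on $\sigma$ is harmless since one may take $\sigma$ close to $2s-1$). Where you genuinely diverge is the second term. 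The paper observes precisely the obstruction you are working around --- Lemma \ref{l:ConvTheta} only gives $H^{\ell'}_{\rm loc}$ convergence, so no global norm convergence of $K_s\star(\theta_\varepsilon-\theta)$ is available --- and resolves it by duality: the estimate \eqref{eq:VelocityConvergence} shows $K_s(x-\cdot)\in H^{-2}$ for each fixed $x$, and the weak-$\star$ convergence $\theta_\varepsilon\rightharpoonup^*\theta$ in $L^\infty_T(H^2)$ then gives $K_s\star\theta_\varepsilon(x)\to K_s\star\theta(x)$ at every point, with no truncation at all. You instead split off a compactly supported piece (handled by Proposition \ref{p:FourierMultiplierKS} and the strong local convergence) and a tail, controlled by the uniform $L^2$ bound on $\theta_\varepsilon-\theta$ together with $K_s\in L^2(\{|z|\ge R\})$ (which holds since $2(3-2s)>2$); this is a valid alternative and gives $L^2_{\rm loc}$, hence a.e.\ convergence after extraction, which is all the lemma claims. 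Two small points you should make explicit if you write it up: the tail bound really rests on the quantitative decay $\|K_s\|_{L^2(\{|z|\ge R\})}\to 0$ uniformly in $\varepsilon$, not merely on pointwise decay of $K_s$; and, for $s\le 1/2$, you need a one-line consistency check that the Fourier-multiplier definition of $K_s\star g$ agrees, on the compact set of evaluation points, with the absolutely convergent integral when $g$ is supported far from that set (the same kind of identification the paper carries out around \eqref{DeVries} and \eqref{Tsunoda}). The paper's duality route buys pointwise convergence for every $x$ and avoids this bookkeeping; yours is more elementary and stays closer to the convolution picture.
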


\begin{proof}
 Recall from Subsection \ref{sec:reg} that the approximate kernel $K_{s, \varepsilon}$ is given by
\begin{equation}\label{eq:KepsRegularization}
    K_{s, \varepsilon} (x) = \big( 1 - \chi(\varepsilon^{-1} x) \big) K(x).
\end{equation}
and that Lemma \ref{l:KernelConvergence} shows that it converges to $K_\varepsilon$ in the space $\dot{B}^\sigma_{1, \infty}$ for any $\sigma < 2s-1$. We write the difference $v_\varepsilon - v$ as a sum of two terms
\begin{equation*}
    v_\varepsilon - v =  (K_{s, \varepsilon} - K_s) \star \theta_\varepsilon + K_s \star (\theta_\varepsilon - \theta).
\end{equation*}
To handle the first of these terms, we use the Littlewood-Paley decomposition: remembering that $\dot{\triangle}_j \dot{\triangle}_m = 0$ as soon as $|j - m| \geq  2$, we have
\begin{equation*}
    \begin{split}
        \big\| (K_{s, \varepsilon} - K_s) \star \theta_\varepsilon \big\|_{L^\infty} & \leq \sum_{j \in \mathbb{Z}}  \big\| \dot{\triangle}_j (K_{s, \varepsilon} - K_s) \star \theta_\varepsilon \big\|_{L^\infty} \\
        & \lesssim \sum_{|j - m| \leq 1} \big\| \dot{\triangle}_j (K_s - K_{s, \varepsilon}) \star  \dot{\triangle}_m \theta_\varepsilon \big\|_{L^\infty}\\
        & \lesssim \sum_{|j - m| \leq 1}  \big\| \dot{\triangle}_j (K_s - K_{s, \varepsilon}) \big\|_{L^1} \| \dot{\triangle}_m \theta_\varepsilon \|_{L^\infty}
    \end{split}
\end{equation*}
Let us focus on the series with negative indices $j < 0$. The Bernstein-type inequality of Lemma \ref{l:Bernstein} applied to $\dot{\triangle}_m \theta_\varepsilon$ and $\dot{\triangle}_m (\theta_\varepsilon - \theta)$ provides, for any $\sigma < 2s - 1$, 
\begin{equation*}
    \begin{split}
        \sum_{j < 0, \; |j - m| \leq 1} & \big\| \dot{\triangle}_j (K_s - K_{s, \varepsilon}) \big\|_{L^1} \| \dot{\triangle}_m \theta_\varepsilon \|_{L^\infty} \\
        & \lesssim \sum_{j < 0 \; |j - m| \leq 1} 2^{-j \sigma} \| K_s - K_{s, \varepsilon} \|_{\dot{B}^\sigma_{1, \infty}} 2^{j} \| \dot{\triangle}_m \theta_\varepsilon \|_{L^2} \\
        & \lesssim \| K_s - K_{s, \varepsilon} \|_{\dot{B}^\sigma_{1, \infty}} \| \theta_\varepsilon \|_{L^2} \sum_{j < 0} 2^{j(1 - \sigma)}.
    \end{split}
\end{equation*}
By remembering that $\sigma < 2s - 1 < 1$, we see that the series above is convergent. Now, we have to deal with nonnegative indices $j \geq 0$. The Bernstein type inequality of Lemma \ref{l:Bernstein} once again yields
\begin{equation*}
    \begin{split}
        \sum_{j \geq 0, \; |j - m| \leq 1} & \big\| \dot{\triangle}_j (K_s - K_{s, \varepsilon}) \big\|_{L^1} \| \dot{\triangle}_m \theta_\varepsilon \|_{L^\infty} \\
        & \lesssim \sum_{j \geq 0, \; |j - m| \leq 1} 2^{-j \sigma} \| K_s - K_{s, \varepsilon} \|_{\dot{B}^\sigma_{1, \infty}} 2^j \| \dot{\triangle}_m \theta_\varepsilon \|_{L^2} \\
        & \lesssim \sum_{j \geq 0, \; |j - m| \leq 1} 2^{-j (\sigma+1)} \| K_s - K_{s, \varepsilon} \|_{\dot{B}^\sigma_{1, \infty}} \|  \theta_\varepsilon \|_{H^2} \\
        & \lesssim \| K_s - K_{s, \varepsilon} \|_{\dot{B}^\sigma_{1, \infty}} \| \theta_\varepsilon \|_{H^2} \sum_{j \geq 0} 2^{-j (\sigma + 1)} \; .
    \end{split}
\end{equation*}
Because $\sigma < 2s - 1 < 1$, the sum above converges. As a consequence, we have shown that
\begin{equation*}
    \big\| (K_{s, \varepsilon} - K_s) \star \theta_\varepsilon \big\|_{L^\infty} \lesssim \| K_{s, \varepsilon} - K_s \|_{\dot{B}^\sigma_{1, \infty}} \| \theta_\varepsilon \|_{H^2} \tend{\varepsilon}{0^+} 0 \, .
\end{equation*}
We now focus on the second term $K_s \star (\theta_\varepsilon - \theta)$. The issue is that we do not have norm convergence of the sequence $(\theta_\varepsilon)$ in the space $H^2$, as Lemma \ref{l:ConvTheta} only provides $H^2_{\rm loc}$ convergence. This means that we cannot show $L^\infty$ convergence of the sequence $(K_s \star \theta_\varepsilon)$. Instead, we will prove weak convergence. We first note that by repeating the computations above, we see that, for any $f \in \mc S$,
\begin{equation}\label{eq:VelocityConvergence}
    \| K_s \star f \|_{L^\infty} \lesssim \| K_s \|_{\dot{B}^\sigma_{1, \infty}} \| f \|_{H^2}.
\end{equation}
This means that, for any $x \in \R^2$, the function $y \mapsto K_s(x - y)$ defines an element of the space $H^{-2}$. Furthermore, the weak-$(\star)$ convergence (up to an extraction)
%\begin{equation}
%    \theta_\varepsilon \wtend^* \theta \qquad \text{L^\infty_T(H^2)},
%\end{equation}
\begin{equation*}
    \theta_\varepsilon \wtend^* \theta \qquad \text{in } L^\infty_T(H^2),
\end{equation*}
which follows from the fact that $k \geq 4$, implies that
\begin{equation*}
    \begin{split}
        K_s \star \theta_\varepsilon (x) & = \langle K_s (x - \cdot), \theta_\varepsilon \rangle_{H^{-2} \times H^2} \\
        & \tend{\varepsilon}{0^+} \langle K_s (x - \cdot), \theta \rangle_{H^{-2} \times H^2} = K_s \star \theta (x).
    \end{split}
\end{equation*}
Overall, we have proved that, \textbf{up to an extraction,}
\begin{equation*}
    v_\varepsilon = K_{s, \varepsilon} \star \theta_\varepsilon \tend{\varepsilon}{0^+} v = - \nabla^\perp (- \Delta)^{-s} \theta \qquad \text{a.e.}
\end{equation*}
\end{proof}

\subsubsection{Step 5: Convergence of the Vortex Trajectories}

We wish to secure pointwise convergence of the vortex trajectories ODE $z_\varepsilon = v(t, z_\varepsilon)$.

\begin{lemma}
    Consider the sequence of functions $(z_\varepsilon)$ defined in Lemma \ref{l:ApproxSolutionsExist} and the limit velocity from Lemma \ref{l:ConvVelocity}. Then we have the convergence (up to an extraction)
    \begin{equation*}
        z_{\varepsilon} \longrightarrow z(t) \qquad \text{in } C^{1, \alpha}([0, T))
    \end{equation*}
    and the limit $z$ is a solution of the ODE $z' = v(t, z)$.
\end{lemma}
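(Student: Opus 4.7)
The plan is a standard Arzelà--Ascoli extraction followed by a passage to the limit in the integral form of the ODE,
\begin{equation*}
    z_\varepsilon(t) = z_0 + \int_0^t v_\varepsilon\big(\tau, z_\varepsilon(\tau)\big) \dd\tau.
\end{equation*}
First I would establish uniform-in-$\varepsilon$ bounds placing $(z_\varepsilon)$ in $C^{1,1}([0,T))$, a space which embeds compactly into $C^{1,\alpha}$ on compact sub-intervals for every $\alpha < 1$. The $L^\infty$ bound on $\dot z_\varepsilon = v_\varepsilon(t,z_\varepsilon(t))$ comes from $\| v_\varepsilon \|_{L^\infty} \lesssim \|\theta_\varepsilon\|_{H^2}$ (the analogue for $K_{s,\varepsilon}$ of \eqref{eq:VelocityConvergence}), itself uniform by \eqref{eq:UniformBounds}. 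Lipschitz regularity of $\dot z_\varepsilon$ in time is obtained by splitting
\begin{equation*}
    \big|v_\varepsilon(t_1,z_\varepsilon(t_1)) - v_\varepsilon(t_2,z_\varepsilon(t_2))\big| \leq \|\nabla v_\varepsilon\|_{L^\infty} |z_\varepsilon(t_1)-z_\varepsilon(t_2)| + \|v_\varepsilon(t_1,\cdot)-v_\varepsilon(t_2,\cdot)\|_{L^\infty},
\end{equation*}
where the spatial gradient is controlled by $\|\theta_\varepsilon\|_{H^k}$ (using $k \geq 4$) and the time increment of $v_\varepsilon$ by $\|\partial_t v_\varepsilon\|_{L^\infty} \lesssim \|\partial_t \theta_\varepsilon\|_{H^{k-1}}$, which is uniformly bounded by \eqref{eq:FirstTimeDerivative}. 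Arzelà--Ascoli then yields the announced convergence $z_\varepsilon \to z$ in $C^{1,\alpha}([0,T))$ along a subsequence.

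The second task is to identify $z$ as a solution of $\dot z = v(t,z)$, for which I need the pointwise convergence $v_\varepsilon(\tau, z_\varepsilon(\tau)) \to v(\tau, z(\tau))$ for every $\tau \in [0,T)$. This I obtain through the triangle inequality
\begin{equation*}
    \big|v_\varepsilon(\tau, z_\varepsilon(\tau)) - v(\tau, z(\tau))\big| \leq \|\nabla v_\varepsilon\|_{L^\infty} \, |z_\varepsilon(\tau) - z(\tau)| + \big|v_\varepsilon(\tau, z(\tau)) - v(\tau, z(\tau))\big|.
\end{equation*}
The first term vanishes uniformly in $\tau$ thanks to the uniform gradient bound combined with the $C^0$ convergence $z_\varepsilon \to z$ just established. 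The second term is pointwise $o(1)$: following the proof of Lemma~\ref{l:ConvVelocity} the contribution $(K_{s,\varepsilon}-K_s)\star\theta_\varepsilon$ tends to zero in $L^\infty$ uniformly in $x$, while $K_s\star(\theta_\varepsilon-\theta)$ converges pointwise at every $(t,x)$ because $\theta_\varepsilon(t) \rightharpoonup \theta(t)$ weakly in $H^2$ (combining the uniform $H^k$ bound with the strong convergence of Lemma~\ref{l:ConvTheta}) and $K_s(x - \cdot) \in H^{-2}$ by \eqref{eq:VelocityConvergence}. Dominated convergence, using the uniform $L^\infty$ bound on $v_\varepsilon$ as dominant, then yields
\begin{equation*}
    z(t) = z_0 + \int_0^t v\big(\tau, z(\tau)\big) \dd\tau,
\end{equation*}
and differentiating delivers the desired ODE.

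The only subtlety worth flagging is that the pointwise convergence $v_\varepsilon \to v$ must hold at the specific points $(\tau, z(\tau))$, which form a measure-zero subset of $[0,T)\times\RR^2$; an almost-everywhere convergence would be insufficient. Fortunately the structure of the splitting used in Lemma~\ref{l:ConvVelocity} produces convergence at \emph{every} $(t,x)$ separately, not merely almost everywhere, so no genuine obstacle appears. The remaining work is routine bookkeeping of the uniform bounds already at our disposal.
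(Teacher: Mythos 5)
Your proposal is correct and follows essentially the same route as the paper: uniform bounds on $\dot z_\varepsilon(t)=v_\varepsilon(t,z_\varepsilon(t))$ coming from the $W^{1,\infty}([0,T);H^{k-1})$ bound \eqref{eq:FirstTimeDerivative} (the paper phrases this as a space-time $C^{0,\alpha}$ bound on $v_\varepsilon$ rather than your $C^{1,1}$ bound on $z_\varepsilon$), Ascoli compactness to get $C^{1,\alpha}$ convergence, and then the same triangle-inequality argument combining the uniform $W^{1,\infty}$ bound on $v_\varepsilon$ with the convergence of Lemma~\ref{l:ConvVelocity} to identify the limit ODE. Your remark that the convergence of Lemma~\ref{l:ConvVelocity} must hold at every point (not merely a.e.), and that its proof indeed yields this, is exactly what the paper uses implicitly at the points $(t,z(t))$.
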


\begin{proof}
Recall from \eqref{eq:FirstTimeDerivative} that the sequence $(\theta_\varepsilon)_{\varepsilon > 0}$ is bounded in $W^{1, \infty}([0, T) ; H^3)$, so that the sequence $(v_\varepsilon)_{\varepsilon > 0}$ is bounded in the spaces
\begin{equation*}
    W^{1, \infty} ([0, T) ; H^2) \subset W^{1, \infty} ([0, T) ; C^{0, \alpha}) \subset C^{0, \alpha} ([0, T) \times \R^2)
\end{equation*}
for any Hölder exponent $0 < \alpha < 1$. The trajectory derivatives $z'_\varepsilon (t) = v_\varepsilon \big( t, z_\varepsilon(t) \big)$ must therefore also be bounded in $C^{0, \alpha} ([0, T))$. This implies the convergence of the sequence $(z_\varepsilon)_{\varepsilon > 0}$ to 
\begin{equation}\label{eq:PVConvergence}
    z_{\varepsilon} \longrightarrow z \qquad \text{in } C^{1, \alpha}([0, T)),
\end{equation}
for some $z \in C^{1, \alpha}([0, T))$. We deduce
\begin{equation*}
    \begin{split}
        \big| v_\varepsilon (t, z(t_\varepsilon)) - v(t, z(t)) \big| & \leq \big| v_\varepsilon (t, z(t_\varepsilon)) - v_\varepsilon(t, z(t)) \big| + \big| v_\varepsilon(t, z(t_\varepsilon)) - v(t, z(t)) \big| \\
        & \leq \| v_\varepsilon \|_{W^{1, \infty}} \big| v(t, z(t_\varepsilon)) - v(t, z(t)) \big| + \big| v_\varepsilon \big( t, z(t) \big) - v \big( t, z(t) \big) \big| \\
        & \tend{\varepsilon}{0^+} 0 \; .
    \end{split}
\end{equation*}
The convergence above follows from Lemma \ref{l:ConvVelocity}. We deduce that the particle trajectory solves the limit ODE, namely
\begin{equation}\label{eq:LimitODE}
    z'(t) = v\big(t, z(t) \big).
\end{equation}
\end{proof}

\subsubsection{Step 6: End of the Proof of Existence}

In this paragraph, we finish the proof of Theorem \ref{t:StrongExistence}. It is mostly a matter of assembling the convergence results from the lemmas above.

\begin{proof}[Proof of Theorem \ref{t:StrongExistence}]
    We begin by studying the convergence of the non-linear terms $v_\varepsilon \cdot \nabla \theta_\varepsilon$ and $\mc H_\varepsilon \cdot \nabla \theta_\varepsilon$ appearing in the first equation of \eqref{eq:ApproxSystem1}. Thanks to Lemmas \ref{l:ConvVelocity} and \ref{l:ConvTheta}, we write
    \begin{equation}\label{eq:ThetaVelocityConvergence}
        v_\varepsilon \cdot \nabla \theta_\varepsilon \longrightarrow v \cdot \nabla \theta \qquad \text{a.e.}.
    \end{equation}
    On the other hand, the product $\mc H_\varepsilon \cdot \nabla \theta_\varepsilon$ requires more attention because the function
    \begin{equation*}
        \mc H_\varepsilon (t, x) = K_{s, \varepsilon} \big( x - z_\varepsilon(t) \big)
    \end{equation*}
    possesses a double dependency on $\varepsilon$, in the kernel $K_{s, \varepsilon}$, and in the argument $x - z_\varepsilon(t)$. However, as the function $\theta_\varepsilon$ is constant around the singularity of $\mc H_\varepsilon$, we only need to care about the values of $\mc H_\varepsilon(t, x)$ for $|x - z_\varepsilon(t)| > R_\varepsilon(t)$. Let us define the target function
    \begin{equation*}
        H(t, x) := K_s \big( x - z(t) \big).
    \end{equation*}
    On the time interval $[0, T)$, both functions $R_\varepsilon(t)$ and $R(t)$ are bounded from below by
    a constant $\eta > 0$ which is independent of $\varepsilon$ (see Lemma \ref{l:ApproxSolutionsExist}). Moreover, from the uniform convergence \eqref{eq:PVConvergence}, 
    we get that the approximate and target point vortices are closer than $\eta$ when $\varepsilon$ is small enough. 
    Thus,
    \begin{equation*}
        |z_\varepsilon(t) - z(t)| \leq \frac{1}{4} \min \big\{ R_\varepsilon(t), R(t) \big\} \qquad \text{for } t \in [0, T).
    \end{equation*}
    Thanks to the uniform bound \eqref{eq:UniformBounds} in Lemma \ref{l:ApproxSolutionsExist}, we deduce that for such $\varepsilon$, both derivatives $\nabla \theta_\varepsilon$ and $\nabla \theta$ vanish on a ball $B(z(t), R_\ast)$ for some constant radius $R_\ast > 0$ depending only on the initial data. Outside of this ball, we get convergence
    \begin{equation*}
        \big\| K_{s, \varepsilon}(x - z_\varepsilon(t)) - K_s(x - z(t)) \big\|_{L^\infty({}^c B(z(t), R_\ast))} \longrightarrow 0 \qquad \text{uniformly on } [0, T),
    \end{equation*}
    from which we infer, together with \eqref{eq:ThetaConvergence},
    \begin{equation}\label{eq:HConvergence}
        \mc H_\varepsilon \cdot \nabla \theta_\varepsilon \longrightarrow H \cdot \nabla \theta \qquad \text{in } L^\infty([0, T) ; L^2).
    \end{equation}

\medskip

Let us summarize what we have obtained so far: putting together all convergences \eqref{eq:ThetaConvergence}, \eqref{eq:ThetaVelocityConvergence}, \eqref{eq:HConvergence} and the fact that $z(t)$ solves the limit ODE \eqref{eq:LimitODE}, we see that $(z, \theta)$ is indeed a solution of the Vortex-Wave system \eqref{eq:StrongSystem}. It only remains to show that $(z, \theta)$ is a solution associated to the initial datum $(z_0, \theta_0)$. The fact that $z(0) = z_0$ is a consequence of the uniform convergence \eqref{eq:PVConvergence} and the equality $z_\varepsilon(0) = z_0$. Concerning the initial value for the scalar unknown, it is a consequence of the time continuity of $\theta$ (see the comments immediately following \eqref{eq:ThetaConvergence}) and the strong convergence of the initial data 
\begin{equation*}
    \theta_{0, \varepsilon} \longrightarrow \theta_0 \qquad \text{in } L^2.
\end{equation*}
These remarks end the proof of Theorem \ref{t:StrongExistence}.
\end{proof}

\subsection{Time Regularity}

In this paragraph, we check that the solution constructed in Theorem \ref{t:StrongExistence} is indeed a classical solution of the PDE problem, \textsl{i.e.} that all derivatives are well-defined in the sense of $C^1$ functions (with respect to time and space). Together with the existence of strong solutions, this completes the proof of the existence part of Theorem \ref{thrm:strong_solutions}.

\begin{proposition}
    Let $k \geq 4$ be an integer and $\theta_0 \in H^k$ an initial datum. Then the solution $(\theta, z) \in L^\infty([0, T) ; H^k) \times C^{k+s-1-\varepsilon}([0, T) ; \R^2)$ constructed in Theorem \ref{t:StrongExistence} is a classical solution (it is $C^1$ in both time and space). More precisely, we have the following regularity: for any $\ell < k-1$, 
    \begin{equation*}
        \theta \in C^1 \big( [0, T) ; H^\ell \big).
    \end{equation*}
\end{proposition}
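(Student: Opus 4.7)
The plan is to reduce the claim to proving that the time derivative $\partial_t\theta=-(v+H)\cdot\nabla\theta$ belongs to $C^0([0,T);H^\ell)$ for every $\ell<k-1$; once this is established, the fundamental theorem of calculus combined with $\theta\in L^\infty([0,T);H^k)$ yields $\theta\in C^1([0,T);H^\ell)$, and the Sobolev embedding $H^\ell\hookrightarrow C^{\ell-1}_b$ (valid since $k\ge 4$ allows $\ell>2$) upgrades this to pointwise $C^1$ regularity in both time and space.

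The first step is to promote the time regularity of $\theta$ from $L^\infty$ to continuity. The very calculations used for the a priori estimates (compare \eqref{eq:FirstTimeDerivative} in the regularized setting), now applied to the limit $\theta$, produce the bound $\partial_t\theta\in L^\infty([0,T);H^{k-1})$. A standard interpolation argument between this and $\theta\in L^\infty([0,T);H^k)$ then shows that $\theta\in C^{0,k-m}([0,T);H^m)$ for every $m\in[k-1,k)$, so in particular $\theta\in C^0([0,T);H^m)$ for every $m<k$.

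Next I would transfer this continuity to each factor in the nonlinearity. Proposition~\ref{p:FourierMultiplierKS} says that the Biot-Savart operator is of order $1-2s$, so $v=K_s\star\theta$ decomposes as a low-frequency part in $L^\infty$ plus a high-frequency part in $H^{m+2s-1}$, both continuous in time. For the singular factor $H(t,x)=K_s(x-z(t))$ we exploit the plateau: Lemma~\ref{lem:remain_constant_near_PV} guarantees a uniform lower bound $R(t)\ge R_\ast>0$ on $[0,T)$, hence
\begin{equation*}
H(t,x)\cdot\nabla\theta(t,x)=\tilde H_{R_\ast}(t,x)\cdot\nabla\theta(t,x),\qquad\tilde H_{R_\ast}(t,x):=\bigl(1-\chi_{R_\ast/2}(x-z(t))\bigr)K_s\bigl(x-z(t)\bigr),
\end{equation*}
and the truncated kernel $\tilde H_{R_\ast}$ is smooth in $x$, uniformly bounded in every $H^N$, and continuous in $t$ because $z\in C^0([0,T);\R^2)$. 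The $C^1$ regularity of $z$ itself comes from $z'(t)=v(t,z(t))$ combined with the fact that $v$ is Hölder jointly in $(t,x)$ and Lipschitz in $x$.

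Combining these three pieces through the usual Moser-type product estimates in Sobolev spaces (dimension $d=2$, both factors at positive regularity with $L^\infty$ control on one of them thanks to embedding), one concludes that $v\cdot\nabla\theta$ and $\tilde H_{R_\ast}\cdot\nabla\theta$ both lie in $C^0([0,T);H^\ell)$ for $\ell<k-1$, which is exactly $\partial_t\theta\in C^0([0,T);H^\ell)$. The main delicate point is the handling of the singularity of $H$, and the entire difficulty is subsumed by the plateau hypothesis: without the uniform lower bound $R_\ast>0$ already secured by the a priori estimates, the truncation used to define $\tilde H_{R_\ast}$ would not yield a uniformly smooth vector field, and the product $H\cdot\nabla\theta$ would not make sense in $H^\ell$ with $\ell$ large.
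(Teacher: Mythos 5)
Your proposal is correct, but it follows a genuinely different route from the paper's proof. The paper obtains the time regularity by differentiating the equation once more in time: it writes $\partial_t^2\theta=-\partial_t v\cdot\nabla\theta-v\cdot\nabla\partial_t\theta-\partial_t H\cdot\nabla\theta-H\cdot\nabla\partial_t\theta$, bounds every term in $H^{k-2}$ (using the plateau to replace $H$ by the smooth truncation $\chi_R H$, exactly as you do with $\tilde H_{R_\ast}$), and then interpolates between $W^{1,\infty}([0,T);H^{k-1})$ and $W^{2,\infty}([0,T);H^{k-2})$ to get $\partial_t\theta\in C^{0,\sigma}([0,T);H^\ell)$ for $k-2<\ell<k-1$, which suffices because $\ell>2$ gives the spatial $C^1$ embedding. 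You never touch $\partial_t^2\theta$: you first upgrade $\theta$ to $C^0([0,T);H^m)$ for every $m<k$ by the same interpolation already used in Lemma~\ref{l:ConvTheta}, transfer that continuity to $v$ (via Proposition~\ref{p:FourierMultiplierKS}), to $\nabla\theta$, and to the truncated field $\tilde H_{R_\ast}$ (via the uniform lower bound on $R(t)$ from the a priori estimates and Lemma~\ref{lem:remain_constant_near_PV}), and conclude by product estimates that the right-hand side $-(v+H)\cdot\nabla\theta$ lies in $C^0([0,T);H^\ell)$, whence $\theta\in C^1([0,T);H^\ell)$ by integrating the equation in time. Your route is somewhat more elementary, since it avoids estimating $\partial_t v$ and $\partial_t H$ (the latter involves the vortex velocity $v(t,z(t))$); the paper's second-derivative argument buys a little more, namely a quantitative H\"older-in-time modulus for $\partial_t\theta$. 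Both proofs rest on the same essential use of the plateau. Two cosmetic remarks: the embedding you need is $H^\ell(\R^2)\subset C^1_b$ for $\ell>2$ (not literally $C^{\ell-1}_b$), and the fundamental-theorem-of-calculus step should be justified by the fact that the equation holds for a.e.\ time with values in $H^{k-1}$, which follows from $\theta\in W^{1,\infty}([0,T);H^{k-1})$, rather than from the $L^\infty(H^k)$ bound alone; neither point affects the validity of the argument.
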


\begin{proof}
    Be repeating the computations in \eqref{eq:FirstTimeDerivative}, we already know that the solution satisfies $\theta \in W^{1, \infty}([0, T) ; H^{k-1})$ and $z \in C^1([0, T) ; \R^2)$, but to show that the function $\theta$ has $C^1$ regularity in both time and space, we study the second derivative $\partial_t \theta$, which reads
    \begin{equation}\label{eq:SecondTimeDerivative}
        \partial_t^2 \theta = - \partial_t v \cdot \nabla \theta - v \cdot \nabla \partial_t \theta - \partial_t H \cdot \nabla \theta - H \cdot \nabla \partial_t \theta.
    \end{equation}
    
    On the one hand, from $\theta \in W^{1, \infty}([0, T) ; H^{k-1})$, we deduce that $\partial_t v \in L^\infty([0, T) ; H^{k-2})$, so that
    \begin{equation}\label{eq:SndDerEstEQ1}
        \| \partial_t v \cdot \nabla \theta \|_{H^{k-2}} \lesssim \| \partial_t v \|_{H^{k-2}} \| \nabla \theta \|_{H^{k-2}} < + \infty.
    \end{equation}
    Similarly, we have
    \begin{equation}\label{eq:SndDerEstEQ2}
        \| v \cdot \nabla \partial_t \theta \|_{H^{k-2}} \lesssim \| v \|_{H^{k-2}} \| \partial_t \theta \|_{H^{k-1}} < + \infty.
    \end{equation}
    
    On the other hand, we look at the last two terms in the righthandside of \eqref{eq:SecondTimeDerivative}, which involve $H(t,x)$. Because the function $\theta$ is constant, equal to $\beta$, on the ball $B\big( z(t), R(t) \big)$, the derivatives $\nabla \theta$ and $\partial_t \nabla \theta$ must vanish on that ball. Therefore, we have
    \begin{equation*}
        \partial_t H \cdot \nabla \theta + H \cdot \nabla \partial_t \theta = (1 -  \chi_R) \partial_t H \cdot \nabla \theta + (1 -  \chi_R) H \cdot \nabla \partial_t \theta,
    \end{equation*}
    which is a considerable help, as the function $(1 -  \chi_R) H$ is $C^\infty$ spacewise smooth at any given time. Let us compute the derivative $\partial_t H$. We have
    \begin{equation*}
        \partial_t H (t, x) = - \nabla K_s \big( x - z(t) \big) \cdot v \big( t, z(t) \big),
    \end{equation*}
    so that
    \begin{equation}\label{eq:SndDerEstEQ3}
        \begin{split}
            \| (1 -  \chi_R) \partial_t H \cdot \nabla \theta\|_{H^{k-1}} \lesssim \| \nabla \theta \|_{H^{k-1}} \| (1 - \chi_R) \partial_t H \|_{H^{k-1}} & \leq \big\| ( 1 - \chi_{R(t)}(x)) \nabla K_s (x - z(t)) \cdot v (t, z(t)) \big\|_{H^{k-1}} \| \theta \|_{H^{k}} \\
            & \lesssim \frac{1}{R(t)^{k+2 - 2s}} \| \theta \|_{H^{k}}^2 < + \infty.
        \end{split}
    \end{equation}
    In the same way, we also have
    \begin{equation}\label{eq:SndDerEstEQ4}
        \begin{split}
            \big\| \chi_R H \cdot \nabla \partial_t \theta \big\|_{H^{k-2}} & \lesssim \| \chi_R H \|_{H^{k-2}} \| \partial_t \theta \|_{H^{k-1}} \\
            & \lesssim \frac{1}{R(t)^{k+1 - 2s}} \| \partial_t \theta \|_{H^{k-2}} < + \infty.
        \end{split}
    \end{equation}
    
    By putting together \eqref{eq:SndDerEstEQ1}, \eqref{eq:SndDerEstEQ2}, \eqref{eq:SndDerEstEQ3} and \eqref{eq:SndDerEstEQ4}, we find that 
    \begin{equation*}
        \partial_t^2 \theta \in L^\infty \big( [0, T) ; H^{k-2} \big).
    \end{equation*}
    This shows that $\theta$ has $C^1$ regularity with respect to time in the $H^{k-2}$ topology. However, we are not quite finished yet: if we want to show that $\theta$ is $C^1$ with respect to both time and space, then the case $k=4$ requires a bit more attention, as $H^{2} \not \subset C^1$. This issue is resolved by interpolating between the spaces $W^{1, \infty}([0, T) ; H^{k-1})$ and $W^{2, \infty}([0, T) ; H^{k-2})$. For any $\sigma \in [0, 1]$, we set
    \begin{equation*}
        \ell = (k-2) \sigma + (k-1)(1 - \sigma).
    \end{equation*}
    Then, for any times $t_1, t_2 \geq 0$, we may write, 
    \begin{equation*}
        \begin{split}
            \big\| \partial_t \theta (t_2) - \partial_t\theta (t_1) \big\|_{H^\ell} & \leq \big\| \partial_t \theta (t_2) - \partial_t \theta(t_1) \big\|^\sigma_{H^{k-2}} \big\| \partial_t \theta (t_2) - \partial_t \theta (t_1) \big\|_{H^{k-1}}^{1-\sigma} \\
            & \leq 2^{1 - \sigma} |t_2 - t_1|^\sigma \| \partial_t^2 \theta \|^\sigma_{W^{1, \infty}(H^{k-2})} \| \partial_t \theta \|_{L^\infty(H^{k-1})}^{1 - \sigma}
        \end{split}
    \end{equation*}
    Consequently, we deduce Hölder regularity on the time derivative
    \begin{equation*}
        \partial_t \theta \in C^{0, \sigma} \big( [0, T) ; H^\ell \big),
    \end{equation*}
    which provides the space-time $C^1$ regularity we were looking for: when $0 < \sigma < 1$, the space above is a subspace of $C^0([0, T) ; C^1 \cap W^{1, \infty})$.
\end{proof}

%%%%%%%%%%%%%%%%%%%%%%%%%%%%%%%%%%%%%%%%%%%%%%%%%%%%%
%%%%%%%%%%%%%%%%%%%%%%%%%%%%%%%%%%%%%%%%%%%%%%%%%%%%%
%%%%%%%%%%%%%%%%%%%%%%%%%%%%%%%%%%%%%%%%%%%%%%%%%%%%%
\section{Uniqueness and stability of classical solutions}\label{sec:Uniqueness}

In this section, we conclude the proof of Theorem~\ref{thrm:strong_solutions}-$(i)$ and Theorem~\ref{thrm:strong_solutions}-$(ii)$ by proving the following proposition.
\begin{proposition}\label{prop:stabilité}
    Let $T > 0$, $k \ge 4$ and $(\theta_1,z_1)$ and $(\theta_2,z_2)$ be two solutions in $L^\infty([0,T],H^k)$ of~\eqref{eq:StrongSystem}, as constructed in Section~\ref{sec:existence_strong}, such that there exists $\beta_1$, $\beta_2$, $R_1>0$ and $R_2>0$ such that
    \begin{equation*}
        \theta_i \equiv \beta_i \quad \text{ on } B(z_i(0),R_i).
    \end{equation*}
    There exists $\rho > 0$ depending only on $\|\theta_1(0,\cdot)\|_{H^k}$, $\|\theta_2(0,\cdot)\|_{H^k}$, $R_1$, $R_2$ and $T$ such that if
    \begin{equation}\label{hyp:z2-z1}
    |z_2(0) - z_1(0) | < \rho,
    \end{equation}
    then, there exists a time $T_0 \in [0,T)$ such that for every $t \in [0,T_0]$ and for every $\ell \in \NN$ such that $2 \le \ell \le k-2$,
    \begin{equation*}
        \| \theta_2(t,\cdot) - \theta_1(t,\cdot) \|_{H^{\ell}} + |z_2(t)-z_1(t)| \lesssim \| \theta_2(0,\cdot) - \theta_1(0,\cdot) \|_{H^{\ell}} + |z_2(0)-z_1(0)|,
    \end{equation*}
    where the omitted multiplicative constant depends only on $\|\theta_1(0,\cdot)\|_{H^k}$, $\|\theta_2(0,\cdot)\|_{H^k}$, $R_1$, $R_2$, $\ell$ and $T$.
\end{proposition}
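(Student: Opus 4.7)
The plan is to combine an $H^{\ell}$ energy estimate for $\delta \theta := \theta_2 - \theta_1$ with a differential inequality for $|\delta z|^2$, where $\delta z := z_2 - z_1$, and to close the argument by Grönwall's lemma. The whole approach rests on a bootstrap that keeps the point-vortex $z_2(t)$ inside the plateau of $\theta_1(t)$ (and symmetrically), which in turn lets us treat the singular pieces $H_1$, $H_2$ and $\delta H := H_2 - H_1$ by a smooth truncation. First, I apply Lemma~\ref{lem:remain_constant_near_PV} to each solution separately: on $[0,T]$ both $R_1(t)$ and $R_2(t)$ remain above some $R_\ast > 0$ depending only on $T$, $R_1$, $R_2$ and the $H^k$ norms of the initial data. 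Then I let $T_0 \in [0,T]$ be the maximal time on which $|\delta z(t)| \leq R_\ast/4$; on this interval both balls $B(z_i(t), 3R_\ast/4)$ lie inside both plateaus, so $\delta \theta \equiv \beta_2-\beta_1$ on each of them, and $\nabla \delta \theta$ vanishes on a neighborhood of $z_1(t)$ and of $z_2(t)$.

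Subtracting the two $\theta$-equations gives
\[
\partial_t \delta \theta + (v_2 + H_2)\cdot \nabla \delta \theta = - \delta v \cdot \nabla \theta_1 - \delta H \cdot \nabla \theta_1, \qquad \delta v = K_s \star \delta \theta.
\]
For a multi-index $\alpha$ with $|\alpha|=\ell$ I apply $\partial^\alpha$ and pair with $\partial^\alpha \delta \theta$ in $L^2$. The transport term $\int (v_2+H_2)\cdot \nabla \partial^\alpha \delta \theta\, \partial^\alpha \delta \theta\,\mathrm{d}x$ vanishes by the divergence-free property: the integration by parts across the singularity of $H_2$ is legitimate because $\partial^\alpha \delta \theta$ is constant on a ball centered at $z_2(t)$. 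The commutator $[\partial^\alpha, v_2]\cdot \nabla \delta \theta$ is controlled by Kato-Ponce/Moser estimates, using $k\geq \ell+2$ and that $\theta_2 \mapsto v_2$ is of order $1-2s\leq 1$, giving $\lesssim \|\theta_2\|_{H^k}\|\delta\theta\|_{H^\ell}$. The commutator with $H_2$ is where the singular kernel appears: here I write $H_2 \cdot \nabla \delta\theta = \widetilde{H}_2 \cdot \nabla \delta \theta$ with $\widetilde{H}_2 := (1-\chi_{R_\ast/2}(\cdot - z_2)) H_2$, which is $C^\infty$ and bounded in every Sobolev norm by a constant $C(R_\ast,s,\ell)$. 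The right-hand side is treated by the Moser inequality: $\|\delta v \cdot \nabla \theta_1\|_{H^\ell} \lesssim \|\theta_1\|_{H^k} \|\delta\theta\|_{H^\ell}$ since $\delta\theta \mapsto \delta v$ loses at most $1-2s < 1$ derivative; and for $\delta H \cdot \nabla \theta_1$ I exploit that $\nabla \theta_1 \equiv 0$ on $B(z_1,R_\ast)$ while both singularities of $\delta H$ lie in $B(z_1,R_\ast/4)$, so that a mean-value estimate yields $\|\delta H \cdot \nabla \theta_1\|_{H^\ell} \lesssim C(R_\ast)\,|\delta z|\,\|\theta_1\|_{H^k}$.

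Combined with the ODE $\dot{\delta z} = \bigl(v_2(t,z_2)-v_2(t,z_1)\bigr)+\delta v(t,z_1)$, whose first term is bounded by $\|\nabla v_2\|_{L^\infty}|\delta z| \lesssim \|\theta_2\|_{H^k}|\delta z|$ and whose second by $\|\delta v\|_{L^\infty}\lesssim \|\delta \theta\|_{H^\ell}$ (Sobolev embedding, valid because $\ell \geq 2 > 2-2s$), these estimates produce a closed Grönwall inequality
\[
\frac{\mathrm{d}}{\mathrm{d}t} \bigl( \|\delta \theta(t)\|_{H^\ell}^2 + |\delta z(t)|^2 \bigr) \leq C\bigl( \|\delta \theta(t)\|_{H^\ell}^2 + |\delta z(t)|^2 \bigr),
\]
with $C$ depending only on the quantities listed in the statement. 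Since $\|\delta \theta(0)\|_{H^\ell}$ is bounded \textsl{a priori} by $\|\theta_1(0)\|_{H^k}+\|\theta_2(0)\|_{H^k}$, after integrating I choose $T_0 > 0$ and $\rho > 0$ small enough, depending only on these quantities and on $R_\ast$, to ensure $|\delta z(t)|\leq R_\ast /4$ throughout $[0,T_0]$, thereby closing the bootstrap. The main technical obstacle in this scheme is the commutator $[\partial^\alpha, H_2]\cdot \nabla \delta \theta$ together with the apparent singularity of $\delta H \cdot \nabla \theta_1$; both are handled only because the plateau hypothesis forces $\nabla \delta \theta$ and $\nabla \theta_1$ to vanish on explicit balls whose radii are controlled uniformly on $[0,T_0]$ by the bootstrap.
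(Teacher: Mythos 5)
Your overall architecture --- persistence of the plateaus via Lemma~\ref{lem:remain_constant_near_PV}, smooth truncation of the point-vortex fields, an $H^\ell$ energy estimate for $\delta\theta$ coupled with an ODE estimate for $\delta z$, then Gr\"onwall --- is the same as the paper's (Lemmas~\ref{lem:choice_rho_T}, \ref{lem:der_z_2-z_1} and \ref{lem:der_theta_2-theta_1}). But there is a genuine gap at the heart of the energy estimate: the claim that $\|\delta v\cdot\nabla\theta_1\|_{H^\ell}\lesssim\|\theta_1\|_{H^k}\|\delta\theta\|_{H^\ell}$ because ``$\delta\theta\mapsto\delta v$ loses at most $1-2s<1$ derivative'' is false when $s<1/2$. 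The operator $\nabla^\perp(-\Delta)^{-s}$ then has strictly positive order $1-2s$, so the contribution in which all $\ell$ derivatives fall on $\delta v$, namely $\int\partial^\alpha\delta v\cdot\nabla\theta_1\,\partial^\alpha\delta\theta$, requires $\|\delta\theta\|_{H^{\ell+1-2s}}$, which is not controlled by $\|\delta\theta\|_{H^\ell}$; interpolating with the a priori $H^k$ bound would only yield H\"older, not Lipschitz, stability, contrary to the statement. This is exactly the difficulty the paper's proof is built around: for the top-order term one must use the skew-symmetry of $A=(A_1,A_2)=\nabla^\perp(-\Delta)^{-s}$ to rewrite it as $\tfrac12\int\partial^\alpha\delta\theta\,\big[A_j,\partial_j\theta\big]\partial^\alpha\delta\theta$ and then apply the commutator estimate of Lemma~\ref{p:CCCGW_Commutator}, together with $\ell\ge 2s$ to absorb $(-\Delta)^{-s}\partial^\alpha\delta\theta$ into $\|\delta\theta\|_{H^\ell}$; the lower-order pieces $\partial^\gamma\delta v$ with $|\gamma|\le\ell-1$ are handled through the low/high-frequency splitting of Proposition~\ref{p:FourierMultiplierKS} and the inequality $|\gamma|+1-2s\le\ell$. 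Your decomposition has the same commutator structure available (with $\theta_1$ in the role the paper gives to $\theta_2$), but as written your Moser bound only works, modulo the low-frequency splitting, for $s\ge 1/2$ --- which is essentially the paper's $L^2$-stability statement, Proposition~\ref{prop:stabilité_L2} --- and does not prove the proposition for $0<s<1/2$.

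A second, repairable, misstep is the closure of your bootstrap. Integrating the closed Gr\"onwall inequality gives $|\delta z(t)|^2\le e^{Ct}\big(\|\delta\theta(0)\|_{H^\ell}^2+|\delta z(0)|^2\big)$, and since no smallness of $\|\delta\theta(0)\|_{H^\ell}$ is assumed in the proposition, this right-hand side may already exceed $(R_\ast/4)^2$ for every $t>0$; so it cannot be used to enforce $|\delta z(t)|\le R_\ast/4$, however small $T_0$ and $\rho$ are chosen. The correct way to fix $T_0$ is through the a priori bound on the vortex speeds, $|\dot z_i|\le\|v_i\|_{L^\infty}\lesssim\|\theta_i\|_{H^k}$ (or, as in the paper's Lemma~\ref{lem:choice_rho_T}, $|\dot z_i|\le\|K_{s,\rho}\|_{L^\infty}\|\theta_i(0)\|_{L^1}$ via the plateau), which gives $|\delta z(t)|\le\rho+Ct$ and lets you choose $\rho$ and $T_0$ in advance, depending only on the admissible quantities, without any bootstrap on the Gr\"onwall output.
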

Theorem~\ref{thrm:strong_solutions}-$(ii)$ is a simple reformulation of Proposition~\ref{prop:stabilité} and uniqueness of classical solutions of \eqref{eq:StrongSystem} as stated in Theorem~\ref{thrm:strong_solutions}-$(i)$ comes from applying Proposition~\ref{prop:stabilité} to two solutions with same initial datum 
$(\theta_0,z_0)$ with $\theta_0$ constant in a neighborhood of $z_0$ as we will detail in Section~\ref{sec:conclusion_uniqueness}.

\medskip

Moreover in this section we obtain en route the $L^2$-stability when $s \ge 1/2$ as follows.
\begin{proposition}\label{prop:stabilité_L2}
    If in addition to the hypotheses of Proposition~\ref{prop:stabilité} we assume that $s \ge 1/2$, then
    \begin{equation*}
        \| \theta_2(t,\cdot) - \theta_1(t,\cdot) \|_{L^2} + |z_2(t)-z_1(t)| \lesssim \| \theta_2(0,\cdot) - \theta_1(0,\cdot) \|_{L^2} + |z_2(0)-z_1(0)|.
    \end{equation*}
\end{proposition}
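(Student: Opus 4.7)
Define $\eta := \theta_2 - \theta_1$ and $\zeta := z_2 - z_1$, and work with the joint energy $\mathcal{E}(t) := \|\eta(t)\|_{L^2}^2 + |\zeta(t)|^2$. The plan is to derive a Grönwall-type inequality $\frac{d}{dt}\mathcal{E} \lesssim \mathcal{E}$ under a bootstrap hypothesis $|\zeta(t)| \le \tfrac{1}{4}\min(R_1(t), R_2(t))$ on a time interval $[0,T_0]$. By Lemma~\ref{lem:remain_constant_near_PV}, the radii $R_i(t)$ stay bounded below by a constant $r_0 > 0$ depending only on the data, and under the bootstrap, both $\theta_1(t)$ and $\theta_2(t)$ are constant (equal to $\beta_1$ and $\beta_2$ respectively) on the common ball $B(z_2(t), r_0/2)$. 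The decisive input of the assumption $s \ge 1/2$ is Proposition~\ref{p:FourierMultiplierKS}, which for $s \ge 1/2$ gives $\|K_s \star f\|_{L^\infty + L^2} \lesssim \|f\|_{H^{1-2s}} \lesssim \|f\|_{L^2}$, since $1 - 2s \le 0$.

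\textbf{Step 1: the $L^2$ estimate for $\eta$.} Subtract the two equations and test against $\eta$ to obtain
\begin{equation*}
    \tfrac{1}{2}\tfrac{d}{dt}\|\eta\|_{L^2}^2 \;=\; -\int \eta\,(v_2 - v_1)\cdot\nabla\theta_1 \dx \;-\; \int \eta\,(H_2 - H_1)\cdot\nabla\theta_1 \dx,
\end{equation*}
the pure transport contribution $\int \eta\,(v_2+H_2)\cdot\nabla\eta\dx$ vanishing by integration by parts (the field $v_2 + H_2$ is divergence-free away from $z_2$, $\eta$ is constant in a neighborhood of $z_2$, and the boundary term on any small circle around $z_2$ cancels since $K_s(y)\cdot y = 0$). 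The first integral is controlled using Proposition~\ref{p:FourierMultiplierKS}: decompose $v_2 - v_1 = K_s\star\eta = u_1 + u_2$ with $\|u_1\|_{L^\infty} + \|u_2\|_{L^2} \lesssim \|\eta\|_{L^2}$, then estimate each part against $\nabla\theta_1 \in L^2 \cap L^\infty$ (since $k \ge 4$) to get $\lesssim \|\theta_1\|_{H^k}\|\eta\|_{L^2}^2$. For the second integral, $\nabla\theta_1 \equiv 0$ on $B(z_1, R_1)$ reduces the integration to $\{|x - z_1| > R_1\}$, where the mean value theorem gives $|H_2 - H_1|(x) \lesssim |\zeta|\,|x-z_1|^{2s-4}$; this function is in $L^2(\{|x-z_1| > R_1\})$ with norm $\lesssim |\zeta|$, and Cauchy-Schwarz yields a bound $\lesssim |\zeta|\,\|\eta\|_{L^2} \lesssim \mathcal{E}$.

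\textbf{Step 2: the ODE estimate for $\zeta$.} Split
\begin{equation*}
    \tfrac{d}{dt}\zeta \;=\; \bigl[v_1(t,z_2) - v_1(t,z_1)\bigr] \;+\; (K_s \star \eta)(t, z_2).
\end{equation*}
The bracket is $\le \|\nabla v_1\|_{L^\infty}|\zeta| \lesssim \|\theta_1\|_{H^k}|\zeta|$. The hard term is the pointwise evaluation at $z_2$; it is handled by using the plateau of $\eta$ around $z_2$. Writing
\begin{equation*}
    (K_s \star \eta)(z_2) \;=\; \int_{|y|<r_0/2} K_s(-y)\,\eta(z_2 - y)\dy \;+\; \int_{|y|>r_0/2} K_s(-y)\,\eta(z_2 - y)\dy,
\end{equation*}
the first integral vanishes: on $|y| < r_0/2$ we have $\eta(z_2 - y) = \beta_2 - \beta_1$ is constant, and $K_s$ is odd. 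The second is estimated by Cauchy-Schwarz using the fact that for $s < 1$ the kernel $K_s$ lies in $L^2(\{|y|>r_0/2\})$ with norm $\lesssim r_0^{-(2-2s)}$, giving $|(K_s\star\eta)(z_2)| \lesssim \|\eta\|_{L^2}$.

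\textbf{Step 3: Grönwall and bootstrap.} Combining Steps 1 and 2, $\tfrac{d}{dt}\mathcal{E}(t) \le C\mathcal{E}(t)$ on any interval where the bootstrap holds, whence $\mathcal{E}(t) \le e^{CT_0}\mathcal{E}(0)$. Choosing $\rho$ small enough so that $\mathcal{E}(0)\,e^{CT_0} < (r_0/4)^2$ closes the bootstrap, proving the conclusion on $[0,T_0]$. The main obstacle was the pointwise evaluation $(K_s\star\eta)(z_2)$ in Step 2: its control relies crucially on \emph{both} the plateau structure (ensuring cancellation of the singular contribution by oddness of $K_s$) and the assumption $s \ge 1/2$ (via Proposition~\ref{p:FourierMultiplierKS}, and the borderline integrability of $K_s$ outside a ball). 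The failure of either ingredient is precisely why the analogous $L^2$-stability breaks down in the super-critical case $s < 1/2$, where one is forced to work at the $H^2$ level instead.
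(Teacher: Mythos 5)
Your Steps 1 and 2 reproduce, in slightly different clothing, the paper's own mechanism (the $L^\infty+L^2$ splitting of $K_s\star(\theta_2-\theta_1)$ via Proposition~\ref{p:FourierMultiplierKS} together with $\|\cdot\|_{H^{1-2s}}\lesssim\|\cdot\|_{L^2}$ for $s\ge 1/2$, and the plateau/oddness cancellation for the vortex ODE), but your Step 3 contains a genuine gap. You close the bootstrap $|\zeta(t)|\le\tfrac14\min(R_1,R_2)$ by ``choosing $\rho$ small enough so that $\mathcal{E}(0)e^{CT_0}<(r_0/4)^2$''. This is not available under the hypotheses of Proposition~\ref{prop:stabilité}: the parameter $\rho$ only constrains $|z_2(0)-z_1(0)|$, while $\mathcal{E}(0)$ also contains $\|\theta_2(0)-\theta_1(0)\|_{L^2}^2$, which is \emph{not} assumed small (the proposition is a stability estimate with a multiplicative constant, not a smallness statement on the initial scalar difference). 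So the bootstrap cannot be closed as written, and since the common plateau around $z_2$ is what legitimizes both the vanishing of the $H_2\cdot\nabla\eta$ contribution in Step~1 and the cancellation in Step~2, the whole argument rests on it. The paper secures the common plateau by a different and hypothesis-compatible route (Lemma~\ref{lem:choice_rho_T}): each vortex velocity is bounded by $\|K_{s,\rho}\|_{L^\infty}\|\theta_i(0)\|_{L^1}$ (using each solution's \emph{own} plateau and the skew-symmetry of the truncated kernel), hence each $z_i$ moves at most $\rho$ on a short time $T_0$ depending only on the data; combined with $|z_2(0)-z_1(0)|<\rho$ and the plateau persistence of Lemma~\ref{lem:remain_constant_near_PV}, this gives $|z_2(t)-z_1(t)|<3\rho$ inside the $4\rho$-plateaus with no smallness of the $\theta$-difference. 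Replacing your bootstrap by this argument repairs the proof.

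Two smaller points. First, at the critical value $s=1/2$ (included in the statement) the kernel $K_{1/2}$ is not locally integrable, so writing $(K_s\star\eta)(z_2)$ as an absolutely convergent integral and splitting it over $\{|y|<r_0/2\}$ and $\{|y|>r_0/2\}$ requires justification: the velocity is defined as a Fourier multiplier, and the identification with the truncated-kernel evaluation is exactly what the paper proves via $K_{s,\varepsilon}$, skew-symmetry \eqref{eq:local_imply_0} and the uniform convergence \eqref{Tsunoda}; your oddness cancellation is the right idea but should be phrased through this regularization (or a principal value). Second, your concluding remark misattributes where $s\ge1/2$ enters: your own Step 2 (and the paper's Lemma~\ref{lem:der_z_2-z_1}) works for every $s\in(0,1)$ and uses neither Proposition~\ref{p:FourierMultiplierKS} nor $s\ge1/2$; the assumption is decisive only in Step 1, where $\|\eta\|_{H^{1-2s}}\lesssim\|\eta\|_{L^2}$ closes the energy estimate at the $L^2$ level — this is precisely why the paper must pass to $H^\ell$, $\ell\ge2$, when $s<1/2$.
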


\medskip

The plan of the proof of Proposition~\ref{prop:stabilité} is the following. We start by looking for a convenient time $T_0$ such that both point-vortices remain in the constant part of each $\theta_i$. Then we estimate the time derivative of $|z_1(t)-z_2(t)|$ and $\| \theta_2(t,\cdot) - \theta_1(t,\cdot) \|_{H^1}$, then conclude by a Gronwall's type argument.

In the computations, we denote by $\delta \theta = \theta_2 - \theta_1$, $\delta v = v_2 - v_1$ and $\delta z = z_2 - z_1$ the difference functions.

\subsection{Existence of $\rho$ and $T_0$}
We start by proving the following lemma concerning the existence of constants $\rho$ and $T_0$ in Proposition~\ref{prop:stabilité}.
\begin{lemma}\label{lem:choice_rho_T}
    Let $(\theta_1,z_1)$ and $(\theta_2,z_2)$ chosen as in Proposition~\ref{prop:stabilité}. Then there exists a choice of $\rho$ and $T_0$ such that if \eqref{hyp:z2-z1} is satisfied, then for every $t \in [0,T_0]$, for every $i,j \in \{1,2\}$, $\theta_i$ remains constant in the disk $B(z_j(t),\rho)$.
\end{lemma}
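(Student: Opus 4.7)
The plan is to combine the radius lower bound provided by Lemma \ref{lem:remain_constant_near_PV} with a crude short-time control on the displacement of each point vortex, so that each disk $B(z_j(t),\rho)$ is contained in the ``plateau'' $B(z_i(t),R_i(t))$ where $\theta_i$ is constant.

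First I would extract a uniform lower bound for the plateau radii. Since $(\theta_i,z_i)$ are solutions in $L^\infty([0,T];H^k)$, the quantity $\|\theta_i\|_{H^3}$ is bounded on $[0,T]$ by a constant depending only on $\|\theta_i(0,\cdot)\|_{H^k}$ (and in fact the energy estimates of Proposition \ref{p:APriori} already encode this). Combined with the Sobolev embedding $H^3 \subset W^{1,\infty}$, we get a uniform bound $\|\nabla v_i\|_{L^\infty([0,T]\times \RR^2)} \leq M_1$. Applying Lemma \ref{lem:remain_constant_near_PV} to each solution yields
\begin{equation*}
R_i(t)\;\geq\; R_i\, e^{-t\,M_1}\;\geq\; R_i\,e^{-T M_1}\;=:\;R^*_i,
\end{equation*}
and we set $R^* := \min(R_1^*,R_2^*) > 0$; this constant depends only on $\|\theta_i(0,\cdot)\|_{H^k}$, $R_1$, $R_2$ and $T$.

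Next I would control the displacement of the point vortices. From Proposition \ref{p:FourierMultiplierKS} together with the Sobolev embedding $H^{s+\epsilon} \subset L^\infty$, one has $\|v_i\|_{L^\infty} \lesssim \|\theta_i\|_{H^k}$ (since $k \geq 4 \geq 1-2s + 2 + \epsilon$). Let $M_2$ denote a uniform upper bound. Integrating $\dot z_i(t) = v_i(t,z_i(t))$ gives, for every $t \in [0,T]$,
\begin{equation*}
|z_i(t) - z_i(0)| \;\leq\; t\, M_2,
\end{equation*}
so that by the triangle inequality, for any $i,j \in \{1,2\}$ and any $t\in[0,T]$,
\begin{equation*}
|z_j(t)-z_i(t)|\;\leq\; |z_j(t)-z_j(0)| + |z_j(0)-z_i(0)| + |z_i(0)-z_i(t)|\;\leq\; 2tM_2 + \rho,
\end{equation*}
using the hypothesis \eqref{hyp:z2-z1}.

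Finally, to ensure $B(z_j(t),\rho) \subset B(z_i(t),R_i(t))$ it suffices to impose $|z_j(t)-z_i(t)| + \rho \leq R^*$, i.e.\ $2tM_2 + 2\rho \leq R^*$. Choosing for instance
\begin{equation*}
\rho \;:=\; \frac{R^*}{4} \qquad\text{and}\qquad T_0 \;:=\; \min\!\left(T,\;\frac{R^*}{4 M_2}\right),
\end{equation*}
both conditions are satisfied for all $t \in [0,T_0]$ and all $i,j \in \{1,2\}$, and the inclusion $B(z_j(t),\rho) \subset B(z_i(t),R_i(t))$ follows; since $\theta_i$ is constant on the latter (by definition of $R_i(t)$ and Lemma \ref{lem:remain_constant_near_PV}), it is constant on the former. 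There is no genuine obstacle here beyond checking that the bounds $M_1,M_2$ are indeed uniform in time on $[0,T]$, which is provided by the $H^k$-regularity of the solutions.
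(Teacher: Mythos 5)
Your proof is correct, and its skeleton (lower bound on the plateau radii via Lemma~\ref{lem:remain_constant_near_PV}, short-time control of the vortex displacement, then a triangle inequality to fit $B(z_j(t),\rho)$ inside the plateau of $\theta_i$) matches the paper's. The genuine difference is how you control $\der{}{t}z_i$. You bound $\|v_i\|_{L^\infty}$ directly through Proposition~\ref{p:FourierMultiplierKS} and a Sobolev embedding, which is legitimate here because the solutions are classical with $\theta_i\in L^\infty([0,T];H^k)$, $k\ge 4$ (note the embedding you want is $H^{1+\eps}\subset L^\infty$ in dimension $2$, not $H^{s+\eps}\subset L^\infty$ as written — your numerology $k\ge 2-2s+1+\eps$ is the right one, and in fact $\|v_i\|_{L^\infty}\lesssim\|\theta_i\|_{H^3}$ already suffices, exactly as in the paper's later estimate of $\|\partial^\gamma v_1\|_{L^\infty}$). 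The paper instead first proves the identity $\der{}{t}z_i(t)=(K_{s,\rho}\star\theta_i)(z_i(t))$, using the plateau around $z_i$, the skew-symmetry of the truncated kernel $K_{s,\rho}$, and the identification of the pointwise value $v_i(t,z_i(t))$ through the regularized kernels $K_{s,\eps}$; this yields the speed bound $\|K_{s,\rho}\|_{L^\infty}\|\theta_i(0)\|_{L^1}$, which depends only on $\rho$ and conserved $L^1$ norms, requires no high regularity, and is reused later (relation~\eqref{Rosberg}) in the proof of Lemma~\ref{lem:der_z_2-z_1}. Your route is shorter but leans on the uniform-in-time $H^k$ (or $H^3$) bound for the constant $M_2$; to claim, as Proposition~\ref{prop:stabilité} requires, that $\rho$ and $T_0$ depend only on the initial data, $R_1$, $R_2$ and $T$, you must invoke the a priori estimates for that uniform bound — the same appeal the paper makes for $M_1$ — so this is not a gap, just a point worth stating explicitly.
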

\begin{proof}
For both $i \in \{1,2\}$, recall that $\| \nabla v_i(t,\cdot) \|_{L^\infty} \le C \| \theta_i (t,\cdot)\|_{H^3}$, and from relation~\eqref{eq:EnergyEstimate}, that $\| \theta_i (t,\cdot)\|_{H^3}$ is bounded in time on $[0,T]$ by a constant depending only on $\|\theta_i(0)\|_{H^k}$, $R_i$ and $T$. Therefore, there exists a constant $\rho = \rho(\|\theta_1(0)\|_{H^k},\|\theta_2(0)\|_{H^k},R_1, R_2,T)$ such that for every $i \in \{1,2\}$ and for every $t \in [0,T]$,
\begin{equation*}
    R_i\exp \left(- \int_0^t \| \nabla v_i(\tau,\cdot) \|_{L^\infty} \dd \tau\right) > 4\rho.
\end{equation*}
By Lemma~\ref{lem:remain_constant_near_PV} applied successively to $\theta_1$ and $\theta_2$, we infer that for every $t \in [0,T]$,
    \begin{equation}\label{Hamilton}
    \theta_i(t,\cdot) \equiv \beta_i \quad \text{on } B(z_i(t),4\rho).
\end{equation}
There remains to prove that $\theta_1 \equiv \beta_1$ on $B(z_2(t),\rho)$.

We recall that the definition of the regularized kernel $K_{s,\varepsilon}$ and its properties from Subsection \ref{sec:reg}.
We now establish that for every $\eps \in (0,4\rho]$, we have that
\begin{equation}\label{DeVries}
    (K_{s,\rho}\star\theta_i)(z_i) = K_{s,\eps}\star\theta_i(z_i).
\end{equation}
Indeed, by relation~\eqref{Hamilton},
\begin{equation*}\begin{split}
    K_{s,\eps}\star\theta_i(z_i) & = \int_{\R^2} K_{s,\eps}(z_i-y)\theta_i(y,t)\dd y \\
    & = \int_{\R^2\setminus B(z_i,4\rho)} K_{s,\eps}(z_i-y)\theta_i(y,t)\dd y + \beta_i\int_{B(z_i,4\rho)} K_{s,\eps}(z_i-y)\dd y.
\end{split}\end{equation*}
By skew-symmetry, we have that
\begin{equation}\label{eq:local_imply_0}
    \int_{B(0,4\rho)} K_{s,\eps}(y)\dd y = \int_{B(0,4\rho)} K_{s,\rho}(y)\dd y = 0.
\end{equation}
Then, we observe that for every $\eps \in (0,4\rho)$, $K_{s,\eps} \equiv K_{s,\rho}$ on $\R^2 \setminus B(z_i,4\rho)$, which proves \eqref{DeVries}.

Using the same arguments than the ones used to obtain~\eqref{eq:VelocityConvergence}, we have that (see also Lemma~\ref{l:KernelConvergence}), we have that
\begin{equation*}
    \| K_{s, \varepsilon} \star \theta_i - v \|_{L^\infty} \lesssim \| K_{s, \varepsilon} - K_s \|_{\dot{B}^{\sigma}_{1, \infty}} \| \theta_i \|_{H^2} \longrightarrow 0 \qquad \text{as } \varepsilon \rightarrow 0^+,
\end{equation*}
with $\sigma<2s-1$. Since the functions $K_{s, \varepsilon} \star \theta$ are continuous, the convergence $K_{s, \varepsilon} \star \theta_i \longrightarrow v$ holds pointwise. Consequently, we have:
\begin{equation}\label{Tsunoda}
    \big(K_{s,\eps}\star\theta_i(t,\cdot)\big) (z_i(t)) = v_i(t,z_i(t)).
\end{equation}
Gathering relations \eqref{DeVries} and \eqref{Tsunoda}, we obtain that
\begin{equation}\label{Rosberg}
    \der{}{t}z_i(t) = v_i(t,z_i(t)) = (K_{s,\rho}\star\theta_i)(z_i).
\end{equation}

Observing that for every $t \in [0,T_0]$,
\begin{equation*}
    \big\|(K_{s,\rho} \star \theta_i)(t,\cdot) \big\|_{L^\infty} \le \|K_{s,\rho}\|_{L^\infty} \| \theta_i(t,\cdot)\|_{L^1} = \|K_{s,\rho}\|_{L^\infty} \| \theta_i(0,\cdot)\|_{L^1},
\end{equation*}
by taking
\begin{equation*}
    T_0 < \rho \left(\|K_{s,\rho}\|_{L^\infty} \max_{i \in \{1,2\}} \| \theta_i(0,\cdot)\|_{L^1}\right)^{-1},
\end{equation*}
we deduce from relation~\eqref{Rosberg} that for each $i\in \{1,2\}$ and every $t\in [0,T_0]$ that
\begin{equation*}
    |z_i(t)-z_i(0)| \le  \rho.
\end{equation*}
Adding then the assumption~\eqref{hyp:z2-z1} that $|z_1(0)-z_2(0)| < \rho$, we obtain for every $t \in [0,T_0]$ that
\begin{equation}\label{Vettel}
    |z_2(t)-z_1(t)| \le |z_2(t)-z_2(0)| + |z_2(0)-z_1(0)| + |z_1(t) - z_1(0)| < 3\rho.
\end{equation}
Gathering relation \eqref{Hamilton} and \eqref{Vettel}, we obtain that for every $t \in [0,T_0]$, for every $i,j \in \{1,2\}$,
\begin{equation*}
    \theta_i(t,\cdot) \equiv \beta_i \quad \text{on } B\big(z_j(t),\rho\big).
\end{equation*}

\end{proof}

\subsection{Estimate of the distance between the point-vortices}

We then obtain the following estimate on the evolution of the distance between the two point-vortices.
\begin{lemma}\label{lem:der_z_2-z_1} 
Let $(\theta_1,z_1)$ and $(\theta_2,z_2)$ chosen as in Proposition~\ref{prop:stabilité}, let $\rho$ and $T_0$ be given by Lemma~\ref{lem:choice_rho_T} and assume that \eqref{hyp:z2-z1} holds true. Then, for every $t \in [0,T_0]$, we have that
    \begin{equation*}
    \der{}{t}|z_1(t)-z_2(t)| \lesssim |z_2(t) - z_1(t)| + \|\theta_2-\theta_1\|_{L^2}.
\end{equation*}
\end{lemma}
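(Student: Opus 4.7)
The key point is to exploit the conclusion of Lemma~\ref{lem:choice_rho_T}: throughout $[0,T_0]$, each $\theta_i$ is constant on the ball $B(z_j(t),\rho)$ for both $i,j\in\{1,2\}$. Consequently, identity~\eqref{Rosberg} lets us replace the singular kernel $K_s$ by its smooth truncation $K_{s,\rho}$ in the point-vortex ODE, writing
\begin{equation*}
    \der{}{t} z_i(t) = (K_{s,\rho}\star\theta_i)(t,z_i(t)), \qquad i\in\{1,2\}.
\end{equation*}
This is the decisive simplification: the map $x\mapsto (K_{s,\rho}\star\theta_i)(x)$ is smooth and its growth is controlled uniformly in $t$.

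From there, the plan is to decompose
\begin{equation*}
    \der{}{t}(z_2-z_1) \;=\; \underbrace{(K_{s,\rho}\star\theta_2)(z_2)-(K_{s,\rho}\star\theta_2)(z_1)}_{(I)} \;+\; \underbrace{\bigl(K_{s,\rho}\star(\theta_2-\theta_1)\bigr)(z_1)}_{(II)},
\end{equation*}
and to estimate the two terms separately. For $(I)$, the mean-value inequality and Young's convolution inequality yield
\begin{equation*}
    |(I)| \;\le\; \|\nabla(K_{s,\rho}\star\theta_2)\|_{L^\infty}\,|z_2-z_1| \;\le\; \|\nabla K_{s,\rho}\|_{L^2}\,\|\theta_2\|_{L^2}\,|z_2-z_1|.
\end{equation*}
For $(II)$, Young's inequality alone gives $|(II)| \le \|K_{s,\rho}\|_{L^2}\,\|\theta_2-\theta_1\|_{L^2}$. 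Combining these with $\der{}{t}|z_2-z_1|\le |\der{}{t}(z_2-z_1)|$ (valid as a Dini derivative, and trivial when $z_1=z_2$) delivers the desired estimate.

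The only points to verify are that $K_{s,\rho}$ and $\nabla K_{s,\rho}$ both belong to $L^2(\RR^2)$, and that $\|\theta_2(t)\|_{L^2}$ is controlled uniformly in $t\in[0,T_0]$. The first follows from the fact that $K_{s,\rho}$ is smooth (the singularity at the origin is truncated) and decays like $|x|^{2s-3}$ at infinity, so that $\nabla K_{s,\rho}$ decays like $|x|^{2s-4}$; both decays are square-integrable at infinity in $\RR^2$ since $s<1$. The second follows from the conservation of $L^p$ norms along the flow, see~\eqref{conservation law epsilon}, which propagates to the limit solution. I do not anticipate a genuine obstacle here: the main difficulty was really already overcome in Lemma~\ref{lem:choice_rho_T}, where the plateau hypothesis and the skew-symmetry~\eqref{eq:local_imply_0} were used to eliminate the singularity of the Biot–Savart kernel at the point-vortex; once this is done, the estimates above are routine functional-analytic computations with a smooth kernel.
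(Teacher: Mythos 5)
Your proof is correct, and it reaches the same estimate by a slightly different route than the paper. The paper splits $v_1(t,z_1)-v_2(t,z_2)$ as $\big(v_1(t,z_1)-v_1(t,z_2)\big)+\delta v(t,z_2)$, bounds the first term by $\|\nabla v_1\|_{L^\infty}\,|\delta z|\lesssim\|\theta_1\|_{H^3}\,|\delta z|$ (so it uses the Sobolev bound on the full velocity, available from the \textsl{a priori} estimates), and only for the second term invokes Lemma~\ref{lem:choice_rho_T} to replace $K_s$ by $K_{s,\rho}$ and apply Cauchy--Schwarz, $|\delta v(t,z_2)|\le\|K_{s,\rho}\|_{L^2}\|\delta\theta\|_{L^2}$. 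You instead start from~\eqref{Rosberg}, so that both point-vortex ODEs are written with the truncated kernel from the outset, then add and subtract $(K_{s,\rho}\star\theta_2)(z_1)$; the Lipschitz part is then controlled by $\|\nabla K_{s,\rho}\|_{L^2}\|\theta_2\|_{L^2}$ via Young/Cauchy--Schwarz, so you never need the embedding $\|\nabla v\|_{L^\infty}\lesssim\|\theta\|_{H^3}$ in this lemma, only the (conserved, or $H^k$-controlled) $L^2$ norm of $\theta_2$. The price is that your implicit constant carries a factor $\|\nabla K_{s,\rho}\|_{L^2}\sim\rho^{2s-3}$, but since $\rho$ depends only on $\|\theta_i(0)\|_{H^k}$, $R_i$ and $T$, this is admissible for the statement, and your integrability checks for $K_{s,\rho}$ and $\nabla K_{s,\rho}$ at infinity (valid since $s<1$) are correct. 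Both arguments rest on the same key input, namely Lemma~\ref{lem:choice_rho_T} (through~\eqref{Rosberg} in your case, through the plateau of $\delta\theta$ near $z_2$ in the paper's), so the difference is one of bookkeeping rather than substance.
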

We recall that the omitted constant may only depends on $\|\theta_1(0)\|_{H^k},\|\theta_2(0)\|_{H^k},R_1, R_2,T$.
\begin{proof}
We compute
\begin{equation*}\begin{split}
     \der{}{t}|z_1(t)-z_2(t)| & \le |v_1(t,z_1(t)) - v_2(t,z_2(t)) | \\
     & \le |v_1(t,z_1(t))-v_1(t,z_2(t))| + |\delta v(t,z_2(t))|.
\end{split}\end{equation*}
From one hand we have that
\begin{equation*}
    |v_1(t,z_1(t))-v_1(t,z_2(t))|\le \|\nabla v_1 \|_{L^\infty} |\delta z| \lesssim \|\theta_1\|_{H^3}|\delta z| \lesssim |\delta z|,
\end{equation*}
by recalling that $\|\theta_1\|_{H^3}$ is bounded on $[0,T]$ by hypothesis. 
From the other hand, by Lemma~\ref{lem:choice_rho_T}, $\delta \theta$ is constant on $B(z_2(t),\rho)$ for every $t \in [0,T_0]$ so we have that 
\begin{equation*}\begin{split}
    \delta v(t,z_2(t))= (K_{s} \star \delta\theta(t,\cdot))(z_2(t)) = (K_{s,\rho} \star \delta\theta(t,\cdot))(z_2(t)).
\end{split}\end{equation*}
By the Cauchy Schwartz inequality, we obtain
\begin{equation*}
    |\delta v(t,z_2(t))| \le \| K_{s,\rho} \|_{L^2} \| \delta\theta \|_ {L^2} \lesssim \| \delta \theta \|_{L^2},
\end{equation*}
which concludes the proof of Lemma~\ref{lem:der_z_2-z_1}.
\end{proof}

\subsection{Estimate on the $H^{\ell}$ norm of $\theta_2 - \theta_1$}

We now turn to the estimate of $\delta\theta$.
\begin{lemma}\label{lem:der_theta_2-theta_1} With the same notations as in Lemma~\ref{lem:der_z_2-z_1}, for every $t\in[0,T_0]$ and every $\ell \in \NN$ such that $2\le \ell \le k-2$, we have that
    \begin{equation*}
    \der{}{t} \|\theta_2-\theta_1\|_{H^{\ell}} \lesssim \|\theta_2-\theta_1\|_{H^{\ell}} + |z_2-z_1|.
\end{equation*}
\end{lemma}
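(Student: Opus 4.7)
Subtracting the equations satisfied by $\theta_1$ and $\theta_2$, the difference $\delta\theta = \theta_2 - \theta_1$ satisfies the transport equation
\begin{equation*}
\partial_t \delta\theta + (v_1 + H_1)\cdot\nabla\delta\theta = -\delta v \cdot \nabla\theta_2 - \delta H \cdot \nabla\theta_2,
\end{equation*}
where $\delta v := v_2 - v_1 = K_s \star \delta\theta$ and $\delta H(t,x) := K_s(x-z_2(t)) - K_s(x-z_1(t))$. For each multi-index $\alpha$ with $|\alpha| = \ell$, I would apply $\partial^\alpha$ and test against $\partial^\alpha\delta\theta$ in $L^2$. After using the divergence-free character of $v_1 + H_1$ to eliminate the uncommutated top-order piece, this yields
\begin{equation*}
\tfrac{1}{2}\tfrac{d}{dt}\|\partial^\alpha\delta\theta\|_{L^2}^2 = -\!\!\int\!\partial^\alpha\delta\theta\cdot[\partial^\alpha,v_1\!+\!H_1]\nabla\delta\theta - \!\!\int\!\partial^\alpha\delta\theta\cdot\partial^\alpha(\delta v\cdot\nabla\theta_2) - \!\!\int\!\partial^\alpha\delta\theta\cdot\partial^\alpha(\delta H\cdot\nabla\theta_2).
\end{equation*}

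The crucial structural observation (from the proof of Lemma~\ref{lem:choice_rho_T}) is that both $\nabla\delta\theta$ and $\nabla\theta_2$ vanish on $B(z_1(t),\rho)\cup B(z_2(t),\rho)$. Therefore, in every integral where $H_i$ (or $\delta H$) is paired against $\nabla\delta\theta$ or $\nabla\theta_2$, I may replace it by its truncation $\widetilde H_i(t,x) := (1-\chi_\rho(x-z_i(t)))H_i(t,x)$, which is smooth with $H^m$ norms bounded by $C(m,s)\rho^{2s-1-m}$. This is precisely the device used in Subsection~\ref{sec:apriori} for the \textsl{a priori} estimates, and it makes all three integrals well-defined.

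For the commutator term, the tame Kato--Ponce estimate gives
\begin{equation*}
\|[\partial^\alpha, v_1+\widetilde H_1]\nabla\delta\theta\|_{L^2} \lesssim \bigl(\|\nabla(v_1+\widetilde H_1)\|_{L^\infty} + \|v_1+\widetilde H_1\|_{H^\ell}\bigr)\|\delta\theta\|_{H^\ell},
\end{equation*}
using $\|\nabla\delta\theta\|_{L^\infty}\lesssim\|\delta\theta\|_{H^\ell}$ (Sobolev, since $\ell\ge 2$), and everything on the right is bounded in terms of the initial data via the energy estimates of Section~\ref{sec:existence_strong} and the radius $\rho$. For the $\delta v$ term, the Leibniz expansion produces products $\partial^\beta\delta v \cdot \partial^{\alpha-\beta}\nabla\theta_2$; these are controlled by $\|\delta v\|_{L^\infty}\|\theta_2\|_{H^{\ell+1}} + \|\delta v\|_{H^\ell}\|\nabla\theta_2\|_{L^\infty}$, and the assumption $\ell\le k-2$ (giving $\|\theta_2\|_{H^{\ell+1}}\le\|\theta_2\|_{H^{k-1}}$) together with Proposition~\ref{p:FourierMultiplierKS} (providing $\|\delta v\|_{L^\infty}\lesssim\|\delta\theta\|_{H^2}$ and $\|\delta v\|_{H^\ell}\lesssim\|\delta\theta\|_{H^{\ell+1-2s}}$) closes the estimate up to the required $\|\delta\theta\|_{H^\ell}^2$. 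Finally, for the $\delta H$ term I write
\begin{equation*}
\delta H(t,x) = \int_0^1 \nabla K_s\bigl(x - z_1(t) - \tau\,\delta z(t)\bigr)\cdot\delta z(t)\,d\tau,
\end{equation*}
and on the support of $\nabla\theta_2$ (which stays at distance $\ge\rho$ from both point-vortices), this yields $\|\widetilde{\delta H}\|_{H^\ell} \lesssim_\rho |\delta z(t)|$, so the third integral is $\lesssim |\delta z|\|\theta_2\|_{H^{\ell+1}}\|\delta\theta\|_{H^\ell}$. Summing over $|\alpha|\le\ell$ and combining with a trivial $L^2$ estimate delivers the stated inequality.

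\textbf{Main obstacle.} The principal subtlety is the apparent loss of $(1-2s)_+$ derivatives in the term $\partial^\alpha\delta v \cdot \nabla\theta_2$ when $s<1/2$: the operator $K_s\star$ no longer smooths. This is absorbed by the slack $\ell\le k-2$, which leaves room to place $\nabla\theta_2$ in $H^{k-1}$ while only demanding $H^{\ell+1-2s}\subset H^{\ell+1}$-regularity on $\delta\theta$; alternatively, one may exploit the commutator structure exactly as in Lemma~\ref{l:energyIneqEQ1}, rewriting the worst piece as $\tfrac{1}{2}\int\partial^\alpha\delta\theta\,[\partial_j\theta_2,A_j]\partial^\alpha\delta\theta\,dx$ and invoking Proposition~\ref{p:CCCGW_Commutator}. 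Verifying that the bookkeeping on $\rho$-dependent constants does not degrade as $\ell$ increases is the other point requiring care.
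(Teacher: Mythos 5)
Your overall architecture is the same as the paper's: use Lemma~\ref{lem:choice_rho_T} to know that $\nabla\theta_1,\nabla\theta_2$ (hence $\nabla\delta\theta$) vanish on $\rho$-balls around \emph{both} point-vortices, replace the vortex fields by their smooth truncations, run an $H^\ell$ energy estimate on the difference equation, and close by Gr\"onwall; your treatment of the $\delta H$ term is also essentially the paper's. However, two steps fail as written, and the first is the heart of the lemma. Your primary estimate of the worst term $\int \partial^\alpha(\delta v)\cdot\nabla\theta_2\,\partial^\alpha\delta\theta$ via $\|\delta v\|_{H^\ell}\lesssim\|\delta\theta\|_{H^{\ell+1-2s}}$ does \emph{not} close when $s<1/2$: the loss of $1-2s$ derivatives falls on $\delta\theta$, not on $\theta_2$, so the slack $\ell\le k-2$ is of no help — the Gr\"onwall functional is $\|\delta\theta\|_{H^\ell}$, and higher Sobolev norms of $\delta\theta$ are only $O(1)$ (bounded by the two solutions' $H^k$ norms), not small; interpolating $\|\delta\theta\|_{H^{\ell+1-2s}}\le\|\delta\theta\|_{H^\ell}^{1-\lambda}\|\delta\theta\|_{H^k}^{\lambda}$ would degrade the claimed linear stability to a H\"older/Osgood-type bound. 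The device you list as an "alternative" — skew-symmetrizing with $A_j=\partial_j(-\Delta)^{-s}$ to rewrite this term as $\tfrac12\int\partial^\alpha\delta\theta\,[A_j,\partial_j\theta_2]\,\partial^\alpha\delta\theta$ and invoking Lemma~\ref{p:CCCGW_Commutator} — is in fact the paper's argument and, for $s<1/2$, the only route offered that yields $\lesssim\|\delta\theta\|_{H^\ell}^2$; it must be the main argument, not an option. (Your naive route is fine only for $s\ge1/2$, which is exactly why the paper gets $L^2$-stability only in that regime.) The actual role of the hypothesis $\ell\le k-2$ in the paper is different from what you assign to it: it is used to bound $\|\partial^\gamma u_1\|_{L^\infty}$ for $|\gamma|\le\ell$ (via Proposition~\ref{p:FourierMultiplierKS}, needing $\theta_1\in H^{\ell+2-2s+\varepsilon}$) and to place $\theta_2$ in $H^{\ell+1}$.

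The second problem is the endpoint $\ell=2$ in your Kato--Ponce step: you use $\|\nabla\delta\theta\|_{L^\infty}\lesssim\|\delta\theta\|_{H^\ell}$, which fails in dimension two when $\ell=2$ since $H^1(\R^2)\not\hookrightarrow L^\infty$ — and $\ell=2$ is precisely the case needed for the uniqueness argument. The paper avoids this by expanding the transport term with Leibniz and always putting the $L^\infty$ norm on $\partial^\gamma u_1$ (controlled by $\|\theta_1\|_{H^k}$ plus a $\rho$-dependent constant) and the $L^2$ norm on derivatives of $\delta\theta$ of order at most $\ell$; adopting that distribution of norms (or restricting Kato--Ponce to the form that never asks for $\|\nabla\delta\theta\|_{L^\infty}$) repairs this step. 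With these two corrections your proof coincides with the paper's.
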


\begin{proof}
 Let 
 \begin{equation*}
     H_{i,\rho}(t,x) := K_{s,\rho}\big(x-z_i(t)\big)
 \end{equation*}
 and 
 \begin{equation*}
     u_{i} := v_i + H_{i,\rho}
 \end{equation*}
 for $i \in \{1,2\}$. We drop the dependence in $\rho$ in the notation $u_i$ since it's not relevant for our computations. One should simply remember that the $u_i$ are smooth flows on $[0,T_0]$.
 
By Lemma~\ref{lem:choice_rho_T}, we have that $\nabla \theta_i \equiv 0$ on $B(z_i(t),\rho)$, so that on the whole plane the equality
\begin{equation*}
    H_i\cdot \nabla \theta_i =H_{i,\rho} \cdot \nabla \theta_i
\end{equation*}
is satisfied. The active scalar $\theta_i$ is not only transported by $v_i+H_i$ but also by $u_{i}$. Consequently, by substracting the two transport equations on $\theta_1$ and $\theta_2$ respectively by the flows $u_1$ and $u_2$, we have that
\begin{equation}\label{eq:sum_transport}
    \partial_t \delta \theta + u_1 \cdot \nabla\delta\theta + \delta u \cdot \nabla \theta_2 = 0,
\end{equation}
with the notation $\delta u = u_{2} - u_{1}$. Let us also denote $\delta H_\rho = H_{2,\rho}-H_{1,\rho}$.
We multiply relation~\eqref{eq:sum_transport} by $\delta \theta$ and integrate to obtain that
\begin{equation}\label{Prost}
    \frac{1}{2} \der{}{t} \|\delta\theta\|_{L^2}^2 + \int_{\R^2} \delta v \cdot \nabla \theta_2\, \delta \theta + \int_{\R^2} \delta H_\rho \cdot \nabla \theta_2\, \delta\theta = 0.
\end{equation}
We start with the last term of this equation. Since $\delta H_{\rho}$ is smooth, then
\begin{equation*}
    \left|\int \delta H_\rho \cdot \nabla \theta_2\, \delta\theta\right| \le \|\delta H_{\rho}\|_{L^\infty} \| \nabla \theta_2 \|_{L^2} \|\delta\theta\|_{L^2}.
\end{equation*}
More precisely, from Lemma~\ref{lem:choice_rho_T}, for every $t\in[0,T_0]$ we know that $H_{i,\rho} \cdot \nabla \theta_2 = 0$ in $B(z_i(t),\rho)$ for both $i\in \{1,2\}$, so that
\begin{equation*}\begin{split}
    \| \delta H_\rho \|_{L^\infty} = \sup_{x\in \R^2} | K_{s,\rho}(x-z_2(t)) - K_{s,\rho}(x-z_1(t))|  \le \| \nabla K_{s,\rho} \|_{L^\infty} |z_2(t)-z_1(t)|.
\end{split}\end{equation*}
Recalling then that $\|\theta_2\|_{H^4}$ is bounded on $[0,T_0]$ by a constant depending only on $\|\theta_2(0)\|_{H^4}$, $R_2$ and $T$, we obtain that
\begin{equation*}
    \left|\int \delta H_\rho \cdot \nabla \theta_2\, \delta\theta\right| \lesssim |\delta z| \| \delta\theta\|_{L^2}.
\end{equation*}
In order to continue our estimates, in the middle term in relation~\eqref{Prost}, one has to control a norm of $\delta v$ in terms of $\delta \theta$. In general, norms of $\delta v$ cannot be controlled by $\|\delta \theta\|_{L^2}$ only so we use Proposition~\ref{p:FourierMultiplierKS} to get that
\begin{equation*}\begin{split}
    \left|\int \delta v \cdot \nabla \theta_2 \,\delta \theta\right| & \le \left| \int \triangle_{-1} \delta v \cdot \nabla \theta_2 \delta \theta\right| +  \left|\int (I-\triangle_{-1}) \delta v \cdot \nabla \theta_2 \delta \theta \right| \\
    & \le \|\triangle_{-1} \delta v \|_{L^\infty} \|\nabla \theta_2\|_{L^2} \| \delta \theta \|_{L^2} + \|(I-\triangle_{-1} \delta v) \|_{L^2} \|\nabla \theta_2\|_{L^\infty} \| \delta \theta \|_{L^2} \\
    & \lesssim \| \theta_2 \|_{H^3}  \| \delta \theta\|_{H^{1-2s}} \| \delta \theta\|_{L^2} \\
    & \lesssim \| \delta \theta\|_{H^{1-2s}} \| \delta \theta\|_{L^2}.
\end{split}\end{equation*}
In the case $s \ge 1/2$, we obtain that
\begin{equation}\label{eq:pour_stabilité_L2}
    \der{}{t} \|\delta \theta \|_{L^2} \lesssim |\delta z| + \| \delta \theta\|_{L^2},
\end{equation}
from which we will obtain Proposition~\ref{prop:stabilité_L2}.
We now continue our computations to prove $H^{\ell}$ stability in the general case $0 < s < 1$. Yet we proved in general that
\begin{equation}\label{eq:Prost_v2}
    \der{}{t} \|\delta \theta \|_{L^2}^2 \lesssim \big(|\delta z| + \| \delta \theta\|_{H^{1-2s}}\big)  \|\delta\theta\|_{L^2}.
\end{equation}
Let $\ell \in \NN$ such that $2s \le \ell \le k-2$. Similarly to what we did in Section~\ref{sec:apriori}, for any $\alpha \in \NN^2$ such that $|\alpha| = \ell$, we have from applying $\partial^\alpha$ to relation \eqref{eq:sum_transport}, multiplying by $\partial^\alpha \delta \theta$ and integrating that
\begin{equation}\label{eq:Senna}
\begin{split}
    \frac{1}{2} \der{}{t} \| \partial^\alpha \delta \theta \|_{L^2}^2 & = - \int \partial^\alpha (u_1 \cdot \nabla \delta \theta) \partial^\alpha \delta \theta = \int \partial^\alpha ( \delta u \cdot \nabla \theta_2) \partial^\alpha \delta \theta \\
    & := - \sum_{\gamma \le \alpha} \binom{\alpha}{\gamma} (I_\gamma + J_\gamma + L_\gamma)
\end{split}
\end{equation}
where, using the Leibniz rule, for every $(0,0) \le \gamma \le \alpha$,
\begin{equation*} \begin{split}
    &I_\gamma = \int \partial^\gamma u_1  \cdot  \partial^{\alpha-\gamma}\nabla\delta\theta \partial^\alpha \delta \theta, \\
    &J_\gamma = \int \partial^\gamma \delta v \cdot \partial^{\alpha - \gamma} \nabla \theta_2 \partial^\alpha \delta \theta, \\
    & L_\gamma = \int \partial^\gamma \delta H_{\rho} \cdot \partial^{\alpha - \gamma} \nabla \theta_2 \partial^\alpha \delta \theta.
\end{split}
\end{equation*}

\paragraph{Estimating the terms $I_\gamma$}
\text{ }

We first observe that by integration by parts, $I_{0,0} = 0$ since $\nabla \cdot u_1 = 0$. For $\gamma \neq (0,0)$, we have that $|\alpha - \gamma| \le \ell-1$ so
\begin{equation*}
    |I_\gamma| \le \| \partial^\gamma u_1\|_{L^\infty} \|\partial^{\alpha - \gamma} \nabla \delta \theta \|_{L^2} \|\partial^\alpha \delta \theta \|_{L^2} \le \| \partial^\gamma u_1\|_{L^\infty} \| \delta \theta \|_{H^\ell}^2.
\end{equation*}
Using that $u_1 = v_1 + H_{1,\rho}$, 
\begin{equation*}
    \| \partial^\gamma u_1\|_{L^\infty} \le \| \partial^\gamma v_1 \|_{L^\infty} + \| \partial^\gamma H_{1,\rho} \|_{L^\infty}.
\end{equation*}
The term $\|\partial^\gamma H_{1,\rho} \|_{L^\infty} < +\infty$ is a constant that depends only on $\gamma$ and $\rho$ (with our notations, $\|\partial^\gamma H_{1,\rho} \|_{L^\infty} \lesssim 1$). We then use Proposition~\ref{p:FourierMultiplierKS} to get that there exists $\eps > 0$ small enough such that
\begin{equation*}
    \begin{split}
        \| \partial^\gamma v_1 \|_{L^\infty} & \le \| \triangle_{-1} \partial^\gamma v_1 \|_{L^\infty} + \| (\mathrm{Id} - \triangle_{-1})\partial^\gamma v_1 \|_{L^\infty} \\
        & \lesssim \| \theta_1 \|_{H^k} + \| (\mathrm{Id} - \triangle_{-1})\partial^\gamma v_1 \|_{H^{1+\eps}} \\
        & \lesssim \| \theta_1 \|_{H^k} + \| \partial^\gamma \theta_1 \|_{H^{1+\eps+1-2s}} \\
        & \lesssim  \| \theta_1 \|_{H^k} + \| \theta_1 \|_{H^{\ell+2-2s+\eps}} \\
        & \lesssim \| \theta_1 \|_{H^k},
    \end{split}
\end{equation*}
where the last inequality comes from the hypothesis that $\ell +2 \le k$. In conclusion,
\begin{equation*}
     \| \partial^\gamma v_1 \|_{L^\infty} \lesssim \| \theta_1 \|_{H^k} \lesssim \| \theta_1(0) \|_{H^k} \lesssim 1,
\end{equation*}
so that for every $\gamma \le \alpha$,
\begin{equation}\label{eq:est_I_gamma}
    |I_\gamma| \lesssim \|\delta\theta\|_{H^\ell}^2.
\end{equation}

\paragraph{Estimating the terms $J_\gamma$}
\text{ }

We start by the case $\gamma \neq \alpha$. We have that
\begin{equation*}
    J_\gamma = \int \big(\triangle_{-1} \partial^\gamma \delta v \big) \cdot \partial^{\alpha - \gamma} \nabla \theta_2 \partial^\alpha \delta \theta + \int \big(\mathrm{Id} -\triangle_{-1}\big)\partial^\gamma \delta v \cdot \partial^{\alpha - \gamma} \nabla \theta_2 \partial^\alpha \delta \theta
\end{equation*}
The first term satisfies, using Proposition~\ref{p:FourierMultiplierKS}, that
\begin{equation*}
\begin{split}
    \left| \int \big(\triangle_{-1} \partial^\gamma \delta v \big) \cdot \partial^{\alpha - \gamma} \nabla \theta_2 \partial^\alpha \delta \theta \right| & \le \| \triangle_{-1} \partial^\gamma \delta v \|_{L^\infty}\| \partial^{\alpha-\gamma} \nabla \theta_2 \|_{L^2} \|\delta\theta \|_{H^\ell} \\
    & \lesssim \| \partial^\gamma \delta \theta \|_{L^2} \|\theta_2\|_{H^{\ell+1}}\|\delta\theta \|_{H^\ell} \\
    & \lesssim \|\delta\theta \|_{H^\ell}^2,
\end{split}
\end{equation*}
where we used that $\ell \ge 2 > 2-2s$.
For the second term, if $\gamma \neq (0,0)$ we have for $\eps \in (0,1)$ that
\begin{equation*}
\begin{split}
     \left| \int \big(\mathrm{Id} -\triangle_{-1}\big)\partial^\gamma \delta v \cdot \partial^{\alpha - \gamma} \nabla \theta_2 \partial^\alpha \delta \theta\right|
     & \le \| (\mathrm{Id} - \triangle_{-1}) \partial^\gamma \delta v \|_{L^2} \| \partial^{\alpha-\gamma} \nabla \theta_2 \|_{L^\infty} \|\partial^\alpha \delta\theta \|_{L^2} \\
     & \lesssim  \|\partial^\gamma \delta \theta \|_{H^{1-2s}} \| \partial^{\alpha-\gamma} \nabla \theta_2 \|_{H^{1+\eps}} \|\delta\theta \|_{H^\ell} \\
     & \lesssim \| \delta\theta \|_{H^{1-2s+\ell-1}} \| \theta_2 \|_{H^{\ell-1+1+\eps}}\|\delta\theta \|_{H^\ell} \\
     & \lesssim \|\delta\theta \|_{H^\ell}^2,
\end{split}
\end{equation*}
where we used that $\gamma \neq \alpha$ so that $|\gamma| \le \ell -1$. In the case $\gamma = (0,0)$, we have that
\begin{equation*}
\begin{split}
    \left| \int \big(\mathrm{Id} -\triangle_{-1}\big) \delta v \cdot \partial^{\alpha} \nabla \theta_2 \partial^\alpha \delta \theta\right| & \le \| (\mathrm{Id} - \triangle_{-1}) \delta v \|_{L^\infty} \| \partial^{\alpha} \nabla \theta_2 \|_{L^2} \|\partial^\alpha \delta\theta \|_{L^2} \\
    & \lesssim \| (\mathrm{I_d} - \triangle_{-1}) \delta v \|_{H^{1+\eps}} \| \theta_2 \|_{H^{\ell +1}} \|\delta\theta \|_{H^\ell} \\
    & \lesssim \| \delta\theta\|_{H^{1+\eps+1-2s}} \|\delta\theta \|_{H^\ell} \\
    & \lesssim \|\delta\theta \|_{H^\ell}^2. 
\end{split}
\end{equation*}
We now estimate $|J_\alpha|$. As in Section~\ref{sec:apriori}, let us denote by $A = (A_1,A_2) =  \Big( - \partial_2(-\Delta)^{-s},\partial_1(-\Delta)^{-s}\Big)$, which is a (formally) skew-symmetric operator. We have that
\begin{equation*}
    \begin{split}
        J_\alpha & = -\sum_{j=1}^2 \int \partial^\alpha A_j (\delta \theta) \partial_j \theta_2 \partial^\alpha \delta\theta \\ 
        & = -\sum_{j=1}^2 \int \partial^\alpha \delta \theta A_j  (\partial_j \theta_2 \partial^\alpha \delta\theta) 
    \end{split}
\end{equation*}
and by summing both of these last relations we obtain that
\begin{equation*}
    J_\alpha = \frac{1}{2} \int \partial^\alpha \delta \theta \big[ A_j,\delta_j \theta_2 \big] (\partial^\alpha \delta \theta).
\end{equation*}
We now use Lemma~\ref{p:CCCGW_Commutator} to get that
\begin{equation*}
    |J_\alpha| \lesssim \left(\| \partial^\alpha \delta\theta \|_{L^2} \left\| \mathcal{F}\left[(- \Delta)^{(1 - 2s)/2} \delta_j \theta_2\right] \right\|_{L^1} + \| (- \Delta)^{s} \partial^\alpha\delta \theta \|_{L^2} \left\| \mathcal{F}\left[(- \Delta)^{1/2} \delta_j \theta_2\right] \right\|_{L^1} \right) \|\partial^\alpha\delta \theta\|_{L^2}.
\end{equation*}
We now compute each of these terms. Using the Cauchy-Schwarz inequality, we have for any $\eps >0$ that
\begin{equation*}
\begin{split}
    \left\| \mathcal{F}\left[(- \Delta)^{1/2} \delta_j \theta_2\right] \right\|_{L^1} & \le \int |\xi|^2 |\what{\theta_2}(\xi)| \dd \xi \\
    & = \int \frac{|\xi|^2}{(1+|\xi|)^{3+\eps}}(1+|\xi|)^{3+\eps}|\what{\theta_2}(\xi)| \dd \xi \\
    & \le \left(\int \frac{|\xi|^4}{(1+|\xi|)^{6+2\eps}} \dd \xi\right)^{1/2} \|\theta_2\|_{H^{3+\eps}} \\
    & \lesssim 1
\end{split}
\end{equation*}
and
\begin{equation*}
\begin{split}
    \left\| \mathcal{F}\left[(- \Delta)^{(1-2s)/2} \delta_j \theta_2\right] \right\|_{L^1} & \le \int |\xi|^{2-2s} |\what{\theta_2}(\xi)| \dd \xi \\
    & \le \int (1+|\xi|)^2 |\what{\theta_2}(\xi)| \dd \xi \\
    & = \int \frac{(1+|\xi|)^2}{(1+|\xi|)^{3+\eps}}(1+|\xi|)^{3+\eps}|\what{\theta_2}(\xi)| \dd \xi \\
    & \le \left(\int \frac{(1+|\xi|)^4}{(1+|\xi|)^{6+2\eps}} \dd \xi\right)^{1/2} \|\theta_2\|_{H^{3+\eps}} \\
    & \lesssim 1.
\end{split}
\end{equation*}
We compute now the final term using the fact that $\ell -2s \ge 0$,
\begin{equation*}
    \begin{split}
        \| (- \Delta)^{s} \partial^\alpha\delta \theta \|_{L^2} & = \left(\int \left(|\xi|^{-2s}|\xi|^{\ell} |\what{\delta\theta}(\xi)|\right)^2\dd \xi \right)^{1/2} \\
        & \le \left(\int (1+|\xi|^\ell)^2 |\what{\delta\theta}(\xi)|^2\dd \xi \right)^{1/2} \\
        & = \| \delta\theta\|_{H^\ell}.
    \end{split}
\end{equation*}
In conclusion, we have that
\begin{equation*}
    |J_\alpha| \lesssim \|\delta\theta\|_{H^\ell}^2,
\end{equation*}
and thus for every $\gamma \le \alpha$,
\begin{equation}\label{eq:est_J_gamma}
    |J_\gamma| \lesssim \|\delta\theta\|_{H^\ell}^2.
\end{equation}

\paragraph{Estimating the terms $L_\gamma$}
\text{ }

Similarly to the $L^2$ estimate, we have that
\begin{equation*}
\begin{split}
    |L_\gamma| & \le \| \partial^\gamma \delta H_{\rho} \|_{L^\infty} \| \partial^{\alpha-\gamma} \nabla \theta_2 \|_{L^2} \| \partial^\alpha \delta\theta \|_{L^2} \\
    & \lesssim \| \partial^\gamma \delta H_{\rho} \|_{L^\infty} \|\delta \theta \|_{H^\ell}.
    \end{split}
\end{equation*}
Observing that $\partial^\gamma \delta H_{\rho}$ is a smooth map, we have that
\begin{equation*}
    \|\partial^\gamma \delta H_{\rho} \|_{L^\infty} \le \| \nabla \partial^\gamma K_{s,\rho} \|_{L^\infty} |\delta z| \lesssim |\delta z|
\end{equation*}
and thus in the end,
\begin{equation}\label{eq:est_L_gamma}
    |L_\gamma|  \lesssim |\delta z| \|\delta \theta \|_{H^\ell}.
\end{equation}

\paragraph{Conclusion}
\text{ }

Gathering estimates~\eqref{eq:est_I_gamma},~\eqref{eq:est_J_gamma} and~\eqref{eq:est_L_gamma} summed over every $|\alpha|=\ell$ and plugging them into relation~\eqref{eq:Senna}, we obtain that
\begin{equation*}
     \der{}{t} \| \partial^\alpha \delta \theta \|_{L^2}^2 \lesssim \|\delta\theta\|_{H^\ell} \Big(\|\delta\theta\|_{H^\ell} + |\delta z| \Big).
\end{equation*}
Adding this with relation~\eqref{eq:Prost_v2} we get that
\begin{equation*}
    \der{}{t} \| \delta\theta \|_{H^\ell}^2 \lesssim \|\delta \theta\|_{L^2}\big(|\delta z| + \| \delta \theta\|_{H^{1-2s}}\big)  + \|\delta\theta\|_{H^\ell} \Big(\|\delta\theta\|_{H^\ell} + |\delta z| \Big) \lesssim \|\delta\theta\|_{H^\ell} \Big(\|\delta\theta\|_{H^\ell} + |\delta z| \Big),
\end{equation*}
which yields that
\begin{equation*}
    \der{}{t} \| \delta\theta \|_{H^\ell} \lesssim |\delta z| + \| \delta\theta \|_{H^\ell},
\end{equation*}
which concludes the proof of Lemma~\ref{lem:der_theta_2-theta_1}.
\end{proof}

\subsection{Conclusion of the proof of Proposition~\ref{prop:stabilité} and uniqueness in Theorem~\ref{thrm:strong_solutions}}\label{sec:conclusion_uniqueness}

We now conclude the proof of Proposition~\ref{prop:stabilité}.

Gathering the results of Lemma~\ref{lem:choice_rho_T}, Lemma~\ref{lem:der_z_2-z_1} and Lemma~\ref{lem:der_theta_2-theta_1}, we obtain that there exists a time $T_0$ such that for every $t \in [0,T_0]$,
\begin{equation*}
    \der{}{t} \big( \| \delta \theta \|_{H^\ell} + |\delta z|\big) \le  C\big(\|\delta\theta\|_{H_\ell} + |\delta z|\big),
\end{equation*}
where $T_0$ and the constant $C$ depend only on $\|\theta_1(0,\cdot)\|_{H^k}$, $\|\theta_2(0,\cdot)\|_{H^k}$, $R_1$, $R_2$, $\ell$ and $T$.

By the classical Gronwall's Lemma, we infer that
\begin{equation*}
    \| \delta \theta \|_{H^\ell} + |\delta z| \le\Big( \| \delta \theta(0,\cdot) \|_{H^\ell} + |\delta z(0)| \Big) e^{Ct}.
\end{equation*}
This concludes the proof of Proposition~\ref{prop:stabilité}, and hence of Theorem~\ref{thrm:strong_solutions}-$(ii)$. \qed

\medskip

Please note that in the case $s \ge 1/2$ replacing the result of Lemma~\ref{lem:der_theta_2-theta_1} by relation~\eqref{eq:pour_stabilité_L2}, then by the same argument using Gronwall's Lemma, we prove Proposition~\ref{prop:stabilité_L2}.

\medskip

We now prove the uniqueness of strong solutions to the vortex wave system~\eqref{eq:StrongSystem} as stated in Theorem~\ref{thrm:strong_solutions}-$(i)$, and whose existence has been proved in Section~\ref{sec:existence_strong}. Let $(\theta_1,z_1)$ and $(\theta_2,z_2)$ be two solutions with same initial datum $(\theta_0,z_0)$, with
\begin{equation*}
    \theta_0 \equiv \beta \quad \text{ on } B(z_0,R_0).
\end{equation*}
Assume that both solutions are defined on $[0,T)$. We then define $\bar{T}$ the supremum of times such that the two solutions coincide. If $\bar{T} < T$, starting from time $\bar{T}$, those solutions clearly satisfy the hypotheses of Proposition~\ref{prop:stabilité} and satisfy the existence of a time $T_0\in(\bar{T}, T)$ such that for every $t \in [\bar{T},T_0]$,
\begin{equation*}
    \| \delta\theta(t) \|_{H^1} + |\delta z(t)| \lesssim \| \delta\theta(\bar{T}) \|_{H^1} + |\delta z(\bar{T})| = 0,
\end{equation*}
which is absurd by definition of $\bar{T}$. This proves that $\bar{T} = T$ and therefore that maximal strong solutions are unique. \qed

\section{Blow-up criterion}\label{sec:blow-up}

A first obstacle to obtain global existence of strong solutions to the vortex-wave system is that the active scalar, whose self-interaction is governed by the SQG equations, can blow-up on his own and lose regularity. Although we believe that a proof that this blow-up can actually happen is not known, there are numerical evidences suggesting it~\cite{Scott_Dritschel_2014}. Moreover, a second type of blow-up, specific to the vortex-wave system, may happen. Indeed, the proof of existence of solutions requires that the active scalar should be locally constant around the point-vortex. If the distance to the non -constant part $t\mapsto R(t)$ goes to 0 in finite time, the equations become singular, resulting in a loss of regularity.
In this article we investigate this second type of blow-up.

\subsection{The case where $R(t)$ vanishes : proof of Theorem~\ref{thrm:blow-up}}
 For any $f \in C^0(\R^2)$, $z \in \R^2$ and $R \in (0,1)$, we introduce the quantity $\cN (f,z,R) \in (-\infty;+\infty]$ given by
\begin{equation*}
    \cN(f,z,R) := \max_{\{x \in \R^2, \, |x-z| = R\}} \frac{-(x-z)\cdot\big(f(x)-f(z)\big)}{R^2(1-\ln R)},
\end{equation*}
which evaluates the behavior of the radial component of $f(x)-f(z)$ near $z$. This quantity is a directional Log-Lipschitz norm. In particular, it satisfies that
\begin{equation*}
    |\cN(f,z,R)| \le \| f \|_{LL} := \sup_{0<|x-y|<1} \frac{|f(x)-f(y)|}{|x-y|(1-\ln|x-y|)}.
\end{equation*}
We prove the following blow-up criterion.
\begin{proposition}[Criterion for the shrink of the constant part]\label{prop:blow-up}
Let $(\theta,z)$ be a strong solution of \eqref{eq:StrongSystem} on $[0,T^\ast)$ with $T^* < \infty$, such that
\begin{equation*}
    \theta_0 \equiv \beta \qquad \text{ on } B\big(z(0),R_0\big),
\end{equation*}
for some $\beta \in \RR$. We recall the definition of $t \mapsto R(t)$ given in relation~\eqref{def:R(t)}. 

Then,
    \begin{equation*}
        R(t) \tend{t}{T^\ast} 0 \; \Longleftrightarrow \; \int_0^{T^\ast} \cN\big(v(t,\cdot),z(t),R(t)\big) \dd t = +\infty.
    \end{equation*}
\end{proposition}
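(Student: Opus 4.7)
The plan is to reduce the claim to an explicit ODE for $R(t)$ of the form $R'(t) = -N(t)\,R(t)(1 - \ln R(t))$, which can be integrated via the substitution $F(R) := -\ln(1 - \ln R)$, since $F'(R) = \frac{1}{R(1-\ln R)}$ on $(0,1)$. The resulting identity
\begin{equation*}
    F(R(t)) = F(R_0) - \int_0^t N(s) \, ds
\end{equation*}
immediately yields the stated equivalence, because $F : (0,1) \to (-\infty, 0)$ is a strictly increasing bijection with $F(R) \to -\infty$ as $R \to 0^+$.

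Setting up this ODE rests on two ingredients. Since $\theta$ is transported by the flow $X(t, \cdot)$ of the divergence-free field $v + H$, we have $\supp(\nabla \theta(t)) = X\big(t, \supp(\nabla \theta_0)\big)$, and hence
\begin{equation*}
    R(t) = \inf_{x_0 \in \supp(\nabla \theta_0)} |X(t, x_0) - z(t)|.
\end{equation*}
Moreover, at any boundary point $y \in \partial B(z(t), R(t))$, the field $H(y) = K_s(y - z(t))$ is orthogonal to $y - z(t)$ by the definition in \eqref{def:K_s}, so $H$ does not contribute to the radial motion; for a trajectory through $y$ at time $t$ one computes
\begin{equation*}
    \frac{d}{d\tau}\Big|_{\tau = t} |X(\tau, x_0) - z(\tau)| = \frac{y - z(t)}{|y - z(t)|} \cdot \big( v(t, y) - v(t, z(t)) \big).
\end{equation*}
A Danskin-type envelope argument then shows that $R$ is Lipschitz, that its minimizers correspond to points of $\partial B(z(t), R(t)) \cap \supp(\nabla \theta(t))$, and that the Dini derivatives of $R$ are obtained by extremizing the above radial velocity over those minimizers.

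The lower bound $R'(t) \geq -N(t) R(t)(1 - \ln R(t))$ is then immediate: the radial velocity at any minimizer is at least the negative of the maximum appearing in the definition of $N(t)$. Integrating via $F$ gives $F(R(t)) \geq F(R_0) - \int_0^t N$, which at once yields the forward implication $R(t) \to 0 \Rightarrow \int_0^{T^*} N = +\infty$. The main obstacle is the matching upper bound $R'(t) \leq -N(t) R(t)(1 - \ln R(t))$ needed for the converse: the maximum defining $N(t)$ is taken over the full circle $\partial B(z(t), R(t))$, while the evolution of $R$ is driven only by the subset $\partial B \cap \supp(\nabla \theta(t))$, which may be proper. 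I would handle this by exploiting that this intersection is nonempty by the very definition of $R$ as a minimum distance, together with a continuity argument on the radial velocity along the circle $\partial B$, to show that the maximum is effectively realized at a minimizer of $d_{x_0}(t) := |X(t, x_0) - z(t)|$. With that matching inequality in hand, the same integration scheme closes the converse implication and hence the equivalence.
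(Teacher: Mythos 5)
Your forward direction is exactly the paper's argument: you differentiate $|X(\tau)-z(\tau)|$ along a fluid trajectory through a point where $\theta\neq\beta$, use that $H(t,x)\cdot\big(x-z(t)\big)=0$ so that only the radial part of $v(t,x)-v\big(t,z(t)\big)$ matters, take an infimum over such points to get the one-sided differential inequality $\der{}{t}R(t)\ge -\cN\big(v(t,\cdot),z(t),R(t)\big)\,R(t)\big(1-\ln R(t)\big)$, and integrate it (your substitution $F(R)=-\ln(1-\ln R)$ is precisely the Osgood integration the paper invokes), which gives the lower bound on $R(t)$ and hence the implication $R(t)\tend{t}{T^\ast}0\Rightarrow\int_0^{T^\ast}\cN\,\dd t=+\infty$. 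Up to the (harmless) extra care you take with Dini derivatives and the identity $R(t)=\inf_{x_0\in\supp\nabla\theta_0}|X(t,x_0)-z(t)|$, this part is correct and identical in substance to the paper's proof.

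The genuine gap is in your treatment of the converse. Your scheme requires the matching upper bound $\der{}{t}R(t)\le -N(t)R(t)\big(1-\ln R(t)\big)$, i.e.\ the exact ODE, and the mechanism you propose for it — that by continuity the maximum defining $N(t)$ is ``effectively realized'' at a point of $\partial B\big(z(t),R(t)\big)\cap\supp\nabla\theta(t)$ — is not available: nothing forces the maximizer to lie in the support of $\nabla\theta(t)$. The plateau $\{\theta(t,\cdot)=\beta\}$ need not be a ball; it can extend well beyond $B\big(z(t),R(t)\big)$ in most directions and touch the circle of radius $R(t)$ only near one contact point, while $v$ has its strongest inward radial component at a portion of the circle entirely surrounded by the plateau, where pushing fluid inward does not decrease $R$ at all. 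So the exact ODE is false in general and the upper bound cannot be obtained this way. It is worth noting that the paper itself does not prove any such upper bound: its proof consists solely of the one-sided inequality above and the resulting Osgood-type lower bound $R(t)\ge\exp\big(-\exp\big(\int_0^t\cN\,\dd\tau+\ln(1-\ln R(0))\big)+1\big)$, which yields the implication ``$R\to 0\Rightarrow\int\cN=+\infty$''; the reverse implication is asserted without a further estimate. Your instinct that the converse needs a separate argument is therefore sound, but the route you sketch does not close it.
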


\begin{proof}
We start similarly as in the proof of Lemma~\ref{lem:remain_constant_near_PV}. Let $x \in \R^2$ such that $\theta(t,x) \neq \beta$. Let $\tau \mapsto X(\tau)$ be the trajectory of the fluid particle passing in $x$ at time $t$. Then
\begin{equation*}\begin{split}
    \left. \der{}{\tau}|X(\tau) - z(\tau)|\right|_{\tau = t} & = \frac{x-z(t)}{\big|x-z(t)\big|} \cdot \big(v(x,t) + H(x,t) - v(z(t),t)\big)\\
    & = \frac{x-z(t)}{\big|x-z(t)\big|} \cdot \big(v(x,t) - v(z(t),t)\big)\\
    & \ge \min_{\{y \in \R^2, \, |y-z(t)| = R(t)\}} \frac{y-z(t)}{\big|y-z(t)\big|} \cdot \big(v(y,t) - v(z(t),t)\big) \\
    & = -\max_{\{y \in \R^2, \, |y-z(t)| = R(t)\}} -\frac{y-z(t)}{\big|y-z(t)\big|} \cdot \big(v(y,t) - v(z(t),t)\big)
\end{split}\end{equation*}
since $\big(x-z(t)\big) \cdot H(x,t)= 0$.
Since $v \in C^1([0,T^\ast))$, the quantity $\cN\Big(v(t,\cdot),z(t),R(t)\Big)$ is well defined and finite by assuming without lost of generality that $R(0) < 1$. We then obtain that $R$ satisfies the differential inequality
\begin{equation*}
   \der{}{t} R(t) \ge  -\cN\Big(v(t,\cdot),z(t),R(t)\Big)\, R(t)(1-\log(R(t)),
\end{equation*}
from which we infer, as a particular case of Osgood's uniqueness theorem that for every $t \in [0,T^\ast)$,
\begin{equation*}
    R(t) \ge \exp \left( - \exp\left( \int_0^t \cN\big(v(\tau,\cdot),z(\tau),R(\tau)\big)\dd \tau + \ln\big(1-\ln(R(0))\big) \right)+1\right).
\end{equation*}
and thus 
\begin{equation*}
            R(t) \tend{t}{T^*} 0 \Longleftrightarrow \int_0^{T^\ast} \cN\big(v(t,\cdot),z(t),R(t)\big) \dd t = +\infty.
\end{equation*}
\end{proof}
Theorem~\ref{thrm:blow-up} is a simple reformulation of Proposition~\ref{prop:blow-up} with $N(t) := \cN\big(v(t,\cdot),z(t),R(t)\big)$.

\subsection{Proof of Theorem~\ref{thrm:strong_solutions}-$(iii)$ and remarks}
From Proposition~\ref{prop:blow-up}, we obtain immediately Theorem~\ref{thrm:strong_solutions}-$(iii)$ from the observation that
\begin{equation*}
    N(t) \le \| v(t)\|_{LL} \lesssim \| \theta(t) \|_{H^{3-2s}}.
\end{equation*}
Let us mention that a naive approach to obtain a blow-up criterion could be to conclude directly from Lemma~\ref{lem:remain_constant_near_PV} that
\begin{equation*}
    R(t) \tend{t}{T^*} 0 \; \Longrightarrow \; \int_0^{T^\ast} \| \nabla v(t,\cdot) \|_{L^\infty} \dd t = +\infty.
\end{equation*}
From this, we only infer that for every $\eps > 0$,
\begin{equation*}
    \int_0^{T^\ast} \| \theta(t,\cdot) \|_{H^{3-2s + \eps}} \dd t = +\infty,
\end{equation*}
which is a weaker result than Theorem~\ref{thrm:strong_solutions}-$(iii)$. Moreover, the quantity $t\mapsto N(t)$ is much more precise than $\| \nabla v(t) \|_{L^\infty}$ or even than $\| v(t)\|_{LL}$. In particular, let us observe that since $\big(x-z(t)\big) \cdot H(t,x) = 0$, the point-vortex seems to play no role into the possible vanishing of $R$. This is not a rigorous assertion at all since $\theta$ is affected by the velocity field $H$ generated by the point-vortex. This make us think that should such a blow-up exist, then it might come from the same mechanism than the blow-up of the SQG equations alone.

%%%%%%%%%%%%%%%%%%%%%%%%%%%%%%%%%%%%%%%%%%%%%%%%%%%%%
%%%%%%%%%%%%%%%%%%%%%%%%%%%%%%%%%%%%%%%%%%%%%%%%%%%%%
%%%%%%%%%%%%%%%%%%%%%%%%%%%%%%%%%%%%%%%%%%%%%%%%%%%%%
\section{Global Existence of weak solutions}

%%%%%%%%%%%%%%%%%%%%%%%%%%%%%%%%%%%%%%%%%%%%%%%%%%%%%
%%%%%%%%%%%%%%%%%%%%%%%%%%%%%%%%%%%%%%%%%%%%%%%%%%%%%
\subsection{Proof of Theorem~\ref{thrm:global weak sub-critical}}\label{sec:proof-subcritical}

Let $\varepsilon>0$.
To start with, we consider the functions $\theta_\varepsilon$, $z_\varepsilon$, $v_\varepsilon$ and $H_\varepsilon$ global solution to the dynamic associated to the regularized kernel $K_{s,\varepsilon}$ (see Section~\ref{sec:reg} for details). 
Using the preservation of the $L^p$ norms by the flow stated by~\eqref{conservation law epsilon}, we have that $(\theta_\varepsilon)_{\varepsilon>0}$ is bounded in $L^\infty(\RR_+\times\RR^2)$. 
Therefore, by the Banach-Alaoglu theorem, we have $\theta_\varepsilon$ that converges weakly-$\star$ in $L^\infty(\RR_+\times\RR^2)$ up to an omitted extraction. 

The right-hand side of~\eqref{bio} is (up to a multiplicative constant) the Green function of the fractional Laplace operator $(-\Delta)^\sigma$ with $\sigma=s-1/2$. 
Note that we have $\sigma>0$ by hypothesis.
Therefore, using properties of homogeneous Fourier multipliers on Besov spaces (see~\cite[\S 2.3]{Bahouri_Chemin_Danchin_2011}), we have $v(t,.)\in B^{\sigma}_{p,p}(\RR^2)$. More precisely, there exists a constant $C>0$ such that
\begin{equation}\label{pomme}
    \forall\,t>0,\qquad\|v_\varepsilon(t,.)\|_{B^{\sigma}_{p,p}}\;\leq C_T\|\theta_0\|_{L^p}
\end{equation} for all $p\in[2,+\infty)$. 
By Sobolev inequalities, we have for $p$ large enough that $B^{\sigma}_{p,p}(\RR^2)$ is continuously embedded in the space of Hölder continuous functions $\cC^{0,\sigma'}(\RR^2)=:B^{\sigma'}_{\infty,\infty}$ with $0<\sigma'<\sigma$. 
With~\eqref{bio}, we have that $K_{s,\varepsilon}$ converges strongly in $L^1_{\rm loc}(\RR^2)$ by the Lebesgue dominated convergence theorem.
This gives as $\varepsilon\to0$:
\begin{equation*}
    v_\varepsilon:=K_{s,\varepsilon}\star\theta_\varepsilon\;\longrightarrow \;v:=K_{s}\star\theta,\qquad\text{strongly in } L^1_{\rm loc}\cap L^\infty(\RR^+\times\RR^2).
\end{equation*}
Thus, for any $\psi\in\cD([0,T)\times\RR^2)$ with $T>0$ fixed,
\begin{equation}\label{lego 1}
    \int_0^T\int_{\RR^2}\theta_\varepsilon(t,x)\;v_\varepsilon(t,x)\cdot\nabla\psi(t,x)\,\dd x\,\dd t\;\longrightarrow \;\int_0^T\int_{\RR^2}\theta(t,x)\;v(t,x)\cdot\nabla\psi(t,x)\,\dd x\,\dd t.
\end{equation}
The weak convergence of $\theta_\varepsilon$ as $\varepsilon\to0$ implies also
\begin{equation}\label{lego 2}
    \int_0^T\int_{\RR^2}\theta_\varepsilon(t,x)\,\frac{\partial\psi}{\partial t}(t,x)\,\dd x\,\dd t\;\longrightarrow \;\int_0^T\int_{\RR^2}\theta(t,x)\,\frac{\partial\psi}{\partial t}(t,x)\,\dd x\,\dd t.
\end{equation}

Similarly as before we have $v\in\cC^{0,\sigma'}$. 
We can then define $t\mapsto z(t)$ using~\eqref{eq:Euler Vortex Wave weak 2}. 
The uniform convergence of $v_\varepsilon$ towards $v$ implies the uniform convergence of $z_\varepsilon$ towards $z$. 
We observe that, since $v$ is bounded, we have $z\in\cC^{0,1}([0,T);\RR^2).$
This is enough to get the convergence:
\begin{equation*}
    H_\varepsilon(t,x):=K_{s,\varepsilon}(x-z_\varepsilon(t))\;\longrightarrow\;H(t,x):=K_{s}(x-z(t)),\qquad\text{strongly in }L^1_{\rm loc}([0,T)\times\RR^2).
\end{equation*}
Thus, 
\begin{equation}\label{lego 3}
    \int_0^T\int_{\RR^2}\theta_\varepsilon(t,x)\;H_\varepsilon(t,x)\cdot\nabla\psi(t,x)\,\dd x\,\dd t\;\longrightarrow\;\int_0^T\int_{\RR^2}\theta(t,x)\;H(t,x)\cdot\nabla\psi(t,x)\,\dd x\,\dd t
\end{equation}
Since $t\mapsto z(t)$ satisfies~\eqref{eq:Euler Vortex Wave weak 2} then with~\eqref{lego 1}\eqref{lego 2}\eqref{lego 3} we have the existence of $(\theta,z)$ weak solution to the SQG vortex-wave system~\eqref{eq:weak formulation}\eqref{eq:weak formulation 2}.

Finally, since the weak convergence decreases the $L^p$ norms, we have for all $p\in[1,+\infty]$ that $\|\theta(t,.)\|_{L^p}\leq\|\theta_\varepsilon(t,.)\|_{L^p}= \|\theta_0\|_{L^p},$ where the last equality is given by~\eqref{conservation law epsilon}.

The only point that remains to be proved is the continuity in time of $t\mapsto\theta(t,.)$ with values in $L^1\cap L^\infty(\RR^2)$ endowed with the weak-$\star$ topology. For $t_0,t\in\RR_+$ and for $\varphi$ a test function on $\RR^2$:
\begin{equation*}
    \big|\left<\theta(t,.)-\theta(t_0,.),\varphi\right>\big|\;\leq\;\big|\left<\theta(t,.)-\theta_\varepsilon(t,.),\varphi\right>\big|+\big|\left<\theta_\varepsilon(t,.)-\theta_\varepsilon(t_0,.),\varphi\right>\big|+\big|\left<\theta_\varepsilon(t_0,.)-\theta(t_0,.),\varphi\right>\big|.
\end{equation*}
We now remark that the estimate~\eqref{ordre et beaute} still holds independently on $N$ using the same reasoning from the evolution equation. As a consequence, we have:
\begin{equation*}
    \big|\left<\theta_\varepsilon(t_0,.)-\theta_\varepsilon(t,.),\varphi\right>\big|\;=\;\bigg|\int_{t_0}^t\int_{\RR^2}\frac{\partial\theta_{\varepsilon}}{\partial t}(s,x)\,\varphi(x)\,dx\,ds\bigg|\leq(t-t_0)\|\varphi\|_{H^1}\Big(\|K_{s,\varepsilon}\|_{L^\infty}+\|K_{s,\varepsilon}\|_{L^2}\|\theta_\varepsilon\|_{L^2}\Big)\|\theta_\varepsilon\|_{L^2}.
\end{equation*}
For fixed values of $\varepsilon$, this term vanishes as $t\to t_0$. We also have that $\big|\left<\theta(t,.)-\theta_\varepsilon(t,.),\varphi\right>\big|$ vanishes as $\varepsilon\to 0$ uniformly with respect to $t\in[0,T]$ as a consequence of~\eqref{lego 2}. 
Thus, $\left<\theta(t,.)-\theta(t_0,.),\varphi\right>\to0$ as $t\to t_0$

\qed

%%%%%%%%%%%%%%%%%%%%%%%%%%%%%%%%%%%%%%%%%%%%%%%%%%%%%
%%%%%%%%%%%%%%%%%%%%%%%%%%%%%%%%%%%%%%%%%%%%%%%%%%%%%
\subsection{Proof of Theorem~\ref{thrm:VFaible}}\label{sec:proofVfaible}
For the proof of the global existence of $V$-weak solutions to the vortex-wave system when $s=1/2$, ie: Theorem~\ref{thrm:VFaible}-$(i)$ et $(ii)$, we follow closely the construction by Marchand~\cite{Marchand_2008}. 
This allows us to make use of the technical lemmas previously in~\cite{Marchand_2008} and ease the reading of this proof and focus on what is new.
Indeed, the main novelties of this proof lay in the convergence of the term of interaction between the ODE part and the PDE part.

%%%%%%%%%%%%%%%%%%%%%%%%%%%%%%%%%%%%%%%%%%%%%%%%%%%%%
\subsubsection{Step 1: regularization of the system}
To start with, we consider the regularization of the kernel $K_{\frac{1}{2},\varepsilon}$ introduced at~\eqref{bio}. By Proposition~\ref{prop:global smooth approx sol}, the vortex-wave system associated to this regularized kernel admits a global smooth solution $(\theta_\varepsilon, z_\varepsilon)$ with the respective associated velocity fields $v_\varepsilon$ and $ H_\varepsilon$.

Since the total velocity field ($v_\varepsilon+H_\varepsilon$) is divergence-free, the flow preserves all the $L^p$ norm of functions.
As a consequence, by the Banach-Alaoglu theorem, we have weak convergence of $\theta_\varepsilon$ towards some function $\theta\in L^\infty(\RR_+;L^1\cap L^\infty(\RR^2))$ (up to an omitted extraction of subsequence) as $\varepsilon\to0$. By property of the weak convergence:
\begin{equation}\label{eq:decrease Lp star}
    \forall\,t>0,\qquad\|\theta(t,.)\|_{L^p}\;\leq\;\liminf\limits_{\varepsilon\to0}\|\theta_\varepsilon(t,.)\|_{L^p}\;\leq\;\limsup\limits_{\varepsilon\to0}\|\theta_\varepsilon(0,.)\|_{L^p}\;=\;\|\theta(0,.)\|_{L^p}.
\end{equation}
We also have for all $T>0$ and $\psi\in\cD([0,T)\times\RR^2)$,
\begin{equation*}
    \int_0^T\int_{\RR^2}\frac{\partial\psi}{\partial t}(t,x)\,\theta_\varepsilon(t,x)\,\dd x\,\dd t\;\longrightarrow\;\int_0^T\int_{\RR^2}\frac{\partial\psi}{\partial t}(t,x)\,\theta(t,x)\,\dd x\,\dd t.
\end{equation*}
Similarly as~\eqref{pomme}, a simple triangular inequality give the continuity in time with value in $L^\infty\cap L^1$ endowed with the weak-$\star$ topology.

\medskip

To conclude the proof of Theorem~\ref{thrm:VFaible}-$(i)$, we study the convergence of the other terms as $\varepsilon\to 0$.

%%%%%%%%%%%%%%%%%%%%%%%%%%%%%%%%%%%%%%%%%%%%%%%%%%%%%
\subsubsection{Step 2: convergence of the non-linear term}
We now study the non-linear term using the commutator formulation~\eqref{eq:commutator formulation}. We introduce the following notation for the associated bilinear form:
\begin{equation*}
    \cI_\psi(\theta,\vartheta):=\int_{\RR^2}K_\frac{1}{2}\star\theta\cdot\Big[(-\Delta)^\frac{1}{2},\;\nabla\psi\Big](-\Delta)^{-\frac{1}{2}}\;\vartheta(x)\,\dd x
\end{equation*}
To study the convergence of this term, we make use of the following technical lemma which are proved in~\cite{Marchand_2008}. First, we use the following estimate on the high-frequencies self interactions:
\begin{lemma}[Proposition 2.3 in~\cite{Marchand_2008}]\label{lem:L4/3 regularity}
    Let $\frac{4}{3}<p<2$.
    For $\theta\in L^{p}$, the distribution $K_s\star\theta\cdot\theta$ is well-defined using~\eqref{eq:commutator formulation} as a consequence of the following estimate on the high-frequencies:
    \begin{equation*}
        \Big|\cI_\psi\big(\mathrm{H}_j\theta, \mathrm{H}_j\theta\big)\Big|\;\leq\; C\,2^{-j\left(3-\frac{4}{p}\right)}\|\nabla^2\psi\|_{L^\infty}\,\|\mathrm{H}_j\theta\|_{L^p}^2,
    \end{equation*}
\end{lemma}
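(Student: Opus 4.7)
The plan is to combine Hölder's inequality with the Calderón commutator estimate of Lemma~\ref{lem:calderon} and a Littlewood--Paley gain, exploiting that $(-\Delta)^{-1/2}$ acting on frequencies $\gtrsim 2^j$ produces a decay of order $2^{-j}$.

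Setting $f := \mathrm{H}_j\theta$ and applying Hölder's inequality with the conjugate pair $(p,p')$, the first step is to write
\begin{equation*}
\big|\cI_\psi(f,f)\big| \;\leq\; \|K_{1/2}\star f\|_{L^p}\;\big\|\big[(-\Delta)^{1/2},\nabla\psi\big](-\Delta)^{-1/2}f\big\|_{L^{p'}}.
\end{equation*}
The first factor is controlled by $\|f\|_{L^p}$ since $K_{1/2}\star\, =\, \nabla^\perp(-\Delta)^{-1/2}$ is a vector Riesz transform, bounded on $L^p$ for every $1<p<\infty$. For the second factor, the Calderón commutator estimate of Lemma~\ref{lem:calderon} applied at the exponent $p'\in(2,4)$ gives
\begin{equation*}
\big\|\big[(-\Delta)^{1/2},\nabla\psi\big](-\Delta)^{-1/2}f\big\|_{L^{p'}} \;\lesssim\; \|\nabla^2\psi\|_{L^\infty}\,\big\|(-\Delta)^{-1/2}f\big\|_{L^{p'}}.
\end{equation*}

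Everything then boils down to bounding $\|(-\Delta)^{-1/2}f\|_{L^{p'}}$ in terms of $\|f\|_{L^p}$ with the correct power of $2^j$. I would decompose $f = \sum_{k\geq j-1}\dot{\triangle}_k\theta$ and treat each block separately. Because $\dot{\triangle}_k\theta$ is spectrally localized on $|\xi|\sim 2^k$, one can write $(-\Delta)^{-1/2}\dot{\triangle}_k\theta$ as a $2^{-k}$-rescaled quasi-dyadic block, yielding $\|(-\Delta)^{-1/2}\dot{\triangle}_k\theta\|_{L^{p'}} \lesssim 2^{-k}\|\dot{\triangle}_k\theta\|_{L^{p'}}$. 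The Bernstein inequality of Lemma~\ref{l:Bernstein} in dimension two then upgrades the integrability from $p$ to $p'$ at the cost of a factor $2^{2k(1/p-1/p')} = 2^{k(4/p-2)}$. Multiplying the two gains produces a dyadic bound of order $2^{k(4/p-3)}\|\dot{\triangle}_k\theta\|_{L^p}$, and noting that $\|\dot{\triangle}_k\theta\|_{L^p} = \|\dot{\triangle}_k f\|_{L^p} \lesssim \|f\|_{L^p}$ uniformly in $k\geq j$ leaves only the summation in $k$.

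The main obstacle is this final geometric series. The hypothesis $p>4/3$ is sharp here: it is precisely what makes the exponent $4/p-3$ strictly negative, so that $\sum_{k\geq j}2^{k(4/p-3)}$ converges and is comparable to its first term $2^{-j(3-4/p)}$. At $p=4/3$ the series would diverge, mirroring the fact that the Riesz potential $(-\Delta)^{-1/2}$ maps $L^{4/3}$ into $L^4$ in dimension two only at the critical endpoint, without any additional high-frequency gain. Plugging the resulting estimate $\|(-\Delta)^{-1/2}f\|_{L^{p'}} \lesssim 2^{-j(3-4/p)}\|f\|_{L^p}$ back into the Hölder bound then yields the stated inequality.
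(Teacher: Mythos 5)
Your proof is correct, and it follows exactly the route the paper indicates for this lemma (which it does not prove itself but quotes from Marchand): Littlewood--Paley decomposition, the Calder\'on commutator estimate of Lemma~\ref{lem:calderon}, and a Bernstein/Sobolev gain, with the condition $p>4/3$ entering precisely to make the dyadic series $\sum_{k\geq j}2^{k(4/p-3)}$ converge to $\sim 2^{-j(3-4/p)}$. The only cosmetic point is that in the summation you should work with the blocks $\dot{\triangle}_k(\mathrm{H}_j\theta)$ rather than $\dot{\triangle}_k\theta$ (they coincide except for the one or two edge indices near $k=j$, and $\|\dot{\triangle}_k \mathrm{H}_j\theta\|_{L^p}\lesssim\|\mathrm{H}_j\theta\|_{L^p}$ uniformly), which does not affect the estimate.
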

The proof of this result relies on Littlewood-Paley decomposition, the commutator estimates given by Lemma~\ref{lem:calderon} and Sobolev embeddings. Note that a consequence of this lemma is that $\cI(\theta,\theta)$ is well-defined for all $\theta\in L^p$ with $4/3<p<2$. \vspace{0.2cm}

We also need the following compactness result for the low frequencies of $\theta$:
\begin{lemma}[Lemma~9.2 in~\cite{Marchand_2008}]\label{lem:strong compactness}
    Let $(\theta_n)_{n\in\NN}$ be a sequence bounded in $L^\infty(\RR_+;L^p(\RR^2))$ for $1<p<+\infty$. Then for every $j\in\ZZ$, the sequences $(\mathrm{S}_j\theta_n)_{n\in\NN}$ and $(K_\frac{1}{2}\star(\mathrm{S}_j\theta_n))_{n\in\NN}$ are pre-compact in $L^\frac{p}{p-1}(\RR_+\times\RR^2)$.
\end{lemma}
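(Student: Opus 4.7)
The plan is to prove pre-compactness via a Riesz--Fréchet--Kolmogorov criterion applied on exhausting bounded pieces $[0,T]\times B(0,R)$ of $\RR_+\times\RR^2$, combined with a tightness estimate at infinity. The two families $(S_j\theta_n)$ and $(K_{1/2}\star S_j\theta_n)$ can be treated simultaneously because $K_{1/2}$ is a bounded operator on $L^p(\RR^2)$ for every $1<p<+\infty$ (its Fourier symbol $\xi^\perp/|\xi|$ is bounded, and the kernel is of Calderón--Zygmund type; Lemma~\ref{lem:K 1/2} handles the endpoints $p=1,2$, and interpolation/standard CZ theory covers the range in between). Thus $K_{1/2}\star S_j\theta_n = S_j(K_{1/2}\star\theta_n)$ inherits every property one proves for $S_j$ applied to a bounded sequence in $L^p$.

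The regularity input comes from writing $S_j$ as convolution with the Schwartz kernel $\psi_j := \cF^{-1}(\phi(2^{-j}\cdot))$. By Young's inequality, for any $q\in[p,+\infty]$ and any multi-index $\alpha$,
\begin{equation*}
\|\partial^\alpha S_j\theta_n(t,\cdot)\|_{L^q(\RR^2)} \;\lesssim_{j,\alpha,p,q}\; \|\theta_n\|_{L^\infty_t L^p_x},
\end{equation*}
so each family is uniformly bounded in $L^\infty_t(C^k_x)$ for every $k$. Spatial equicontinuity follows from $\tau_h(S_j\theta_n) - S_j\theta_n = (\tau_h\psi_j - \psi_j)\star\theta_n$ and Young's inequality with Hölder conjugate exponents: $\|\tau_h\psi_j-\psi_j\|_{L^s}\to 0$ as $h\to 0$ uniformly in $n$ and in $t$, by dominated convergence and the Schwartz-class decay of $\psi_j$. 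Spatial tightness at infinity uses the same Schwartz decay: for any $M$,
\begin{equation*}
\|S_j\theta_n(t,\cdot)\|_{L^{p'}(|x|>R)} \;\lesssim_{j,M,p}\; R^{-M}\,\|\theta_n\|_{L^\infty_t L^p_x},
\end{equation*}
which gives tightness uniformly in $t,n$ and handles the spatial part of the ambient $L^{p'}(\RR_+\times\RR^2)$ norm via a diagonal extraction.

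The main obstacle is the time direction: from the sole hypothesis $\theta_n\in L^\infty(\RR_+;L^p)$ one cannot hope for time equicontinuity of $S_j\theta_n$, since nothing prevents wild temporal oscillations of $\theta_n$. The way out, which matches the use of the lemma in the sequel, is that in practice the sequence $\theta_n$ solves the regularized transport equation with smooth divergence-free velocity $v_n+H_n$, so that $\partial_t\theta_n$ is controlled in a negative Sobolev norm uniformly in $n$. Applying $S_j$ (a smoothing operator of order $-\infty$) promotes this to a uniform bound on $\partial_t S_j\theta_n$ in $L^\infty_t(L^{p'}_x)$ on every compact time interval; combining this with the spatial equicontinuity and tightness already established, the Aubin--Lions lemma yields pre-compactness of $S_j\theta_n$ (and of $S_j(K_{1/2}\star\theta_n)$) in $L^{p'}([0,T]\times B(0,R))$ for every $T,R$, and a further diagonal extraction in $(T,R)$ together with the tightness estimate upgrades this to pre-compactness in $L^{p'}(\RR_+\times\RR^2)$. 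This step --- supplementing the abstract $L^\infty_t L^p_x$ hypothesis by the PDE time-regularity of the approximate sequence to which the lemma is actually applied --- is where all the delicate work concentrates.
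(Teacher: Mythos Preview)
The paper does not actually prove this lemma: it is quoted from Marchand~\cite{Marchand_2008} with only the one-line remark that ``the proof uses the smoothing properties of the operators $(S_j)$ and a compact embedding argument.'' Your overall strategy --- convolution with the Schwartz kernel $\psi_j$ for spatial smoothing, then Aubin--Lions for time compactness --- is exactly what that remark points to, and your observation that $K_{1/2}\star$ commutes with $S_j$ and is $L^p$-bounded, so the second family reduces to the first, is correct.

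You are also right to flag that the statement, read literally as ``bounded in $L^\infty_t L^p_x$ implies pre-compact in $L^{p'}(\RR_+\times\RR^2)$,'' cannot hold without further input: there is no time regularity in the hypothesis, and you correctly identify the fix, namely the uniform $H^{-1}$-type bound on $\partial_t\theta_n$ coming from the approximate transport equation, followed by Aubin--Lions. This is the implicit extra hypothesis in Marchand's setting and in the paper's application.

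There is, however, a genuine error in your spatial tightness step. The inequality
\begin{equation*}
\|S_j\theta_n(t,\cdot)\|_{L^{p'}(|x|>R)} \;\lesssim_{j,M,p}\; R^{-M}\,\|\theta_n\|_{L^\infty_t L^p_x}
\end{equation*}
is false: take $\theta_n(t,x)=\theta_0(x-n e_1)$ for a fixed $\theta_0\in L^p$. Then $S_j\theta_n$ is just a translate of $S_j\theta_0$, and its $L^{p'}$ mass outside $B(0,R)$ does not decay uniformly in $n$. The Schwartz decay of $\psi_j$ buys you smoothness, not localisation --- convolution cannot concentrate mass that is already spread out. So the same defect you diagnosed in time is present in space: the pure $L^\infty_t L^p_x$ hypothesis allows mass to escape to spatial infinity. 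The resolution is the same as for time: either invoke additional structure from the approximate problem, or --- more to the point --- observe that in the application the integrals $\cI_\psi$ are tested against $\psi\in\mathcal D([0,T)\times\RR^2)$, so only pre-compactness in $L^{p'}_{\rm loc}$ is actually needed, and there both your tightness worries disappear.
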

The proof of this lemma uses the smoothing properties of the operators $(S_j)$ and a compact embedding argument.\vspace{0.2cm}

\begin{lemma}Let $\theta_\varepsilon$ be a solution in the regularized vortex-wave system~\eqref{eq:ApproxSystem1}.
We have convergence of the non-linear term:
    $\cI_\psi(\theta_\varepsilon,\theta_\varepsilon)\;\longrightarrow\;\cI_\psi(\theta,\theta).$
\end{lemma}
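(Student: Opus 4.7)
The plan is to mimic Marchand's treatment of the SQG nonlinearity: I would split $\theta_\varepsilon = S_j \theta_\varepsilon + \mathrm{H}_j \theta_\varepsilon$ at a Littlewood-Paley truncation parameter $j \in \NN$ and expand $\cI_\psi(\theta_\varepsilon, \theta_\varepsilon)$ by bilinearity into a low-low self-interaction, a high-high self-interaction, and two high-low cross-interactions. The same splitting would be applied to the limit $\theta$, and the conclusion would follow from a diagonal procedure: first let $\varepsilon \to 0^+$ with $j$ fixed to handle the low-low piece, then let $j \to +\infty$ to kill the error terms created by the high-frequency truncation.

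For fixed $j$, I would deduce the low-low convergence $\cI_\psi(S_j \theta_\varepsilon, S_j\theta_\varepsilon) \tend{\varepsilon}{0} \cI_\psi(S_j \theta, S_j\theta)$ from Lemma~\ref{lem:strong compactness}, which provides strong $L^{p/(p-1)}$ convergence of both $S_j \theta_\varepsilon$ and $K_{1/2} \star S_j \theta_\varepsilon$ for any $p \in (4/3,2)$; the commutator $[(-\Delta)^{1/2}, \nabla \psi]$ is continuous on every $L^q$ with $1<q<\infty$ by Lemma~\ref{lem:calderon}, so a Hölder argument combined with the uniform $L^1 \cap L^\infty$ bound \eqref{eq:decrease Lp star} allows the passage to the limit. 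The pure high-high term $\cI_\psi(\mathrm{H}_j \theta_\varepsilon, \mathrm{H}_j \theta_\varepsilon)$ would then be bounded by $C\,2^{-j(3-4/p)}\|\theta_0\|_{L^p}^2$, uniformly in $\varepsilon$, by a direct application of Lemma~\ref{lem:L4/3 regularity}.

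The hard part will be controlling the two high-low cross terms $\cI_\psi(\mathrm{H}_j \theta_\varepsilon, S_j \theta_\varepsilon)$ and $\cI_\psi(S_j \theta_\varepsilon, \mathrm{H}_j \theta_\varepsilon)$ uniformly in $\varepsilon$ and making them arbitrarily small for $j$ large. I would adapt the Littlewood-Paley proof of Lemma~\ref{lem:L4/3 regularity} from~\cite{Marchand_2008} to such mixed pairs by a Bony paraproduct decomposition: the operator $(-\Delta)^{-1/2}$ gains a factor $2^{-\max(k,\ell)}$ on each dyadic block $(\dot\triangle_k \theta_\varepsilon, \dot\triangle_\ell \theta_\varepsilon)$, the commutator $[(-\Delta)^{1/2}, \nabla\psi]$ spends one derivative (Lemma~\ref{lem:calderon}), Bernstein's inequality (Lemma~\ref{l:Bernstein}) converts $L^p$ estimates into $L^2$ ones at the cost of a positive power of $2$, and the resulting geometric series, summed over blocks with at least one index $\geq j$, would produce the same decay $2^{-j(3-4/p)}$ as in the high-high case. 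Sending $\varepsilon \to 0^+$ at fixed $j$ and then $j \to +\infty$, and applying the same bounds to $\theta$ itself thanks to \eqref{eq:decrease Lp star}, would conclude that $\cI_\psi(\theta_\varepsilon,\theta_\varepsilon) \tend{\varepsilon}{0} \cI_\psi(\theta,\theta)$, as desired.
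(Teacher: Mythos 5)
Your skeleton coincides with the paper's: the same four-term splitting of $\cI_\psi(\theta_\varepsilon,\theta_\varepsilon)$ at a fixed truncation index $j$ (equation \eqref{pie}), the same use of Lemma~\ref{lem:strong compactness} for the low-low block, and the same uniform bound $C\,2^{-j(3-4/p)}\|\theta_0\|_{L^p}^2$ on the high-high block via Lemma~\ref{lem:L4/3 regularity} and \eqref{eq:decrease Lp star}, applied to $\theta_\varepsilon$ and to $\theta$ alike. The divergence is in the two cross terms, and this is where your proposal has a genuine gap. The paper does not make these terms small in $j$ at all: for fixed $j$ it passes to the limit in $\cI_\psi(\mathrm{S}_j\theta_\varepsilon,\mathrm{H}_j\theta_\varepsilon)$ and $\cI_\psi(\mathrm{H}_j\theta_\varepsilon,\mathrm{S}_j\theta_\varepsilon)$ directly, pairing the strong $L^{p/(p-1)}$ convergence of $\mathrm{S}_j\theta_\varepsilon$ and of $K_{\frac12}\star\mathrm{S}_j\theta_\varepsilon$ provided by Lemma~\ref{lem:strong compactness} against the weak-$\star$ convergence of the complementary high-frequency remainder $\mathrm{H}_j\theta_\varepsilon=\theta_\varepsilon-\mathrm{S}_j\theta_\varepsilon$ (the commutator being bounded on $L^q$ by Lemma~\ref{lem:calderon}); only afterwards is $j\to+\infty$ used, solely to kill the high-high blocks.

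The gap in your route is the claimed uniform decay $2^{-j(3-4/p)}$ of the cross terms. For $\cI_\psi(\mathrm{S}_j\theta_\varepsilon,\mathrm{H}_j\theta_\varepsilon)$ the mechanism does work and is even simpler than you suggest: $(-\Delta)^{-1/2}$ acts on the high-frequency argument, so $\|(-\Delta)^{-1/2}\mathrm{H}_j\theta_\varepsilon\|_{L^p}\lesssim\sum_{k\geq j}2^{-k}\|\dot{\triangle}_k\theta_\varepsilon\|_{L^p}\lesssim 2^{-j}\|\theta_0\|_{L^p}$, and Lemma~\ref{lem:calderon} plus H\"older give $O(2^{-j})$ uniformly in $\varepsilon$. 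But for $\cI_\psi(\mathrm{H}_j\theta_\varepsilon,\mathrm{S}_j\theta_\varepsilon)$ the operator $(-\Delta)^{-1/2}$ falls on the \emph{low}-frequency argument: on a block pair $(\dot{\triangle}_k\theta_\varepsilon,\dot{\triangle}_\ell\theta_\varepsilon)$ with $k\geq j>\ell$ it gains only $2^{-\ell}$, not the $2^{-\max(k,\ell)}$ you assert, while the high-frequency factor enters only through the $0$-order multiplier $K_{\frac12}\star$, which yields no smallness; and $\|\mathrm{H}_j\theta_\varepsilon\|_{L^q}$ does not tend to $0$ uniformly in $\varepsilon$, since uniform decay of high-frequency tails is precisely the compactness you do not have. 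To salvage this term by a paraproduct argument you would in addition have to exploit the near-orthogonality of well-separated blocks through the rapidly decaying Fourier tail of $\nabla\psi$, reducing the pairing to $k\approx\ell\approx j$ and then arguing as in Marchand's high-high estimate; this is plausible but is a genuinely new estimate, not an application of Lemmas~\ref{lem:calderon}, \ref{l:Bernstein} or \ref{lem:L4/3 regularity}. The compactness argument you already invoke for the low-low block handles both cross terms with no extra work, which is exactly what the paper does.
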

\begin{proof}
we decompose the studied non-linear term between low and high frequencies (we fix $j\in\ZZ$):\begin{equation}\label{pie}
\cI_\psi(\theta_\varepsilon,\theta_\varepsilon)=\cI_\psi(\mathrm{S}_j\theta_\varepsilon,\mathrm{S}_j\theta_\varepsilon)+\cI_\psi(\mathrm{S}_j\theta_\varepsilon,\mathrm{H}_j\theta_\varepsilon)+\cI_\psi(\mathrm{H}_j\theta_\varepsilon,\mathrm{S}_j\theta_\varepsilon)+\cI_\psi(\mathrm{H}_j\theta_\varepsilon,\mathrm{H}_j\theta_\varepsilon).
\end{equation}
It is a direct consequence of Lemma~\ref{lem:strong compactness} that the three first integrals appearing in the decomposition~\eqref{pie} are converging as $\varepsilon\to 0$ towards their expected limit quantity:
\begin{equation*}\begin{split}
&\cI_\psi(\mathrm{S}_j\theta_\varepsilon,\mathrm{S}_j\theta_\varepsilon)\;\longrightarrow\;\cI(\mathrm{S}_j\theta,\mathrm{S}_j\theta),\\
&\cI_\psi(\mathrm{S}_j\theta_\varepsilon,\mathrm{H}_j\theta_\varepsilon)\;\longrightarrow\;\cI(\mathrm{S}_j\theta,\mathrm{H}_j\theta),\\
&\cI_\psi(\mathrm{H}_j\theta_\varepsilon,\mathrm{S}_j\theta_\varepsilon)\;\longrightarrow\;\cI(\mathrm{H}_j\theta,\mathrm{S}_j\theta).
\end{split}
\end{equation*}
Concerning the remaining term (ie: the high-frequency self interaction), we simply write, using Lemma~\ref{lem:L4/3 regularity} and Equation~\eqref{eq:decrease Lp star}:
\begin{equation*}
    \Big|\cI_\psi(\mathrm{H}_j\theta_\varepsilon,\mathrm{H}_j\theta_\varepsilon)\Big|+\Big|\cI_\psi(\mathrm{H}_j\theta,\mathrm{H}_j\theta)\Big|\;\leq\;C_0\,2^{-j},
\end{equation*}
where the constant $C_0$ above depends on the test function $\psi$ and on the initial datum $\theta_0$. We conclude
\begin{equation*}
    \limsup\limits_{\varepsilon\to 0}\Big|\cI_\psi(\theta_\varepsilon,\theta_\varepsilon)-\cI_\psi(\theta,\theta)\Big|\;\leq\;C_0\,2^{-j}
\end{equation*}
Since the estimate above holds for all $j\in\ZZ$, we infer as $\varepsilon\to 0$:
\begin{equation}\label{Helene}
    \cI_\psi(\theta_\varepsilon,\theta_\varepsilon)\;\longrightarrow\;\cI_\psi(\theta,\theta).
\end{equation}
\end{proof}

From the convergence~\eqref{Helene}, we can conclude that we have the expected convergence of the non-linear term:
\begin{equation*}
    \int_{\RR^2}K_{\frac{1}{2},\varepsilon}\star\theta_\varepsilon(x)\cdot\nabla\psi(x)\,\theta_\varepsilon(x)\,\dd x\;\longrightarrow\;\cI_\psi(\theta,\theta).
\end{equation*}

%%%%%%%%%%%%%%%%%%%%%%%%%%%%%%%%%%%%%%%%%%%%%%%%%%%%%
\subsubsection{Step 3: convergence of the ODE}
\begin{lemma}\label{lem:conv ODE}
Let $z_\varepsilon$ be the solution to the ODE in the regularized vortex-wave system~\eqref{eq:ApproxSystem1}. As $\varepsilon\to0$, it converges towards $z\in\cC^{0,1}(\RR_+;\RR^2)$ solution to 
    \begin{equation}\label{eq:Duhamel is here}
            z(t)\;=\;z_0+\int_0^t\int_{\RR^2}v(t,x)\,\chi\big(x-z(t)\big)\,\dd x.
    \end{equation}
\end{lemma}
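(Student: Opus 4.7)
The approach is a compactness argument: I would first establish a uniform Lipschitz bound on $(z_\varepsilon)$, then extract a uniformly convergent subsequence via Arzel\`a--Ascoli, and finally identify the limit by passing to the limit in the integral formulation of the ODE.

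For the uniform bounds: a scaling computation exploiting the homogeneity of $K_{1/2}$ gives $\widehat{K_{1/2,\varepsilon}}(\xi) = \widehat{K_{1/2}}(\xi) - \widehat{\chi K_{1/2}}(\varepsilon \xi)$, which is uniformly bounded in $(\xi, \varepsilon)$ since the compactly supported distribution $\chi K_{1/2}$ has a smooth bounded Fourier transform. Plancherel combined with the $L^2$-conservation \eqref{conservation law epsilon} then yields $\|v_\varepsilon(t,\cdot)\|_{L^2} \lesssim \|\theta_0\|_{L^2}$ uniformly in $\varepsilon > 0$. Approximating the cutoff $\chi \in L^1$ by a sequence $\chi_n \in L^1 \cap L^2$ with $\chi_n \to \chi$ in $L^1$, Cauchy--Schwarz provides
\[
\bigl| \tfrac{\dd}{\dd t} z_\varepsilon(t) \bigr| \leq \|\chi_n\|_{L^2}\, \|v_\varepsilon(t, \cdot)\|_{L^2},
\]
uniformly in $\varepsilon$ and $t$. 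Hence $(z_\varepsilon)$ is uniformly Lipschitz on every compact interval, and Arzel\`a--Ascoli furnishes a subsequence $z_\varepsilon \to z$ locally uniformly, with $z \in \cC^{0,1}(\RR_+; \RR^2)$.

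To identify the limit, I would split, for a.e. $s$,
\begin{align*}
& \int v_\varepsilon(s, x)\, \chi(x - z_\varepsilon(s))\, \dd x - \int v(s, x)\, \chi(x - z(s))\, \dd x \\
&\quad = \int (v_\varepsilon - v)(s, x)\, \chi(x - z_\varepsilon(s))\, \dd x + \int v(s, x)\, \bigl[\chi(x - z_\varepsilon(s)) - \chi(x - z(s))\bigr]\, \dd x.
\end{align*}
The second integral tends to zero pointwise in $s$ by continuity of translations in $L^1$ (using the uniform convergence $z_\varepsilon \to z$) combined with the $L^2$ control on $v$ and Cauchy--Schwarz, after the density reduction. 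The first integral is handled by the convergence of $v_\varepsilon$ to $v$ established in the preceding step of the proof of Theorem~\ref{thrm:VFaible}, tested against the family of translates $\chi(\cdot - z_\varepsilon(s))$, which is uniformly bounded in $L^2$. Dominated convergence in the time variable then passes these statements through the $\int_0^t \dd s$ integral and yields \eqref{eq:Duhamel is here}.

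The main obstacle is that, a priori, the pairing of $v \in L^\infty_t L^2_x$ with translates of a merely $L^1$ function $\chi$ need not make sense pointwise in $s$ without extra integrability, so the limit equation itself has to be given a rigorous meaning. This is resolved by the density approximation $\chi_n \in L^1 \cap L^2$ coupled with a diagonal argument: for fixed $n$, pass $\varepsilon \to 0$ to get a trajectory $z^{(n)}$ solving the integral equation with $\chi_n$, and then send $n \to \infty$ using that the uniform Lipschitz bound survives (since $\|\chi_n\|_{L^1} \to \|\chi\|_{L^1}$ remains bounded and the $L^2$ control on $v$ is uniform), which allows a final Arzel\`a--Ascoli extraction in $n$.
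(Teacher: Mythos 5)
Your skeleton (uniform Lipschitz bound, Arzelà--Ascoli, then a weak--strong passage to the limit in the Duhamel formula using continuity of translations of $\chi$) is the same as the paper's, and your multiplier computation $\what{K_{\frac12,\varepsilon}}(\xi)=\what{K_{\frac12}}(\xi)-\what{\chi K_{\frac12}}(\varepsilon\xi)$ giving $\|v_\varepsilon(t,\cdot)\|_{L^2}\lesssim\|\theta_0\|_{L^2}$ uniformly in $\varepsilon$ is a legitimate, in fact more explicit, version of the paper's appeal to the boundedness of the symbol $\xi^\perp/|\xi|$. The genuine gap is in how you handle the fact that $\chi$ is only assumed to be $L^1$. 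As written, the Cauchy--Schwarz bound $|\der{}{t}z_\varepsilon|\le\|\chi_n\|_{L^2}\|v_\varepsilon\|_{L^2}$ does not apply to the trajectory driven by $\chi$; it only makes sense inside your modified $\chi_n$-dynamics, and that diagonal scheme does not close. First, the Lipschitz constant of the $\chi_n$-trajectories is $\|\chi_n\|_{L^2}\|\theta_0\|_{L^2}$, not something controlled by $\|\chi_n\|_{L^1}$; if $\chi\in L^1\setminus L^2$ then $\|\chi_n\|_{L^2}\to\infty$, so the uniform-in-$n$ Lipschitz bound you invoke for the final Ascoli extraction simply is not there. Second, even granting compactness in $n$, identifying the limit requires passing to the limit in $\int v^{(n)}(s,x)\,\chi_n(x-z^{(n)}(s))\,\dd x$ with $\chi_n\to\chi$ only in $L^1$ and $v^{(n)}$ only bounded in $L^2_x$: the error $\int v^{(n)}\,(\chi_n-\chi)(\cdot-z^{(n)})$ needs an $L^\infty$ bound on the velocity, which is precisely what is unavailable at criticality $s=1/2$. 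So the obstacle you correctly identified (the pairing of $v\in L^\infty_t L^2_x$ with translates of a merely $L^1$ function is ill-defined) is not removed by the approximation; the limit equation with the original $\chi$ is still not given a meaning. Third, replacing $\chi$ by $\chi_n$ changes the whole coupled system, since $z_\varepsilon$ feeds back into the PDE through $\mc H_\varepsilon$, hence into $\theta_\varepsilon$ and $v_\varepsilon$; for each $n$ you would get different limit objects $(\theta^{(n)},v^{(n)},z^{(n)})$, and the entire PDE compactness argument would have to be redone and intertwined with the $n$-limit, which your proposal does not address.

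The paper avoids all of this by de facto requiring more from the mollifier: its proof uses $\|\chi(\cdot-z_2)-\chi(\cdot-z_1)\|_{L^p}\le\|\nabla\chi\|_{L^p}\,|z_2-z_1|$ and pairs $v_\varepsilon$, which converges weakly in $L^2$ (pointwise in time, via the bounded multiplier and the weak-$\star$ convergence of $\theta_\varepsilon$), against these strongly convergent translates, and then concludes by an Ascoli/bootstrap argument. In other words, the Duhamel pairing is made sense of by treating $\chi$ as a fixed $W^{1,p}$ (in particular square-integrable) function. If you adopt the same standing assumption on $\chi$, your argument closes directly and no diagonal scheme is needed; in that case also replace ``the family of translates $\chi(\cdot-z_\varepsilon(s))$ is uniformly bounded in $L^2$'' by ``converges strongly in $L^2$ to $\chi(\cdot-z(s))$'', since uniform boundedness alone does not let a weak convergence $v_\varepsilon\wtend v$ pass through a test family that moves with $\varepsilon$.
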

\begin{proof}
To start with, we recall that
\begin{equation*}
    \der{z_\varepsilon}{t}(t)\;=\;\int_{\RR^2}v_\varepsilon(t,.)\,\chi\big(x-z_\varepsilon(t)\big)\,\dd x.
\end{equation*}
The kernel $\nabla^\perp(-\Delta)^{-\frac{1}{2}}$ is associated to the Fourier multiplier $\xi^\perp/|\xi|$, which is a bounded Fourier multiplier (see Lemma~\ref{lem:K 1/2}).
On the other hand, since $\theta_\varepsilon$ converges weakly towards $\theta$ then the Fourier transform $\widehat{\theta_\varepsilon}$ converges weakly towards $\widehat{\theta}$.
Therefore, we get that $v_\varepsilon(t,.):=K_{\frac{1}{2}}\star\theta_\varepsilon(t,.)$ converges weakly towards $v(t,.):=K_{\frac{1}{2}}\star\theta(t,.).$

On the other hand, it is a general property of functions in $W^{1,p}$ that
\begin{equation*}
    \Big\|\chi(.-z_2)-\chi(.-z_1)\Big\|_{L^p}\;\leq\;\|\nabla\chi\|_{L^p}\,|z_2-z_1|.
\end{equation*}
Therefore, fixing a $t>0$, we have for any sequence $\zeta_\varepsilon$ converging towards some $\zeta\in\RR^2,$
\begin{equation*}
    \int_{\RR^2}v_\varepsilon(t,.)\,\chi\big(x-\zeta_\varepsilon\big)\,\dd x\quad\longrightarrow\quad\int_{\RR^2}v(t,.)\,\chi\big(x-\zeta\big)\,\dd x
\end{equation*}
Then, by a bootstrap argument, we have convergence of $\dd z_\varepsilon/\dd t$ towards $\int_{\RR^2}v_\varepsilon(t,x)\,\chi\big(x-z_\varepsilon(t)\big)\,\dd x$ for every fixed $t>0$ and thus we have local uniform convergence of $z_\varepsilon$ towards a $z$ that satisfy~\eqref{eq:Duhamel is here}.
\end{proof}

%%%%%%%%%%%%%%%%%%%%%%%%%%%%%%%%%%%%%%%%%%%%%%%%%%%%%
\subsubsection{Convergence of the singular part of the velocity field}
To study the convergence of $H_\varepsilon$, the singular part of the velocity field (ie the velocity field generated by the point-vortex), it is necessary to exploit the fact that we only study $V$-weak solutions. 
In this parts and in the next part of the proof, we manipulate this notion of $V$-weak solutions. 
The objective is to show that this notion permits to obtain a non-trivial object in the presence of a localized singularity (in our case it is the point-vortex) even if the singularity is non-integrable.  

\medskip

It is now time to introduce the set $V$ we are going to work with:
\begin{lemma}
    Let $H_\varepsilon, z_\varepsilon$ defined by~\eqref{eq:ApproxSystem1} and let $t\mapsto z(t)$ the limit of $z_\varepsilon$ as $\varepsilon\to 0$ (see Lemma~\ref{lem:conv ODE}. Let $H(t,x):=K_\frac{1}{2}(x-z(t))$. Let $\delta>0$, we define
\begin{equation*}
    V_\delta\;:=\;\Big\{\psi\in\cD(\RR_+\times\RR^2)\;:\;\forall\;t>0,\quad\forall x\in\cB\big(z(t),\delta\big),\quad\nabla\psi(t,x)=0.\Big\}.
\end{equation*}
Then, for all $\psi\in V_\delta$, we have $\nabla\psi\cdot H\in L^1$ and for all $t>0$:
\begin{equation*}
\int_{\RR^2}\nabla\psi(t,x)\cdot H_\varepsilon(t,x)\,\dd x\;\longrightarrow\;\int_{\RR^2}\nabla\psi(t,x)\cdot H(t,x)\,\dd x.
\end{equation*}
\end{lemma}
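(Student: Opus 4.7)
My plan is to handle the $L^1$ integrability and the pointwise convergence as two separate points, both arguments hinging on the defining property of $V_\delta$: elements of this set have gradient vanishing on a spacetime neighborhood of the singular support of $H$.

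For the integrability claim, the argument should be immediate. Since $\psi \in V_\delta \subset \mathcal{D}([0,T) \times \RR^2)$, the gradient $\nabla\psi$ is compactly supported and identically zero on the tube $\bigcup_{t \in [0,T)} \{t\} \times B(z(t),\delta)$. Outside this tube, the kernel satisfies $|H(t,x)| = c_{1/2}|x - z(t)|^{-2} \leq c_{1/2}\delta^{-2}$ (recall that $4 - 2s = 3$ for $s = 1/2$), so the product $\nabla\psi \cdot H$ is bounded and compactly supported, hence of class $L^1$ both at each fixed time and in spacetime.

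For the convergence statement, I would fix $t > 0$ and combine three ingredients. First, Lemma~\ref{lem:conv ODE} gives $z_\varepsilon(t) \to z(t)$, so that for $\varepsilon$ small enough one has $|z_\varepsilon(t) - z(t)| < \delta/2$. Second, whenever $\nabla\psi(t,x) \neq 0$, the definition of $V_\delta$ forces $|x - z(t)| \geq \delta$, and a triangle inequality then yields $|x - z_\varepsilon(t)| \geq \delta/2$. Third, provided also $\varepsilon < \delta/2$, the construction of $K_{1/2,\varepsilon}$ from Section~\ref{sec:reg} gives $K_{1/2,\varepsilon}(y) = K_{1/2}(y)$ as soon as $|y| \geq \varepsilon$. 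Putting these three facts together, on the support of $\nabla\psi(t,\cdot)$ one has, for all sufficiently small $\varepsilon$,
\begin{equation*}
H_\varepsilon(t,x) \;=\; K_{1/2,\varepsilon}\big(x - z_\varepsilon(t)\big) \;=\; K_{1/2}\big(x - z_\varepsilon(t)\big).
\end{equation*}

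To finish, I would observe that $K_{1/2}$ is smooth on the closed region $\{|y| \geq \delta/2\}$, so that the translates $K_{1/2}(\,\cdot\, - z_\varepsilon(t))$ converge to $K_{1/2}(\,\cdot\, - z(t))$ uniformly on the compact set $\mathrm{supp}\,\nabla\psi(t,\cdot)$; the convergence then follows either by this uniform convergence against the compactly supported factor $\nabla\psi(t,\cdot)$ or, equivalently, by dominated convergence with the uniform bound $c_{1/2}(\delta/2)^{-2}$. The only mild obstacle is the careful bookkeeping of the two independent smallness parameters, $\varepsilon$ and $|z_\varepsilon(t) - z(t)|$, which must both be arranged to be strictly less than $\delta/2$, so that the singular zone $B(z_\varepsilon(t),\varepsilon)$ of the truncated kernel is entirely absorbed into the plateau $B(z(t),\delta)$ on which $\nabla\psi$ vanishes.
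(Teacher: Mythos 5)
Your proposal is correct and follows essentially the same route as the paper: for $\varepsilon$ small, the truncation radius and the singular zone $B(z_\varepsilon(t),\varepsilon)$ are absorbed into the ball $B(z(t),\delta)$ where $\nabla\psi$ vanishes, after which only the smoothness of $K_{1/2}$ at distance $\geq \delta/2$ from the origin and the convergence $z_\varepsilon(t)\to z(t)$ from Lemma~\ref{lem:conv ODE} are needed. The only (cosmetic) difference is that the paper passes through the intermediate function $\widetilde{H}_\varepsilon(t,x)=K_{\frac12,\varepsilon}(x-z(t))$ and concludes via Cauchy--Schwarz with $\|\nabla K_{\frac12,\delta/2}\|_{L^2}\,|z_\varepsilon(t)-z(t)|$, giving a quantitative rate, whereas you conclude by uniform (or dominated) convergence of the translated kernel on the support of $\nabla\psi(t,\cdot)$.
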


\begin{proof}
The idea is to choose $V$ so that the most singular part (that lays in a moving point) vanishes. 
We recall the definition of the regularization kernel from Subsection \ref{sec:reg} where we introduced the functions 
\begin{equation*}
   {H}_\varepsilon(t,x)\;:=\;K_{\frac{1}{2},\varepsilon}(x-z_\varepsilon(t)),\qquad\text{and}\qquad \widetilde{H}_\varepsilon(t,x)\;:=\;K_{\frac{1}{2},\varepsilon}(x-z(t))
\end{equation*}
To start with, we have for all $\psi\in V_\delta$, on any fixed interval of time $[0,T)$,
\begin{equation*}
    \forall\,(t,x)\in[0,T)\times\RR^2,\qquad\widetilde{H}_\varepsilon(t,x)\cdot\nabla\psi(t,x)\;=\;H(t,x)\cdot\nabla\psi(t,x)
\end{equation*}
provided that $\varepsilon\leq\delta$. 
Indeed, the two functions $\widetilde{H}_\varepsilon(t,.)$ and $H(t,.)$ coincide outside $\cB(z(t),\delta)$ while $\nabla\psi(t,.)$ vanishes inside this ball.
On the other hand, if we assume that $\varepsilon$ is so small that for all $t\in[0,T],$ we have $|z_\varepsilon(t)-z(t)|\leq\delta/2$ and $\varepsilon\leq\delta/2,$ then, using again $\nabla\psi(t,.)\equiv0$ on $\cB(z(t),\delta)$,
\begin{equation*}\begin{split}
    \int_{\RR^2}\nabla\psi(t,x)&\cdot\Big(H_\varepsilon(t,x)-\widetilde{H}_\varepsilon(t,x)\Big)\dd x\;\\&=\;\int_{\RR^2\setminus\cB(z(t),\delta)}\nabla\psi(t,x)\cdot\Big(K_{\frac{1}{2},\varepsilon}(x-z_\varepsilon(t))-K_{\frac{1}{2},\varepsilon}(x-z(t))\Big)\dd x\\
    &=\;\int_{\RR^2\setminus\big(\cB(z(t),\frac{\delta}{2})\cup\cB(z_\varepsilon(t),\frac{\delta}{2})\big)}\nabla\psi(t,x)\cdot\Big(K_{\frac{1}{2},\varepsilon}(x-z_\varepsilon(t))-K_{\frac{1}{2},\varepsilon}(x-z(t))\Big)\dd x\\
    &=\;\int_{\RR^2}\nabla\psi(t,x)\cdot\Big(K_{\frac{1}{2},\frac{\delta}{2}}(x-z_\varepsilon(t))-K_{\frac{1}{2},\frac{\delta}{2}}(x-z(t))\Big)\dd x.
    \end{split}
\end{equation*}
Thus,
\begin{equation*}
    \bigg|\int_{\RR^2}\nabla\psi(t,x)\cdot\Big(H_\varepsilon(t,x)-\widetilde{H}_\varepsilon(t,x)\Big)\dd x\bigg|\;\leq\;\|\nabla\psi(t,.)\|_{L^2}\;\Big\|\nabla K_{\frac{1}{2},\frac{\delta}{2}}(t,.)\Big\|_{L^2}\;|z_\varepsilon(t)-z(t)|.
\end{equation*}
This implies the announced convergence.\end{proof}

Gathering all the convergences established before, we can conclude the proof of Theorem~\ref{thrm:VFaible}-$(i)$ by setting $V:=\bigcup_{\delta>0}V_\delta$.

%%%%%%%%%%%%%%%%%%%%%%%%%%%%%%%%%%%%%%%%%%%%%%%%%%%%%

\subsubsection{Conclusion of the proof of Theorem~\ref{thrm:VFaible}}

Concerning Theorem~\ref{thrm:VFaible}-$(ii)$, we already have for all $\psi\in V$ that $H\cdot\nabla\psi\in L^1$. 
For the $\Gamma$-coincident property, we  note that $V\subseteq\cD(\RR_+\times\RR^2)$. 
Conversely, let $\Omega_0\subseteq\RR_+\times\RR^2$ such that $\delta_0:=\inf_{x\in\Omega_0}\inf_{y\in\Gamma}|x-y|>0$. 
For a fixed $\varphi\in\cD(\RR_+\times\RR^2)$, it is possible to construct a $\psi\in V_{\delta_0/2}$ such that $\varphi\equiv\psi$ on $\Omega_0$ by the Urysohn's lemma. More precisely, let $\chi:\RR_+\to\RR_+$ monotonous, $\cC^\infty$ and such that $\chi(s)=1$ if $s\leq\delta_0/2$ and $\chi(s)=0$ if $s\geq\delta_0$. We define
\begin{equation}\label{def:psi delta}
    \psi_{\delta_0}(t,x)\;:=\;\chi\big(\|x-z(t)\|\big)-\!\!\!\!\!\!\!\int_{\cB\big(z(t),\frac{\delta_0}{2}\big)}\varphi(t,x)\,\dd x\;+\;\Big(1-\chi\big(\|x-z(t)\|\big)\Big)\,\varphi(t,x),
\end{equation}
where the barred integral denotes the mean value. It is direct to check that $\psi_{\delta_0}\in V_{\delta_0}\subset V$ and that $\psi_{\delta_0}\equiv\varphi$ in $\Omega_0.$

To prove Theorem~\ref{thrm:VFaible}-$(iii)$, we assume that a $V$-weak solution $(\theta,z)$ has regularity $\cC^1$ on an interval of time $[0,T)$ with $T\in(0,+\infty]$ and that $\theta(t,.)$ is constant on a neighborhood of $z(t)$ for all $t\in[0,T)$.
We prove here that, with such regularity, this $V$-weak solution is actually a solution in the classical sense.

To start with, by hypothesis, there exists $\delta_0>0$ such that $\nabla\theta(t,.)\equiv 0$ on the ball $\cB(z(t),\delta)$ for all $t\in[0,T)$. Let $0<\delta<\delta_0$, let $\varphi\in\cD([0,T)\times\RR^2)$ and let $\psi_\delta$ defined by~\eqref{def:psi delta}. The fact that $\theta$ is constant on $\cB(z(t),\delta)$ implies
\begin{equation}\label{Stroustrup}
    \forall\,t\in[0,T),\quad\forall\,x\in\RR^2,\qquad\theta(t,x)\nabla\varphi(t,x)\;=\;\theta(t,x)\nabla\psi_\delta(t,x).
\end{equation}
On the other hand, since $\psi_\delta\in V$, we use that $\theta$ is a $V$-weak solution to write
\begin{equation*}\begin{split}
    \int_0^T\int_{\RR^2}\Bigg(\frac{\partial\psi_\delta}{\partial t}(t,x)+v(t,x)&\cdot\Big[(-\Delta)^s,\nabla\psi_\delta(t,x)\Big](-\Delta)^{-s}\\&+H(t,x)\cdot\nabla\psi_\delta(t,x)\Bigg)\theta(t,x)\;\dd x\,\dd t+\int_{\RR^2}\omega_0(x)\,\psi_\delta(0,x)\,\dd x=0,
    \end{split}
\end{equation*}
Using Lemma~\ref{lem:Commutator formulation}, since $\theta$ is $\cC^1$, gives
\begin{equation*}
    \int_0^T\int_{\RR^2}\Bigg(\frac{\partial\psi_\delta}{\partial t}(t,x)+\Big(v(t,x)+H(t,x)\Big)\cdot\nabla\psi_\delta(t,x)\Bigg)\theta(t,x)\;\dd x\,\dd t+\int_{\RR^2}\omega_0(x)\,\psi_\delta(0,x)\,\dd x=0,
\end{equation*}
Using~\eqref{Stroustrup}, we can replace $\nabla\psi_\delta$ by $\nabla\varphi$ in the equality above. It then becomes possible to pass to the limit $\delta\to 0$ and get
\begin{equation*}
    \int_0^T\int_{\RR^2}\Bigg(\frac{\partial\varphi}{\partial t}(t,x)+\Big(v(t,x)+H(t,x)\Big)\cdot\nabla\varphi(t,x)\Bigg)\theta(t,x)\;\dd x\,\dd t+\int_{\RR^2}\omega_0(x)\,\varphi(0,x)\,\dd x=0,
\end{equation*}
Since $\theta$ is a $\cC^1$ function and since $\varphi$ is any function in $\cD([0,T)\times\RR^2)$, we conclude that $\theta$ satisfy point-wise
\begin{equation*}
    \frac{\partial\theta}{\partial t}(t,x)+\Big(v(t,x)+H(t,x)\Big)\cdot\nabla\theta(t,x)\;=\;0.\qquad\text{and}\qquad\theta(0,x)\;=\;\omega_0(x).
\end{equation*}
Similar manipulations give that $t\mapsto z(t)$ satisfy the quasi-point-vortex equation point-wise.
\qed

%%%%%%%%%%%%%%%%%%%%%%%%%%%%%%%%%%%%%%%%%%%%%%%%%%%%%
%%%%%%%%%%%%%%%%%%%%%%%%%%%%%%%%%%%%%%%%%%%%%%%%%%%%%
%%%%%%%%%%%%%%%%%%%%%%%%%%%%%%%%%%%%%%%%%%%%%%%%%%%%%
\section{The case of several vortices}\label{sec:multi_points}

In the previous sections, we studied for the sake of clarity the case of a single point-vortex interacting with the background. Naturally, the results of this paper can be extended to the case of several point-vortices at least in non degenerate cases. Although this is not the main topic of this paper and we will not provide a proof of this claim, let us discuss briefly how to define and deal with several point-vortices.

Consider several point-vortices $(z_i)_{1\le i \le N}$ of intensities $a_i \in \R^\ast$. The vortex-wave system then writes
\begin{equation}\label{eq:VW-SQG_N}
\left\{\begin{split}
    &\quad v(t,x)=K_s\star\theta(t,x),  \\
    &\quad H_i(t,x)=K_s\big(x-z_i(t)\big), \quad \forall i \in \{1,\ldots,N\},\\
    &\quad \frac{\partial\theta}{\partial t}+\nabla\cdot\left[\Big(v+\sum_{i=1}^N a_i H_i\Big)\theta\right]=0,\\
    &\quad\der{z_i}{t}(t)=v\big(z_i(t),t\big) + \sum_{j \neq i} a_j H_j(z_i(t),t), \quad \forall i \in \{1,\ldots,N\}.
\end{split}\right.
\end{equation}

A common method in the study of point-vortices used to reduce this problem to the case of a single point-vortex is to consider the influence of other point-vortices as an exterior field. In that case we consider the equations
\begin{equation*}
\left\{\begin{split}
    &\quad v(t,x)=K_s\star\theta(t,x),  \\
    &\quad H(t,x)=K_s\big(x-z(t)\big), \\
    &\quad \frac{\partial\theta}{\partial t}+\nabla\cdot\left[\Big(v+ a H + F\Big)\theta\right]=0,\\
    &\quad\der{z}{t}(t)=v\big(z_i(t),t\big) + F(z_i(t),t),
\end{split}\right.
\end{equation*}
with $F$ being a bounded, Lipschitz, divergence free map. It is then not difficult to reproduce the arguments of local existence and uniqueness of strong solutions for this new system. The remaining part consists in proving that the exterior field $F$ defined as the influence of the other point-vortices retains its properties locally in time. This is the case if the active scalar is locally constant around the point-vortices at time 0 and if the point-vortices remain far from each other.

The main difficulty arising when considering several point-vortices is that there exists a new type of blow-up: collapses of point-vortices. Assume that
\begin{equation*}
    \forall i \in \{1,\ldots,N\}, \quad \theta_0 \equiv \beta_i \quad \text{ on } B(z_i(0),R_i(0)),
\end{equation*}
for some constants $\beta_i$ and $R_i(0) > 0$ and let
\begin{equation*}
    R_i(t) := \sup \big\{ r \ge 0 \; , \; \theta(t) \equiv \beta_i \text{ on } B(z_i(t),r) \big\}.
\end{equation*}
If for some $T > 0$, 
\begin{equation}\label{eq:collapse}
    \lim_{t \to T} |z_i(t) - z_j(t)| = 0,
\end{equation}
which is what we call a collision of the point-vortices $z_i$ and $z_j$,
then if $\beta_i \neq \beta_j$ this implies that 
\begin{equation*}
    \lim_{t\to T} R_i(t) = 0 \quad \text{ and } \lim_{t\to T} R_j(t) = 0,
\end{equation*}
which is a more complicated version of the type of blow up that we already discussed in Section~\ref{sec:blow-up}, let us call it Type I blow-up.
However if $\beta_i = \beta_j$ it could be possible that \eqref{eq:collapse} holds true, while having that $R_i(t) > m$ and $R_j(t) > m$, with $m>0$, for every $t \in [0,T)$. This is a new type of blow-up, let us call it Type II, which is a classical point-vortex collapse in an exterior field that this time represents the interaction with the background. For references on the collisions of point-vortices in the SQG equations, we refer the reader to \cite{Reinaud_Dritschel_Scott_2022,Donati_Godard-Cadillac_2023_Holder_reg_collapse,Chen_Liu_2024} and references therein.

Both in the context of the Euler or SQG point-vortex system, if the intensities $a_i$ have all the same sign, then collapses can not happen and the solution is always global in time. For the Euler vortex-wave system, this remains true: there exists a global in time strong solution when the initial vorticity is locally constant around the point-vortices, see \cite{Marchioro_Pulvirenti_1991}, so that in that case, neither Type I nor Type II blow-up can happen. For the general SQG vortex-wave system this may not be true anymore because of the possible regularity blow-up of the background. More precisely, we prove that if all the intensities of the point-vortices are positive, then Type II blow-up can occur only if Type I occurs at the same time.
\begin{proposition}
Let $(\theta,(z_i)_i)$ be a classical (in the sense of Theorem~\ref{thrm:strong_solutions}-$(i)$) solution to the vortex-wave system \eqref{eq:VW-SQG_N} with $N$ points, on at least $[0,T^\ast)$. If all the intensities have the same sign, then for a collision of point-vortices to happen at time $T^\ast$ one must have that
\begin{equation*}
    \min_{1\le i \le N} R_i(t) \tend{t}{T^\ast} 0.
\end{equation*}

\end{proposition}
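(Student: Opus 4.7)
The plan is to argue by contradiction using the SQG point-vortex Hamiltonian. Suppose a collision occurs at $T^*$ while $R_i(t) \geq m > 0$ on $[0, T^*)$ for every $i \in \{1, \dots, N\}$. Since all intensities share a sign, $a_i a_j > 0$ for every pair, and I would consider the positive quantity
\begin{equation*}
    \cH(t) := \sum_{1 \leq i<j \leq N} a_i a_j |z_i(t) - z_j(t)|^{2s-2},
\end{equation*}
which, because $2s - 2 < 0$, is forced to diverge as soon as any pair $(i_0, j_0)$ collapses. The proof thus reduces to establishing a uniform upper bound on $\cH$ over $[0, T^*)$.

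To compute $\der{\cH}{t}$ I would exploit the standard Hamiltonian structure of the SQG point-vortex system. Writing $\dot z_i = v(t, z_i) + \sum_{k \neq i} a_k K_s(z_i - z_k)$, the self-interaction contribution to $\der{\cH}{t}$ involves terms of the form $(z_i - z_j) \cdot K_s(z_i - z_k)$ that cancel after the symmetrisation in $(i,j)$ built into $\cH$: this is the usual antisymplectic cancellation arising from $K_s = -\nabla^\perp G_s$ with $G_s(y) = c |y|^{2s - 2}$, since a Hamiltonian Poisson-commutes with itself. Only the external field $v$ thus contributes, leaving
\begin{equation*}
    \der{\cH}{t} = (2s-2) \sum_{i<j} a_i a_j |z_i - z_j|^{2s-4} \bigl(v(t, z_i) - v(t, z_j)\bigr) \cdot (z_i - z_j).
\end{equation*}
Applying a pointwise Lipschitz estimate and using the positivity $a_i a_j > 0$, one obtains $|\der{\cH}{t}| \leq (2-2s) \|\nabla v(t)\|_{L^\infty} \cH(t)$, whence Gronwall's lemma yields $\cH(t) \leq \cH(0) \exp\bigl((2-2s) \int_0^t \|\nabla v(\tau)\|_{L^\infty} \dd \tau \bigr)$.

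What remains is to verify that $\int_0^{T^*} \|\nabla v\|_{L^\infty} \dd \tau < + \infty$. By Proposition~\ref{p:FourierMultiplierKS} combined with Sobolev embeddings, $\|\nabla v\|_{L^\infty} \lesssim \|\theta\|_{H^3}$, so this reduces to controlling $\|\theta(t)\|_{H^k}$ uniformly on $[0, T^*)$. For this I would adapt the \textsl{a priori} estimates of Proposition~\ref{p:APriori} to the multi-vortex setting: since $R_i(t) \geq m$ ensures that each $H_i \cdot \nabla \theta$ is supported at distance at least $m$ from its singular point $z_i$, the energy method goes through as in the single-vortex case and produces a differential inequality for $E(t) = \|\theta\|_{H^k}^2 + \sum_i 1/R_i(t)$ of the same shape as in Proposition~\ref{p:APriori}. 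Combined with $R_i(t) \geq m$, this rules out blow-up of $\|\theta\|_{H^k}$ at $T^*$: indeed, if it did blow up, then iterating the local existence theorem starting from $t$ close to $T^*$ would extend the classical solution past $T^*$ with the $z_i$ still separated (the extension time depending only on $E$ and on the minimum inter-vortex distance, both then controlled), contradicting the occurrence of a collision there.

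The main obstacle is the bookkeeping required to disentangle the three competing blow-up mechanisms at $T^*$: regularity loss of $\theta$, shrinkage of some $R_i$, and pair collisions. Concretely, one needs a multi-vortex extension of the blow-up criterion of Theorem~\ref{thrm:strong_solutions}-$(iii)$ ensuring that regularity of $\theta$ persists as long as all $R_i$ \emph{and} all inter-vortex distances stay bounded below. Once this is in place, the Hamiltonian argument above is essentially standard and leverages in a clean way the point-vortex structure alongside the smoothness of the background velocity inherited from $\theta$.
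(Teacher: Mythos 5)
Your Hamiltonian set-up coincides with the paper's: both proofs monitor the positive interaction energy $\sum a_ia_j|z_i-z_j|^{2s-2}$, use the symplectic cancellation so that only the background field contributes to its time derivative, and conclude by a Gronwall argument that it stays bounded, hence no collision. The genuine gap is the step where you need $\int_0^{T^*}\|\nabla v(t)\|_{L^\infty}\,\dd t<+\infty$. You reduce this to a uniform bound on $\|\theta(t)\|_{H^k}$ up to $T^*$, but no such bound follows from the hypotheses: the solution is only assumed classical on $[0,T^*)$, and the lower bound $R_i(t)\ge m$ does not prevent the active scalar from losing regularity on its own as $t\to T^*$ (this self-blow-up of SQG is an independent mechanism which the proposition does not exclude and which the paper explicitly leaves open). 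The \textsl{a priori} estimates of Proposition~\ref{p:APriori} only give control on a time interval whose length depends on the size of the data, so they cannot be iterated up to an arbitrary $T^*$; and your continuation argument is circular, since it invokes control of $E$ and of the minimal inter-vortex distance --- the latter is exactly what the collision hypothesis destroys, and the former is what you are assuming blows up inside that sub-argument.

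The paper's proof avoids this issue entirely: it never uses $\|\nabla v\|_{L^\infty}$, only the values $v(t,z_i(t))$ of the background velocity at the vortex locations. Because $\theta(t)$ is constant on $B(z_i(t),R_i(t))$ and the kernel is odd (hence of zero average on that ball, cf.~\eqref{eq:local_imply_0} and \eqref{Tsunoda}), one has $v(t,z_i(t))=\int_{|x-z_i(t)|\ge R_i(t)}K_s(x-z_i(t))\,\theta(t,x)\dx$, whence $|v(t,z_i(t))|\le c_s\|\theta_0\|_{L^1}\,R_i(t)^{2s-3}\le C\,m^{2s-3}$, using only the conserved $L^1$ norm and the plateau --- no regularity of $\theta$ near $T^*$ is required. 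This is precisely where the assumption on the $R_i$ enters, and it is this quantity (together with a moment-of-inertia bound on the $|z_i|$) that feeds the Gronwall inequality for the interaction energy in the paper. Your Lipschitz-difference trick, bounding $(v(t,z_i)-v(t,z_j))\cdot(z_i-z_j)$ by $\|\nabla v\|_{L^\infty}|z_i-z_j|^2$, is what makes your Gronwall cleanly linear in $\cH$, but it is also what forces you to use exactly the norm you cannot control; to repair the proof you must estimate the background contribution through the pointwise values $v(t,z_i(t))$ bounded via the plateau as above.
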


\begin{proof}
Assume without loss of generality that $a_i > 0$ for every $i \in \{1,\ldots,N\}$.
Let 
\begin{equation*}
    H(t) = \sum_{i \neq j} a_ia_j \frac{1}{\big|z_i(t)-z_j(t)\big|^{2-2s}}
\end{equation*}
and
\begin{equation*}
    I(t) = \sum_{i=1}^N a_i z_i^2(t).
\end{equation*}
Let $V(t) := \sup_{1\le i \le N} |v(t,z_i(t))|$. Then, recalling the anti-symmetry $u\cdot v^\perp=-v\cdot u^\perp$,
\begin{equation*}\begin{split}
    \der{}{t} I(t) & = 2 C_s \sum_{i=1}^N a_i z_i \cdot \left(\sum_{j \neq i} a_j\frac{\big( z_i(t)-z_j(t)\big)^\perp}{\big| z_i(t)-z_j(t)\big|^{4-2s}} + v(t,z_i(t)) \right) \\
    & = 2 C_s \sum_{i\neq j} a_ia_j \frac{z_i(t) \cdot z_j^\perp(t)}{\big| z_i(t)-z_j(t)\big|^{4-2s}} + 2 C_s \sum_{i=1}^N a_i z_i(t)\cdot v(t,z_i(t)) \\
    & = 2 C_s \sum_{i=1}^N a_i z_i(t)\cdot v(t,z_i(t)) \\
    & \le C (I(t) + 1)V(t),
\end{split}\end{equation*}
where we used that $z_i(t) \le z_i^2(t) +1$. Therefore, there exists a constant $C$ such that
\begin{equation*}
    \forall t \in [0,T^\ast), \quad I(t) \le C \exp\left(C\int_0^t V(t') \dd t'\right),
\end{equation*}
and thus
\begin{equation}\label{eq:control_I}
    \forall t \in [0,T^\ast), \quad \forall i \in \{1,\ldots,N\}, \quad |z_i(t)| \le C \sqrt{I(t)} \le  C \exp\left(C\int_0^t V(t') \dd t'\right).
\end{equation}
We then have that
\begin{equation*}\begin{split}
    \der{}{t} H(t)& = C_s \sum_{i=1}^N v(t,z_i(t)) \cdot \sum_{j \neq i} a_j\frac{\big( z_i(t)-z_j(t)\big)^\perp}{\big| z_i(t)-z_j(t)\big|^{4-2s}} \\
    & \le  C V(t) \exp\left(C\int_0^t V(t') \dd t'\right) \sum_{i\neq j} a_ia_j \frac{1}{\big|z_i(t)-z_j(t)\big|^{2-2s}} \\
    & \le  C V(t) \exp\left(C\int_0^t V(t') \dd t'\right) H(t)
\end{split}\end{equation*}
where we used relation \eqref{eq:control_I}. Therefore, 
\begin{equation*}
    \forall t \in [0,T^\ast), \quad H(t) \le C \exp\left( C \int_0^t V(\tau) \exp\left(C\int_0^\tau V(t') \dd t'\right) \dd \tau \right).
\end{equation*}
We now observe that
\begin{equation*}
    |V(t)| \le \frac{C}{\displaystyle \min_{1\le i \le N} \big(R_i(t)\big)^{3-2s}}
\end{equation*}
since, as in relation~\eqref{Tsunoda} and \eqref{eq:local_imply_0}, we have that
\begin{align*}
    v(t,z_i(t)) & = K_{s,R_i(t)}\star \theta(t,\cdot)(z_i(t)) \\ 
    & = \int_{|x-z_i(t)| \ge R_i(t)} K_s(x-z_i(t))\theta(t,x)\dd x \\
    & \le  \|\theta(t)\|_{L^1}\|K_s\|_{L^\infty(\R^2 \setminus B(z_i(t),R_i(t)))} \\
    & \le \frac{C_s}{\big(R_i(t)\big)^{3-2s}}.
\end{align*}
If a collision of point-vortices occurs at time $T^\ast$, then $H(t) \tend{t}{T^\ast} +\infty$ and thus gathering the previous relations, $V(t) \tend{t}{T^\ast} +\infty$ and
$\displaystyle \min_{1\le i \le N} R_i(t) \tend{t}{T^*} 0$.
\end{proof}

\paragraph{Acknowledgments.}
\text{ }
\medskip

Dimitri Cobb is supported by Deutsche Forschungsgemeinschaft (DFG, German Research Foundation) Project ID 211504053 - SFB 1060\medskip

Martin Donati's work has been supported by the Simons fundation project on Wave Turbulence, and the ENS de Lyon. \medskip

This work was supported by the BOURGEONS project, grant ANR-23-CE40-0014-01 of the French National Research Agency (ANR).


\begin{thebibliography}{10}

\bibitem{Ao_Davila_DelPino_Musso_Wei_2021}
W.~Ao, J.~Dávila, M.~Del~Pino, M.~Musso, and J.~Wei.
\newblock {Travelling and Rotating Solutions to the Generalized Inviscid
  Surface Quasi-Geostrophic Equation}.
\newblock {\em {Trans. Am. Math. Soc.}}, 374(9):6665--6689, 2021.

\bibitem{Arnold_1989}
V.I. Arnold.
\newblock {\em Mathematical Methods of Classical Mechanics}.
\newblock Springer-Verlag, New-York, 1989.

\bibitem{Bahouri_Chemin_Danchin_2011}
H.~Bahouri, J.-Y. Chemin, and R.~Danchin.
\newblock {\em {Fourier Analysis and Nonlinear Partial Differential
  Equations}}.
\newblock Springer Berlin, Heidelberg, 2011.

\bibitem{Bjorland_2011}
C.~Bjorland.
\newblock The vortex-wave equation with a single vortex as the limit of the
  {E}uler equation.
\newblock {\em Comm. Math. Phys.}, 305:131--151, 2011.

\bibitem{Buckmaster_Shkoller_Vicol_2019}
T.~Buckmaster, V.~Shkoller, and V.~Vicol.
\newblock {Nonuniqueness of weak solutions to the SQG equation}.
\newblock {\em {Comm. Pure Appl. Math.}}, 72(9):1809--1874, 2019.

\bibitem{Cao_Qin_Zhan_Zou_2021}
D.~Cao, G.~Qin, W.~Zhan, and C.~Zou.
\newblock {On the global classical solutions for the generalized SQG equation}.
\newblock {\em Journal of Functional Analysis}, 283(2):109503, 2022.

\bibitem{Cao_Wang_2020}
D.~Cao and G.~Wang.
\newblock Existence of steady multiple vortex patches to the vortex-wave
  system.
\newblock {\em Pac. Jour. Math.}, 308(2):257–279, 2020.

\bibitem{Chae_Constantin_Cordoba_Gancedo_Wu_2012}
D.~Chae, P.~Constantin, D.~Córdoba, F.~Gancedo, and J.~Wu.
\newblock Generalized surface quasi-geostrophic equations with singular
  velocities.
\newblock {\em Comm. Pure App. Math.}, 65:1037--1066, 2012.

\bibitem{Chae_Jeong_Oh_2023}
D.~Chae, I.~J. Jeong, and S.~J. Oh.
\newblock Illposedness via degenerate dispersion for generalized surface
  quasi-geostrophic equations with singular velocities.
\newblock {\em preprint}, 2023.

\bibitem{Chen_Liu_2024}
J.~Chen and Q.~Liu.
\newblock Sufficient and necessary conditions for self-similar motions of three
  point vortices in generalized fluid systems.
\newblock {\em Physica D: Nonlinear Phenomena}, 470:134392, 2024.

\bibitem{Constantin_Majda_Tabak_1994}
P.~Constantin, A.J. Majda, and E.~Tabak.
\newblock {Formation of strong fronts in the {$2$}-{D} quasi-geostrophic
  thermal active scalar}.
\newblock {\em Nonlinearity}, 7(6):1495--1533, 1994.

\bibitem{Crippa_LopesFilho_Miot_NussenzveigLopes_2016}
G.~Crippa, M.~Lopes~Filho, E.~Miot, and H.~Nussenzveig~Lopes.
\newblock Flows of vector fields with point singularities and the vortex-wave
  system.
\newblock {\em {Discrete Cont. Dyn. Syst.}}, 36(5):2405--2417, 2016.

\bibitem{Cordoba_MartinezZoroa_2022}
D.~Córdoba and L.~Martínez-Zoroa.
\newblock {Non-existence and strong ill-posedness in $C^{k,\beta}$ for the
  generalized Surface Quasi-geostrophic equation}.
\newblock {\em Advances in Mathematics}, 407:108570, 2022.

\bibitem{Donati_2022}
M.~Donati.
\newblock Two-dimensional point vortex dynamics in bounded domains: Global
  existence for almost every initial data.
\newblock {\em SIAM Journal on Mathematical Analysis}, 54(1):79--113, 2022.

\bibitem{Donati_Godard-Cadillac_2023_Holder_reg_collapse}
M.~Donati and L.~Godard-Cadillac.
\newblock Hölder regularity for collapses of point-vortices.
\newblock {\em Nonlinearity}, 36(11):5773, sep 2023.

\bibitem{Scott_Dritschel_2014}
D.~G. Dritschel and R.~K. Scott.
\newblock {Numerical simulation of a self-similar cascade of filament
  instabilities in the Surface Quasi-Geostrophic System}.
\newblock {\em {Phys. Rev. Lett.}}, 112:144505, 2014.

\bibitem{Garcia_2021}
C.~García.
\newblock {Vortex Patches Choreography for Active Scalar Equations}.
\newblock {\em {Jour. Nonlinear Sci.}}, 31(75), 2021.

\bibitem{Garcia_GomezSerrano_2022}
C.~Garc{í}a and J.~G{ó}mez-Serrano.
\newblock {Self-similar spirals for the generalized surface quasi-geostrophic
  equations}.
\newblock {\em {preprint}}, {2022}.
\newblock {\tt arXiv:2207.12363}.

\bibitem{Geldhauser_Romito_2020}
C.~Geldhauser and M.~Romito.
\newblock Point vortices for inviscid generalized surface quasi-geostrophic
  models.
\newblock {\em Am. Ins. Math. Sci.}, 25(7):2583--2606, 2020.

\bibitem{Glass_Lacave_Sueur_2014}
O.~Glass, C.~Lacave, and F.~Sueur.
\newblock On the motion of a small body immersed in a two-dimensional
  incompressible perfect fluid.
\newblock {\em Bull. Soc. Math.}, 142(3):489--536, 2014.

\bibitem{Glass_Lacave_Sueur_2016}
O.~Glass, C.~Lacave, and F.~Sueur.
\newblock On the motion of a small light body immersed in a two dimensional
  incompressible perfect fluid with vorticity.
\newblock {\em Comm. Math. Phys.}, 341(3):1015–1065, 2016.

\bibitem{Godard-Cadillac_vortex_2022}
L.~Godard-Cadillac.
\newblock Vortex collapses for the euler and quasi-geostrophic models.
\newblock {\em {Discrete Cont. Dyn. Syst.}}, 42(7):3143--3168, 2022.

\bibitem{Godard-Cadillac_Gravejat_Smets_2020}
L.~Godard-Cadillac, P.~Gravejat, and D.~Smets.
\newblock Co-rotating vortices with n fold symmetry for the inviscid surface
  quasi-geostrophic equation.
\newblock {\em Indiana Univ. Math. Journ.}, 2020.
\newblock {\tt arXiv:2010.08194}.

\bibitem{Gravejat_Smets_2019}
P.~Gravejat and D.~Smets.
\newblock {Smooth travelling-wave solutions to the inviscid surface
  quasi-geostrophic equation}.
\newblock {\em Int. Math. Res. Not.}, IMRN 2019(6):1744--1757, 2019.

\bibitem{Hassainia_Wheeler_2022}
Z.~Hassainia and M.H. Wheeler.
\newblock {Multipole vortex patch equilibria for active scalar equations}.
\newblock {\em SIAM Journal on Mathematical Analysis}, 54(6):6054--6095, 2022.

\bibitem{Hmidi_Mateu_2017}
T.~Hmidi and J.~Mateu.
\newblock {Existence of corotating and counter-rotating vortex pairs for active
  scalar equations}.
\newblock {\em {Comm. Math. Phys.}}, 350:699--747, 2017.

\bibitem{Hu_Kukavika_Ziane_2015}
W.~Hu, I.~Kukavica, and M.~Ziane.
\newblock Sur l'existence locale pour une équation de scalaires actifs.
\newblock {\em {Comptes Rendus Math.}}, 353(3):241--245, 2015.

\bibitem{Jeong_Kim_2024}
I.~J. Jeong and J.~Kim.
\newblock Strong ill-posedness for {SQG} in critical sobolev spaces.
\newblock {\em ANALYSIS AND PDE}, 17(1):133--170, 2024.

\bibitem{Jolly_Kumar_Martinez_2022}
M.~S. Jolly, A.~Kumar, and V.~R. Martinez.
\newblock On local well-posedness of logarithmic inviscid regularizations of
  generalized {SQG} equations in borderline sobolev spaces.
\newblock {\em {Commun. Pure Appl. Anal.}}, 21(1):101--120, 2022.

\bibitem{Jolly_Kumar_Martinez_2021}
S.~Jolly, A.~Kumar, and V.~Martinez.
\newblock On the existence, uniqueness, and smoothing of solutions to the
  generalized {SQG} equations in critical sobolev spaces.
\newblock {\em Comm. Math. Phys.}, 387:551 -- 596, 2021.

\bibitem{Lacave_Miot_2009}
C.~Lacave and E.~Miot.
\newblock Uniqueness for the vortex-wave system when the vorticity is constant
  near the point vortex.
\newblock {\em SIAM J. Math. Anal.}, 41(3):1138--1163, 2009.

\bibitem{Lacave_Miot_preprint}
C.~Lacave and E.~Miot.
\newblock The vortex-wave system with gyroscopic effects.
\newblock {\em Ann. Sc. Norm. Super. Pisa Cl. Sci.}, XXII(5):1--30, 2021.

\bibitem{Marchand_2008}
F.~Marchand.
\newblock {Existence and regularity of weak solutions to the quasi-geostrophic
  equations in the spaces $L^p$ or $\dot{H}^{-1/2}$.}
\newblock {\em {Comm. Math. Phys.}}, 277(1):45--67, 2008.

\bibitem{Marchioro_Pulvirenti_1991}
C.~Marchioro and M.~Pulvirenti.
\newblock {\em On the vortex-wave system}.
\newblock Mechanics, analysis and geometry: 200 years after Lagrange. Elsevier
  Science, 1991.

\bibitem{Marchioro_Pulvirenti_1993}
C.~Marchioro and M.~Pulvirenti.
\newblock {\em {Mathematical theory of incompressible nonviscous fluids}},
  volume~96 of {\em {Applied mathematical sciences}}.
\newblock Springer-Verlag, 1993.

\bibitem{Nguyen_Nguyen_2019}
T.~Nguyen and T.~Nguyen.
\newblock The inviscid limit of {Navier--Stokes} equations for vortex-wave data
  on {${\mathbb R}^2$}.
\newblock {\em SIAM Jour. Math. Anal.}, 51(3):2575--2598, 2019.

\bibitem{Pedlowsky_1987}
J.~Pedlosky.
\newblock {\em Geophysical Fluid Dynamics}.
\newblock Springer-Verlag, New-York, 1987.

\bibitem{Reinaud_Dritschel_Scott_2022}
J.~N. Reinaud, D.~G. Dritschel, and Richard~K. Scott.
\newblock Self-similar collapse of three vortices in the generalised euler and
  quasi-geostrophic equations.
\newblock {\em Physica D: Nonlinear Phenomena}, 434:133226, 2022.

\bibitem{Rosenzweig_2020}
M.~Rosenzweig.
\newblock Justification of the point vortex approximation for modified surface
  quasi-geostrophic equations.
\newblock {\em SIAM Journal on Mathematical Analysis}, 52(2):1690--1728, 2020.

\bibitem{Schochet_1995}
S.~Schochet.
\newblock {The weak vorticity formulation of the 2D Euler equations and on
  concentration-cancellation}.
\newblock {\em {Comm. Partial Differential equations}}, {5}({6}):{ 1077--1104},
  {1995}.

\bibitem{Starovoitov_1994_Uniqueness}
V.~N. Starovoitov.
\newblock Uniqueness of a solution to the problem of evolution of a point
  vortex.
\newblock {\em Siberian Mathematical Journal}, 35(3), 1994.

\bibitem{Vallis_2006}
G.K. Vallis.
\newblock {\em Atmospheric and Oceanic Fluid Dynamics}.
\newblock Cambridge University Press, 2006.

\bibitem{Varholm_Wahlen_Walsh_2020}
K.~Varholm, E.~{Wahlén}, and S.~Walsh.
\newblock On the stability of solitary water waves with a point vortex.
\newblock {\em Comm. Pure App. Math.}, 73(12), 2020.

\bibitem{Youdovich_1963}
V.I. Youdovich.
\newblock {Non-stationary flow of an ideal incompressible liquid}.
\newblock {\em {Zh. Vychisl. Mat. Fiz.}}, 3(6):1032--1066, 1963.

\bibitem{Youdovich_1995}
V.I. Youdovich.
\newblock {Uniqueness theorem for the basic nonstationary problem in the
  dynamics of an ideal incompressible fluid}.
\newblock {\em {Math. Res. Lett.}}, 2(1):27--38, 1995.

\end{thebibliography}
\end{document}